\documentclass[oneside,a4paper]{amsart}

\usepackage[T1]{fontenc}
\usepackage[utf8]{inputenc}
\usepackage{amsmath, amsthm, amsfonts, amssymb}

\usepackage{etoolbox}
\usepackage{url}
\usepackage[all]{xy}
\usepackage{placeins} 
\usepackage[usenames, dvipsnames]{color} 
\usepackage{euscript} 
\usepackage{bbm} 
\usepackage{enumitem} 

\usepackage[dvipsnames]{xcolor} 
\definecolor{dark-red}{rgb}{0.5,0.15,0.15}
\definecolor{dark-blue}{rgb}{0.15,0.15,0.6}
\definecolor{dark-green}{rgb}{0.15,0.6,0.15}
\usepackage{hyperref}
\hypersetup{
    colorlinks, linkcolor=OliveGreen,
    citecolor=dark-blue, urlcolor=dark-green
}
\usepackage[nameinlink,capitalise,noabbrev]{cleveref}

\usepackage{tikz}
\usetikzlibrary{arrows,backgrounds,
    decorations.pathreplacing,
    decorations.pathmorphing
}
\usetikzlibrary{matrix,arrows}

\newcommand{\euscr}[1]{\EuScript{#1}} 
\newcommand{\acat}{\euscr{A}} 
\newcommand{\ccat}{\euscr{C}} 
\newcommand{\dcat}{\euscr{D}} 
\newcommand{\ecat}{\euscr{E}} 
\newcommand{\Fun}{\textnormal{Fun}} 
\newcommand{\Hom}{\textnormal{Hom}} 
\newcommand{\Ext}{\textnormal{Ext}} 
\newcommand{\map}{\textnormal{map}} 
\newcommand{\spectra}{\euscr{S}p} 

\newcommand{\synspectra}{\euscr{S}yn} 
\newcommand{\ComodE}{\euscr{C}omod_{E_{*}E}} 
\newcommand{\Comod}{\euscr{C}omod} 
\newcommand{\Mod}{\euscr{M}od} 
\newcommand{\monunit}{\mathbbm{1}} 
\newcommand{\monunitt}[1]{\monunit_{\leq #1}} 
\newcommand{\atrun}[1]{A_{\leq #1}} 
\newcommand{\ekoperad}[1]{\mathbf{E}_{#1}} 

\newcommand{\triplerightarrow}{%
\tikz[minimum height=0ex]
  \path[->]
   node (a)            {}
   node (b) at (1em,0) {}
  (a.north)  edge (b.north)
  (a.center) edge (b.center)
  (a.south)  edge (b.south);%
}

\theoremstyle{plain}

\newtheorem{thm}{Theorem}[section]
\newtheorem{lemma}[thm]{Lemma}
\newtheorem{prop}[thm]{Proposition}
\newtheorem{cor}[thm]{Corollary}

\newtheorem*{thm*}{Theorem}

\theoremstyle{definition}
\newtheorem{example}[thm]{Example}

\newtheorem{defin}[thm]{Definition}
\newtheorem{rem}[thm]{Remark}

\newtheorem*{rem*}{Remark}
\newtheorem*{interpretation*}{Interpretation}
\newtheorem*{defin*}{Definition}
\newtheorem*{conjecture*}{Conjecture}
\newtheorem*{notation*}{Notation}
\newtheorem*{convention*}{Convention}
\newtheorem*{thm_italics*}{Theorem}

\theoremstyle{remark}

\numberwithin{equation}{section}

\makeatletter
  \def\subsection{\@startsection{subsection}{1}%
  \z@{.7\linespacing\@plus\linespacing}{.5\linespacing}%
  {\normalfont\bfseries\centering}}
\makeatother

\setcounter{tocdepth}{3}
\let\oldtocsection=\tocsection
\let\oldtocsubsection=\tocsubsection
\let\oldtocsubsubsection=\tocsubsubsection
\renewcommand{\tocsection}[2]{\hspace{0em}\oldtocsection{#1}{#2}}
\renewcommand{\tocsubsection}[2]{\hspace{1em}\oldtocsubsection{#1}{#2}}
\renewcommand{\tocsubsubsection}[2]{\hspace{2em}\oldtocsubsubsection{#1}{#2}}


\setlength{\textwidth}{\paperwidth}
\addtolength{\textwidth}{-2.5in}
\calclayout

\newcommand{\ThetaSect}{\Theta{\text -}\mathrm{Sect}}
\newcommand{\ThetaMorph}{\Theta{\text -}\mathrm{Morph}}
\newcommand{\LL}{\mathbb{L}}
\DeclareMathOperator{\Aut}{Aut}
\DeclareMathOperator{\Spec}{Spec}
\DeclareMathOperator{\CAlg}{CAlg}
\DeclareMathOperator{\Alg}{Alg}

\begin{document}
\title[Abstract Goerss-Hopkins Theory]{Abstract Goerss-Hopkins Theory}
\author[Piotr Pstr\k{a}gowski, Paul VanKoughnett]{Piotr Pstr\k{a}gowski, Paul VanKoughnett}
\thanks{The first author was supported by the Danish National Research Foundation through the Copenhagen Centre for Geometry and Topology (DNRF151). The second author was supported by the National Science Foundation Grant No. 1440140, while he was in residence at the Mathematical Sciences Research Institute in Berkeley, California, during the spring semester of 2019, as well as under Grant No. 1714273.}
\address{Harvard University}
\email{pstragowski.piotr@gmail.com, pvankoug@math.northwestern.edu}

\begin{abstract}
We present an abstract version of Goerss-Hopkins theory in the setting of a prestable $\infty$-category equipped with a suitable periodicity operator. In the case of the $\infty$-category of synthetic spectra, this yields obstructions to realizing a comodule algebra as a homology of a commutative ring spectrum, recovering the results of Goerss and Hopkins. 
\end{abstract}

\maketitle 

\tableofcontents


\section{Introduction}

Goerss-Hopkins obstruction theory gives obstructions to realizing an algebra in $E_{*}E$-comodules as $E$-homology of a commutative ring spectrum \cite{moduli_spaces_of_commutative_ring_spectra}, \cite{moduli_problems_for_structured_ring_spectra}. 

A famous application of these results is the Goerss-Hopkins-Miller theorem, which states that Morava $E$-theory spectrum admits an essentially unique structure of an $\ekoperad{\infty}$-ring spectrum, which is strictly functorial in the underlying formal group law, producing an action of the Morava stabilizer group. This action, and the corresponding fixed point spectra constructed by Devinatz and Hopkins \cite{devinatz2004homotopy}, are essential in modern chromatic homotopy theory and have led to many results of both conceptual and computational power, such as the construction of the spectrum of topological modular forms \cite{goerss2005resolution}, \cite{rognes2005galois}, \cite{behrens2011notes},  \cite{behrens2011higher}, \cite{beaudrygoersshenn2017chromatic}.

In this note, we give an independent and more general account of Goerss-Hopkins theory, suitable for realizing objects from homological data in a wide class of stable $\infty$-categories. Moreover, our approach is also simpler and more direct, making it interesting even in the classical case, since the original papers \cite{moduli_spaces_of_commutative_ring_spectra}, \cite{moduli_problems_for_structured_ring_spectra} are well-known for their technicality. 

Roughly, one can divide our work into two parts, the first being a construction of an appropriate $\infty$-category of synthetic objects, where one can work with "resolutions". In the case relevant to realizing commutative ring spectra, one needs an appropriate theory of synthetic $E$-local spectra; these have been constructed by the first author in \cite{pstrkagowski2018synthetic}.

The second part, to which this note is devoted, takes such an $\infty$-category of "resolutions" as an input and defines an appropriate tower of moduli of potential $n$-stages, the definition of which goes back to the fundamental work of Blanc, Dwyer and Goerss \cite{realization_space_of_a_pi_algebra}. The obstruction theory is then derived from the detailed study of this tower; essentially, the obstructions are obtained from the deformation theory of algebras in synthetic objects.

\subsection{Existence of realizations} 

Suppose that $\ccat$ is a graded symmetric monoidal, complete Grothendieck prestable $\infty$-category; that is, $\ccat$ is the subcategory of connective objects in a well-behaved stable $\infty$-category. 

We say a commutative algebra $A \in \textnormal{CAlg}(\ccat)$ is \emph{shift} if we have a map $\tau: \Sigma A[-1] \rightarrow A$ of $A$-modules which induces an isomorphism $\pi_{*} A \simeq (\pi_{0} A)[\tau]$. We say an $A$-module $M$ is \emph{periodic} if $\pi_{*}M \simeq \pi_{*} A \otimes _{\pi_{0} A} \pi_{0} M \simeq (\pi_{0} M)[\tau]$ and that it is \emph{flat} if additionally $\pi_{0}M$ is flat over $\pi_{0} A$. 

Let $A$ be a shift algebra such that $\Mod_{A}(\ccat)$ is generated under colimits by flat modules. In this note, we present Goerss-Hopkins obstruction theory as giving obstructions to constructing a periodic $A$-algebra with prescribed homotopy groups.

\begin{thm}[\ref{cor:if_c_is_complete_we_have_inductive_obstructions_to_existence_of_periodic_algebras}, \ref{rem:obstructions_in_derived_category_in_commutative_case}]
\label{thm:obstructions_to_existence_of_a_periodic_commutative_algebra_in_intro}
Let $S$ be a commutative $\pi_{0} A$-algebra in $\ccat^{\heartsuit}$. Then, there exists a sequence of inductively defined obstructions

\begin{center}
$\theta_{n} \in \Ext_{\Mod_{S}(\ccat^{\heartsuit})}^{n+2, n}(\mathbb{L}^{\ekoperad{\infty}}_{S / \pi_{0}A}, S)$, where $n \geq 1$,
\end{center}
which vanish if and only if there exists a periodic commutative $A$-algebra $R$ such that $\pi_{0} R \simeq S$ as $\pi_{0} A$-algebras. 
\end{thm}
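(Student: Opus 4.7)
The plan is to realize $R$ as the limit of a tower of approximations cut by the $\tau$-filtration. I would introduce a notion of \emph{potential $n$-stage}: a commutative $A$-algebra $R_n$ whose homotopy groups realize $(\pi_0 R_n)[\tau]/\tau^{n+1}$ with $\pi_0 R_n \simeq S$ as $\pi_0 A$-algebras. These organize into a tower of moduli spaces $\ldots \to M_{n+1} \to M_n \to \ldots \to M_0$, and since $\ccat$ is complete the inverse limit should be the moduli of periodic commutative $A$-algebras lifting $S$. Existence of $R$ is then equivalent to a compatible sequence of lifts $R_0, R_1, \ldots$ through this tower.

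The base case $R_0 = S$, viewed in $\ccat^{\heartsuit} \subset \ccat$ via the unit map $A \to S$, is essentially tautological. For the inductive step, I would argue that the fiber of $M_{n+1} \to M_n$ at $R_n$ is a moduli of square-zero extensions of commutative $A$-algebras attaching one more power of $\tau$. Deformation theory of $\ekoperad{\infty}$-algebras then identifies the obstruction to the existence of such an extension with a class in an Ext group of the cotangent complex $\mathbb{L}^{\ekoperad{\infty}}_{R_n/\pi_0 A}$ with coefficients in a shifted copy of $S$ in $\Mod_S(\ccat^{\heartsuit})$.

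The bidegree $\Ext^{n+2,n}$ arises naturally: the module attached at stage $n+1$ is $S$ sitting in internal $\tau$-degree $n$, accounting for the second index $n$, while the homological degree $n+2$ is the usual shift distinguishing obstructions from lifts in commutative algebra deformation theory. Base change along $R_n \to S$, together with the fact that $R_n$ is periodic with $\pi_0 R_n = S$, should then identify $\mathbb{L}^{\ekoperad{\infty}}_{R_n/\pi_0 A}$ with $\mathbb{L}^{\ekoperad{\infty}}_{S/\pi_0 A}$ in the range of degrees where the obstruction lives, justifying the rewriting in terms of $S$ alone.

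The main obstacle is precisely this last step: making the comparison of cotangent complexes between $R_n$ and $S$ rigorous in the bigraded setting, and keeping the two gradings (homotopical and $\tau$-adic) straight throughout. The rest is a formal tower/moduli-theoretic construction in the spirit of Blanc--Dwyer--Goerss, and the ``only if'' direction follows by truncation: if a periodic realization $R$ exists then its $\tau$-truncations provide a compatible family of potential $n$-stages, forcing each inductively defined obstruction to vanish.
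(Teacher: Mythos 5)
Your outline follows the same broad strategy as the paper — potential $n$-stages, the Goerss--Hopkins tower of moduli, completeness for convergence, and deformation theory of square-zero extensions via the cotangent complex — but two of the steps you identify as "formal" are actually where the real technical content lives, and as stated your proposal glosses over them.

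First, the claim that "the fiber of $M_{n+1} \to M_n$ at $R_n$ is a moduli of square-zero extensions" is not yet a proof. The issue is that an arbitrary square-zero extension of $R_{n-1}$ by $\Sigma^n \pi_0 R_{n-1}[-n]$ as an $A$-algebra need not be a potential $n$-stage: one needs compatibility with the $\tau$-action, i.e., the extension must be an extension of $A_{\leq n}$-algebras refining the given $A_{\leq n-1}$-algebra structure. The paper's mechanism for making this precise is the equivalence $\Mod_{\atrun{n}} \simeq \ThetaSect_{\atrun{n-1}}$ (Theorem \ref{thm:modules_over_higher_truncation_as_modules_with_a_section}), which expresses $A_{\leq n}$-modules as $A_{\leq n-1}$-modules equipped with a \emph{section} of the functor $\Theta = d_* d_0^*$ built from the square-zero extension $A_{\leq n} \to A_{\leq n-1}$. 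A lift of a potential $(n-1)$-stage is then literally a section of the square-zero extension $\pi: \Theta R_{n-1} \to R_{n-1}$, which exists if and only if its classifying map $\LL^{\ekoperad{\infty}}_{R_{n-1}/\atrun{n-1}} \to \Sigma^{n+2}\pi_0 R_{n-1}[-n]$ is null. Without this (or some equivalent device), you have not reduced the lifting problem to deformation theory.

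Second, your "main obstacle" — identifying $\LL^{\ekoperad{\infty}}_{R_n/\pi_0 A}$ with $\LL^{\ekoperad{\infty}}_{S/\pi_0 A}$ in a range of degrees — is resolved in the paper by a cleaner and exact argument, and in fact your setup of the comparison is slightly off. The cotangent complex appearing in the obstruction is $\LL^{\ekoperad{\infty}}_{R_{n-1}/\atrun{n-1}}$ (relative to $A_{\leq n-1}$, since the square-zero extension is one of $A_{\leq n-1}$-algebras), not $\LL^{\ekoperad{\infty}}_{R_n/\pi_0 A}$. One then observes that the target $\Sigma^{n+2}\pi_0 R[-n]$ is a suspension of a discrete object and hence canonically a $\pi_0 A$-module; the mapping space therefore factors through extension of scalars $\pi_0 A \otimes_{\atrun{n-1}} (-)$, which takes $R_{n-1}$ to $S$ and, by the base change formula for the cotangent complex, takes $\LL^{\ekoperad{\infty}}_{R_{n-1}/\atrun{n-1}}$ to $\LL^{\ekoperad{\infty}}_{S/\pi_0 A}$ exactly, with no degree restriction. (As a smaller point, note that $R_n$ is a potential $n$-stage, not periodic, unless $n = \infty$.) Finally, to land the $\Ext$-group in $\Mod_S(\ccat^\heartsuit)$ as in the statement one needs a flatness hypothesis as in Remark \ref{rem:obstructions_in_derived_category_in_commutative_case}, which your proposal does not mention.
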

The $\Ext$-groups appearing in the statement are a form of Andr\'{e}-Quillen cohomology; more precisely, it is the Andr\'{e}-Quillen cohomology of $S$ considered as an $\ekoperad{\infty}$-$\pi_{0}A$-algebra. These groups are completely algebraic and relatively computable, giving the statement its power. 

In the body of the paper, we also prove variants of \cref{thm:obstructions_to_existence_of_a_periodic_commutative_algebra_in_intro} which give obstructions to constructing periodic $\mathbf{E}_{k}$-algebras for $k < \infty$, as well as a ``linear'' version relating to periodic $A$-modules, see \cref{thm:obstructions_in_ext_groups_to_lifting_a_potential_stage} and \cref{cor:if_c_is_complete_then_we_have_an_obstruction_theory_to_constructing_a_periodic_a_module}. 

The notions of shift algebra and periodic module are very general, and not necessarily of interest in their own. Rather, the usefulness of \cref{thm:obstructions_to_existence_of_a_periodic_commutative_algebra_in_intro} comes from the fact that for specific choices of the $\infty$-category $\ccat$, periodic $A$-modules can be identified with various kinds of topological objects, yielding a variety of obstruction theories. 

The classical case considered by Goerss and Hopkins is obtained by letting $\ccat$ be the $\infty$-category $\synspectra$ of synthetic spectra, as described in \cite{pstrkagowski2018synthetic}, which informally plays the role of the ``derived $\infty$-category of spectra''. There is an $\infty$-category of synthetic spectra associated to each Adams-type homology theory $E$, and its monoidal unit $\monunit$ is a shift algebra. 

The fundamental property of $\synspectra$ is that its heart $\synspectra^{\heartsuit}$ is equivalent to the category of $E_{*}E$-comodules, while the $\infty$-category $\Mod_{\monunit}^{per}(\synspectra)$ of periodic objects is equivalent to $E$-local spectra. Thus, synthetic spectra act as an intermediary between the world of spectra and the world of comodules. Specialized to $\synspectra$, \cref{thm:obstructions_to_existence_of_a_periodic_commutative_algebra_in_intro} takes the following familiar form. 

\begin{thm}[Goerss-Hopkins, \ref{thm:obstructions_to_realizing_an_algebra_in_comodules}, \cite{moduli_spaces_of_commutative_ring_spectra}]
\label{thm:classical_goerss_hopkins_intro}
Let $E$ be Morava $E$-theory and let $S$ be a commutative algebra in $E_{*}E$-comodules, where $E_{*}E = \pi_{*}(E \wedge E)$. Then, there exists a sequence of inductively defined obstructions

\begin{center}
$\theta_{n} \in \Ext_{\Mod_{S}(\ComodE)}^{n+2, n}(\mathbb{L}^{\ekoperad{\infty}}_{S / E_{*}}, S)$, where $n \geq 1$
\end{center}
which vanish if and only if there exists an $\ekoperad{\infty}$-ring spectrum $E$ such that $E_{*}R \simeq S$ as comodule algebras. 
\end{thm}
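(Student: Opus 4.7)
The plan is to deduce this as a direct specialization of Theorem \ref{thm:obstructions_to_existence_of_a_periodic_commutative_algebra_in_intro} to $\ccat = \synspectra$ and $A = \monunit$, using the structural results on synthetic $E$-local spectra established in \cite{pstrkagowski2018synthetic}. Substantively, all of the obstruction-theoretic work is already packaged inside the abstract theorem; the task here is to verify that its hypotheses are met for synthetic spectra and to translate its conclusion along the appropriate identifications.

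First I would check the input. By \cite{pstrkagowski2018synthetic}, the $\infty$-category $\synspectra$ is a graded symmetric monoidal, complete Grothendieck prestable $\infty$-category, and its heart is identified with the category $\ComodE$ of $E_{*}E$-comodules. The monoidal unit $\monunit$ is a shift algebra: there is a canonical element $\tau : \Sigma \monunit[-1] \to \monunit$ whose effect on homotopy induces $\pi_{*}\monunit \simeq (\pi_{0}\monunit)[\tau]$, with $\pi_{0}\monunit \simeq E_{*}$ in $\ComodE$. Lastly, $\Mod_{\monunit}(\synspectra) \simeq \synspectra$ is generated under colimits by the flat synthetic analogues of Adams-type spectra, which supplies the final hypothesis. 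A commutative algebra $S$ in $E_{*}E$-comodules is then precisely a commutative $\pi_{0}\monunit$-algebra in $\synspectra^{\heartsuit}$, so Theorem \ref{thm:obstructions_to_existence_of_a_periodic_commutative_algebra_in_intro} produces inductive obstructions $\theta_{n}$ in $\Ext^{n+2,n}_{\Mod_{S}(\synspectra^{\heartsuit})}(\mathbb{L}^{\ekoperad{\infty}}_{S/\pi_{0}\monunit}, S)$ whose vanishing is equivalent to the existence of a periodic commutative $\monunit$-algebra $R$ in $\synspectra$ with $\pi_{0}R \simeq S$. Under the identifications $\synspectra^{\heartsuit} \simeq \ComodE$ and $\pi_{0}\monunit \simeq E_{*}$, these are exactly the $\Ext$-groups in the statement.

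To finish I would transport the realization conclusion along the symmetric monoidal equivalence $\Mod_{\monunit}^{per}(\synspectra) \simeq \spectra_{E}$ between periodic $\monunit$-modules and $E$-local spectra. This upgrades to an equivalence on $\ekoperad{\infty}$-algebras, so periodic commutative $\monunit$-algebras correspond to $\ekoperad{\infty}$-ring spectra that are $E$-local, and since $\pi_{0}$ on the synthetic side is computed by $E_{*}$ on the spectral side, the condition $\pi_{0}R \simeq S$ becomes $E_{*}R \simeq S$ as comodule algebras. The only genuine obstacle beyond invoking the abstract theorem is ensuring that the identifications of \cite{pstrkagowski2018synthetic} upgrade to the symmetric monoidal and $\ekoperad{\infty}$-algebra level; it is exactly this promotion that allows the algebraic obstructions to control the topological realization problem.
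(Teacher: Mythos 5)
Your overall strategy matches the paper's exactly: the result is deduced by specializing the abstract obstruction theory to $\ccat = \synspectra$ and $A = \monunit$, checking that the hypotheses of Corollary~\ref{cor:if_c_is_complete_we_have_inductive_obstructions_to_existence_of_periodic_algebras} are satisfied, and then transporting the conclusion along the equivalences $\synspectra^{\heartsuit} \simeq \ComodE$ and $\Mod_{\monunit}^{per}(\synspectra) \simeq \spectra_{E}$.

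However, there is a genuine gap in your verification of the hypotheses: you assert that $\synspectra$ is \emph{complete} (i.e.\ Postnikov towers converge) as a consequence of \cite{pstrkagowski2018synthetic}, but that reference only establishes that $\synspectra$ is \emph{separated}. Completeness is not automatic, and in fact the paper explicitly flags it as a hypothesis that is missing in the original Goerss--Hopkins literature (see Remark~\ref{rem:goerss_hopkins_issue_of_completeness}). Establishing completeness for $\synspectra$ based on Morava $E$-theory is the content of Theorem~\ref{thm:synthetic_spectra_based_on_morava_e_theory_complete}, a non-formal result whose proof relies on the Hopkins--Ravenel finite spectrum and the resulting horizontal vanishing line in $\Ext_{E_*E}$. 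This is precisely why the statement is restricted to Morava $E$-theory rather than to an arbitrary Adams-type homology theory; your proposal does not use (and does not need to use, from its point of view) that $E$ is Morava $E$-theory, which is a signal that something essential has been skipped. You should replace the unsupported completeness claim by an appeal to, or a reproduction of the argument of, Theorem~\ref{thm:synthetic_spectra_based_on_morava_e_theory_complete}. A smaller imprecision: the generating objects guaranteeing that $\Mod_{\monunit}$ is generated by periodic modules are the synthetic analogues of $E$-localizations of finite $E_*$-projective spectra, not of ``Adams-type spectra.''
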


In the statement of \cref{thm:classical_goerss_hopkins_intro} we restrict to the case of Morava $E$-theory, since to prove a result at this level of generality one needs to know that the $\infty$-category of synthetic spectra based on E has convergent Postnikov towers. This additional assumption does not appear in the original papers \cite{moduli_spaces_of_commutative_ring_spectra}, \cite{moduli_problems_for_structured_ring_spectra}; we believe that this is a mistake, see \cref{rem:goerss_hopkins_issue_of_completeness}.

To fill the gap, we prove in \cref{thm:synthetic_spectra_based_on_morava_e_theory_complete} that synthetic spectra based on Morava $E$-theory have convergent Postnikov towers, which is related to the fact that the the $E$-local Adams-Novikov spectral sequence always converges \cite[5.3]{hovey1999invertible}. This is enough to yield the Goerss- Hopkins-Miller theorem, whose proof we also review in the interest of being self-contained. 

On the other hand, Postnikov convergence does not hold for an arbitrary Adams-type homology theory and in fact fails already for $H \mathbb{F}_{p}$. This is a consequence of the existence of an $H \mathbb{F}_{p}$-local but not $H \mathbb{F}_{p}$-nilpotent complete spectrum. We couldn't find an example of this phenomena in the literature and so we include one which we learned from Robert Burklund, see \cref{example:non_vanishing_adams_spectral_sequence}. 

In such cases, which are often of interest, variants of \cref{thm:classical_goerss_hopkins_intro} for a fixed comodule can be obtained by adding in an additional step which establishes the needed convergence by hand. This phenomena of additional conditions is familiar from the classical Toda obstruction theory to the existence of spectra realizing a given module $M$ over the Steenrod algebra, as explained in \cite{bhattacharya2016class}, \cite{margolis2011spectra}. To show that these convergence issues can be dealt with, we use Goerss-Hopkins theory to prove a homological version of a classical result of Toda. 

\begin{thm}[Toda, \ref{cor:homological_toda_obstruction_theory}]
\label{thm:toda_obstruction_theory_intro}
Let $H = H \mathbb{F}_{p}$ be the Eilenberg-MacLane spectrum and let $M$ be a $H_{*}H$-comodule which is bounded below. Then, there exists a sequence of inductively defined obstructions

\begin{center}
$\theta_{n} \in \Ext_{\Comod_{H_{*}H}}^{n+2, n}(M, M)$, where $n \geq 1$
\end{center}
which vanish if and only if there exists a spectrum $X$ such that $H_{*}X \simeq M$ as comodules.
\end{thm}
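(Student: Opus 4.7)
The plan is to apply the linear variant of the abstract obstruction theory, namely Corollary \ref{cor:if_c_is_complete_then_we_have_an_obstruction_theory_to_constructing_a_periodic_a_module}, to the $\infty$-category $\ccat = \synspectra$ of synthetic spectra based on $E = H\mathbb{F}_p$. In this specialization, the monoidal unit $\monunit$ is a shift algebra whose heart is $\synspectra^{\heartsuit} \simeq \Comod_{H_{*}H}$, and the $\infty$-category of periodic $\monunit$-modules is identified with $H$-local spectra. Since we are realizing a module $M$ rather than an algebra, the cotangent complex appearing in the abstract statement specializes to $M$ itself, so the obstruction groups reduce to $\Ext^{n+2,n}_{\Comod_{H_{*}H}}(M,M)$, matching the statement of the theorem.

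The main obstacle is that, in contrast to the Morava $E$-theory case treated by \textbf{Theorem \ref{thm:synthetic_spectra_based_on_morava_e_theory_complete}}, synthetic spectra based on $H\mathbb{F}_p$ are not known, and are suspected not, to have convergent Postnikov towers, so the completeness hypothesis of Corollary \ref{cor:if_c_is_complete_then_we_have_an_obstruction_theory_to_constructing_a_periodic_a_module} is not available off the shelf. To get around this, I would exploit the bounded-below hypothesis on $M$: the inductive construction of the obstruction theory produces a tower of potential $n$-stages $\{M_n\}$ in $\synspectra$ whose successive layers carry a uniform connectivity estimate coming from the bounded-belowness of $M$. The only way Postnikov convergence can fail in $\synspectra$ is through unbounded-below phenomena, so one can check by hand that this particular tower converges in $\synspectra$. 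The inverse limit $\varprojlim M_n$ then exists and is a synthetic spectrum realizing $M$ in the sense that its $0$-th synthetic homotopy comodule recovers $M$.

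The underlying periodic object of $\varprojlim M_n$ is then an $H$-local spectrum $X$ with $H_{*}X \simeq M$ as $H_{*}H$-comodules; since $M$ is bounded below, this $H$-local spectrum is itself an honest spectrum (a $p$-complete spectrum, in the bounded-below regime), yielding the desired realization. Conversely, any spectrum $X$ with $H_{*}X \simeq M$ determines via the synthetic construction a coherent system of potential stages, whose successive lifting problems are classified exactly by the $\theta_n$, so vanishing of the obstructions is also necessary. Thus the real content beyond invoking the abstract obstruction machinery is the convergence step, and the role played there by the bounded-below hypothesis is a direct analogue of its role in the classical Toda obstruction theory over the Steenrod algebra.
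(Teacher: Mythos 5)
Your high-level plan matches the paper's: apply the linear abstract obstruction theory (\textbf{Corollary \ref{cor:if_c_is_complete_then_we_have_an_obstruction_theory_to_constructing_a_periodic_a_module}}) to $\synspectra$ based on $H\mathbb{F}_p$, where periodic modules over the unit are $H$-local spectra and the heart is $\Comod_{H_*H}$, and then supply by hand the convergence of the Goerss-Hopkins tower that the completeness hypothesis would otherwise give. That is exactly how the paper organizes the argument (\textbf{Theorem \ref{thm:goerss_hopkins_tower_converges_for_bounded_below_hh_comodules}} followed by \textbf{Corollary \ref{cor:homological_toda_obstruction_theory}}).

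However, the convergence step is where the real work lies, and your description of it has a genuine gap. You assert that the layers of the tower of potential $n$-stages ``carry a uniform connectivity estimate coming from the bounded-belowness of $M$'' and that ``the only way Postnikov convergence can fail in $\synspectra$ is through unbounded-below phenomena.'' Neither claim is self-evident, and as stated neither is the right mechanism. The $n$-th layer of the Goerss-Hopkins tower is always $\Sigma^n M[-n]$, so it is $(n-1)$-connected for \emph{any} comodule $M$; that alone does not force convergence, which is precisely why the Morava $E$-theory case (\textbf{Theorem \ref{thm:synthetic_spectra_based_on_morava_e_theory_complete}}) requires the finite homological dimension coming from the Hopkins-Ravenel spectrum, not just connectivity. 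What the bounded-belowness of $M$ actually buys you is a vanishing line of slope $\tfrac{1}{q}$ in $\Ext_{H_*H}^{s,t}(H_*P, M)$ (\textbf{Theorem \ref{thm:adams_vanishing}}), \emph{provided} $P$ is finite with homology freely acted on by the Bockstein. One then has to choose a generating family of such $P$ and a compatible Bockstein-free skeletal filtration of $H$ (\textbf{Lemma \ref{lemma:bounded_subspectrum_also_freely_acted_on_by_the_bockstein}}), show that $[\Sigma^k \nu P, X_n]$ stabilizes with a bound depending only on $k$ and the top degree of $H_*P$, and then use the formula $(\pi_k X)_l \simeq \varinjlim_\alpha [\Sigma^k \nu(\Sigma^l D H_\alpha), X]$ to conclude degreewise stabilization of the homotopy comodules. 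Your proposal replaces all of this with an unsubstantiated heuristic; to make it a proof you need to cite and use the Adams vanishing line (which the paper itself flags as the key input, noting the argument works for any $E$ whose Adams $E_2$-term has vanishing lines of positive slope), and you need the Bockstein-free reduction to make the vanishing lines apply uniformly over the generating finite spectra.
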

To prove the convergence of Postnikov towers needed to establish \cref{thm:toda_obstruction_theory_intro}, we have to work a little harder than in the case of Morava $E$-theory. The argument we give uses in an essential way the vanishing lines in the $E_{2}$-term of the classical Adams spectral sequence, which are of positive slope, proving the result only in the bounded below case.

In fact, our arguments prove variants of Toda's obstruction theory hold whenever the needed vanishing lines are present. Notably, one also obtains an obstruction theory to realization of bounded below $BP_{*}BP$-comodules, see \cref{rem:toda_obstruction_theory_for_bp}. 

\subsection{Mapping spaces and spectral sequences}

While \cref{thm:obstructions_to_existence_of_a_periodic_commutative_algebra_in_intro} and \cref{thm:classical_goerss_hopkins_intro} allow one to prove that certain realizations exist, one can also say something about the mapping spaces between them. To state this result, it will be convenient for us to sketch our constructions. 

As before, let $\ccat$ be a graded symmetric monoidal prestable $\infty$-category and let $A$ be a commutative shift algebra in $\ccat$, which we recall means that $\pi_{*} A \simeq (\pi_{0}A)[\tau]$. Our goal is to describe the $\infty$-category of periodic $A$-algebras, that is, those that satisfy $\pi_{*} R \simeq \pi_{0} R \otimes _{\pi_{0} A} \pi_{*}A$; the main idea is to interpolate between $A$-algebras and $\pi_{0}A$-algebras by using the Postnikov tower of $A$. 

By an easy computation one sees that an $A$-module $R$ is periodic if and only if $\pi_{0} A \otimes _{A} R$ is discrete. This condition can be naturally generalized, so that we say that a commutative $A_{\leq n}$-algebra $R_{n}$ is a \emph{potential $n$-stage} if $\pi_{0} A \otimes _{\atrun{n}} R_{n}$ is discrete. This leads to the \emph{Goerss-Hopkins tower} of $\infty$-categories

\begin{center}
$\textnormal{CAlg}(\Mod_{A}^{per}) \rightarrow \ldots \rightarrow \textnormal{CAlg}(\mathcal{M}_{1}) \rightarrow \textnormal{CAlg}(\mathcal{M}_{0})$,
\end{center}
where $\textnormal{CAlg}(\mathcal{M}_{n})$ is the $\infty$-category of potential $n$-stages and the connecting functors are given by extension of scalars.

It is clear from the definitions that the base of this tower is the category of discrete $\pi_{0}A$-algebras; moreover, the composite $\textnormal{CAlg}(\Mod_{A}^{per}) \rightarrow \textnormal{CAlg}(\mathcal{M}_{0})$ can be identified with taking $\pi_{0}$. Thus, any discrete $\pi_{0}A$-algebra $S$ determines a point in the base, and to realize $S$ as the homotopy of a periodic algebra is the same as to lift the given point to $\textnormal{CAlg}(\Mod_{A}^{per})$. 

If $\ccat$ has convergent Postnikov towers, the Goerss-Hopkins tower is a limit diagram; to lift the chosen point to the top of the tower it is then enough to construct a compatible sequence of lifts along the functors $\textnormal{CAlg}(\mathcal{M}_{n+1}) \rightarrow \textnormal{CAlg}(\mathcal{M}_{n})$. Since the Postnikov tower of $A$ is a tower of square-zero extensions, one can find cohomological obstructions to the existence of such lifts using the theory of the cotangent complex. These are exactly the obstructions appearing in the statement of \cref{thm:obstructions_to_existence_of_a_periodic_commutative_algebra_in_intro}.

As an another consequence of the convergence of the Goerss-Hopkins tower, we have that if $R, S$ are periodic $A$-algebras, then $\map(R, S) \simeq \varprojlim \map(R_{n}, S_{n})$, where $R_{n} \simeq \atrun{n} \otimes _{A} R$ and $S_{n} \simeq \atrun{n} \otimes _{A} S$ are the corresponding potential $n$-stages. Thus, we can understand $\map(R, S)$ by first describing the spaces $\map(R_{n}, S_{n})$, which can be done inductively. 

It is not hard to see that when $n = 0$, $\map(R_{n}, S_{n})$ can be identified with the set of maps $\pi_{0} R \rightarrow \pi_{0} S$ of $\pi_{0} A$-algebras, considered as a discrete space. It is then natural to ask for the difference between the mapping spaces for $n$ and $(n-1)$. This is provided by the following result.

\begin{thm}[\ref{prop:relation_between_mapping_spaces_between_potential_stages_in_the_multiplicative_case}]
\label{thm:homotopy_groups_of_space_of_lifts_of_a_map_intro}
Let $R_{n}, S_{n}$ be potential $n$-stages and let $\phi: R_{n-1} \rightarrow S_{n-1}$ be a map between the corresponding potential $(n-1)$-stages. Then, there exists an obstruction

\begin{center}
$\theta_{\phi} \in \Ext_{\Mod_{\pi_{0}R_{n}}(\ccat^{\heartsuit})}^{n+1, n}(\mathbb{L}^{\ekoperad{\infty}}_{\pi_{0} R_{n} / \pi_{0}A}, \pi_{0}S_{n})$
\end{center}
which vanishes if and only if $\phi$ lifts to a map $R_{n} \rightarrow S_{n}$. If such a lift exists, then the homotopy groups of the space $F_{\phi}$ of lifts are given by the formula

\begin{center}
$\pi_{k} F_{\phi} \simeq \Ext_{\Mod_{\pi_{0}R_{n}}(\ccat^{\heartsuit})}^{n-k, n}(\mathbb{L}^{\ekoperad{\infty}}_{\pi_{0} R_{n} / \pi_{0}A}, \pi_{0}S_{n})$.
\end{center}
\end{thm}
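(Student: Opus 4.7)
The plan is to realize the space of lifts $F_\phi$ as a loop space of a suitable module mapping space, then read off the $\Ext$ groups via its long exact sequence of homotopy groups.

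First, I would observe that the Postnikov tower of the shift algebra $A$ is a tower of square-zero extensions. The truncation map $A_{\leq n} \to A_{\leq n-1}$ is a square-zero extension in $\CAlg(\ccat)$, and by the shift condition $\pi_* A \simeq (\pi_0 A)[\tau]$ the corresponding ideal is a shift of $\pi_0 A$ concentrated in internal degree $n$. Base-changing along $A_{\leq n} \to R_n$ and $A_{\leq n} \to S_n$ and using the potential $n$-stage conditions $\pi_0 A \otimes_{A_{\leq n}} R_n \simeq \pi_0 R_n$ (and similarly for $S$), I deduce that $R_n \to R_{n-1}$ and $S_n \to S_{n-1}$ are square-zero extensions in $\CAlg(\Mod_{A_{\leq n}}(\ccat))$ whose ideals are shifts of the discrete modules $\pi_0 R_n$ and $\pi_0 S_n$ respectively.

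Next, I apply the standard deformation theory of square-zero extensions for $\ekoperad{\infty}$-algebras. The extension $R_n$ is classified by a derivation $d_R : \mathbb{L}^{\ekoperad{\infty}}_{R_{n-1}/A_{\leq n}} \to M_R$ in $\Mod_{R_{n-1}}(\ccat)$, and realizes $R_n$ as a pullback of $R_{n-1}$ along $d_R$ and the zero section. By the universal property of this pullback, lifts of $\phi$ to maps $R_n \to S_n$ correspond to nullhomotopies of the difference $\phi_* d_R - d_S \circ \phi$ of derivations valued in $\phi^* M_S$. This yields a fiber sequence
\begin{equation*}
F_\phi \longrightarrow \ast \longrightarrow \Map_{\Mod_{R_{n-1}}(\ccat)}(\mathbb{L}^{\ekoperad{\infty}}_{R_{n-1}/A_{\leq n}},\, \phi^* M_S),
\end{equation*}
in which the point of the target is exactly the putative obstruction class; it lies in the component of the zero map if and only if a lift exists, in which case $F_\phi$ is equivalent to the loop space at zero and its homotopy groups are shifts of those of the target.

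Finally, I need to identify the right-hand mapping space with one in $\Mod_{\pi_0 R_n}(\ccat^{\heartsuit})$ involving $\mathbb{L}^{\ekoperad{\infty}}_{\pi_0 R_n / \pi_0 A}$. Since $\phi^* M_S$ is concentrated in the heart and is naturally a $\pi_0 R_n$-module, I can base change $\mathbb{L}^{\ekoperad{\infty}}_{R_{n-1}/A_{\leq n}}$ along $R_{n-1} \to \pi_0 R_n$, using the base change formula for the $\ekoperad{\infty}$-cotangent complex to produce $\mathbb{L}^{\ekoperad{\infty}}_{\pi_0 R_n / \pi_0 A}$ with the appropriate internal shift by $n$. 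Tracking internal degrees then gives the stated formula $\pi_k F_\phi \simeq \Ext^{n-k,n}_{\Mod_{\pi_0 R_n}(\ccat^{\heartsuit})}(\mathbb{L}^{\ekoperad{\infty}}_{\pi_0 R_n / \pi_0 A}, \pi_0 S_n)$, with $\theta_\phi$ landing in $\Ext^{n+1,n}$. The main obstacle is this last identification: passing from the "geometric" cotangent complex $\mathbb{L}^{\ekoperad{\infty}}_{R_{n-1}/A_{\leq n}}$ to the purely algebraic $\mathbb{L}^{\ekoperad{\infty}}_{\pi_0 R_n / \pi_0 A}$ requires a careful base change argument that exploits the potential $n$-stage condition to ensure all deformation-theoretic data is detected in the heart.
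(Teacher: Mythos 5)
Your overall strategy matches the paper's: realize the space of lifts via the square-zero-extension structure of the Postnikov truncations of potential $n$-stages, then read off the homotopy groups from a path/mapping space in the cotangent-complex formalism. But there are two places where you've taken a different (and slightly more fragile) route than the paper, and one where the reasoning as stated has a gap.

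First, the justification that $R_n \to R_{n-1}$ and $S_n \to S_{n-1}$ are square-zero extensions. You derive this by base-changing the square-zero extension $A_{\leq n} \to A_{\leq n-1}$ along $A_{\leq n} \to R_n$. This step needs justification: the pullback square of $\ekoperad{\infty}$-algebras defining a square-zero extension is not automatically preserved by relative tensor product, and this is precisely the point of Proposition \ref{prop:pullback_square_of_module_infinity_categories}, which treats the module (not algebra) version and uses prestability to pass between pushouts and pullbacks. The paper avoids this entirely: since $R_n$ and $S_n$ are potential $n$-stages, \textbf{Lemma \ref{lemma:homotopy_of_a_potential_n_stage_free}} identifies $R_{n-1}$ with the Postnikov truncation $(R_n)_{\leq n-1}$, and then \cite[7.4.1.26]{higher_algebra} applies directly to the truncation map $S_n \to (S_n)_{\leq n-1}$ to say it is a square-zero extension by $\Sigma^n\pi_0 S[-n]$. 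No base-change of algebra-level square-zero extensions is needed.

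Second, and more substantively, the paper's argument is asymmetric: only the square-zero structure of the \emph{target} $S_n \to S_{n-1}$ is used. The fibre $F_\phi$ is identified (after reducing to $\ekoperad{k}$-$A$-algebra mapping spaces and using that $uR \simeq R_{\leq n-1}$, $uS \simeq S_{\leq n-1}$) as the fibre of $\map_A(R_n, S_n) \to \map_A(R_n, S_{\leq n-1})$ over $\phi\circ p$, and \cite[7.4.1.8]{higher_algebra} expresses this directly as the path space $P_{0,\widetilde\phi}\,\map^{\ekoperad{k}}_{R_n}(\mathbb{L}^{\ekoperad{k}}_{R_n/A_{\leq n}}, \Sigma^{n+1}\pi_0S[-n])$, with $\widetilde\phi$ classifying the pulled-back extension $S_n\times_{S_{n-1}} R_n \to R_n$. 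Note the cotangent complex here is that of $R_n$, not of $R_{n-1}$. Your ``difference of derivations'' formulation $\phi_* d_R - d_S\circ\phi$ is the symmetric analogue and works with $\mathbb{L}_{R_{n-1}/A_{\leq n}}$, but it requires fixing the map of ideals $M_R \to \phi^*M_S$ and checking that the space of maps of $\ekoperad{k}$-algebras $R_n \to S_n$ over $\phi$ is actually the same as the space of maps of square-zero extensions of $R_{n-1}$ with this fixed ideal map --- a not-entirely-trivial identification that \cite[7.4.1.8]{higher_algebra} lets you sidestep. Both $\mathbb{L}_{R_n/A_{\leq n}}$ and $\mathbb{L}_{R_{n-1}/A_{\leq n}}$ do reduce to $\mathbb{L}^{\ekoperad{\infty}}_{\pi_0 R/\pi_0 A}$ after base change along $R_n \to \pi_0 R_n$ (since $\pi_0 R_n \simeq \pi_0 R_{n-1}$), so your final answer is right; but in the body of the argument you are not computing with the same object as the paper, and your route has more to check.

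The last step --- base change to $\pi_0 R_n$ using that the ideal is concentrated in the heart, then the base-change formula \cite[7.3.3.7]{higher_algebra} for the cotangent complex, then \textbf{Theorem \ref{thm:modules_over_zero_truncation_same_as_the_derived_category}} to identify the mapping space with an $\Ext$-group --- matches the paper. You correctly identify this as the crux, and the potential $n$-stage condition ($\pi_0 A \otimes_{A_{\leq n}} R_n \simeq \pi_0 R_n$ being discrete) is exactly what makes it work.
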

Note that for simplicity, we gave only the description of the homotopy groups of the space of lifts, but the more precise version on \cref{thm:homotopy_groups_of_space_of_lifts_of_a_map_intro} given in the main body of the text in fact identifies $F_{\phi}$ with a space of paths in a certain Andr\'{e}-Quillen cohomology space. As in the case of \cref{thm:obstructions_to_existence_of_a_periodic_commutative_algebra_in_intro}, we also prove variants for $\mathbf{E}_{k}$-algebras.

Note that if $R, S$ are periodic $A$-algebras, then the expression $\map(R, S) \simeq \varprojlim \map(R_{n}, S_{n})$ of the mapping space between them as a limit of a tower leads in a natural way to a spectral sequence; in this context, \cref{thm:homotopy_groups_of_space_of_lifts_of_a_map_intro} can be interpreted as describing the relevant $E_{1}$-term. Restricting to the case of realizations of commutative ring spectra, we recover the following classical result of Goerss and Hopkins. 

\begin{thm}[\ref{cor:mapping_space_spectral_sequence_ring_spectra}]
\label{thm:intro_spectral_sequence_for_mapping_space_between_einfty_ring_spectra}
Let $E$ be Morava $E$-theory and let $\phi:R \to S$ be a homomorphism of $E$-local $\ekoperad{\infty}$-ring spectra. Then, there is a first quadrant spectral sequence converging to 
\begin{center}
$\pi_{t-s}(\map_{\textnormal{CAlg}(\spectra_{E})}(R, S); \phi)$,
\end{center}
the homotopy groups of the space of $\ekoperad{\infty}$-ring maps from $R$ to $S$ based at $\phi$, with the $E_{1}$-term 

\begin{center}
$E_1^{s,t} = \Ext_{\Mod_{E_*R}(\ComodE)}^{2t-s,s}(\mathbb{L}^{\ekoperad{\infty}}_{E_*R/E_*}, E_*S)$
\end{center}
for $s > 0$ and $E_1^{0,0} = \map_{\CAlg(\ComodE)}(E_*R, E_*S)$.
\end{thm}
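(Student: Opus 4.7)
The plan is to apply the general theory to $\ccat = \synspectra$, the $\infty$-category of synthetic spectra on Morava $E$-theory, with shift algebra $A = \monunit$ the monoidal unit. Under the identifications $\synspectra^\heartsuit \simeq \ComodE$ and $\Mod_\monunit^{per}(\synspectra) \simeq \spectra_E$, the given $E$-local $\ekoperad{\infty}$-rings $R, S$ correspond to periodic $\monunit$-algebras with potential $n$-stages $R_n = \atrun{n} \otimes_\monunit R$ and $S_n = \atrun{n} \otimes_\monunit S$, and the map $\phi$ restricts to a compatible family of basepoints $\phi_n\colon R_n \to S_n$, with $\pi_0 R_n \simeq E_*R$, $\pi_0 S_n \simeq E_*S$, and $\pi_0 \monunit \simeq E_*$.

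Next, I would invoke Theorem \ref{thm:synthetic_spectra_based_on_morava_e_theory_complete}, which ensures that the Postnikov tower of $\monunit$ converges, and hence that the Goerss-Hopkins tower is a limit diagram. Applying $\map$ then yields
$$\map_{\CAlg(\spectra_E)}(R, S) \simeq \varprojlim_n \map_{\CAlg(\mathcal{M}_n)}(R_n, S_n).$$
The standard Bousfield-Kan spectral sequence of this tower of pointed spaces, based at the compatible family $\{\phi_n\}$, converges to $\pi_{t-s}(\map_{\CAlg(\spectra_E)}(R, S); \phi)$ with $E_1$-page computed from the homotopy groups of the fibers
$$F_n = \mathrm{fib}_{\phi_{n-1}}\bigl(\map_{\CAlg(\mathcal{M}_n)}(R_n, S_n) \to \map_{\CAlg(\mathcal{M}_{n-1})}(R_{n-1}, S_{n-1})\bigr).$$

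Theorem \ref{thm:homotopy_groups_of_space_of_lifts_of_a_map_intro} then identifies these as $\pi_k F_n \simeq \Ext^{n-k,n}_{\Mod_{E_*R}(\ComodE)}(\mathbb{L}^{\ekoperad{\infty}}_{E_*R/E_*}, E_*S)$, and reading off the Bousfield-Kan $E_1$-page with the Postnikov stage $n$ corresponding to the filtration index $s$ produces the claimed formula for $s > 0$. The exceptional term $E_1^{0,0} \simeq \map_{\CAlg(\ComodE)}(E_*R, E_*S)$ appears because the bottom of the Goerss-Hopkins tower, $\CAlg(\mathcal{M}_0)$, is equivalent to the discrete $1$-category of commutative comodule algebras. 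The hard part will be the careful bookkeeping matching the weight bigrading on the synthetic heart (arising from the shift $\tau$) with the Adams-style internal grading on $\ComodE$ — this is what produces the cohomological index $2t - s$ and reflects the even concentration of $E_*$ — together with verifying strong convergence of the resulting first-quadrant spectral sequence from Postnikov convergence via a standard $\varprojlim^1$ argument on the tower of homotopy groups of the mapping spaces.
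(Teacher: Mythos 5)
Your outline follows the paper's route exactly: the result is deduced from the general mapping-space spectral sequence (Corollary \ref{cor:mapping_space_spectral_sequence}) by specializing to $\ccat = \synspectra$ for Morava $E$-theory, using completeness of $\synspectra$ (Theorem \ref{thm:synthetic_spectra_based_on_morava_e_theory_complete}) to get $\map_{\CAlg}(R,S) \simeq \varprojlim_n \map(R_n, S_n)$, the Bousfield--Kan spectral sequence of that tower, and the fiber identification of Proposition \ref{prop:relation_between_mapping_spaces_between_potential_stages_in_the_multiplicative_case} / Remark \ref{rem:fibre_of_map_between_mapping_spaces_of_potential_stages}. Two cautionary notes. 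First, the convergence you promise to "verify" is stronger than what the paper establishes: Postnikov convergence of $\synspectra$ gives only \emph{conditional} convergence of the Bousfield--Kan spectral sequence, and as Remark \ref{rem:conditional_convergence} points out, upgrading to complete convergence requires a separate $\varprojlim^1$ vanishing hypothesis (e.g.\ finite generation of $E_r$ terms) that is not automatic here; so the "standard $\varprojlim^1$ argument" does not come for free, and the body-text Corollary \ref{cor:mapping_space_spectral_sequence_ring_spectra} is correspondingly phrased in terms of conditional convergence. Second, be careful with the bookkeeping you defer: the fiber gives $E_1^{s,t} = \pi_{t-s}\map(\LL, \Sigma^s \pi_0 S[-s]) \cong \Ext^{2s-t,s}$, not $\Ext^{2t-s,s}$ as stated in the introduction version you were handed --- the body statement in Corollary \ref{cor:mapping_space_spectral_sequence_ring_spectra} has the correct index $2s-t$, so make sure you reproduce the computation rather than pattern-match to the (typo'd) target formula.
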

The spectral sequence of \cref{thm:intro_spectral_sequence_for_mapping_space_between_einfty_ring_spectra} is a the spectral sequence of a tower of spaces, so that it is not necessary to start with a point $\phi \in \map(R, S)$; rather, such a point can be constructed inductively using the obstructions of \cref{thm:homotopy_groups_of_space_of_lifts_of_a_map_intro}. As before, we state the result for Morava $E$-theory, but it is valid for any Adams-type homology theory after imposing restrictions that guarantee the convergence of the associated Goerss-Hopkins tower. 

\subsection{Related work}
It is important to stress that there is real power in the fact that Goerss-Hopkins theory yields not only information about the existence of a realization, but gives a description of the whole $\infty$-category of such realizations, describing it as a limit of a tower. These towers can be used to prove a variety of results; for example, they were used by Lurie and Hopkins to study the Brauer groups of Morava $E$-theory \cite{lurie_hopkins_brauer_group}. 

To give a different example, Goerss-Hopkins towers were also used by the first author to prove that at large primes, there is an equivalence $h \spectra_{E} \simeq h \dcat(E_{*}E)$ of homotopy categories between $E$-local spectra and differential $E_{*}E$-comodules \cite{pstragowski_chromatic_homotopy_algebraic}. The proof of that result proceeds by comparing topological and algebraic towers and so crucially depends on the additional generality provided in this note.

We should mention that in the original papers \cite{moduli_spaces_of_commutative_ring_spectra, moduli_problems_for_structured_ring_spectra}, Goerss and Hopkins allow one to work with different operads on the topological and algebraic side, spelling out an explicit compatibility condition. This allows, for example, to introduce power operations as part of the realization data. For ease of exposition, we do not pursue this idea here.

A different approach to Goerss-Hopkins obstruction theory is present in the work of Mazel-Gee \cite{mazel2018goerss}. Compared to ours, the approach of Mazel-Gee is closer in spirit to the original works of Goerss and Hopkins, generalizing the theory to a setting of an arbitrary presentable $\infty$-category through the use of model $\infty$-categories. 

\subsection{Notation and conventions}

By an \emph{$\infty$-category} we always mean a quasicategory. We freely use the theory of $\infty$-categories as developed by Joyal and Lurie, the standard reference is \cite{lurie_higher_topos_theory}. All constructions should be understood in the homotopy-invariant sense, in particular limits and colimits. 

If $f: c \rightarrow d$ is a map in a presentable $\infty$-category, then we say that $f$ is an \emph{$n$-equivalence} if it induces an equivalence $\tau_{\leq n} c \rightarrow \tau_{\leq n} d$ between $n$-truncations. If $\ccat$ is prestable, this is equivalent to saying that $f$ induces isomorphisms $\pi_{k} c \rightarrow \pi_{k} d$ between homotopy groups for $k \leq n$. 

If $\ccat$ is a prestable $\infty$-category and $a, b \in \ccat$, we write $\Ext^{s}_{\ccat}(a, b) := \pi_{0} \map(a, \Sigma^{s} b)$. If $\acat$ is an abelian category and $a, b \in \acat$, then we also write $\Ext^{s}_{\acat}(a, b) := \Ext^{s}_{\dcat(\acat)}(a, b)$, which reduces to the $\Ext$-groups of $\acat$ in the usual sense.

If $A$ is an $\ekoperad{k}$-algebra object in some symmetric monoidal $\infty$-category, where $1 \le k \le \infty$, then there is an $\infty$-category of $\ekoperad{k}$-$A$-modules $\Mod_A^{\ekoperad{k}}$ \cite{higher_algebra}[3.3.3]. If $k = 1$, this is equivalent to the $\infty$-category of $A$-bimodules, and when $k = \infty$, then this is the $\infty$-category of left $A$-modules.  We write
\[	\map^{\ekoperad{k}}_A(M,N)	\]
for the space of $\ekoperad{k}$-$A$-module maps $M \to N$. If $f:A \to B$ is a map of $\ekoperad{k}$-algebra objects in some symmetric monoidal $\infty$-category, then it induces an adjunction
\[	f^*:\Mod_A^{\ekoperad{k}} \leftrightarrows \Mod_B^{\ekoperad{k}} :f_*	\]
where $f^*$ is the base change functor and $f_*$ is the forgetful functor.

\subsection{Acknowledgements}

Considering the subject matter, and the fact that he supervised both of the authors, it comes as no surprise that we are heavily indebted to Paul Goerss, whom we would like to thank for his support and guidance. We would like to thank Robert Burklund for sharing his example of a non-nilpotent complete spectrum.

The first author would also like to thank Kyoto University, where the interesting parts of this note were first written down. The second author was inspired to work on this project by the Talbot workshop on obstruction theory, and would particularly like to thank Maria Basterra and Sarah Whitehouse, the mentors of the workshop, as well as Dominic Culver, for helpful conversations there.

\section{Periodic modules in prestable \texorpdfstring{$\infty$}{infinity}-categories} 

In this section we review the theory of Grothendieck prestable $\infty$-categories, due to Lurie. Then, we introduce the notion of a periodic module over a shift algebra, and we derive their basic properties. 

Informally, the kind of $\infty$-categories we consider come equipped with a periodicity operator $\tau$. Then a shift algebra $A$ is an associative algebra whose homotopy groups form a polynomial algebra $\pi_{*} A \simeq (\pi_{0} A)[\tau]$, and an $A$-module $M$ is periodic if $\pi_{*} M \simeq \pi_{*} A \otimes _{\pi_{0} A} \pi_{0} M$. In this note, we phrase Goerss-Hopkins theory as giving obstructions to constructing a periodic algebra with specified homotopy groups.

As explained in the introduction, the notions of a shift algebra and periodicity module are very general, and not necessarily of interest in their own right. Rather, working at this level of generality is useful since one can construct various ``designer'' prestable $\infty$-categories in which periodic modules can be identified with a desired type of topological objects, see \cref{example:periodic_modules_in_synthetic_spectra_same_as_elocal_spectra}, \cref{example:periodic_modules_in_synthetic_emodules_same_as_kn_local_emodules}. 

\begin{defin}\label{defin:grothendieck_prestable}
Recall that an $\infty$-category $\ccat$ is \emph{Grothendieck prestable} if there exists a presentable, stable $\infty$-category $\dcat$ equipped with a $t$-structure ($\dcat_{\geq 0}, \dcat_{\leq 0})$ compatible with filtered colimits and an equivalence $\ccat \simeq \dcat_{\geq 0}$ \cite[C.1.4.2]{lurie_spectral_algebraic_geometry}. This in particular implies that $\ccat$ is presentable and additive.
\end{defin}

\begin{example}
If $R$ is a connective ring spectrum, then the $\infty$-category of $\Mod_{R}(\spectra_{\geq 0})$ of connective $R$-modules is Grothendieck prestable. In fact, this example is universal in the sense that any other Grothendieck prestable $\infty$-category can be obtained by a left exact localization of one of this form \cite[C.2.4.1]{lurie_spectral_algebraic_geometry}. 
\end{example}

\begin{rem}
\label{rem:grothendieck_prestable_embeds_into_stable}
If $\ccat$ is Grothendieck prestable, there is a canonical choice of a stable $\dcat$ equipped with a $t$-structure such that $\ccat \simeq \dcat_{\geq 0}$. Namely, one can take $\dcat$ to be $Sp(\ccat)$, the $\infty$-category of spectrum objects in $\ccat$ \cite[C.1.2.10, C.3.1.5]{lurie_spectral_algebraic_geometry}. 

The $\infty$-category $Sp(\ccat)$ is called the \emph{stabilization} of $\ccat$, and it is the universal stable, presentable $\infty$-category equipped with a cocontinuous functor out of $\ccat$. One can show that the functor $\ccat \rightarrow Sp(\ccat)$ is fully faithful and its essential image is the connective part of a canonical $t$-structure compatible with filtered colimits. Thus, Grothendieck prestable $\infty$-categories are a convenient way to encode a stable $\infty$-category \emph{together} with a choice of a $t$-structure. 
\end{rem}
If $\ccat$ is Grothendieck prestable, then by $\ccat^{\heartsuit}$ we denote the \emph{heart}; that is, the subcategory of discrete objects. One can show that the category $\ccat^{\heartsuit}$ is Grothendieck abelian \cite[1.3.5.23]{higher_algebra}. Note that the embedding of \cref{rem:grothendieck_prestable_embeds_into_stable} induces an equivalence $\ccat^{\heartsuit} \simeq Sp(\ccat)^{\heartsuit}$, where by the latter we mean the heart of the canonical $t$-structure on $\spectra(\ccat)$, justifying the terminology. 

\begin{defin}
Let $\ccat$ be Grothendieck prestable and let $X \in \ccat$. Then, by the \emph{homotopy groups} $\pi_{i} X \in \ccat^{\heartsuit}$ we mean the homotopy groups of the image of $X$ in $Sp(\ccat)$ with respect to its canonical $t$-structure.
\end{defin}
One can give an explicit formula for the homotopy groups. Namely, for $i \geq 0$ we have $\pi_{i} X \simeq (\Omega^{i} X)_{\leq 0}$ as objects of $\ccat^{\heartsuit}$, where by $(-)_{\leq 0}$ we denote the $0$-truncation, while the negative homotopy groups vanish. In particular, $\pi_{0} X \simeq X_{\leq 0}$ is just the discretization of $X$. 

\begin{defin}
\label{defin:separated_and_complete_prestable_infty_cats}
We say a Grothendieck prestable $\infty$-category $\ccat$ is \emph{separated} if the homotopy groups detect equivalences; in other words, if a map $X \to Y$ is an equivalence in $\ccat$ if and only if $\pi_nX \to \pi_nY$ is an isomorphism in $\ccat^\heartsuit$ for every $n$.

We say $\ccat$ is \emph{complete} if the Postnikov towers in $\ccat$ converge, meaning that for every $X \in \ccat$, the natural map $X \to \varprojlim_n X_{\le n}$ is an equivalence.
\end{defin}
Note that a complete Grothendieck prestable $\infty$-category is separated, but the converse is not true. 

\begin{example}
If $\acat$ is a Grothendieck abelian category, then the connective derived $\infty$-category $D(\acat)_{\geq 0}$ is separated Grothendieck prestable and we have $D(\acat)_{\geq 0}^{\heartsuit} \simeq \acat$. In fact, one can show that it is a universal such $\infty$-category \cite[C.5.4.9]{lurie_spectral_algebraic_geometry}. 

On the other hand, the $\infty$-category $D(\acat)_{\geq 0}$ is not necessarily complete, the counterexample due to Neeman being the case of the category $\acat$ of representations of the additive group in positive characteristic \cite{neeman2011non}.
\end{example}

\begin{example}
Any stable, presentable $\infty$-category is Grothendieck prestable, but it is never separated in the sense of \cref{defin:separated_and_complete_prestable_infty_cats} unless it is zero. 
\end{example}

\begin{defin}\label{defin:grading}
A \emph{grading} on a symmetric monoidal Grothendieck prestable $\infty$-category $\ccat$ is a choice of a distinguished autoequivalence which we denote by $c \mapsto c[1]$, together with a natural equivalence $c[1] \otimes d \simeq (c \otimes d)[1]$. 
\end{defin}

As $c \mapsto c[1]$ is an autoequivalence of $\ccat$, iterating it defines autoequivalences $c \mapsto c[k]$, for $k \in \mathbb{Z}$. We then have $c[l] \otimes d[k] \simeq (c \otimes d)[k+l]$ for $c, d \in \ccat$ and $k, l \in \mathbb{Z}$. Note that we implicitly assume that the given symmetric monoidal structure is presentable, that is, that it commutes with colimits in each variable. We will denote the unit of $\ccat$ by $\monunit$. 

\begin{rem}
\label{rem:grading_on_heart}
Since a grading on $\ccat$ is an auto-equivalence, it preserves the subcategory $\ccat^{\heartsuit}$ of discrete objects, and thus makes $\ccat^{\heartsuit}$ into a graded abelian category (in the sense analogous to \cref{defin:grading}). This grading is compatible with the one on $\ccat$, in the sense that
\[	\pi_i A[k] \simeq (\pi_i A)[k].	\]
Moreover, there are bigraded Ext groups between objects in $\ccat^{\heartsuit}$, defined by
\[	\Ext^{s,t}_{\ccat^{\heartsuit}}(A,B) = \mathrm{map}_{D(\ccat^{\heartsuit})}(A, \Sigma^s B[-t]).	\]
\end{rem}

Similarly, a symmetric monoidal structure on $\ccat$ induces one on $\ccat^{\heartsuit}$, the unique one for which the truncation functor $\pi_{0}: \ccat \rightarrow \ccat^{\heartsuit}$ is symmetric monoidal. In particular, the unit of $\ccat^{\heartsuit}$ is given by $\pi_{0} \monunit \simeq \monunit_{\leq 0}$. 

\begin{example}
Any symmetric monoidal Grothendieck prestable $\infty$-category can be trivially graded by making the chosen autoequivalence the identity.
\end{example}

\begin{example}
If $\acat$ is the abelian category of graded modules over a graded ring, then $D(\acat)_{\geq 0}$ is canonically a graded symmetric monoidal, separated Grothendieck prestable $\infty$-category. Here, the symmetric monoidal structure is induced from the tensor product, and the grading is induced by the internal shift of modules.
\end{example} 

\begin{example}
\label{example:synspectra_is_a_graded_sym_mon_separated_grothendieck_prestable_category}
The $\infty$-category $\synspectra$ of hypercomplete, connective synthetic spectra, as defined in \cite{pstrkagowski2018synthetic}, is a graded symmetric monoidal, separated Grothendieck prestable $\infty$-category. As explained in the introduction, this $\infty$-category is the context for the Goerss-Hopkins obstruction theory to realizing an algebra in comodules as homology of a ring spectrum. This $\infty$-category will be discussed in more detail in section 6.\end{example}

We will now describe a theory of shift algebras and periodic modules over them. Throughout the rest of the section, we assume that $\ccat$ is a fixed graded symmetric monoidal, separated Grothendieck prestable $\infty$-category. 

\begin{defin}
\label{defin:shift_algebra}
A \emph{shift algebra} $A$ is an associative algebra $A \in \textnormal{Alg}(\ccat)$ equipped with a map $\tau: \Sigma A[-1] \rightarrow A$ of right $A$-modules which induces an isomorphism $\pi_{*} A \simeq \pi_{0} A[\tau] $, where the latter is the graded algebra in $\ccat^{\heartsuit}$ given by $(\pi_{0} A[\tau])_{k} \simeq (\pi_{0} A)[-k]$. 
\end{defin}
Our notion of a shift algebra is rather weak, as we only require that the homotopy groups of $A$ resemble a polynomial algebra over $\pi_{0}A$. Note that $A$ itself need not be a $\pi_{0}A$-algebra.
\begin{example}
\label{example:unit_of_synspectra_a_periodicity_algebra}
In the $\infty$-category $\synspectra$ of synthetic spectra of \cref{example:synspectra_is_a_graded_sym_mon_separated_grothendieck_prestable_category}, the monoidal unit $\monunit$ is given by the synthetic analogue $\nu S^{0}_{E}$ of the $E$-local sphere, and is in fact a shift algebra \cite[4.61, 4.21]{pstrkagowski2018synthetic}. This is an example of a shift algebra which is commutative. 
\end{example}

\begin{example}
\label{example:over_a_discrete_ring_a_unique_shift_algebra}
If $k \in \textnormal{CAlg}(\ccat^{\heartsuit})$ is a discrete commutative algebra, then there is a unique up to equivalence shift $k$-algebra $A$ with $\pi_{0} A \simeq k$. To see that one exists, notice that $F_{k}(\Sigma k[-1])$, the free associative $k$-algebra on $\Sigma k[-1]$, is a shift $k$-algebra. 

To see that this is the only one, assume that $A$ is any other shift $k$-algebra with $\pi_{0} A \simeq k$. Then, the $k$-linear composite $\Sigma k[-1] \rightarrow \Sigma A[-1] \rightarrow A$ induces a map $F_{k}(\Sigma k[-1]) \rightarrow A$ which is necessarily an equivalence, as we see by inspecting the homotopy groups. 
\end{example}

If $M$ is a left $A$-module, then we have a morphism $\tau \otimes _{A} M: \Sigma M [-1] \rightarrow M$, which by abuse of notation we will also denote by $\tau$. Note that this is in general only a morphism of the underlying objects, but not necessarily of $A$-modules, unless $A$ is commutative. 

We will now introduce a notion of a periodic module over a shift algebra. Roughly, a periodic $A$-module is one on which $\tau$ acts as close to an equivalence as possible. 

\begin{prop}
\label{prop:different_characterizations_of_periodic_objects}
Let $A$ be a shift algebra, and let $M$ be a left $A$-module. Then the following conditions are equivalent:

\begin{enumerate}
\item $\pi_{*} M \simeq \pi_{*} A \otimes _{\pi_{0} A} \pi_{0} M$.
\item $\pi_{0} A \otimes _{A} M$ is a discrete object of $\ccat$.
\item $\tau: \Sigma M[-1] \rightarrow M$ is a $1$-connective cover. 
\end{enumerate}
\end{prop}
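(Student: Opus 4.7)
The plan is to show that all three conditions are equivalent ways of saying that the cofiber of $\tau: \Sigma M[-1] \to M$ is discrete.

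First I would analyze the cofiber of the structure map $\tau: \Sigma A[-1] \to A$ itself. The long exact sequence $\cdots \to \pi_k\Sigma A[-1] \xrightarrow{\tau_*} \pi_k A \to \pi_k\mathrm{cofib}(\tau) \to \pi_{k-1}\Sigma A[-1] \to \cdots$ combined with $\pi_k\Sigma A[-1] \simeq (\pi_{k-1} A)[-1] \simeq (\pi_0 A)[-k]$ and $\pi_k A \simeq (\pi_0 A)[-k]$, together with the shift-algebra hypothesis that $\tau_*$ realizes the polynomial-algebra isomorphism in positive internal degrees, forces $\mathrm{cofib}(\tau)$ to be discrete with $\pi_0\mathrm{cofib}(\tau) \simeq \pi_0 A$. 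In other words, the cofiber sequence $\Sigma A[-1] \xrightarrow{\tau} A \to \pi_0 A$ is the $0$-truncation of $A$.

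Since $\tau$ is a map of right $A$-modules, tensoring with $M$ over $A$ is exact and gives a cofiber sequence $\Sigma M[-1] \xrightarrow{\tau} M \to \pi_0 A \otimes_A M$, so $\pi_0 A \otimes_A M \simeq \mathrm{cofib}(\tau:\Sigma M[-1] \to M)$. Thus (2) is exactly the claim that this cofiber is discrete. For (2) $\Leftrightarrow$ (3), I would run the long exact sequence once more: $\mathrm{cofib}(\tau)$ has vanishing $\pi_k$ for $k \geq 1$ if and only if $\tau_*:(\pi_{k-1}M)[-1] \to \pi_k M$ is an isomorphism for all $k \geq 1$; since $\Sigma M[-1]$ is automatically $1$-connective, this is exactly the condition that $\tau$ be a $1$-connective cover.

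For (3) $\Rightarrow$ (1), iterating the isomorphism $\pi_k M \simeq (\pi_{k-1}M)[-1]$ from (3) yields $\pi_k M \simeq (\pi_0 M)[-k]$, matching $(\pi_* A \otimes_{\pi_0 A} \pi_0 M)_k \simeq (\pi_0 A)[-k] \otimes_{\pi_0 A} \pi_0 M \simeq (\pi_0 M)[-k]$, and the canonical map realizes this identification. Conversely, for (1) $\Rightarrow$ (3) one reads (1) as asserting that the canonical map $\pi_* A \otimes_{\pi_0 A} \pi_0 M \to \pi_* M$ of $\pi_* A$-modules is an isomorphism; then $\tau_*$ acts on $\pi_* M$ through its action on $\pi_* A$ by $\tau \otimes 1$, which is an isomorphism in each positive degree by the shift-algebra axiom.

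The work is almost entirely in the first step, namely identifying $\mathrm{cofib}(\tau:\Sigma A[-1] \to A) \simeq \pi_0 A$; everything afterward is a formal consequence of this identification together with base change along $A \to \pi_0 A$ and the long exact sequence of homotopy groups. The main subtlety to flag is that the isomorphism in (1) must be interpreted as the canonical map being an isomorphism of $\pi_* A$-modules, since a purely abstract isomorphism of graded objects of $\ccat^\heartsuit$ would not be enough to recover the behavior of $\tau_*$ needed for (3).
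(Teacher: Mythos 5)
Your proposal is correct and follows essentially the same argument as the paper: identify the cofiber of $\tau\colon \Sigma A[-1] \to A$ as $\pi_0 A$ via the long exact sequence, base-change along $A \to \pi_0 A$ to get the cofibre sequence $\Sigma M[-1] \to M \to \pi_0 A \otimes_A M$, read off (2)$\Leftrightarrow$(3), and then pass between (3) and (1) by iterating the isomorphisms $\pi_k M \simeq (\pi_{k-1}M)[-1]$ on homotopy groups. The caveat you flag at the end, that (1) must be read as the canonical map being an isomorphism of $\pi_*A$-modules, is a fair and useful observation, and the paper's phrasing $\pi_k M \simeq \tau^k \pi_0 M[-k]$ is implicitly making the same point.
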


\begin{proof}
By looking at the homotopy groups, we see that $\tau$ induces a cofibre sequence 
\begin{center}
$\Sigma A[-1] \rightarrow A \rightarrow \pi_{0}A$
\end{center}
of right $A$-modules. By tensoring it with $M$ we obtain a cofibre sequence
\begin{equation}\label{eq:periodic_cofibre_sequence}
\Sigma M[-1] \rightarrow M \rightarrow \pi_{0}A \otimes _{A} M
\end{equation}
which immediately implies that $(2)$ and $(3)$ are equivalent. 

If $(2)$ and $(3)$ are satisfied, then the long exact sequence of homotopy groups of \eqref{eq:periodic_cofibre_sequence} gives
\begin{align*}
	\pi_0M &\simeq \pi_0A \otimes_A M, \\
	\pi_kM &\simeq \tau \pi_{k-1}M[-1] = \pi_1A \otimes \pi_{k-1}M \quad \text{for }k>0.
\end{align*}
This gives $(1)$. Conversely, if $(1)$ is satisfied, then $\pi_kM \simeq \tau^k\pi_0M[-k]$, which immediately implies $(3)$.
\end{proof}

\begin{defin}
\label{defin:periodic_objects_in_a_grothendieck_prestable_infty_category}
We say that a left $A$-module $M \in \Mod_{A}(\ccat)$ is \emph{periodic} if it satisfies the equivalent conditions of \cref{prop:different_characterizations_of_periodic_objects}. We denote the full subcategory of $\Mod_A(\ccat)$ spanned by the periodic modules by $\Mod_{A}^{per}(\ccat)$. 
\end{defin}

For specific choices of $\ccat$, $\Mod_{A}^{per}(\ccat)$ can be often identified with various $\infty$-categories of interest. Let us give a couple of examples.

\begin{example}
\label{example:periodic_modules_in_synthetic_spectra_same_as_elocal_spectra}
If $\ccat$ is the $\infty$-category of synthetic spectra considered in \cref{example:unit_of_synspectra_a_periodicity_algebra}, the $\infty$-category $\Mod_{\monunit}(\synspectra)$ of periodic modules can be shown to be equivalent to the $\infty$-category $\spectra_{E}$ of $E$-local spectra \cite[4.36, 5.6]{pstrkagowski2018synthetic}.
\end{example}

\begin{example}
\label{example:periodic_modules_in_synthetic_emodules_same_as_kn_local_emodules}
In their paper \cite{lurie_hopkins_brauer_group}, Hopkins and Lurie introduce a graded symmetric monoidal, separated Grothendieck prestable $\infty$-category $\synspectra_{E}$ of \emph{synthetic $E$-modules}, where $E$ is the Morava $E$-theory ring spectrum. They show that the monoidal unit is a shift algebra, and that periodic objects can be identified with the $\infty$-category of $K(n)$-local $E$-modules in spectra.
\end{example}

\begin{example}
If $R$ is a discrete commutative ring, then there is a unique shift algebra $A$ in $\dcat(R)_{\geq 0}$ with $\pi_{0} A \simeq R$, see \cref{example:over_a_discrete_ring_a_unique_shift_algebra}. The $\infty$-category of periodic $A$-modules can be identified with the \emph{1-periodic derived $\infty$-category} of $R$ (cf. \cite{stai}, \cite{barnes2011monoidality}) -- that is, the derived $\infty$-category of $R$-modules $M$ equipped with an endomorphism $d:M \to M$ such that $d^2 = 0$.
\end{example}

One can notice that in all of our examples, the $\infty$-category $\Mod_{A}^{per}(\ccat)$ of periodic $A$-modules is a stable $\infty$-category. This is not an accident: under very minor assumptions, one can show that a periodic $A$-module uniquely encodes a $\tau$-local one; that is, an $A$-module $M$ such that $\tau: \Sigma M[-1] \rightarrow M$ is an equivalence. One sees easily that no non-zero module in $\ccat$ could satisfy this condition, so that to make sense of this we need to allow more general modules in $Sp(\ccat)$.

For simplicity, we will assume that $A$ is an $\mathbf{E}_{2}$-algebra, so that the tensor product endows the $\infty$-category of $A$-modules with a monoidal structure and thus $\tau: \Sigma M[-1] \rightarrow M$ is canonically a map of $A$-modules for any $M \in \Mod_{A}(\ccat)$. One can almost certainly get away with weaker assumptions: see for example the treatment by Lurie of localizations of modules over ring spectra \cite[7.2.4.17]{higher_algebra}.

\begin{defin}
\label{defin:tau_local_module}
Let $A$ be a shift $\mathbf{E}_{2}$-algebra. We say an $A$-module $M \in \Mod_{A}(\spectra(\ccat))$ is \emph{$\tau$-local} if $\tau: \Sigma M[-1] \rightarrow M$ is an equivalence. We denote the $\infty$-category of $\tau$-local $A$-modules by $\Mod^{\tau^{-1}}_{A}(\spectra(\ccat))$.
\end{defin}

The inclusion $\Mod_{A}^{\tau^{-1}}(\spectra(\ccat)) \hookrightarrow \Mod_{A}(\spectra(\ccat))$ is easily seen to have a left adjoint $L_{\tau}$ given by the explicit formula 

\begin{center}
$L_{\tau}M := \varinjlim \ M \rightarrow \Sigma^{-1}M[1] \rightarrow \Sigma^{-2} M[2] \rightarrow \ldots $.
\end{center}
Thus, $\Mod_{A}^{\tau^{-1}}(\spectra(\ccat))$ is a localization of $\Mod_{A}(\spectra(\ccat))$. This localization is smashing, since $\tau$-local modules are clearly closed under colimits, and compatible with the monoidal structure, since the tensor product $M \otimes _{A} N$ is $\tau$-local if one of $M$, $N$ is. 

\begin{prop}
\label{prop:periodic_a_modules_equivalent_to_tau_local_ones}
The connective cover functor $(-)_{\geq 0}$ and the localization $L_{\tau}$ restrict to inverse equivalences $\Mod_{A}^{per}(\ccat) \simeq \Mod_{A}^{\tau^{-1}}(\spectra(\ccat))$ between the $\infty$-categories of periodic $A$-modules in $\ccat$ and $\tau$-local $A$-modules in $\spectra(\ccat)$. 
\end{prop}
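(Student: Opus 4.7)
The plan is to show that $L_\tau$ and $(-)_{\geq 0}$ form an adjoint equivalence between the two $\infty$-categories. First I would set up the ambient adjunction: the inclusion $\Mod_{A}(\ccat) \hookrightarrow \Mod_{A}(\spectra(\ccat))$ of connective $A$-modules has right adjoint $(-)_{\geq 0}$, and composing with the smashing localization $L_\tau$ from $\Mod_{A}(\spectra(\ccat))$ to $\Mod_{A}^{\tau^{-1}}(\spectra(\ccat))$ yields the desired adjunction
\[ L_\tau: \Mod_{A}(\ccat) \leftrightarrows \Mod_{A}^{\tau^{-1}}(\spectra(\ccat)): (-)_{\geq 0}. \]
To see that this restricts to the subcategories of interest, note that $L_\tau M$ is $\tau$-local by construction, while if $N$ is $\tau$-local then iterating the equivalence $\tau: \Sigma N[-1] \to N$ on homotopy groups gives $\pi_{k}N \simeq (\pi_{0}N)[-k]$ for all $k \in \mathbb{Z}$, so that $N_{\geq 0}$ satisfies the periodicity condition of \textbf{Proposition \ref{prop:different_characterizations_of_periodic_objects}}.

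Next I would verify the unit is an equivalence on periodic modules. Using that the $t$-structure is compatible with filtered colimits, for periodic $M$ one has
\[ (L_\tau M)_{\geq 0} \simeq \varinjlim_{n} (\Sigma^{-n}M[n])_{\geq 0}, \]
and by periodicity of $M$, $\pi_{k}(\Sigma^{-n}M[n])_{\geq 0} \simeq (\pi_{k+n}M)[n] \simeq (\pi_{0}M)[-k]$ for $k \geq 0$, with transition maps the isomorphisms induced by $\tau$-multiplication. Hence the colimit agrees with $M$ on every homotopy group in nonnegative degree, and separatedness of $\ccat$ forces the unit $\eta_M: M \to (L_\tau M)_{\geq 0}$ to be an equivalence.

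For the counit, I would invoke the triangle identity: for $\tau$-local $N$, the composite
\[ N_{\geq 0} \xrightarrow{\eta_{N_{\geq 0}}} (L_\tau N_{\geq 0})_{\geq 0} \xrightarrow{(\varepsilon_N)_{\geq 0}} N_{\geq 0} \]
is the identity. Since $N_{\geq 0}$ is periodic, $\eta_{N_{\geq 0}}$ is an equivalence by the previous step, so $(\varepsilon_N)_{\geq 0}$ is also an equivalence. Let $F = \mathrm{fib}(\varepsilon_N)$; as the fiber of a map between $\tau$-local objects, $F$ is itself $\tau$-local, and $F_{\geq 0} = 0$ by what we have just shown. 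Then $\tau$-locality forces $\pi_k F \simeq (\pi_0 F)[-k] = 0$ for all $k \in \mathbb{Z}$.

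The hard part will be to conclude $F = 0$ from $\pi_{*}F = 0$. For this I would appeal to the non-degeneracy of the $t$-structure on $\spectra(\ccat)$, which follows from the separatedness of $\ccat$: concretely, the iterated $\tau$-equivalence $F \simeq \Sigma^{-n}F[n]$ places $F$ in $\spectra(\ccat)_{\leq -n-1}$ for every $n \geq 0$, so that $F \in \bigcap_{n} \spectra(\ccat)_{\leq -n}$; but any such object has vanishing homotopy groups, and hence belongs to the connective part, where it must be zero by separatedness. This shows $\varepsilon_N$ is an equivalence, completing the proof that $(L_\tau, (-)_{\geq 0})$ restricts to an adjoint equivalence.
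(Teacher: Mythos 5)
Your proposal takes essentially the same approach as the paper. Both check that the unit $M \to (L_\tau M)_{\geq 0}$ is an equivalence for periodic $M$ using compatibility of the $t$-structure with filtered colimits, and both then deal with the counit; the paper does this by observing that $(-)_{\geq 0}$ is conservative on $\tau$-local modules, while you directly show the fiber $F$ of the counit vanishes. These are equivalent formulations (unit being an equivalence plus right adjoint being conservative already gives an adjoint equivalence), so the overall strategy is the same.

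There is, however, a gap in your final step. You want to conclude $F = 0$ from the fact that $F \in \bigcap_n \spectra(\ccat)_{\leq -n}$, and you argue that such an object ``has vanishing homotopy groups, and hence belongs to the connective part, where it must be zero by separatedness.'' But an object with vanishing homotopy groups need not lie in $\spectra(\ccat)_{\geq 0}$ --- that inference is circular, being precisely what you are trying to establish. Separatedness of $\ccat$ gives left-separation, namely $\bigcap_n \spectra(\ccat)_{\geq n} = 0$, but what your argument actually needs is right-separation, $\bigcap_n \spectra(\ccat)_{\leq -n} = 0$, which does not follow from separatedness alone. The missing input is right-completeness of the $t$-structure on $\spectra(\ccat)$, which holds because $\ccat$ is Grothendieck prestable (the $t$-structure is compatible with filtered colimits). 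With that in hand, write $F \simeq \varinjlim_n \tau_{\geq -n} F$; each $\tau_{\geq -n}F$, after shifting into $\ccat$, has vanishing homotopy groups and therefore vanishes by separatedness, giving $F = 0$. So both separatedness and right-completeness are used, and invoking only separatedness leaves a hole.
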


\begin{proof}
If $M$ is periodic, then the map $\tau: M \rightarrow \Sigma^{-1} M[1]$ is a $0$-connective cover. Since the $t$-structure on $\spectra(\ccat)$ is compatible with filtered colimits, we deduce that the same is true for the inclusion $M \rightarrow LM \simeq \varinjlim \Sigma^{-k} M[k]$ and so the canonical map $M \rightarrow (LM)_{\geq 0}$ is an equivalence. 

We deduce that it is enough to show that the functor $(-)_{\geq 0}: \Mod^{\tau^{-1}}_{A}(\spectra(\ccat)) \rightarrow \Mod_{A}(\ccat)$ is conservative. This is clear, since a map $M \rightarrow N$ is an equivalence if and only if $\pi_{*} M \rightarrow \pi_{*} N$ is an isomorphism. Since the latter are modules over $(\pi_{0}A)[\tau^{\pm 1}]$ when $M, N$ are $\tau$-local, this happens if and only if $\pi_{0} M \rightarrow \pi_{0}N$ is an isomorphism, proving the claim.
\end{proof}

\begin{cor}
\label{cor:infinity_cat_of_periodic_modules_is_stable}
Let $A$ be a shift $\ekoperad{2}$-algebra. Then, the $\infty$-category $\Mod_{A}^{per}(\ccat)$ of periodic $A$-modules is stable. 
\end{cor}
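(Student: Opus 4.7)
The plan is to deduce stability directly from the equivalence established in \textbf{Proposition \ref{prop:periodic_a_modules_equivalent_to_tau_local_ones}}. Since $\Mod_{A}^{per}(\ccat) \simeq \Mod_{A}^{\tau^{-1}}(\spectra(\ccat))$ as $\infty$-categories, it suffices to show that the right-hand side is stable.

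The ambient $\infty$-category $\spectra(\ccat)$ is stable and presentable, and since $A$ is an $\ekoperad{2}$-algebra, $\Mod_{A}(\spectra(\ccat))$ inherits a presentable stable structure (for instance by \cite[7.1.1.4]{higher_algebra}). Next, I would invoke the explicit description of the left adjoint given in the paragraph preceding \textbf{Proposition \ref{prop:periodic_a_modules_equivalent_to_tau_local_ones}}: the localization $L_{\tau}$ is given by the filtered colimit
\[ L_{\tau} M \simeq \varinjlim \bigl( M \to \Sigma^{-1} M[1] \to \Sigma^{-2} M[2] \to \ldots \bigr). \]
Since filtered colimits in a stable presentable $\infty$-category commute with finite limits, $L_{\tau}$ is left exact. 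A reflective localization of a stable $\infty$-category whose localization functor preserves finite limits is again stable (see \cite[1.1.4.1]{higher_algebra}), so $\Mod_{A}^{\tau^{-1}}(\spectra(\ccat))$ is stable.

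Transporting the stable structure back along the equivalence of \textbf{Proposition \ref{prop:periodic_a_modules_equivalent_to_tau_local_ones}} then yields the claim for $\Mod_{A}^{per}(\ccat)$. There is no genuine obstacle here; the only mild subtlety is checking that the shift and cofiber operations in $\Mod_{A}^{per}(\ccat)$, computed via the equivalence, stay inside the subcategory of periodic modules, but this is automatic since these operations are computed by first applying $L_{\tau}$ (which preserves $\tau$-locality) and then the connective cover. Thus the corollary follows in a few lines from the already-established equivalence.
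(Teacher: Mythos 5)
Your proof is correct and takes essentially the same route as the paper: both reduce to \textbf{Proposition \ref{prop:periodic_a_modules_equivalent_to_tau_local_ones}} and then observe that $\Mod_{A}^{\tau^{-1}}(\spectra(\ccat))$ is stable. The paper verifies this by noting it is a thick subcategory of $\Mod_{A}(\spectra(\ccat))$, while you verify it by observing $L_{\tau}$ is a left exact (indeed exact) localization, using the filtered-colimit formula; these are equivalent ways of saying the same thing. One small remark: your closing ``mild subtlety'' is not actually needed---once you have an equivalence of $\infty$-categories, stability (being a property, not extra structure) transports automatically, so there is nothing to check about shifts and cofibers landing back in the subcategory.
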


\begin{proof}
This is immediate from \cref{prop:periodic_a_modules_equivalent_to_tau_local_ones}, since the $\infty$-category $\Mod_{A}^{\tau^{-1}}(\spectra(\ccat))$ is clearly stable, in fact a thick subcategory of $\Mod_{A}(\spectra(\ccat))$.
\end{proof}

\section{Modules over truncations}

In this section, we discuss the relations between the $\infty$-categories of modules over Postnikov truncations of an associative algebra $A$. We make no claim to originality, as we closely follow the account of Hopkins and Lurie \cite[7.3]{lurie_hopkins_brauer_group}, filling in the details and making sure that the arguments work in the generality in which we need them. 

Throughout the section, we fix a symmetric monoidal, separated Grothendieck prestable $\infty$-category $\ccat$ and we let $A \in \textnormal{Alg}(\ccat)$ be an associative algebra. To fix ideas, by a module we will mean a left module in $\ccat$, and we use the shorthand notation $\Mod_{A} := \Mod_{A}(\ccat)$. 

As $\ccat$ is the connective part of a stable $\infty$-category and a compatible symmetric monoidal structure, the truncation functors $\tau_{\le n}$ are compatible with the symmetric monoidal structure \cite[2.2.1.10]{higher_algebra}, and so each $\tau_{\le n}:\ccat \to \ccat$ is lax symmetric monoidal. So we can associate to $A$ a tower of associative algebras

\begin{equation}\label{eq:A_truncation_tower}
A \rightarrow \ldots \rightarrow \atrun{n} \rightarrow \ldots \rightarrow \atrun{1} \rightarrow \atrun{0}.
\end{equation}
given by the Postnikov truncations. This is in fact necessarily a tower of square-zero extensions \cite[7.4.1.28]{higher_algebra}, a fact which will become important later. This tower of algebras induces a tower of Grothendieck prestable $\infty$-categories 

\begin{equation}\label{eq:Mod_A_truncation_tower}
\Mod_{A} \rightarrow \ldots \rightarrow \Mod_{\atrun{n}} \rightarrow \ldots \rightarrow \Mod_{\atrun{1}} \rightarrow \Mod _{\atrun{0}},
\end{equation}
where the indicated functors are given by base change along the maps in \eqref{eq:A_truncation_tower}.

In general, we would like to use the tower \eqref{eq:Mod_A_truncation_tower} to inductively describe $\Mod_{A}$. To make this work, one has to understand three things, namely
\begin{enumerate}
\item the $\infty$-category $\Mod_{\atrun{0}}$, 
\item the difference between $\Mod_{\atrun{n}}$ and $\Mod_{\atrun{n-1}}$, and 
\item the relation between $\Mod_{A}$ and $\varprojlim \Mod_{A_{\leq n}}$. 
\end{enumerate}

In this section, we will give answers to $(2)$ and $(3)$ in the generality of an arbitrary associative algebra $A$, and an answer to $(1)$ that applies to the kind of shift algebras we have in mind. We will proceed in an order reverse to the one given above. 

We start with the relation between $\Mod_{A}$ and $\Mod_{\atrun{n}}$, which, assuming that the ambient Grothendieck prestable $\infty$-category $\ccat$ is complete, is as one would expect.

\begin{lemma}
\label{lemma:any_k_truncated_module_canonically_a_module_over_k_truncation}
The restriction of scalars functor $\Mod_{\atrun{k}} \rightarrow \Mod_{A}$ induces an equivalence between the $\infty$-categories of $k$-truncated objects. In other words, any $k$-truncated $A$-module is uniquely an $\atrun{k}$-module. 
\end{lemma}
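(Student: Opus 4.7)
The plan is to exhibit an adjunction between $\Mod_A$ and $\Mod_{A_{\leq k}}$ that restricts to an equivalence on $k$-truncated subcategories. The restriction functor $R: \Mod_{A_{\leq k}} \to \Mod_A$ admits a left adjoint $F = A_{\leq k} \otimes_A(-)$ given by extension of scalars, and I will verify that both the unit and counit of this adjunction become equivalences on $k$-truncated objects.

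The computational tool is the cofiber sequence $\tau_{\geq k+1}A \to A \to A_{\leq k}$ of right $A$-modules in $\spectra(\ccat)$; tensoring over $A$ with a connective left $A$-module $M$ yields the cofiber sequence
\[ \tau_{\geq k+1}A \otimes_A M \to M \xrightarrow{\eta_M} A_{\leq k}\otimes_A M, \]
whose rightmost map is the unit. Since $\tau_{\geq k+1}A$ is $(k+1)$-connective and the relative tensor product preserves connectivities (via the bar resolution, using that the tensor product in $\ccat$ preserves connectivity in each variable), the fiber is $(k+1)$-connective whenever $M$ is connective.

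For $M \in \Mod_A$ that is $k$-truncated, the long exact sequence of homotopy groups then shows directly that $A_{\leq k}\otimes_A M$ remains $k$-truncated and that $\eta_M$ is an equivalence. This shows that $F$ restricts to a functor $\Mod_A^{\leq k} \to \Mod_{A_{\leq k}}^{\leq k}$ and yields essential surjectivity of the restricted $R$: every $k$-truncated $A$-module $M$ is equivalent to $R(A_{\leq k}\otimes_A M)$. For full faithfulness, let $M$ be a $k$-truncated $A_{\leq k}$-module; by the triangle identity, the counit $\epsilon_M: A_{\leq k}\otimes_A M|_A \to M$ admits the unit $\eta_{M|_A}$ as a section after restricting to $A$-modules, yielding a splitting
\[ A_{\leq k}\otimes_A M|_A \simeq M|_A \oplus \Sigma\bigl(\tau_{\geq k+1}A \otimes_A M|_A\bigr) \]
in the stable category $\Mod_A(\spectra(\ccat))$. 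The second summand is $(k+2)$-connective, but the left-hand side is $k$-truncated by the previous paragraph applied to $M|_A$; since $\ccat$ is separated, the second summand must therefore vanish. Hence $\epsilon_M|_A$ is an equivalence, and so is $\epsilon_M$ since $R$ is conservative.

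The main subtlety is the bookkeeping around which ambient $\infty$-category one works in, as the cofiber sequences and connective covers used above naturally live in the stable category $\spectra(\ccat)$ rather than in the prestable $\ccat$ itself. Once the compatibility of the relevant $t$-structures, module structures, and tensor products with the embedding $\Mod_A(\ccat) \hookrightarrow \Mod_A(\spectra(\ccat))$ is in place, the arguments above yield the claimed equivalence.
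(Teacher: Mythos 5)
Your approach, of exhibiting the equivalence via the adjunction between extension and restriction of scalars, is a natural one, but it contains a genuine mathematical error in the first substantive step. You claim that for a $k$-truncated $A$-module $M$, the long exact sequence associated to
\[ \tau_{\geq k+1}A \otimes_A M \to M \to A_{\leq k}\otimes_A M \]
shows that $A_{\leq k}\otimes_A M$ is again $k$-truncated and that $\eta_M$ is an equivalence. What the long exact sequence actually gives, from the $(k+1)$-connectivity of the fiber, is that $\pi_j(\eta_M)$ is an isomorphism for $j \leq k$ and that $\pi_{k+1}(A_{\leq k}\otimes_A M) = 0$; but for $j \geq k+2$ one finds $\pi_j(A_{\leq k}\otimes_A M) \cong \pi_{j-1}(\tau_{\geq k+1}A\otimes_A M)$, which does not vanish in general. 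A concrete counterexample: take $\ccat = \spectra_{\geq 0}$ with trivial grading, $A = \mathbb{S}$, $k = 0$, and $M = H\mathbb{Z}$ viewed as a discrete $\mathbb{S}$-module; then $A_{\leq 0}\otimes_A M \simeq H\mathbb{Z}\otimes_{\mathbb{S}} H\mathbb{Z}$, which has nontrivial homotopy in arbitrarily high degrees. The same error vitiates your full-faithfulness argument, which relies on the left-hand side of the splitting $A_{\leq k}\otimes_A M|_A \simeq M|_A \oplus \Sigma(\tau_{\geq k+1}A \otimes_A M|_A)$ being $k$-truncated; since it is not, you cannot conclude that the second summand vanishes (and in general it does not).

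The adjunction you want is not $F \dashv R$ but rather $\tau_{\leq k} \circ F \dashv R|_{\tau_{\leq k}}$: the functor that takes a $k$-truncated $A$-module $M$ to $\bigl(A_{\leq k}\otimes_A M\bigr)_{\leq k}$, which by your own connectivity estimate is equivalent to $M$ with some $A_{\leq k}$-module structure. If you run the unit/counit argument with this corrected left adjoint, the conclusion does go through, since the triangle identity together with conservativity of $R$ and the $k$-equivalence property of $\eta$ suffices. The paper sidesteps the whole issue: it observes that $(-)_{\leq k}: \ccat \to \ccat$ is lax symmetric monoidal, so it directly induces a functor $\Mod_A \to \Mod_{A_{\leq k}}$ sending $M \mapsto M_{\leq k}$ with an $A_{\leq k}$-module structure built from the lax structure map $A_{\leq k}\otimes M_{\leq k} \to (A \otimes M)_{\leq k} \to M_{\leq k}$, and then checks this functor is inverse to restriction on the $k$-truncated subcategories. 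That is both shorter and avoids the relative tensor product entirely; your approach, once repaired, is longer but perhaps more transparent if one prefers thinking in terms of the extension/restriction adjunction.
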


\begin{proof}
Since the truncation endofunctor $(-)_{\leq k}: \ccat \rightarrow \ccat$ is lax symmetric monoidal, it induces a functor $(-)_{\leq k}: \Mod_{A} \rightarrow \Mod_{\atrun{k}}$. This functor is readily verified to be inverse to the restriction of scalars when restricted to the subcategories of $k$-truncated objects. 
\end{proof}

\begin{prop}
\label{prop:if_ambient_grothendieck_infty_cat_prestable_module_category_a_limit_of_modules_over_truncation}
Assume that $\ccat$ is complete; in other words, that Postnikov towers in $\ccat$ converge. Then, the extension of scalars functors $\Mod_{A} \rightarrow \Mod_{\atrun{k}}$ induce an equivalence $\Mod_{A} \simeq \varprojlim \Mod_{\atrun{n}}$. 
\end{prop}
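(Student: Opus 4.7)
The comparison functor $\Phi: \Mod_A \to \varprojlim_n \Mod_{\atrun{n}}$ sends $M$ to the coherent system $(\atrun{n} \otimes_A M)_n$, with compatibility data coming from iterated base change. My plan is to show $\Phi$ is an equivalence by isolating a single connectivity estimate, then using it first for fully faithfulness (via completeness) and afterwards for essential surjectivity. The key lemma is that for every $M \in \Mod_A$, the unit $M \to \atrun{n} \otimes_A M$ is an $n$-equivalence. To see this, I would apply $-\otimes_A M$ to the cofiber sequence of right $A$-modules
\[
\mathrm{fib}(A \to \atrun{n}) \to A \to \atrun{n}
\]
to obtain a cofiber sequence $\mathrm{fib}(A \to \atrun{n}) \otimes_A M \to M \to \atrun{n} \otimes_A M$. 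Since $\mathrm{fib}(A \to \atrun{n})$ is $(n+1)$-connective and the monoidal structure on $\ccat$ is compatible with the $t$-structure, the leftmost term is $(n+1)$-connective: write $\otimes_A$ as the geometric realization of the two-sided bar construction, observe each term is $(n+1)$-connective by the tensor estimate, and invoke that filtered colimits in a Grothendieck prestable $\infty$-category preserve connectivity.

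Combined with the hypothesized completeness of $\ccat$, which forces Postnikov towers in $\Mod_A$ to converge, the key lemma implies $M \simeq \varprojlim_n \atrun{n} \otimes_A M$: the tower on the right is pro-isomorphic to the Postnikov tower of $M$. Fully faithfulness of $\Phi$ then follows formally from the base-change / restriction adjunction:
\[
\Map_{\varprojlim \Mod_{\atrun{n}}}(\Phi M, \Phi N) \simeq \varprojlim_n \Map_{\Mod_A}(M, \atrun{n} \otimes_A N) \simeq \Map_{\Mod_A}\bigl(M, \varprojlim_n \atrun{n} \otimes_A N\bigr) \simeq \Map_{\Mod_A}(M, N).
\]

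For essential surjectivity, given $(M_n)_n \in \varprojlim \Mod_{\atrun{n}}$ with structural equivalences $\atrun{n-1} \otimes_{\atrun{n}} M_n \simeq M_{n-1}$, I would set $M := \varprojlim_n M_n$ in $\Mod_A$, viewing each $M_n$ as an $A$-module via restriction along $A \to \atrun{n}$. The same connectivity argument, applied over $\atrun{n}$ rather than $A$, shows the transition map $M_n \to M_{n-1}$ is an $(n-1)$-equivalence, and hence the projection $M \to M_k$ is a $k$-equivalence for each $k$. It then remains to identify $\atrun{k} \otimes_A M \simeq M_k$ compatibly in $\Mod_{\atrun{k}}$. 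I expect this last identification to be the main obstacle: since base change along $A \to \atrun{k}$ is a left adjoint, it does not commute on the nose with the inverse limit defining $M$, so the comparison must be made by leveraging the connectivity estimates to reduce to Postnikov truncations, where Lemma~\ref{lemma:any_k_truncated_module_canonically_a_module_over_k_truncation} supplies the needed identification between $k$-truncated $A$-modules and $k$-truncated $\atrun{k}$-modules.
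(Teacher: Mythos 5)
Your proposal takes a genuinely different route from the paper. The paper's argument is a short, purely formal limit-swap: completeness gives $\Mod_B \simeq \varprojlim_k \tau_{\leq k}\Mod_B$ for each algebra $B$, one commutes $\varprojlim_n$ past $\varprojlim_k$, and then Lemma~\ref{lemma:any_k_truncated_module_canonically_a_module_over_k_truncation} identifies $\varprojlim_n \tau_{\leq k}\Mod_{\atrun{n}}$ with $\tau_{\leq k}\Mod_A$; the connectivity of the unit map is never invoked. By contrast, you go through the comparison functor $\Phi$ directly, proving full faithfulness and essential surjectivity, with your key lemma (the unit $M \to \atrun{n}\otimes_A M$ is an $n$-equivalence) doing the heavy lifting. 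That key lemma is correct for the reason you give, with the small caveat that the colimit computing the bar resolution is a geometric realization rather than a filtered colimit, so one needs the standard fact that geometric realizations in a prestable $\infty$-category preserve connectivity (e.g.\ via the skeletal filtration). Your full-faithfulness chain is clean and correct.

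The genuine gap is exactly where you flag it: in essential surjectivity you need $\atrun{k}\otimes_A M \simeq M_k$, but the resolution you sketch — ``reduce to Postnikov truncations, where Lemma~\ref{lemma:any_k_truncated_module_canonically_a_module_over_k_truncation} supplies the needed identification'' — does not actually land. Lemma~\ref{lemma:any_k_truncated_module_canonically_a_module_over_k_truncation} concerns $k$-truncated modules, but the $M_n$ coming from a general point of $\varprojlim_n\Mod_{\atrun{n}}$ need not be $n$-truncated (for instance $M_n = \atrun{n}\oplus\Sigma^{100}\atrun{n}$ is a perfectly good compatible system), so the fact that $\atrun{k}\otimes_A M \to M_k$ is a $k$-equivalence alone does not identify the two sides. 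A correct way to finish: for each $n \geq k$, factor the comparison as
\[
\atrun{k}\otimes_A M \;\simeq\; \atrun{k}\otimes_{\atrun{n}}\bigl(\atrun{n}\otimes_A M\bigr) \;\longrightarrow\; \atrun{k}\otimes_{\atrun{n}} M_n \;\simeq\; M_k,
\]
observe that $\atrun{n}\otimes_A M \to M_n$ is an $n$-equivalence (as both $M$ and $M_n$ map to $M_n$ by $n$-equivalences), and that base change along $\atrun{n} \to \atrun{k}$ preserves $n$-equivalences by the same connectivity estimate. Thus $\atrun{k}\otimes_A M \to M_k$ is an $n$-equivalence for every $n \geq k$, hence an equivalence, and naturality in $k$ gives $\Phi(M) \simeq (M_n)_n$ in $\varprojlim_n \Mod_{\atrun{n}}$. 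With this fix your argument is complete; it is longer and more hands-on than the paper's, but it has the virtue of isolating the quantitative connectivity statement explicitly, which the paper's purely categorical proof keeps implicit.
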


\begin{proof}
Notice that since $\ccat$ is complete, the same is true for the $\infty$-category $\Mod_{B}$ of modules over any associative algebra $B \in \textnormal{Alg}(\ccat)$, so that in any such case we have an equivalence $\Mod_{B} \simeq \varprojlim \tau_{\leq k} \Mod_{B}$, where by $\tau_{\leq k}\Mod_{B}$ we denote the subcategory of $k$-truncated objects. 

Using the above statement for $B = \atrun{n}$ we see that we only need to prove that 

\begin{center}
$\Mod_{A} \simeq \varprojlim_{n} \Mod_{\atrun{n}} \simeq \varprojlim_{n} \varprojlim_{k} \tau_{\leq k} \Mod_{\atrun{n}} \simeq \varprojlim_{k} \varprojlim_{n} \tau_{\leq k} \Mod_{\atrun{n}}$, 
\end{center}
where we've used that limits can always be commuted with other limits. By \cref{lemma:any_k_truncated_module_canonically_a_module_over_k_truncation}, the maps $\tau_{\leq k} \Mod_{A} \rightarrow \tau_{\leq k} \Mod_{\atrun{n}}$ are equivalences for $n \geq k$, thus $\varprojlim _{n} \tau_{\leq k} \Mod_{\atrun{n}} \simeq \tau_{\leq k} \Mod_{A}$ for any $k$. We deduce that the needed statement is equivalent to $\Mod_{A} \simeq \varprojlim_{k} \tau_{\leq k} \Mod_{A}$ which is clear from completeness of $\Mod_{A}$. 
\end{proof}

We will now describe the relation between $\Mod_{\atrun{n}}$ and $\Mod_{\atrun{n-1}}$, which can be derived as a formal consequence of the fact that the Postnikov tower of $A$ is a tower of square-zero extensions of a particular form. Our account follows Hopkins and Lurie very closely, especially their introduction of what we call the $\Theta$-functor. 

\begin{lemma}
\label{lemma:extension_along_0_equivalence_is_conservative}
Let $f: A \rightarrow B$ be a map of algebras which is a $0$-equivalence, meaning that $f$ induces an isomorphism $\pi_{0} A \stackrel{\sim}{\to} \pi_{0} B$. Then, the extension of scalars functor $f^{*}: \Mod_{A} \rightarrow \Mod_{B}$ is conservative; that is, it reflects equivalences. 
\end{lemma}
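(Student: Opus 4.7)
The plan is to translate conservativity into a statement about homotopy groups using the separatedness hypothesis on $\ccat$, and then to exploit the $0$-equivalence hypothesis to show that base change preserves the bottom nonzero homotopy group of $M$.

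Since $\ccat$ (and hence $\Mod_{A}$) is separated, an object is zero if and only if all of its homotopy groups vanish. Thus, to prove that $f^{*} = B \otimes_{A} -$ is conservative it suffices to show that if $f^{*}M \simeq 0$ then $\pi_{n}M = 0$ for every $n \geq 0$. Suppose to the contrary that $M \not\simeq 0$, and let $n_{0} \geq 0$ be the smallest index with $\pi_{n_{0}}M \neq 0$. Working in $\spectra(\ccat)$, I would consider the Postnikov fiber sequence
\[ M_{\geq n_{0}+1} \longrightarrow M \longrightarrow \Sigma^{n_{0}} \pi_{n_{0}} M \]
and apply the base change functor $f^{*}$ to obtain a cofiber sequence to analyze.

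The bar construction $|B \otimes A^{\otimes \bullet} \otimes (-)|$, together with the compatibility of the $t$-structure on $\spectra(\ccat)$ with the symmetric monoidal structure and with geometric realizations, shows that $f^{*}$ preserves connectivity, so that $B \otimes_{A} M_{\geq n_{0}+1}$ is $(n_{0}+1)$-connective. The right-hand term of the applied sequence is $\Sigma^{n_{0}}(B \otimes_{A} \pi_{n_{0}}M)$, and since $\pi_{n_{0}}M$ is discrete (viewed as an $A$-module through $A \to \pi_{0}A$), the coequalizer description of $\pi_{0}$ of the relevant geometric realization yields
\[ \pi_{0}(B \otimes_{A} \pi_{n_{0}}M) \;\simeq\; \pi_{0}B \otimes_{\pi_{0}A} \pi_{n_{0}}M; \]
the $0$-equivalence hypothesis $\pi_{0}A \xrightarrow{\sim} \pi_{0}B$ then identifies this with $\pi_{n_{0}}M$. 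Reading the long exact sequence of homotopy groups in degree $n_{0}$ therefore produces an isomorphism $\pi_{n_{0}}(f^{*}M) \simeq \pi_{n_{0}}M \neq 0$, contradicting $f^{*}M \simeq 0$.

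The main technical point to verify is that base change preserves connectivity and that $\pi_{0}$ commutes with base change on discrete modules; both are standard consequences of the compatibility of the $t$-structure on $\spectra(\ccat)$ with filtered colimits and with the monoidal structure, and no further properties of $\ccat$ are needed beyond what is already in the running hypotheses.
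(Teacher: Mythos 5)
Your argument is correct, and it reaches the conclusion by a route that is similar in spirit to the paper's but technically distinct. The paper reduces to showing that $B \otimes_A M = 0$ forces $M = 0$, and then runs an induction on connectivity: the base case observes that $M \to B \otimes_A M$ is a $0$-equivalence (because $0$-equivalences are preserved by the bar-construction relative tensor product), hence $M$ is $0$-connected; and the inductive step uses the prestable fact that a $0$-connected object is a suspension, writing $M = \Sigma N$, desuspending the vanishing $B \otimes_A M = 0$ to $B \otimes_A N = 0$, and applying the inductive hypothesis. You instead pick the bottom nonzero homotopy group $\pi_{n_0}M$, split off the Postnikov stage $M_{\geq n_0+1} \to M \to \Sigma^{n_0}\pi_{n_0}M$, and show that $\pi_{n_0}$ survives base change. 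Both arguments turn on the same elementary kernel — that $\pi_0$ applied to a base change of a discrete module is base change on $\pi_0$, which when $\pi_0 A \cong \pi_0 B$ preserves the bottom class — but the paper repackages this as an iterated desuspension while you compute the bottom homotopy group directly via the truncation cofiber sequence. The paper's version is a bit cleaner in that it avoids having to form the truncation and invoke preservation of connectivity for the upper Postnikov piece, whereas your version makes the location of the obstruction visible in a single long exact sequence. Both are valid; just make sure you note that the reduction from conservativity to "$f^*M \simeq 0 \Rightarrow M = 0$" uses that $f^*$ is exact (it preserves cofibers, so a map is an equivalence iff its cofiber base-changes to zero), which you state but don't justify.
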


\begin{proof}
Since both $\infty$-categories in question are prestable, a map $M \rightarrow N$ is an equivalence in either one if and only if the cofibre vanishes. Since extension of scalars is cocontinuous, it is enough to show that if $M \in \Mod_{A}$ and $B \otimes _{A} M = 0$, then $M = 0$. Because we assume $\ccat$ to be separated, it is enough to show that $M$ is $n$-connected for all $n \geq 0$, which we will prove by induction. 

To cover the base case, consider the map $M \rightarrow B \otimes_{A} M$. This map is the colimit of a map between bar constructions,
\[  A \otimes A^{\otimes \bullet} \otimes M \to B \otimes A^{\otimes \bullet} \otimes M.    \]
Since $A \to B$ is a $0$-equivalence and everything is $(-1)$-connected, this map is a levelwise $0$-equivalence. Because $0$-equivalences are closed under colimits, we find that $M \to B \otimes_A M$ is a $0$-equivalence. It follows that $M$ is $0$-connected, since $B \otimes _{A} M$ vanishes by assumption. 

Now let $n > 0$ and assume that we know that any $A$-module $M$ such that $B \otimes _{A} M = 0$ is $(n-1)$-connected. Suppose that $M$ is such an $A$-module. Since $M$ is $0$-connected, by prestability we can write $M = \Sigma N$ for some other $A$-module $N$. We have 

\begin{center}
$\Sigma(B \otimes_{A} N) \simeq B \otimes_{A} (\Sigma N) \simeq B \otimes _{A} M = 0$
\end{center} 
and by prestability we deduce that $B \otimes _{A} N = 0$. By the inductive hypothesis, we deduce that $N$ is $(n-1)$-connected and thus $M$ is $n$-connected, which ends the argument.
\end{proof}

\begin{cor}
For any algebra $A$ and $0 \leq l \leq k \leq \infty$, the extension of scalars functor $\atrun{l} \otimes _{\atrun{k}} -: \Mod_{\atrun{k}} \rightarrow \Mod_{\atrun{l}}$ is conservative.
\end{cor}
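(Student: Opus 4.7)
The plan is to deduce this immediately from \textbf{Lemma \ref{lemma:extension_along_0_equivalence_is_conservative}}. The only thing to check is that for $0 \leq l \leq k \leq \infty$, the truncation map $\atrun{k} \to \atrun{l}$ (with the convention $A_{\leq \infty} = A$) is a $0$-equivalence of associative algebras in $\ccat$. But by definition of Postnikov truncation, both $\atrun{k}$ and $\atrun{l}$ have the same $\pi_{0}$, namely $\pi_{0} A$, and the truncation map induces the identity on $\pi_{0}$. Hence the hypothesis of the lemma is satisfied, and the extension of scalars $\atrun{l} \otimes_{\atrun{k}} -: \Mod_{\atrun{k}} \to \Mod_{\atrun{l}}$ is conservative.

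There is no real obstacle here; the corollary is a formal consequence, and the only mild point is to verify that the case $k = \infty$ is covered, which it is since the map $A \to \atrun{l}$ is likewise a $0$-equivalence. The proof should be only one or two sentences long.
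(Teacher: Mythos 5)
Your proof is correct and takes exactly the same approach as the paper: the paper simply says the corollary is immediate from \textbf{Lemma \ref{lemma:extension_along_0_equivalence_is_conservative}}, and your verification that the truncation map $\atrun{k} \to \atrun{l}$ is a $0$-equivalence (including the $k = \infty$ case) is just the small check the paper leaves implicit.
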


\begin{proof}
This is immediate from \cref{lemma:extension_along_0_equivalence_is_conservative}. 
\end{proof}

Recall that the Postnikov tower of $A$ is a tower of square-zero extensions of associative algebras \cite[7.4.1.28]{higher_algebra}. Define $F_n \in \Mod_{\atrun{n}}$ by the fibre sequences
\begin{center}
$F_{n} \rightarrow \atrun{n} \rightarrow \atrun{n-1}$,
\end{center}
so that the homotopy of $F_{n}$ is necessarily concentrated in degree $n$. By \cite[7.4.1.26]{higher_algebra}, there is a pullback square in $\textnormal{Alg}(\ccat)$ of the form:
\begin{center}
	\begin{tikzpicture}
		\node (TL) at (0, 1.4) {$ \atrun{n} $};
		\node (TR) at (2.7, 1.4) {$ \atrun{n-1} $};
		\node (BL) at (0, 0) {$ \atrun{n-1} $};
		\node (BR) at (2.7, 0) {$ \atrun{n-1} \oplus \Sigma F_{n}. $};
		\node(S) at (5, 0.7) {$ (\spadesuit) $};
		
		\draw [->] (TL) to (TR);
		\draw [->] (TL) to (BL);
		\draw [->] (TR) to node[right] {$ d $} (BR);
		\draw [->] (BL) to node[below] {$ d_{0} $} (BR);
	\end{tikzpicture}
\end{center}

Here, $\atrun{n-1} \oplus \Sigma F_n$ is a trivial square-zero extension, and $d_{0}$ is the trivial section, while the map $d:\atrun{n-1} \to \atrun{n-1} \oplus \Sigma F_n$ is some other map of algebras, which depends on the actual square-zero extension $\atrun{n} \to \atrun{n-1}$. We now show that this diagram also induces a pullback square of module $\infty$-categories. 

\begin{prop}
\label{prop:pullback_square_of_module_infinity_categories}
The commutative square of module $\infty$-categories and extension of scalars functors 

\begin{center}
	\begin{tikzpicture}
		\node (TL) at (0, 1.3) {$ \Mod_{\atrun{n}} $};
		\node (TR) at (3.4, 1.3) {$ \Mod_{\atrun{n-1}} $};
		\node (BL) at (0, 0) {$ \Mod_{\atrun{n-1}}$};
		\node (BR) at (3.4, 0) {$ \Mod_{\atrun{n-1} \oplus \Sigma F_{n}} $};
		
		\draw [->] (TL) to (TR);
		\draw [->] (TL) to (BL);
		\draw [->] (TR) to node[right] {$ d^{*} $} (BR);
		\draw [->] (BL) to node[below] {$ d_{0}^* $} (BR);
	\end{tikzpicture}
\end{center}
induced by the diagram $(\spadesuit)$ is a pullback square of $\infty$-categories. That is, there is a canonical equivalence between $\Mod_{\atrun{n}}$ and the $\infty$-category of triples $(M, N, \alpha)$, where $M, N \in \Mod_{\atrun{n-1}}$ and $\alpha: d_{0}^* X \simeq d^{*} Y$. 
\end{prop}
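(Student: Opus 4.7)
The plan is to construct an explicit inverse to the comparison functor
\[ \Phi \colon \Mod_{\atrun{n}} \longrightarrow \Mod_{\atrun{n-1}} \times_{\Mod_{\atrun{n-1} \oplus \Sigma F_{n}}} \Mod_{\atrun{n-1}} \]
arising from the square, and verify that the resulting unit and counit are equivalences. The candidate inverse $\Psi$ would send a triple $(M,N,\alpha)$, with $\alpha \colon d_{0}^{*} M \simeq d^{*} N$, to the pullback $M \times_{d_{0}^{*} M} N$ formed in $\ccat$ using $\alpha$. Because $(\spadesuit)$ is a pullback square of associative algebras and the $\infty$-categorical limit functor is lax symmetric monoidal, this pullback inherits a canonical module structure over $\atrun{n-1} \times_{\atrun{n-1} \oplus \Sigma F_{n}} \atrun{n-1} \simeq \atrun{n}$, giving $\Psi$ as a functor into $\Mod_{\atrun{n}}$.

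To show that the unit $\mathrm{id} \to \Psi \circ \Phi$ is an equivalence, I would begin with the observation that $(\spadesuit)$, being a pullback of algebras, gives a fibre sequence
\[ \atrun{n} \longrightarrow \atrun{n-1} \oplus \atrun{n-1} \longrightarrow \atrun{n-1} \oplus \Sigma F_{n} \]
of $\atrun{n}$-bimodules. Tensoring this on the right over $\atrun{n}$ with an arbitrary $X \in \Mod_{\atrun{n}}$ yields a fibre sequence presenting $X$ as the pullback $(\atrun{n-1} \otimes_{\atrun{n}} X) \times_{(\atrun{n-1} \oplus \Sigma F_{n}) \otimes_{\atrun{n}} X} (\atrun{n-1} \otimes_{\atrun{n}} X)$, which is precisely $\Psi \Phi (X)$ as an $\atrun{n}$-module.

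The counit $\Phi \circ \Psi \to \mathrm{id}$ requires showing that for a triple $(M,N,\alpha)$, base change of $M \times_{d_{0}^{*} M} N$ along each projection $\atrun{n} \to \atrun{n-1}$ recovers $M$ (respectively $N$), compatibly with the given coherence $\alpha$. By writing the pullback as a fibre and using that $d_{0}$ is the \emph{trivial} section, this should reduce to a manipulation of fibre sequences; when a direct calculation is awkward, conservativity of extension of scalars along the $0$-equivalence $\atrun{n} \to \atrun{n-1}$, provided by \textbf{Lemma \ref{lemma:extension_along_0_equivalence_is_conservative}}, lets one reduce the check to a question about $\atrun{n-1}$-modules. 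The main obstacle is exactly this coherent interaction between base change and the pullback of modules: it is here that the specific structure of $(\spadesuit)$ as a presentation of a square-zero extension — and not just an abstract pullback of algebras — becomes essential, since the analogous statement fails for general pullback squares of $\mathbb{E}_{1}$-algebras.
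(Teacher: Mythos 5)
Your overall strategy — construct the limit functor $\Psi$ as a candidate inverse and check unit and counit — is the same as the paper's, and the unit argument can be made to work, but as written it has two issues, one minor and one serious.

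The minor one is in the unit step. You tensor the sequence $\atrun{n} \to \atrun{n-1} \oplus \atrun{n-1} \to \atrun{n-1} \oplus \Sigma F_n$ with $X$ and claim the result is still a fibre sequence. But $(- \otimes_{\atrun{n}} X)$ is a left adjoint and preserves colimits, not limits, so it preserves cofibre sequences, not fibre sequences. The argument works only if you first observe that $(\spadesuit)$ is also a \emph{pushout} square of $\atrun{n}$-bimodules (equivalently, that the cofibre of $\atrun{n} \to \atrun{n-1}\oplus\atrun{n-1}$ is already connective), so tensoring preserves it as a pushout, and then prestability of $\Mod_{\atrun{n}}$ lets you pass back to a pullback. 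This is exactly the extra step the paper's proof supplies.

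The serious issue is the counit. Once the unit is known to be an equivalence, $\Phi$ is fully faithful, and the standard way to finish is to prove the \emph{right} adjoint $G = \Psi$ is conservative; then $G$ applied to the counit is an equivalence (by the triangle identity) and conservativity promotes this. You instead gesture at the conservativity of extension of scalars along $\atrun{n} \to \atrun{n-1}$, which is conservativity of $\Phi$ (or its components), not of $\Psi$ — and since $\Phi$ is already known to be fully faithful from the unit, this gives no new information. What is actually needed, and what the paper proves, is that if the pullback $M \times_{d_0^*M} N$ vanishes then $M$ and $N$ both vanish. This is not formal: it uses that $d$ and $d_0$ are both sections of the projection $\atrun{n-1} \oplus \Sigma F_n \to \atrun{n-1}$, which is an $n$-equivalence, so $M \to d_0^*M$ and $N \to d^*N$ are $n$-equivalences; one then reads off $\pi_0 M = \pi_0 N = 0$ from the Mayer--Vietoris sequence and bootstraps by the prestability induction of \textbf{Lemma \ref{lemma:extension_along_0_equivalence_is_conservative}}. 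You acknowledge a "main obstacle" here, and this is indeed the content of the proof; the remark about the $0$-equivalence $\atrun{n}\to\atrun{n-1}$ does not close it.
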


\begin{proof}
This appears in the work of Hopkins and Lurie as \cite[7.3.6]{lurie_hopkins_brauer_group}, but for convenience of the reader we recall the argument. There is an adjunction
\[	F:\Mod_{\atrun{n}} \leftrightarrows \Mod_{\atrun{n-1}} \times _{\Mod_{\atrun{n-1} \oplus \Sigma F_{n}}} \Mod_{\atrun{n-1}}: G,	\]
where $F$ is induced by base change around the diagram $(\spadesuit)$, and its right adjoint $G$ is given informally by the formula 
\[	(M, N, \alpha) \mapsto M \times _{d_{0}^* M} N.	\]

We would like to show that $F$ and $G$ are inverse equivalences. We first show that the unit of the adjunction is an equivalence; that is, for any $M \in \Mod_{\atrun{n}}$, the square
\[	\xymatrix{ M \ar[r] \ar[d] & \atrun{n-1} \otimes_{\atrun{n}} M \ar[d] \\ \atrun{n-1} \otimes_{\atrun{n}} M \ar[r] & (\atrun{n-1} \oplus \Sigma F_n) \otimes_{\atrun{n}} M }	\]
is a pullback square. In fact, $(\spadesuit)$ is easily seen to be a pushout square of $A_{\leq n}$-bimodules, so it is taken to a pushout square by tensoring with $M$. But $\ccat$ is prestable, so $\Mod_{A\le n}$ is prestable, which implies that this square is also a pullback.

It is now enough to prove that $G$ is conservative. By a direct calculation, $G$ preserves cofibres and thus by prestability it is enough to check that if $(M, N, \alpha)$ is a triple such that $M \times _{d_{0}^* M} N$ vanishes, then $M = 0$ and $N = 0$. 

Observe that since both $d, d_{0}$ are sections of the projection $\atrun{n-1} \oplus \Sigma F_{n} \rightarrow \atrun{n-1}$, which is an $n$-equivalence, the maps $M \rightarrow d_{0}^* M$ and $N \rightarrow d^{*} N \simeq d_{0}^* M$ are also $n$-equivalences. It follows that the projections $\pi_{0} M \times _{\pi_{0} d_{0}^* M} \pi_{0} N \rightarrow \pi_{0} M$ and $\pi_{0} M \times _{\pi_{0} d_{0}^* M} \pi_{0 } N \rightarrow \pi_{0} N$ are isomorphisms. Since $\pi_{0} (M \times _{d_{0}^* M} N) \rightarrow \pi_{0} M \times _{\pi_{0} d_{0}^* M} \pi_{0} N$ is a surjection by the long exact sequence of homotopy and the source vanishes by assumption, we deduce that $\pi_{0} M = 0$ and $\pi_{0} N = 0$.

We conclude that if $M \times _{d_{0}^* M} N$ vanishes, then $M, N$ are $0$-connected. Arguing inductively as in the proof of \cref{lemma:extension_along_0_equivalence_is_conservative}, we deduce that they are $n$-connected for all $n \geq 0$, and so vanish. This ends the argument. 
\end{proof}

Observe that \cref{prop:pullback_square_of_module_infinity_categories} gives a description of modules over $\atrun{n}$ in terms of modules over $\atrun{n-1}$, which was our goal. Our next step is to show that this description can be further simplified. 

\begin{defin}
\label{defin:hopkins_lurie_theta_functor}
We define the functor $\Theta: \Mod_{\atrun{n-1}} \rightarrow \Mod_{\atrun{n-1}}$ by $\Theta M = d_{*} d_{0}^* M$.
\end{defin}
Notice that the underlying object of $\ccat$ of the $\atrun{n-1}$-module $\Theta M$ can be described by the simple formula 

\begin{center}
$(\atrun{n-1} \oplus \Sigma F_{n}) \otimes _{\atrun{n-1}} M \simeq M \oplus (\atrun{0} \otimes_{\atrun{n-1}} M)[-n]$.
\end{center}
However, the above direct sum acquires an exotic $\atrun{n-1}$-module structure given by restricting scalars along the possibly non-zero derivation $d$.

The unit of the adjunction $p^{*} \dashv p_{*}$ induced by the projection $p: \atrun{n-1} \oplus \Sigma F_{n} \rightarrow \atrun{n-1}$ induces a map

\begin{center}
$d_{*} d_{0}^{*} M \rightarrow d_{*} p_{*} p^{*} d_{0} ^{*} M \simeq M$,
\end{center}
where in the last equivalence we have used that both $d$ and $d_{0}$ are sections of $p$. Thus, we obtain a natural transformation $\pi: \Theta M \rightarrow M$ of endofunctors of $\Mod_{\atrun{n-1}}$.

\begin{defin}
\label{defin:theta_sect}
We define the $\infty$-category $\ThetaSect_{\atrun{n-1}}$ to be the $\infty$-category of triples $(M, s, h)$, where $M \in \Mod_{\atrun{n-1}}$, $s:M \rightarrow \Theta M$, and $h$ is a homotopy from $\pi \circ s$ to $id_M$.
\end{defin}

Intuitively, $\ThetaSect_{\atrun{n-1}}$ is the $\infty$-category of $\atrun{n-1}$-modules $M$ equipped with a section $s$ of $\pi:\Theta M \to M$. However, it is important to remember that in the $\infty$-categorical setting, the homotopy realizing $\pi \circ s \simeq id_M$ is a necessary part of the data.

Note that the above definition given in terms of tuples is somewhat informal, because it only describes the objects rather than all of the $\infty$-categorical structure. For a more detailed description, see \cref{rem:construction_of_the_infty_categories_of_tuples}.

\begin{thm}
\label{thm:modules_over_higher_truncation_as_modules_with_a_section}
There is an equivalence of $\infty$-categories
\[	\Mod_{\atrun{n}} \simeq \ThetaSect_{\atrun{n-1}}.	\]
Moreover, under this equivalence, the extension of scalars $\Mod_{\atrun{n}} \rightarrow \Mod_{\atrun{n-1}}$ corresponds to the forgetful functor $\ThetaSect_{\atrun{n-1}} \to \atrun{n-1}$ sending $(M, s, h)$ to $M$. 
\end{thm}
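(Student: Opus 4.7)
My plan is to deduce the equivalence directly from Proposition \ref{prop:pullback_square_of_module_infinity_categories}, which identifies $\Mod_{\atrun{n}}$ with the $\infty$-category of triples $(M, N, \alpha)$ with $M, N \in \Mod_{\atrun{n-1}}$ and $\alpha : d_0^* M \xrightarrow{\sim} d^* N$ an equivalence in $\Mod_B$ where $B = \atrun{n-1} \oplus \Sigma F_n$. The first move is to eliminate the second module $N$. Since $p: B \to \atrun{n-1}$ is a common retraction of both $d_0$ and $d$, base change satisfies $p^* d_0^* = p^* d^* = id$; applying $p^*$ to $\alpha$ therefore produces a canonical equivalence $M \xrightarrow{\sim} N$, along which we may identify $N$ with $M$. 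The pullback thus reduces to the $\infty$-category of pairs $(M, \alpha)$ where $\alpha : d_0^* M \to d^* M$ is a map equipped with a trivialization $p^* \alpha \simeq id_M$. Note that the invertibility of $\alpha$ does not need to be imposed separately: any $\alpha$ with $p^* \alpha = id_M$ is automatically an equivalence, since $p$ is a $0$-equivalence of algebras and hence $p^*$ is conservative by Lemma \ref{lemma:extension_along_0_equivalence_is_conservative}.

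The second step is to use the adjunction $d^* \dashv d_*$ to reinterpret $\alpha$ as a section. The map $\alpha^{-1}: d^* M \to d_0^* M$ corresponds to $s: M \to d_* d_0^* M = \Theta M$ via $s := d_*(\alpha^{-1}) \circ \eta_d$, where $\eta_d$ denotes the unit of $d^* \dashv d_*$. Tracing through the construction of $\pi: \Theta M \to M$, which uses the unit $\eta_p$ of $p^* \dashv p_*$ together with the canonical identifications $p^* d_0^* M = M$ and $d_* p_* = id$, a direct naturality calculation shows that the composite $\pi \circ s$ is canonically identified with the image of $p^* \alpha$ under these identifications. Hence the trivialization $p^* \alpha \simeq id_M$ transforms into a nullhomotopy $h$ witnessing $\pi \circ s \simeq id_M$, yielding the desired object $(M, s, h) \in \ThetaSect_{\atrun{n-1}}$.

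The third step checks that the forgetful functor $\ThetaSect_{\atrun{n-1}} \to \Mod_{\atrun{n-1}}$ corresponds under this equivalence to extension of scalars $\Mod_{\atrun{n}} \to \Mod_{\atrun{n-1}}$; this is immediate from the construction, since both functors ultimately project onto the underlying module $M$. The main obstacle in this plan is the first step: making the reduction from triples $(M, N, \alpha)$ to pairs $(M, \alpha)$ into a genuine equivalence of $\infty$-categories rather than merely a bijection on equivalence classes. The cleanest way to handle this is probably to phrase the entire argument using the universal property of the pullback, rather than manipulating explicit tuples; one can then set up comparison functors in both directions and verify they are inverse equivalences by comparing mapping spaces through the adjunctions at play.
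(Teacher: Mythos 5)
Your outline matches the paper's argument quite closely: both start from the pullback description of \textbf{Proposition \ref{prop:pullback_square_of_module_infinity_categories}}, both use the adjunction $d^* \dashv d_*$ to turn the gluing equivalence into a map to $\Theta M$, both observe that the trivialization after applying $p^*$ becomes the section homotopy $h$, and both invoke the conservativity of $p^*$ from \textbf{Lemma \ref{lemma:extension_along_0_equivalence_is_conservative}} to show that invertibility can be read off from the $\pi$-composite. So this is the same route, not a different one.

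The one real gap is precisely the one you yourself flag: passing from triples $(M,N,\alpha)$ to pairs $(M,\alpha)$ by ``identifying $N$ with $M$ along $p^*\alpha$'' is not an operation you can simply perform in an $\infty$-category, and your proposed remedy (``phrase the argument using the universal property of the pullback and compare mapping spaces'') is too vague to close it. The paper closes this gap with a concrete and worth-remembering device: rather than trying to substitute $M$ for $N$, it \emph{enlarges} the data to quintuples $(M,N,\alpha',\beta,h)$ where $\beta: M \to N$ and $h$ witnesses $\pi \circ \alpha' \circ \beta \simeq \mathrm{id}_M$, and then shows that \emph{both} forgetful functors — down to the triples $(M,N,\alpha')$ on one side, and down to $\ThetaSect_{\atrun{n-1}}$ via $(M, \alpha'\circ\beta, h)$ on the other — are left fibrations with contractible fibres (for the first, the fibre over a triple is the space of sections of an equivalence; for the second, the fibre over $(M,s,h)$ is the space of objects $N$ equipped with an equivalence $M \simeq N$). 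This ``inflate, then show both projections are trivial fibrations'' trick is the standard way to implement the informal identification you want, and it also slightly reorders your argument: the paper applies the adjunction before removing the redundant $N$, whereas you want to remove $N$ first. If you adopt the quintuples mechanism, either ordering works; without it, the first step of your plan doesn't compile.
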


\begin{proof}
By \cref{prop:pullback_square_of_module_infinity_categories}, we can identify $\Mod_{\atrun{n}}$ with the $\infty$-category of triples $(M, N, \alpha)$, where $M, N \in \Mod_{\atrun{n-1}}$ and $\alpha: d^{*} N \rightarrow d_{0}^* M$ is an equivalence. By adjunction, $\alpha$ corresponds to a map $\alpha^\prime: N \rightarrow \Theta M$ of $\atrun{n-1}$-modules. 

We claim that $\alpha$ is an equivalence if and only if the composite $\pi \circ \alpha^\prime: N \rightarrow \Theta M \rightarrow M$ is. To see this, notice that under the identification $p^{*} d^{*} N \simeq N$ and $p^{*} d_{0}^{*} M \simeq M$ coming from the fact that both $d, d_{0}$ are both sections of the projection $p: \atrun{n-1} \oplus \Sigma F_{n} \rightarrow \atrun{n-1}$, the composite $\pi \circ \alpha^\prime$ can be identified with $p^{*} \alpha:  p^{*} d^{*} M \rightarrow p^{*} d_{0}^* N$. Then, the claim follows immediately from the fact that $p^{*}$ is conservative, which is \cref{lemma:extension_along_0_equivalence_is_conservative}. It follows that $\Mod_{\atrun{n}}$ is equivalent to the $\infty$-category of triples 
\[  \dcat = \{(M, N, \alpha^\prime): M, N \in \Mod_{\atrun{n-1}}, \, \pi:N \to \Theta M\text{ such that }\pi \circ \alpha^\prime\text{ is an equivalence}\}.    \]

Consider the $\infty$-category of quintuples $\ecat = \{(M, N, \alpha^\prime, \beta, h)\}$, where $M, N, \alpha^\prime$ are as above, $\beta$ is a morphism $\beta: M \rightarrow N$ of $\atrun{n-1}$-modules and $h$ is a homotopy witnessing $\pi \circ \alpha^\prime \circ \beta \simeq id_{X}$. It is not hard to see that the functor 
\[  \ecat \to \dcat: (M, N, \alpha^\prime, \beta, h) \mapsto (M, N, \alpha^\prime)  \]
is fibred in spaces, that is, it is a Cartesian fibration all of whose fibres are $\infty$-groupoids. We claim that all of the fibres are contractible, so that this functor is an equivalence. Indeed, the fibre over any object $(M, N, \alpha^\prime)$ in the target is equivalent to the space of pairs 
\[	(\beta:M \to N, h:\pi \circ \alpha^\prime\circ  \beta \simeq id_M),	\]
which is precisely the homotopy fibre of the composition map
\[	(\pi\circ \alpha^\prime)_*:\Mod_{\atrun{n-1}}(M,N) \to \Mod_{\atrun{n-1}}(M,M)	\]
over the identity. Since $\pi\circ \alpha^\prime$ is an equivalence, this space is contractible. We deduce that the forgetful functor is an equivalence of $\infty$-categories, as promised. 

Consider the functor $F: \ecat \rightarrow \ThetaSect_{\atrun{n-1}}$ given by the formula 

\begin{center}
$F(M, N, \alpha^\prime, \beta, h) = (M, \alpha^\prime \circ \beta, h)$.
\end{center}
Similarly to the case above, one can show that the functor $F$ is fibred in spaces. Moreover, the fibre over any object $(M, s, h)$ of the target can be identified with the space of objects $N$ equipped with an equivalence $\beta: M \simeq N$, which is again contractible. We deduce that $F$ is also an equivalence, proving the theorem. 
\end{proof}

\begin{rem}
\label{rem:theta_lax_symmetric_monoidal}
If $A$ is commutative, the functor $\Theta: \Mod_{\atrun{n-1}} \rightarrow \Mod_{\atrun{n-1}}$ is lax symmetric monoidal, being a composite of two lax symmetric monoidal functors, and the $\infty$-category $ \ThetaSect_{\atrun{n-1}}$ of \cref{defin:theta_sect} carries a canonical symmetric monoidal structure. Under these conditions, the equivalence of \cref{thm:modules_over_higher_truncation_as_modules_with_a_section} can be promoted to an equivalence of symmetric monoidal $\infty$-categories. 
\end{rem}

\begin{rem}
\label{rem:construction_of_the_infty_categories_of_tuples}
Let us give the promised formal definition of the $\infty$-category $\ThetaSect_{\atrun{n-1}}$ of sections, whose objects are triples $(M, s, h)$, where $M \in \Mod_{\atrun{n-1}}$, $s: M \rightarrow \Theta M$ is a morphism and $h: \pi \circ s \simeq id_{M}$ is a homotopy. This construction of such $\infty$-categories is standard; the other $\infty$-categories of ``tuples'' appearing in the proof of \cref{thm:modules_over_higher_truncation_as_modules_with_a_section} are defined in the same way. 

First, define the $\infty$-category of tuples $\ThetaMorph_{\atrun{n-1}}$ as the pullback in the diagram

\begin{center}
	\begin{tikzpicture}
		\node (TL) at (0, 1.3) {$ \ThetaMorph_{\atrun{n-1}} $};
		\node (TR) at (4.9, 1.3) { $ \Fun(\partial \Delta^{1}, \Mod_{\atrun{n-1}}) $};
		\node (BL) at (0, 0) {$ \Mod_{\atrun{n-1}} $};
		\node (BR) at (4.9, 0) {$ \Fun(\Delta^{1}, \Mod_{\atrun{n-1}}) $};
		
		\draw [->] (TL) to (TR);
		\draw [->] (TL) to (BL);
		\draw [->] (TR) to (BR);
		\draw [->] (BL) to node[below] {$ (id, \Theta) $} (BR);
	\end{tikzpicture},
\end{center}
of simplicial sets, where the right vertical map is induced by the inclusion $\partial \Delta^{1} \hookrightarrow \Delta^{1}$. Objects of $\ThetaMorph_{\atrun{n-1}}$ are pairs $(M, s)$, where $M \in \Mod_{\atrun{n-1}}$ and $s: M \rightarrow \Theta M$ is a morphism.

Together, the upper horizontal arrow in the diagram above, the natural transformation $\pi$ and the identity determine a functor $\ThetaMorph_{\atrun{n-1}} \rightarrow \Fun(\partial \Delta^{2}, \Mod_{\atrun{n-1}})$ defined by 

\begin{center}
	\begin{tikzpicture}
		\node (A) at (-2, 0.7) {$ M $} ;

		\node (T) at (1, 1.4) {$ \Theta M $};
		\node (BL) at (0, 0) {$ M $};
		\node (BR) at (2, 0) {$ M $};
		
		\draw [->] (BL) to node[auto] {$ s $} (T);
		\draw [->] (T) to node[auto] {$ \pi $} (BR);
		\draw [->] (BL) to node[auto] {$ id $} (BR);
		\draw [|->] (-1.5, 0.7) to (-0.5, 0.7);
	\end{tikzpicture}.
\end{center}
We then define the $\infty$-category $\ThetaSect_{\atrun{n-1}}$ through the pullback 

\begin{center}
	\begin{tikzpicture}
		\node (TL) at (0, 1.3) {$ \ThetaSect_{\atrun{n-1}} $};
		\node (TR) at (4.9, 1.3) {$ \Fun(\Delta^{2}, \Mod_{\atrun{n-1}} )$};
		\node (BL) at (0, 0) {$ \ThetaMorph_{\atrun{n-1}} $};
		\node (BR) at (4.9, 0) {$ \Fun(\partial \Delta^{2}, \Mod_{\atrun{n-1}}) $};
		
		\draw [->] (TL) to (TR);
		\draw [->] (TL) to (BL);
		\draw [->] (TR) to (BR);
		\draw [->] (BL) to node[below] {$ (id, \Theta) $} (BR);
	\end{tikzpicture}.
\end{center}
The $0$-simplices of $\ThetaSect_{\atrun{n-1}}$ are the desired triples $(M, s, h)$.

Note that the way we constructed the relevant $\infty$-categories, both of  the forgetful functors $\ThetaSect_{\atrun{n-1}} \rightarrow \ThetaMorph_{\atrun{n-1}}$ and $\ThetaMorph_{\atrun{n-1}} \rightarrow \Mod_{\atrun{n-1}}$ are inner fibrations, since the inclusions $\partial \Delta^{1} \hookrightarrow \Delta^{1}$ and $\partial \Delta^{2} \hookrightarrow \Delta^{2}$ are cofibrations in the Joyal model structure. Thus, the coCartesian fibrations appearing in the proof of \cref{thm:modules_over_higher_truncation_as_modules_with_a_section} are in fact left fibrations in the sense of Lurie. 
\end{rem}

Lastly, we give a description of the $\infty$-category of modules over the $0$-truncation of $A$. Here, we assume that $A$ is a shift algebra in the sense of \cref{defin:shift_algebra} and that it admits ``enough'' periodic modules; this covers the cases we will need.  

\begin{thm}
\label{thm:modules_over_zero_truncation_same_as_the_derived_category}
Suppose that $A$ is a shift algebra in $\ccat$ such that $\Mod_{A}$ is generated under colimits by periodic modules. Then, the inclusion $\Mod_{\atrun{0}}^{\heartsuit} \hookrightarrow \Mod _{\atrun{0}}$ extends to an equivalence

\begin{center}
$\dcat(\Mod_{\atrun{0}}^{\heartsuit})_{\geq 0} \simeq \Mod_{\atrun{0}}$
\end{center}
between the connective derived $\infty$-category and modules over the $0$-truncation. Moreover, we have $\Mod_{\atrun{0}}^{\heartsuit} \simeq \Mod_{\pi_{0} A}(\ccat^{\heartsuit})$. 
\end{thm}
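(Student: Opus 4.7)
My plan is to apply Lurie's characterization of the connective derived $\infty$-category (roughly SAG C.5.4.9--16), which states that a Grothendieck prestable $\infty$-category $\dcat$ is canonically equivalent to $\dcat(\dcat^{\heartsuit})_{\geq 0}$ via the inclusion of the heart precisely when $\dcat$ is separated and generated under colimits by its heart $\dcat^{\heartsuit}$. So the strategy reduces to verifying these two hypotheses for $\dcat = \Mod_{\atrun{0}}(\ccat)$, together with the identification of the heart.

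\textbf{Step 1 (Heart).} I will identify $\Mod_{\atrun{0}}^{\heartsuit}$ with $\Mod_{\pi_{0}A}(\ccat^{\heartsuit})$. Since $\atrun{0}$ is discrete, it lives in $\textnormal{Alg}(\ccat^{\heartsuit})$ and is canonically identified with $\pi_{0}A$ as an algebra object of the heart. A discrete left $\atrun{0}$-module is by definition an object of $\ccat^{\heartsuit}$ equipped with an action of $\atrun{0}$, and since both the tensor product and the action maps preserve discrete objects (as $\ccat^{\heartsuit}$ is closed under the induced symmetric monoidal structure), this data is precisely that of a $\pi_{0}A$-module in $\ccat^{\heartsuit}$. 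This is a straightforward unwinding of definitions.

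\textbf{Step 2 (Separatedness and prestability).} The $\infty$-category $\Mod_{\atrun{0}}(\ccat)$ is Grothendieck prestable since $\ccat$ is and $\atrun{0}$ is a connective algebra. Separatedness is inherited from $\ccat$: if $M \to N$ is a map of $\atrun{0}$-modules inducing isomorphisms on all $\pi_{n}$, then the underlying map in $\ccat$ is an equivalence by separatedness of $\ccat$, and equivalences of modules are detected on underlying objects.

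\textbf{Step 3 (Generation by the heart).} This is where the hypothesis on periodic modules enters, and I expect it to be the main obstacle in the sense of having to assemble the right pieces. The extension of scalars functor $\atrun{0} \otimes_{A} (-): \Mod_{A} \to \Mod_{\atrun{0}}$ is cocontinuous (being a left adjoint) and essentially surjective (the counit $\atrun{0} \otimes_{A} N \to N$ is an equivalence for $N \in \Mod_{\atrun{0}}$). By condition (2) of \textbf{Proposition \ref{prop:different_characterizations_of_periodic_objects}}, every periodic $A$-module $M$ has $\atrun{0} \otimes_{A} M$ discrete, hence lying in $\Mod_{\atrun{0}}^{\heartsuit}$. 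Given any $N \in \Mod_{\atrun{0}}$, we can view it as an $A$-module via restriction and, by hypothesis, write it as a colimit $N \simeq \mathrm{colim}_{i} M_{i}$ of periodic $A$-modules. Applying the cocontinuous extension of scalars and using that the counit is an equivalence on $N$ gives $N \simeq \mathrm{colim}_{i}\, \atrun{0} \otimes_{A} M_{i}$, a colimit of objects of $\Mod_{\atrun{0}}^{\heartsuit}$. Thus $\Mod_{\atrun{0}}^{\heartsuit}$ generates $\Mod_{\atrun{0}}$ under colimits.

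\textbf{Step 4 (Conclusion).} With Steps 1--3 in hand, Lurie's characterization produces a canonical equivalence $\dcat(\Mod_{\atrun{0}}^{\heartsuit})_{\geq 0} \simeq \Mod_{\atrun{0}}$ extending the inclusion of the heart, which combined with Step 1 yields both statements of the theorem.
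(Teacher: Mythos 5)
Your proposal is correct and follows essentially the same route as the paper: both reduce to checking, via Lurie's characterization of $\dcat(\acat)_{\geq 0}$ (SAG C.5.4.11), that $\Mod_{\atrun{0}}$ is separated and generated under colimits by its heart, with the latter supplied by cocontinuity and essential surjectivity of $\atrun{0}\otimes_{A}(-)$ together with the hypothesis that periodic modules generate $\Mod_A$. The only cosmetic difference is that the paper avoids explicitly writing $N$ as a colimit of periodic modules, instead observing directly that the images $\atrun{0}\otimes_{A}M$ of a generating family generate the target of an essentially surjective cocontinuous functor.
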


\begin{proof}
Since $\ccat$ is Grothendieck prestable, so is $\Mod_{\atrun{0}}$ and thus by \cite[C.5.4.11]{lurie_spectral_algebraic_geometry} it is enough to show that $\Mod_{\atrun{0}}$ is separated and $0$-complicial, that is, is generated under colimits by discrete objects. That it is separated immediately follows from the fact that the ambient $\infty$-category $\ccat$ itself is assumed to be separated. Moreover, since $\Mod_{A}$ is assumed to be generated under colimits by periodic modules $M$, the $\infty$-category $\Mod_{\atrun{0}}$ is generated under colimits by modules of the form $\atrun{0} \otimes _{A} M$ with $M$ periodic, which are all discrete by definition. 

The equivalence $\Mod_{\atrun{0}}^{\heartsuit} \rightarrow \Mod_{\pi_{0} A}(\ccat^{\heartsuit})$ is induced by the functor $\pi_{0}$. 
\end{proof}

\section{Linear obstruction theory}

In this section we introduce the notion of a potential $n$-stage for a periodic module and we develop the basic, linear variant of Goerss-Hopkins obstruction theory. 

Let us fix a graded symmetric monoidal, separated Grothendieck prestable $\infty$-category $\ccat$ and a shift algebra $A \in Alg(\ccat)$ in the sense of \cref{defin:shift_algebra}. Recall that an $A$-module $M$ is said to be \emph{periodic} if $\pi_{*} M \simeq \pi_{*} A \otimes _{\pi_{0} A} \pi_{0} M$; we will assume that $\Mod_{A}$ is generated under colimits by periodic modules. We will typically abbreviate the category of discrete $\pi_0A$-modules, $\Mod_{\pi_0A}(\ccat^\heartsuit)$, by $\Mod_{\pi_0A}$.

The linear Goerss-Hopkins theory yields obstructions to constructing a periodic $A$-module $M$ with prescribed $\pi_{0} M$ as a $\pi_{0} A$-module, with obstructions lying in $\Ext$-groups between discrete $\pi_{0} A$-modules. Intuitively, the latter arise as a consequence of \cref{thm:modules_over_zero_truncation_same_as_the_derived_category}, which states that the category of modules in $\ccat$ over the 0-truncation $\atrun{0}$ is equivalent to the derived category of discrete $\pi_{0}A$-modules, so that the mapping spaces therein compute $\Ext$-groups.

Recall that by \cref{prop:different_characterizations_of_periodic_objects} an $A$-module $M$ is periodic if and only if $\pi_{0} A \otimes _{A} M$ is discrete. This condition readily generalizes to modules over truncations of $A$, giving rise to the notion of a \emph{potential $n$-stage}. The following definition is classical, going back to the work of Blanc, Dwyer and Goerss on moduli of $\Pi$-algebras \cite{realization_space_of_a_pi_algebra}, \cite{pstrkagowski2017moduli}.

\begin{defin}
\label{defin:potential_n_stage}
We say an $\atrun{n}$-module $M$ is a \emph{potential $n$-stage for a periodic $A$-module} if $\atrun{0} \otimes _{\atrun{n}} M$ is discrete. We denote the $\infty$-category of potential $n$-stages by $\mathcal{M}_{n}$. 
\end{defin}
Notice that the above definition works for $0 \leq n \leq \infty$, where by abuse of notation we write $\atrun{\infty} := A$, so that a potential $\infty$-stage is the same as a periodic $A$-module and we have $\mathcal{M}_{\infty} \simeq \Mod_{A}^{per}(\ccat)$. On another extreme, a potential $0$-stage is the same as a discrete module over $A_{\leq 0}$, so that $\mathcal{M}_{0} \simeq \Mod_{\pi_{0}A}(\ccat^{\heartsuit})$. This suggests the correct intuition that potential stages for $0 < n < \infty$ interpolate between periodic $A$-modules and discrete $\pi_{0}A$-modules.

\begin{rem}
All objects of $\mathcal{M}_{n}$ are $n$-truncated, see \cref{lemma:homotopy_of_a_potential_n_stage_free} below, so that the $\infty$-category $\mathcal{M}_{n}$ of potential $n$-stages is an $(n+1)$-category.
\end{rem}

\begin{rem}
Since $\mathcal{M}_{0}$ is Grothendieck abelian, being equivalent to the category of modules in the Grothendieck abelian category $\ccat^{\heartsuit}$, it is tempting to guess that $\mathcal{M}_{n}$ for $n > 0$ is a Grothendieck abelian $(n+1)$-category in the sense of \cite[C.5.4.1]{lurie_spectral_algebraic_geometry}. This is not the case, as the $\infty$-categories $\mathcal{M}_{n}$ for $0 < n < \infty$ are usually not presentable, in fact need not admit finite limits. 
\end{rem}

Clearly, potential $n$-stages are taken to potential $m$-stages by the extension of scalars functor along $\atrun{n} \rightarrow \atrun{m}$ for $m \leq n$, so that there is an induced tower of $\infty$-categories 

\begin{center}
$\mathcal{M}_{\infty} \rightarrow \ldots \rightarrow \mathcal{M}_{n} \rightarrow \mathcal{M}_{n-1} \rightarrow \ldots \rightarrow \mathcal{M}_{0}$.
\end{center}

In practice, $\mathcal{M}_{\infty}$ can usually be identified with an $\infty$-category of geometric objects one wants to classify, $\mathcal{M}_{0}$ with the category of some algebraic objects, and the arrow $\mathcal{M}_{\infty} \rightarrow \mathcal{M}_{0}$ with a functor associated to some algebraic invariant. The moduli $\mathcal{M}_{n}$ for $0 < n < \infty$ stratify the loss of information present when we pass from a geometric object to an invariant of algebraic type. 

\begin{rem}
\label{rem:if_ambient_category_complete_infinite_stages_a_limit_of_finite_ones}
If $\ccat$ is complete, then the natural maps $\mathcal{M}_{\infty} \rightarrow \mathcal{M}_{n}$ induce an equivalence $\mathcal{M}_{\infty} \simeq \varprojlim \mathcal{M}_{n}$. To see this, notice that by \cref{prop:if_ambient_grothendieck_infty_cat_prestable_module_category_a_limit_of_modules_over_truncation} we have $\Mod_{A} \simeq \varprojlim \Mod_{\atrun{n}}$, and one sees immediately that this equivalence restricts to the one we need. 
\end{rem}

We will now use the relation between the $\infty$-categories of modules over the truncations of the unit we derived in the previous section to develop an obstruction theory to extending a potential $(n-1)$-stage to a potential $n$-stage.

\begin{lemma}
\label{lemma:homotopy_of_a_potential_n_stage_free}
Let $M \in \Mod_{\atrun{n}}$ and $n < \infty$. Then, $M$ is a potential $n$-stage if and only if $\pi_{*} M \simeq \pi_{*} \atrun{n} \otimes _{\pi_{0} A} \pi_{0} M$. 
\end{lemma}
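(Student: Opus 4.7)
The plan is to induct on $n$, with trivial base case $n=0$ since both conditions reduce to $M$ being discrete. For the inductive step, set $M' := \atrun{n-1} \otimes_{\atrun{n}} M$ and $C := \atrun{0} \otimes_{\atrun{n}} M$. The proof rests on two cofiber sequences of right $\atrun{n}$-modules coming from the shift-algebra structure: truncating the canonical cofiber sequence $\Sigma A[-1] \xrightarrow{\tau} A \to \atrun{0}$ gives
\[
\Sigma \atrun{n-1}[-1] \xrightarrow{\tau} \atrun{n} \to \atrun{0},
\]
while the Postnikov fiber of $\atrun{n} \to \atrun{n-1}$ is identified with $\Sigma^n \atrun{0}[-n]$ as an $\atrun{n}$-module (its homotopy is concentrated in degree $n$, where the positive-degree part of $\pi_*\atrun{n}$ acts trivially), yielding
\[
\Sigma^n \atrun{0}[-n] \xrightarrow{\tau^n} \atrun{n} \to \atrun{n-1}.
\]
Tensoring both with $M$ over $\atrun{n}$ produces the cofiber sequences
\[
\text{(A)}\quad \Sigma M'[-1] \xrightarrow{f} M \to C, \qquad \text{(B)}\quad \Sigma^n C[-n] \to M \to M'.
\]
Since $C \simeq \atrun{0} \otimes_{\atrun{n-1}} M'$, $M$ is a potential $n$-stage if and only if $M'$ is a potential $(n-1)$-stage, which by induction is equivalent to $\pi_* M' \simeq \pi_* \atrun{n-1} \otimes_{\pi_0 A} \pi_0 M$. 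The proof thus reduces to showing this is equivalent to the formula for $\pi_* M$.

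For the forward direction, assuming $C$ is discrete the long exact sequence of (A) immediately gives $\pi_k M \simeq (\pi_{k-1}M')[-1]$ for $k \geq 1$, which combined with the inductive formula for $\pi_* M'$ (and $\pi_{k-1}M' = 0$ for $k>n$) yields the desired identification. For the reverse direction, assume $\pi_* M \simeq \pi_* \atrun{n} \otimes_{\pi_0 A} \pi_0 M$ as $\pi_*\atrun{n}$-modules, so that $\tau$ acts as an isomorphism on $\pi_* M$ in the relevant range. The key observation is that the map $\tau: \Sigma \atrun{n}[-1] \to \atrun{n}$ factors as $\Sigma\atrun{n}[-1] \to \Sigma \atrun{n-1}[-1] \xrightarrow{\tau} \atrun{n}$ through the truncation; tensoring with $M$ identifies the connecting map $f_k: (\pi_{k-1}M')[-1] \to \pi_k M$ of (A) with the $\tau$-multiplication $(\pi_{k-1}M)[-1] \to \pi_k M$ composed with the inverse of the canonical map $\pi_{k-1}M \to \pi_{k-1}M'$, which is an isomorphism for $k-1 \leq n-1$ by (B). Hence $f_k$ is an isomorphism for $1 \leq k \leq n$, and the long exact sequence of (A) gives $\pi_k C = 0$ for $1 \leq k \leq n+1$. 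To extend this, I turn to (B): the connecting map $(\pi_0 M)[-n] \to \pi_n M$ there is identified with the $\tau^n$-multiplication, an isomorphism by hypothesis, yielding $\pi_n M' = \pi_{n+1}M' = 0$ and $\pi_k M' \simeq (\pi_{k-n-1}C)[-n]$ for $k \geq n+2$. Substituting these back into the long exact sequence of (A) produces the recursion $\pi_k C \simeq (\pi_{k-n-3}C)[-n-1]$ for $k \geq n+3$, which combined with the already-established vanishing $\pi_k C = 0$ for $1 \leq k \leq n+2$ forces $\pi_k C = 0$ for all $k \geq 1$ by induction on $k$, so $C$ is discrete.

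The main obstacle I anticipate is the reverse direction above degree $n$: neither (A) nor (B) alone produces the required vanishing, and closing the interlocking recursion requires carefully identifying the long exact sequence connecting maps of both sequences with the $\tau$- and $\tau^n$-actions, an identification that depends on the hypothesized isomorphism being compatible with the $\pi_*\atrun{n}$-module structure.
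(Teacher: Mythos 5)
Your proof is correct in its main lines and handles the forward direction essentially as the paper does — the paper works entirely with the single cofiber sequence you call (B), namely $\Sigma^n\atrun{0}[-n]\to\atrun{n}\to\atrun{n-1}$ tensored with $M$, and uses the same induction on $n$; your use of (A) in the forward direction gives the same computation by a slightly different route. Where you genuinely diverge is the reverse direction. The paper dispatches it in one sentence: ``the statement about homotopy groups implies that the fibre of $M\to\atrun{n-1}\otimes_{\atrun{n}}M$ is concentrated in homotopy degree $n$.'' Read naively, this is circular — asserting the fibre $\Sigma^nC[-n]$ lives only in degree $n$ is exactly the claim that $C$ is discrete, and knowing $\pi_*M$ alone does not immediately determine $\pi_*M'$ or $\pi_*C$ in degrees above $n$. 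Your interlocking recursion between (A) and (B), together with the identification of the (A)-connecting map with $\tau$-multiplication, is a genuine way to close that gap, and it is considerably more detailed than anything in the paper. (A shorter route to the same end is the base-change/Tor spectral sequence for $\pi_*(\atrun{n-1}\otimes_{\atrun{n}}M)$, which collapses because $\pi_*M$ is free over $\pi_*\atrun{n}$; this may be what the paper has tacitly in mind.)

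There are, however, two minor indexing slips in your reverse direction. First, the direct argument from (A) and the $\tau$-isomorphisms gives $\pi_kC=0$ for $1\le k\le n+1$, as you correctly state at first; the later reference to ``already-established vanishing $\pi_kC=0$ for $1\le k\le n+2$'' overstates what was shown at that point. Second, the recursion $\pi_kC\simeq(\pi_{k-n-3}C)[-n-1]$ should begin at $k\ge n+4$, not $k\ge n+3$: for $k=n+3$ your formula would yield $(\pi_0C)[-n-1]$, which is generally nonzero, whereas the long exact sequence of (A) directly gives $\pi_{n+3}C\simeq(\pi_{n+1}M')[-1]=0$. Both slips are easily repaired — the cases $k=n+2$ and $k=n+3$ follow from the (A)-sequence together with the vanishing $\pi_nM'=\pi_{n+1}M'=0$ you already established from (B), and then the $k\ge n+4$ recursion closes the induction — so the overall argument is sound.
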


\begin{proof}
This proof is analogous to the one given in \cref{prop:different_characterizations_of_periodic_objects}. In more detail, we have a cofibre sequence
\begin{equation*}
\Sigma^{n} \pi_{0} A [-n] \rightarrow \atrun{n} \rightarrow \atrun{n-1}
\end{equation*}
of right $\atrun{n}$-modules, in which the first map is induced from $\tau^{n}: \Sigma^{n} A[-n] \rightarrow A$ by applying $n$-truncation. Tensoring this with $M$ gives a cofibre sequence
\begin{equation*}
\Sigma^{n}(\pi_0A \otimes_{\atrun{n}} M)[-n] \to M \to \atrun{n-1} \otimes_{\atrun{n}} M.
\end{equation*}
Assume that $M$ is a potential $n$-stage, and assume for induction that the statement has been proved for $k < n$, the case of $n=0$ being trivial. Then in particular, we have
\[	\pi_*(\atrun{n-1} \otimes_{\atrun{n}} M) \simeq \pi_*\atrun{n-1} \otimes _{\pi_{0}A} \pi_0M.	\]
This is concentrated in homotopy degrees 0 through $n-1$, and likewise, using discreteness of $\pi_0A \otimes_{\atrun{n}} M$,
\[	\pi_*\Sigma^{n}(\pi_0A \otimes_{\atrun{n}} M)[-n] = \pi_0M[-n],	\]
concentrated in homotopy degree $n$. This implies the needed statement about homotopy groups.

Conversely, the statement about homotopy groups implies that the fibre of
\[	M \to \atrun{n-1} \otimes_{\atrun{n}} M	\]
is concentrated in homotopy degree $n$, which means that $\pi_0A \otimes_{\atrun{n}} M$ is discrete.
\end{proof}

\begin{rem}
\label{rem:any_potential_n_stage_is_an_n_type_and_defin_equivalent_to_classical_one}
It follows from \cref{lemma:homotopy_of_a_potential_n_stage_free} that our definition of a potential $n$-stage is equivalent to the classical one, as appearing in the work of Goerss and Hopkins \cite[3.3.1]{moduli_problems_for_structured_ring_spectra}. Note that there is no distinction between $\atrun{n}$ and $A$-modules in the present case, since by the above any potential $n$-stage is $n$-truncated and it follows by \cref{lemma:any_k_truncated_module_canonically_a_module_over_k_truncation} that its $\atrun{n}$-module structure is uniquely determined by the underlying $A$-module. 
\end{rem}

Recall from \cref{thm:modules_over_higher_truncation_as_modules_with_a_section} that we can identify $\Mod_{\atrun{n}}$ with the $\infty$-category $\ThetaSect_{\atrun{n-1}}$ of triples $(M, s, h)$,  where $M \in \Mod_{\atrun{n-1}}$ and $s: M \rightarrow \Theta M$ and $h$ is a distinguished homotopy $id_{M} \simeq \pi \circ s$ witnessing that $s$ is a section of the natural projection $\pi: \Theta M \rightarrow M$. 

Here, the source $\Theta M$, as defined in \cref{defin:hopkins_lurie_theta_functor}, is given as an object of $\ccat$ by 

\begin{center}
$\Theta M \simeq M \oplus (\Sigma^{n+1} \atrun{0}[-n] \otimes _{\atrun{n-1}} M)$,
\end{center}
but it comes with a possibly exotic $\atrun{n-1}$-module structure induced by restricting scalars along a possibly non-trivial derivation $d: \atrun{n-1} \rightarrow \atrun{n-1} \oplus \Sigma^{n+1} \atrun{0}[-n]$. 

We start by making a simple observation about the natural transformation $\pi$ from $\Theta$ to identity introduced in the discussion following \cref{defin:hopkins_lurie_theta_functor}.

\begin{prop}
\label{prop:projection_from_theta_n_connective}
Let $M \in \Mod_{\atrun{n-1}}$. Then, $\pi: \Theta M \rightarrow M$ is an $n$-equivalence.
\end{prop}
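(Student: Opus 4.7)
The plan is to compute the fibre of $\pi$ as an object of $\ccat$ and check that it has connectivity at least $n+1$, which is exactly what is needed for $\pi$ to be an $n$-equivalence.

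First I would recall the underlying splitting already noted in the discussion following \textbf{Definition \ref{defin:hopkins_lurie_theta_functor}}: as an object of $\ccat$ (ignoring the $\atrun{n-1}$-module structure twisted by $d$) we have
\begin{equation*}
\Theta M \simeq M \oplus (\Sigma F_n \otimes_{\atrun{n-1}} M).
\end{equation*}
Next, I would unpack the definition of $\pi$. By construction, it is the image under $d_*$ of the unit $d_0^*M \to p_*p^*d_0^*M$ of the adjunction $p^* \dashv p_*$, followed by the canonical identifications $p \circ d_0 \simeq \mathrm{id}$ and $p \circ d \simeq \mathrm{id}$. Tracing this through at the level of underlying objects in $\ccat$, where $d_0^*M$ has the split form $M \oplus (\Sigma F_n \otimes_{\atrun{n-1}} M)$ and the functor $p^*$ amounts to killing the summand $\Sigma F_n$, one verifies that $\pi$ is just the projection onto the first summand. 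In particular, the fibre of $\pi$ in $\ccat$ is $\Sigma F_n \otimes_{\atrun{n-1}} M$.

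Now I would compute the connectivity of this fibre. Since $F_n$ is the fibre of the square-zero extension $\atrun{n} \to \atrun{n-1}$, its homotopy is concentrated in degree $n$, so $\Sigma F_n$ is $(n+1)$-connective. Because the relative tensor product over the connective algebra $\atrun{n-1}$ preserves connectivity in each variable, $\Sigma F_n \otimes_{\atrun{n-1}} M$ is also $(n+1)$-connective; equivalently, its homotopy groups $\pi_k$ vanish for $k \leq n$.

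Finally, I would apply the long exact sequence associated to the fibre sequence $\Sigma F_n \otimes_{\atrun{n-1}} M \to \Theta M \xrightarrow{\pi} M$: the vanishing of $\pi_k$ of the fibre for $k \leq n$ immediately implies that $\pi_k(\pi): \pi_k \Theta M \to \pi_k M$ is an isomorphism for $k \leq n$, which is the definition of an $n$-equivalence. The only step needing any real care is the identification of $\pi$ on underlying objects as the projection, since the source carries a non-trivial $\atrun{n-1}$-module structure twisted by $d$; however the restriction-of-scalars functor to $\ccat$ is conservative and preserves limits, so the computation of the fibre in $\ccat$ suffices.
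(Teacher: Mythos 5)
Your proof is correct and follows essentially the same route as the paper: both arguments come down to the fact that $\pi$ arises by tensoring the projection $p\colon \atrun{n-1}\oplus\Sigma F_n \to \atrun{n-1}$ against a module, and that relative tensor products over connective algebras preserve connectivity (equivalently, $n$-equivalences) via the bar construction. You phrase this by computing the fibre of $\pi$ to be $\Sigma F_n \otimes_{\atrun{n-1}} M$ and observing it is $(n+1)$-connective, while the paper states directly that $\pi$ is an $n$-equivalence because $p$ is; these are the same argument.
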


\begin{proof}
Recall that the map $\pi$ is induced by the unit of the adjunction induced by the projection

\begin{center}
$p:  (\atrun{n-1} \oplus \Sigma^{n+1} \atrun{0} [-n]) \rightarrow \atrun{n-1}$. 
\end{center}
Then, it follows that $\pi$ is an $n$-equivalence since $p$ is, and $n$-equivalences are preserved by the relative tensor product because it can be computed using the bar construction.
\end{proof}

\begin{cor}
\label{cor:cofibre_of_map_from_theta_for_potential_stages}
Let $M \in \mathcal{M}_{n-1}$ be a potential $(n-1)$-stage. Then the cofibre of $\pi: \Theta M \rightarrow M$ is equivalent to $\Sigma^{n+2} \pi_{0}M [-n]$.
\end{cor}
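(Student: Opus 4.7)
The strategy is to reduce the cofibre computation to the level of underlying objects in $\ccat$, and there exploit the trivial section $d_0$ together with the potential-stage hypothesis.

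First, since the forgetful functor $\Mod_{\atrun{n-1}} \to \ccat$ preserves colimits, the underlying object of the cofibre of $\pi$ in $\Mod_{\atrun{n-1}}$ is the same as the cofibre of the underlying map in $\ccat$. The target $\Sigma^{n+2}\pi_{0}M[-n]$ is a priori a statement about underlying objects (or a discrete module), so it suffices to compute in $\ccat$. To identify $\pi$ underlyingly, I would use that $d_0$ is the trivial section of $p: \atrun{n-1} \oplus \Sigma F_{n} \to \atrun{n-1}$, so that $d_0^* M$ has underlying object $M \oplus (\Sigma F_{n} \otimes_{\atrun{n-1}} M)$, with $\Sigma F_{n} \simeq \Sigma^{n+1}\atrun{0}[-n]$. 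After restricting scalars along $d$ to form $\Theta M = d_* d_0^* M$, the module structure may twist, but the underlying splitting is unchanged.

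Next, I would identify $\pi$ underlyingly. Since the unit of $p^* \dashv p_*$ applied to $d_0^* M$ is, underlyingly, the map $(\atrun{n-1} \oplus \Sigma F_n) \otimes_{\atrun{n-1}} M \to \atrun{n-1} \otimes_{\atrun{n-1}} M$ induced by the projection $p$, the map $\pi$ in $\ccat$ is simply the projection of the split sum $M \oplus (\Sigma^{n+1}\atrun{0}[-n] \otimes_{\atrun{n-1}} M)$ onto its first summand. The cofibre of a split projection $X \oplus Y \to X$ is $\Sigma Y$, so I obtain
\[
	\mathrm{cof}(\pi) \simeq \Sigma\bigl(\Sigma^{n+1}\atrun{0}[-n] \otimes_{\atrun{n-1}} M\bigr) \simeq \Sigma^{n+2}\bigl(\atrun{0} \otimes_{\atrun{n-1}} M\bigr)[-n],
\]
using the compatibility of the grading with the tensor product.

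Finally, I would invoke the hypothesis that $M$ is a potential $(n-1)$-stage to rewrite $\atrun{0} \otimes_{\atrun{n-1}} M$. By \textbf{Definition \ref{defin:potential_n_stage}} this object is discrete, and by \textbf{Lemma \ref{lemma:homotopy_of_a_potential_n_stage_free}} the homotopy of $M$ is free over $\pi_{0}A$, so a short Tor computation (degenerating because $\pi_* M \simeq \pi_*\atrun{n-1} \otimes_{\pi_0A} \pi_0M$ is flat over $\pi_*\atrun{n-1}$) gives $\pi_0(\atrun{0} \otimes_{\atrun{n-1}} M) \simeq \pi_{0}M$. Combined with discreteness this yields $\atrun{0} \otimes_{\atrun{n-1}} M \simeq \pi_{0}M$, and substituting gives the desired equivalence $\mathrm{cof}(\pi) \simeq \Sigma^{n+2}\pi_{0}M[-n]$.

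The main obstacle is conceptual rather than computational: one must be careful that the twisted $\atrun{n-1}$-structure coming from the derivation $d$ does not affect the underlying cofibre, which is why I would explicitly reduce to $\ccat$ at the outset. The consistency check with \textbf{Proposition \ref{prop:projection_from_theta_n_connective}} (that $\pi$ is an $n$-equivalence, so the cofibre is $(n+1)$-connective) is automatic since $\Sigma^{n+2}\pi_{0}M[-n]$ is concentrated in homotopy degree $n+2$.
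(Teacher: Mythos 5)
Your proof follows essentially the same route as the paper: identify the underlying splitting of $\Theta M$, compute the cofibre of the split projection, and use the potential-stage hypothesis plus the identification of $\pi_0$ to land on $\Sigma^{n+2}\pi_0M[-n]$. One quibble: the parenthetical claim that $\pi_*M \simeq \pi_*\atrun{n-1}\otimes_{\pi_0A}\pi_0M$ is flat over $\pi_*\atrun{n-1}$ is not justified (it would require $\pi_0M$ flat over $\pi_0A$, which is not assumed), and no Tor computation is needed anyway --- right exactness of $\pi_0$ of the tensor product already gives $\pi_0(\atrun{0}\otimes_{\atrun{n-1}}M)\simeq\pi_0\atrun{0}\otimes_{\pi_0A}\pi_0M\simeq\pi_0M$, and discreteness of $\atrun{0}\otimes_{\atrun{n-1}}M$ (Definition~\ref{defin:potential_n_stage}) then finishes the identification.
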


\begin{proof}
We've seen that as an object of $\ccat$, $\Theta M \simeq M \oplus \Sigma^{n+1} (\atrun{0} \otimes _{\atrun{n-1}} M)$, and since $M$ is assumed to be a potential $(n-1)$-stage, the latter summand is a suspension of a discrete one. Since the map $\Theta M \rightarrow M$ is an $n$-equivalence by \cref{prop:projection_from_theta_n_connective}, the cofibre must be of the form given above. 
\end{proof}

\begin{thm}
\label{thm:obstructions_in_ext_groups_to_lifting_a_potential_stage}
Let $M \in \mathcal{M}_{n-1}$ be a potential $(n-1)$-stage. Then, there exists an obstruction $o_{M} \in \Ext_{\pi_{0}A} ^{n+2, n}(\pi_{0} M, \pi_{0} M)$ which vanishes if and only if $M$ can be lifted to a potential $n$-stage, that is, if there exists $M' \in \mathcal{M}_{n}$ with $M \simeq \atrun{n-1} \otimes _{\atrun{n}} M'$.
\end{thm}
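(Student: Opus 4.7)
The plan is to apply \textbf{Theorem \ref{thm:modules_over_higher_truncation_as_modules_with_a_section}} to translate the lifting problem into a section problem for $\pi : \Theta M \to M$, extract the obstruction from the cofibre sequence of \textbf{Corollary \ref{cor:cofibre_of_map_from_theta_for_potential_stages}}, and finally identify the resulting group via the base-change adjunction and \textbf{Theorem \ref{thm:modules_over_zero_truncation_same_as_the_derived_category}}.

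First I would observe, using \textbf{Theorem \ref{thm:modules_over_higher_truncation_as_modules_with_a_section}}, that producing an $\atrun{n}$-module $M'$ with $\atrun{n-1} \otimes_{\atrun{n}} M' \simeq M$ is equivalent to producing a pair $(s, h)$, where $s : M \to \Theta M$ is a map and $h : \pi \circ s \simeq \mathrm{id}_M$ is a homotopy. Such an $M'$ is automatically a potential $n$-stage, since
\[
\atrun{0} \otimes_{\atrun{n}} M' \simeq \atrun{0} \otimes_{\atrun{n-1}} (\atrun{n-1} \otimes_{\atrun{n}} M') \simeq \atrun{0} \otimes_{\atrun{n-1}} M,
\]
which is discrete by hypothesis. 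Hence the question becomes whether the homotopy fibre of the forgetful map $\pi_* : \map_{\atrun{n-1}}(M, \Theta M) \to \map_{\atrun{n-1}}(M, M)$ over $\mathrm{id}_M$ is non-empty.

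Next I would use the cofibre sequence $\Theta M \xrightarrow{\pi} M \xrightarrow{q} \Sigma^{n+2} \pi_{0} M [-n]$ provided by \textbf{Corollary \ref{cor:cofibre_of_map_from_theta_for_potential_stages}}. Viewed in the stabilization $\spectra(\Mod_{\atrun{n-1}})$, this is a fibre sequence, so applying $\Map(M, -)$ yields a fibre sequence of mapping spectra whose long exact sequence of homotopy groups contains
\[
\pi_0 \map(M, \Theta M) \xrightarrow{\pi_*} \pi_0 \map(M, M) \xrightarrow{q_*} \pi_0 \map(M, \Sigma^{n+2} \pi_{0} M[-n]).
\]
I would then set $o_M := q_*(\mathrm{id}_M)$, which is just the class of $q$ itself, and exactness immediately shows that $\mathrm{id}_M$ lies in the image of $\pi_*$---equivalently, that the required section $s$ and homotopy $h$ exist---if and only if $o_M = 0$.

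Finally I would identify the target of $o_M$ with the stated $\Ext$-group. The object $\Sigma^{n+2} \pi_0 M[-n]$ is naturally an $\atrun{0}$-module, viewed as an $\atrun{n-1}$-module by restriction along $\atrun{n-1} \to \atrun{0}$, so the base-change adjunction gives
\[
\map_{\atrun{n-1}}(M, \Sigma^{n+2} \pi_0 M[-n]) \simeq \map_{\atrun{0}}(\atrun{0} \otimes_{\atrun{n-1}} M, \Sigma^{n+2} \pi_0 M[-n]).
\]
Since $M$ is a potential $(n-1)$-stage, $\atrun{0} \otimes_{\atrun{n-1}} M$ is discrete and so agrees with $\pi_0 M$. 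Invoking \textbf{Theorem \ref{thm:modules_over_zero_truncation_same_as_the_derived_category}} to identify $\Mod_{\atrun{0}} \simeq \dcat(\Mod_{\pi_0 A}(\ccat^{\heartsuit}))_{\geq 0}$, together with the connectivity of $\Sigma^{n+2} \pi_0 M[-n]$ which lets the mapping space be computed equally in the unbounded derived category, the $\pi_0$ of the resulting mapping space becomes $\Ext^{n+2, n}_{\pi_0 A}(\pi_0 M, \pi_0 M)$ by \textbf{Remark \ref{rem:grading_on_heart}}. I expect the main subtlety to be verifying that all of the identifications involved---the base change, the identification $\atrun{0} \otimes_{\atrun{n-1}} M \simeq \pi_0 M$, and the equivalence with the derived category---are canonical and compose to land $o_M$ in the expected $\Ext$-group rather than one twisted by some automorphism.
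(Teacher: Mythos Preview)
Your proposal is correct and follows essentially the same approach as the paper: reduce to a section problem via \textbf{Theorem \ref{thm:modules_over_higher_truncation_as_modules_with_a_section}}, read off the obstruction as the connecting map in the cofibre sequence of \textbf{Corollary \ref{cor:cofibre_of_map_from_theta_for_potential_stages}}, and identify the target via base change and \textbf{Theorem \ref{thm:modules_over_zero_truncation_same_as_the_derived_category}}. Your explicit check that any lift $M'$ is automatically a potential $n$-stage and your remark about non-canonicality are both apt; the paper handles the latter in a remark immediately following the proof, noting that $o_M$ is only well-defined up to the action of $\Aut_{\Mod_{\pi_0 A}}(\pi_0 M)$.
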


\begin{proof}
By \cref{thm:modules_over_higher_truncation_as_modules_with_a_section}, $M$ extends to a potential $n$-stage if and only if there exists a section of $\Theta M \rightarrow M$. Then, by \cref{cor:cofibre_of_map_from_theta_for_potential_stages}, there is a cofibre sequence 

\begin{center}
$\Theta M \rightarrow M \rightarrow \Sigma^{n+2} \pi_{0} M [-n]$
\end{center}
and the first map admits a section if and only if the second is zero. Thus, the obstruction to the existence of a section lies in
\begin{center}
$\map_{\atrun{n-1}}(M, \Sigma^{n+2} \pi_{0}M [-n]) \simeq \map_{\pi_{0}A}(\pi_{0}A \otimes _{\atrun{n-1}} M, \Sigma^{n+2} \pi_{0}M [-n])$, 
\end{center}
where we have used that the target is canonically a $\pi_{0}A$-module, because it is a suspension of a discrete object. 

Since $M$ is a potential $(n-1)$-stage, we have an equivalence $\pi_{0}A \otimes _{\atrun{n-1}} M \simeq \pi_{0}M$ and so we can further rewrite the above mapping space as 
\[	\map_{\pi_{0}A}(\pi_{0}M, \Sigma^{n+2} \pi_{0}M [-n]).	\]
Through the equivalence $\Mod_{\atrun{0}}(\ccat) \simeq \dcat(\Mod_{\pi_{0}A}(\ccat^{\heartsuit}))_{\geq 0}$ of \cref{thm:modules_over_zero_truncation_same_as_the_derived_category}, this can be identified with the $\Ext$-group as in the statement of the theorem. 
\end{proof}

\begin{cor}
\label{cor:if_c_is_complete_then_we_have_an_obstruction_theory_to_constructing_a_periodic_a_module}
Suppose that $\ccat$ is complete. Then, for any $M \in \Mod_{\pi_{0}A}(\ccat^{\heartsuit})$ there exists a sequence of inductively defined obstructions in $\Ext_{\pi_0A} ^{n+2, n}(M, M)$, where $n \geq 1$, which vanish if and only if there exists a periodic $A$-module $P$ with $\pi_{0} P \simeq M$ as a $\pi_{0}A$-module.
\end{cor}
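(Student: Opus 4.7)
The plan is to reduce this corollary to an inductive application of \textbf{Theorem \ref{thm:obstructions_in_ext_groups_to_lifting_a_potential_stage}}, using the completeness hypothesis to glue finite potential stages into a periodic module. Specifically, \textbf{Remark \ref{rem:if_ambient_category_complete_infinite_stages_a_limit_of_finite_ones}} provides an equivalence $\mathcal{M}_\infty \simeq \varprojlim_n \mathcal{M}_n$, so producing a periodic $A$-module $P$ with $\pi_0 P \simeq M$ is the same as producing a compatible tower of potential $n$-stages $M_n \in \mathcal{M}_n$ with identifications $\atrun{n-1} \otimes_{\atrun{n}} M_n \simeq M_{n-1}$ and base case $M_0 \simeq M$ under $\mathcal{M}_0 \simeq \Mod_{\pi_0 A}(\ccat^\heartsuit)$.

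For the induction I would take $M_0 := M$, and then suppose that $M_{n-1} \in \mathcal{M}_{n-1}$ has been built together with an identification $\pi_0 M_{n-1} \simeq M$. \textbf{Theorem \ref{thm:obstructions_in_ext_groups_to_lifting_a_potential_stage}} yields an obstruction class
\[
\theta_n := o_{M_{n-1}} \in \Ext^{n+2, n}_{\pi_0 A}(\pi_0 M_{n-1}, \pi_0 M_{n-1}) \simeq \Ext^{n+2, n}_{\pi_0 A}(M, M),
\]
whose vanishing is equivalent to the existence of a lift to some $M_n \in \mathcal{M}_n$. Whenever $\theta_n$ vanishes, \textbf{Lemma \ref{lemma:homotopy_of_a_potential_n_stage_free}} applied to $M_n$ gives $\pi_0 M_n \simeq \pi_0(\atrun{n-1} \otimes_{\atrun{n}} M_n) \simeq \pi_0 M_{n-1} \simeq M$, so that the induction can proceed with a new identification on $\pi_0$ propagated forward.

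The main subtlety, and essentially the only one, is bookkeeping: one must check that the identifications $\pi_0 M_n \simeq M$ are chosen coherently along the tower, so that the successive obstructions genuinely live in the fixed family of groups $\Ext^{n+2,n}_{\pi_0 A}(M, M)$, and so that each $\theta_n$ depends only on the choices made at earlier stages (justifying the phrase ``inductively defined''). Since the comparison $\pi_0 M_n \simeq \pi_0 M_{n-1}$ is natural in $M_n$ and induced by the base-change map, this is routine. Once the compatible tower $\{M_n\}$ has been built, it corresponds via \textbf{Remark \ref{rem:if_ambient_category_complete_infinite_stages_a_limit_of_finite_ones}} to an object of $\mathcal{M}_\infty = \Mod_A^{per}(\ccat)$, giving the desired periodic $A$-module $P$ with $\pi_0 P \simeq M$. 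Conversely, if such a $P$ exists then its truncations $P_n := \atrun{n} \otimes_A P$ realize the tower, forcing each $\theta_n$ to vanish.
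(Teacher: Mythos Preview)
Your proposal is correct and follows essentially the same approach as the paper: both invoke \textbf{Remark \ref{rem:if_ambient_category_complete_infinite_stages_a_limit_of_finite_ones}} to identify $\mathcal{M}_\infty \times_{\mathcal{M}_0} \{M\}$ with the limit of the $\mathcal{M}_n \times_{\mathcal{M}_0} \{M\}$, and then reduce to an inductive application of \textbf{Theorem \ref{thm:obstructions_in_ext_groups_to_lifting_a_potential_stage}}. Your version is more explicit about the $\pi_0$-bookkeeping and the converse direction, but the argument is the same.
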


\begin{proof}
Since $\ccat$ is complete, we have $\mathcal{M}_{\infty} \simeq \varprojlim \mathcal{M}_{n}$ by \cref{rem:if_ambient_category_complete_infinite_stages_a_limit_of_finite_ones}.  It then follows that the $\infty$-category of periodic $A$-modules satisfying the above condition can be identified with 

\begin{center}
$\mathcal{M}_{\infty} \times _{\mathcal{M}_{0}} \{ M \} \simeq \varprojlim \mathcal{M}_{n} \times _{\mathcal{M}_{0}} \times \{ M \}$. 
\end{center}
It follows that to a construct a point $P$ in this space it is enough to give a compatible sequence $M_{n} \simeq \mathcal{M}_{n}$ such that $M_{0} \simeq M$. The statement is then immediate from \cref{thm:obstructions_in_ext_groups_to_lifting_a_potential_stage}.
\end{proof}

\begin{rem}
One has to be careful, as the obstruction $o_{M} \in \Ext_{\pi_0A}^{n+2, n}(\pi_{0} M, \pi_{0} M)$ of \cref{thm:obstructions_in_ext_groups_to_lifting_a_potential_stage} associated to a potential $(n-1)$-stage $M \in \mathcal{M}_{n-1}$ is not canonical.

To see this, notice that the obstruction depends on the choice of an equivalence between the cofibre of $\Theta M \rightarrow M$ and $\Sigma^{n+2} \pi_{0}M [-n]$, so that the obstruction is only well-defined up to the action of the automorphism group $Aut_{\Mod_{\pi_{0}A}}(\pi_{0} M)$. Note that, in particular, whether the obstruction vanishes does not depend on that choice. 
\end{rem}

Now that we've given an obstruction to lifting a potential $(n-1)$-stage to a potential $n$-stage, we will also relate the mapping spaces between the two.

\begin{prop}
\label{prop:fibre_sequence_of_mapping_spaces_of_linear_potential_stages}
Let $M, N \in \mathcal{M}_{n}$ be potential $n$-stages and let $uM := \atrun{n-1} \otimes _{\atrun{n}} M$ and $uN := \atrun{n-1} \otimes _{\atrun{n}} N$ denote their images in $\mathcal{M}_{n-1}$. Then, there's a fibre sequence

\begin{center}
$\map_{\mathcal{M}_{n}}(M, N) \rightarrow \map_{\mathcal{M}_{n-1}}(uM, uN) \rightarrow \map_{D(\Mod_{\pi_{0}A}(\ccat^{\heartsuit}))}(\pi_{0} M, \Sigma^{n+1} \pi_{0} N [-n])$,
\end{center}
where on the right we have the mapping space in the derived category.
\end{prop}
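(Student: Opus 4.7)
The plan is to use the adjunction between extension and restriction of scalars along $\atrun{n} \to \atrun{n-1}$, combined with the shift algebra structure to compute the relevant fibre. Write $u: \Mod_{\atrun{n}} \to \Mod_{\atrun{n-1}}$ for the extension of scalars and $u^{R}$ for its right adjoint (restriction of scalars). By adjunction there is an equivalence $\map_{\atrun{n-1}}(uM, uN) \simeq \map_{\atrun{n}}(M, u^{R}uN)$, and the unit map $N \to u^{R}uN$ in $\Mod_{\atrun{n}}$ induces the forgetful map $\map_{\mathcal{M}_{n}}(M, N) \to \map_{\mathcal{M}_{n-1}}(uM, uN)$ after applying $\map_{\atrun{n}}(M, -)$. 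Thus the proposition reduces to computing the fibre of the unit and extending the resulting fibre sequence of mapping spaces one step to the right.

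To identify this fibre, denote by $F_{n}$ the fibre of $\atrun{n} \to \atrun{n-1}$ as an $\atrun{n}$-bimodule, so there is a fibre sequence $F_{n} \otimes_{\atrun{n}} N \to N \to u^{R}uN$ obtained by tensoring $F_{n} \to \atrun{n} \to \atrun{n-1}$ with $N$ over $\atrun{n}$. Since $A$ is a shift algebra, $\pi_{*}F_{n}$ is concentrated in degree $n$ with $\pi_{n}F_{n} \simeq \pi_{n}A \simeq \pi_{0}A[-n]$, and since $F_{n}$ is $n$-truncated both its left and right $\atrun{n}$-actions factor through $\atrun{0} = \pi_{0}A$; one concludes $F_{n} \simeq \Sigma^{n}(\pi_{0}A)[-n]$ as a $\pi_{0}A$-bimodule. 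Using that $N$ is a potential $n$-stage, so that $\atrun{0} \otimes_{\atrun{n}} N \simeq \pi_{0}N$, we obtain
\[
F_{n} \otimes_{\atrun{n}} N \simeq \Sigma^{n}(\pi_{0}A)[-n] \otimes_{\pi_{0}A} \pi_{0}N \simeq \Sigma^{n}\pi_{0}N[-n].
\]

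Applying $\map_{\atrun{n}}(M, -)$ and extending the fibre sequence one step to the right (i.e.\ using the cofibre presentation $N \to u^{R}uN \to \Sigma^{n+1}\pi_{0}N[-n]$ in $\spectra(\Mod_{\atrun{n}})$, which $\map_{\atrun{n}}(M, -)$ turns into a fibre sequence of infinite loop spaces) yields a fibre sequence
\[
\map_{\atrun{n}}(M, N) \to \map_{\atrun{n}}(M, u^{R}uN) \to \map_{\atrun{n}}(M, \Sigma^{n+1}\pi_{0}N[-n]).
\]
The middle term is $\map_{\mathcal{M}_{n-1}}(uM, uN)$ by adjunction. For the third term, since $\Sigma^{n+1}\pi_{0}N[-n]$ has homotopy concentrated in a single degree, its $\atrun{n}$-module structure factors through $\atrun{0}$, so adjunction together with the potential $n$-stage condition on $M$ gives $\map_{\atrun{n}}(M, \Sigma^{n+1}\pi_{0}N[-n]) \simeq \map_{\pi_{0}A}(\pi_{0}M, \Sigma^{n+1}\pi_{0}N[-n])$; this is precisely the mapping space in $\dcat(\Mod_{\pi_{0}A}(\ccat^{\heartsuit}))$ via \textbf{Theorem \ref{thm:modules_over_zero_truncation_same_as_the_derived_category}}. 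The main technical obstacle is the bimodule analysis of $F_{n}$ — that the shift algebra hypothesis really does force both sides of $F_{n}$ to be $\pi_{0}A$-linear so the tensor product over $\atrun{n}$ simplifies as above — together with being careful that the one-step extension of the fibre sequence uses the infinite loop space structure on each of these mapping spaces (available since we may compute them in the stable $\infty$-category $\Mod_{\atrun{n}}(\spectra(\ccat))$).
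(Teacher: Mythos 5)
Your proof is correct and follows essentially the same strategy as the paper's: identify the forgetful map with the map induced by truncating $N$, and compute the fibre. The main presentational difference is that the paper invokes \textbf{Remark \ref{rem:any_potential_n_stage_is_an_n_type_and_defin_equivalent_to_classical_one}} and \textbf{Lemma \ref{lemma:homotopy_of_a_potential_n_stage_free}} to pass to $A$-modules and to identify $\mathrm{fib}(N \to uN) \simeq \Sigma^n \pi_0 N[-n]$ directly, whereas you re-derive this by analyzing the bimodule $F_n = \mathrm{fib}(\atrun{n} \to \atrun{n-1})$ and computing $F_n \otimes_{\atrun{n}} N$; this bimodule computation is essentially the content of the proof of \textbf{Lemma \ref{lemma:homotopy_of_a_potential_n_stage_free}}, so you've inlined it rather than cited it. One small imprecision: you justify the claim that the $\atrun{n}$-actions on $F_n$ factor through $\pi_0 A$ by saying $F_n$ is $n$-truncated, but $n$-truncatedness alone is not enough (for instance $\atrun{n}$ is $n$-truncated and its self-action does not factor through $\pi_0 A$); what does the work is that $F_n$ has homotopy concentrated in the single degree $n$, hence is $\Sigma^n$ of a discrete object, whose $\atrun{n}$-module structure factors through $\pi_0 A$ by \textbf{Lemma \ref{lemma:any_k_truncated_module_canonically_a_module_over_k_truncation}} with $k = 0$. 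Also note that only the \emph{right} $\atrun{n}$-action on $F_n$ enters the computation of $F_n \otimes_{\atrun{n}} N$; the claim about the left action is true but unnecessary here.
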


\begin{proof}
By \cref{rem:any_potential_n_stage_is_an_n_type_and_defin_equivalent_to_classical_one}, the middle and left mapping spaces can be computed in $A$-modules. Moreover, \cref{lemma:homotopy_of_a_potential_n_stage_free} implies that the maps $M \rightarrow uM$ and $N \rightarrow uN$ exhibit their targets as the $(n-1)$-truncation of the source, so that we can replace $uM, uN$ by $M_{\leq n-1}, N_{\leq n-1}$. 

By adjunction, we have $\map_{A}(M, N_{\leq n-1}) \simeq \map_{A}(M_{\leq n-1}, N_{\leq n-1})$, and the left arrow in the statement can be identified with the morphism $\map_{A}(M, N) \rightarrow \map_{A} (M, N_{\leq n-1})$ induced by the truncation $N \rightarrow N_{\leq n-1}$. It follows that there's a fibre sequence 

\begin{center}
$\map_{A}(M, N) \rightarrow \map_{A}(M, N_{\leq n-1}) \rightarrow \map_{A}(M, \Sigma F)$,
\end{center}
where $F$ is the fibre of $N \rightarrow N_{\leq n-1}$. To prove the statement, it is thus enough to identify the right mapping space with the one computed in the derived $\infty$-category. 

Again, by \cref{lemma:homotopy_of_a_potential_n_stage_free} we see that $F \simeq \Sigma^{n} \pi_{0} N[-n]$, which is a suspension of a discrete object and so canonically a $\pi_{0}A$-module. We deduce that there's an equivalence

\begin{center}
$\map_{A}(M, \Sigma F) \simeq \map_{\pi_{0}A}(\pi_{0}A \otimes _{\atrun{n}} M, \Sigma^{n+1} \pi_{0}N [-n]) \simeq \map_{\pi_{0}A}(\pi_{0}M, \Sigma^{n+1} \pi_{0}N [-n])$
\end{center}
where the first equivalence is an application of adjunction, and in the second one we use that $M$ is a potential $n$-stage. Then, one sees that under the equivalence of \cref{thm:modules_over_zero_truncation_same_as_the_derived_category} the above mapping space corresponds to the one in the statement of the proposition.
\end{proof}

\begin{cor}\label{cor:mapping_space_spectral_sequence_linear}
Suppose that $\ccat$ is complete. Let $M$, $N$ be periodic $A$-modules. Then there is a spectral sequence of signature
\[	E_1^{s,t} = \Ext^{2s-t,s}_{\pi_0A}(\pi_0M, \pi_0N), \quad t \ge s \ge 0,	\]
converging conditionally to $\pi_*\map_A(M,N)$.
\end{cor}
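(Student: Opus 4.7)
The plan is to apply the standard Bousfield--Kan spectral sequence of a tower of spaces to the tower of mapping spaces obtained from the Goerss--Hopkins tower of potential stages, and then identify its $E_1$-page using the fibre sequence of \textbf{Proposition \ref{prop:fibre_sequence_of_mapping_spaces_of_linear_potential_stages}}.

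First, I would use completeness. Writing $M_n := \atrun{n} \otimes_A M$ and $N_n := \atrun{n} \otimes_A N$ for the potential $n$-stages associated to the periodic $A$-modules $M$ and $N$, \textbf{Remark \ref{rem:if_ambient_category_complete_infinite_stages_a_limit_of_finite_ones}} gives $\mathcal{M}_\infty \simeq \varprojlim_n \mathcal{M}_n$, and therefore a natural equivalence
\[
\map_A(M,N) \;\simeq\; \varprojlim_n \map_{\mathcal{M}_n}(M_n, N_n).
\]
This exhibits the target mapping space as the inverse limit of a tower of spaces, which is precisely the input for the Bousfield--Kan homotopy spectral sequence. That spectral sequence converges conditionally to $\pi_\ast$ of the limit and has $E_1^{s,t} = \pi_{t-s}(F_s)$, where $F_s$ is the fibre of the map $\map_{\mathcal{M}_s}(M_s, N_s) \to \map_{\mathcal{M}_{s-1}}(M_{s-1}, N_{s-1})$ at the relevant basepoint.

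Next I would identify these fibres. By \textbf{Proposition \ref{prop:fibre_sequence_of_mapping_spaces_of_linear_potential_stages}} applied to the truncations $M_s, N_s$, there is a fibre sequence
\[
\map_{\mathcal{M}_s}(M_s, N_s) \;\longrightarrow\; \map_{\mathcal{M}_{s-1}}(M_{s-1}, N_{s-1}) \;\longrightarrow\; \map_{D(\Mod_{\pi_0 A})}(\pi_0 M,\, \Sigma^{s+1} \pi_0 N[-s]),
\]
so the fibre $F_s$ is the loop space (at the relevant point) of $\map_{D(\Mod_{\pi_0 A})}(\pi_0 M,\, \Sigma^{s+1} \pi_0 N[-s])$. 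Using the definition of the bigraded Ext groups recalled in \textbf{Remark \ref{rem:grading_on_heart}}, we compute
\[
\pi_{t-s}(F_s) \;\simeq\; \pi_{t-s+1}\,\map_{D(\Mod_{\pi_0 A})}(\pi_0 M,\, \Sigma^{s+1} \pi_0 N[-s]) \;\simeq\; \Ext^{s-(t-s),\, s}_{\pi_0 A}(\pi_0 M, \pi_0 N) \;=\; \Ext^{2s-t,\, s}_{\pi_0 A}(\pi_0 M, \pi_0 N),
\]
which is the claimed $E_1$-term. The range $t \ge s \ge 0$ is forced: $s \ge 0$ indexes the tower, and $t \ge s$ is needed for $\pi_{t-s}$ to be defined.

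There is no real hard step here; the argument is essentially the assembly of \textbf{Remark \ref{rem:if_ambient_category_complete_infinite_stages_a_limit_of_finite_ones}} and \textbf{Proposition \ref{prop:fibre_sequence_of_mapping_spaces_of_linear_potential_stages}} through the machinery of tower spectral sequences. The only points requiring mild care are the basepoint bookkeeping (one works over a compatible family of basepoints in the tower, which is automatic once one fixes a map in the limit, and conditional convergence accommodates the remaining components) and the index translation from the loop-space shift to the bigraded Ext indexing, which is carried out above.
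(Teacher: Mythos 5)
Your proof is correct and follows essentially the same route as the paper: completeness gives the tower identification of $\map_A(M,N)$, the fibre sequence of \textbf{Proposition \ref{prop:fibre_sequence_of_mapping_spaces_of_linear_potential_stages}} identifies the layers, and the Bousfield--Kan spectral sequence of the tower gives the result, with your index translation agreeing with the paper's. The only place where the paper is slightly more careful is the basepoint issue: rather than fixing a map in the limit (which presupposes the limit is nonempty), the paper observes that each fibre $F_s \simeq \map_{D(\Mod_{\pi_0 A})}(\pi_0 M, \Sigma^s \pi_0 N[-s])$ is a topological abelian group, hence canonically pointed by zero, so the spectral sequence is well-defined without any choice; your phrase about conditional convergence accommodating the remaining components is pointing in the same direction but is a bit looser.
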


\begin{proof}
Let 
\[	\mathrm{maps}_s := \map_{\atrun{s}}(M \otimes_{A} \atrun{s}, N \otimes_{A} \atrun{s}).	\]
By the previous proposition, the fiber of $\mathrm{maps}_s \to \mathrm{maps}_{s-1}$ is
\[	F_s \simeq \map_{D(\pi_0A)}(\pi_0M, \Sigma^s \pi_0N[-s]).	\]
Since these are mapping spaces in the derived category of an abelian category, they are all topological abelian groups abelian groups, and in particular all pointed by the zero map. The Bousfield-Kan spectral sequence for the homotopy groups of a tower \cite{bousfield1972homotopy} then takes the form
\[	E_1^{s,t} = \pi_{t-s}F_s = \Ext^{2s-t,s}_{\pi_0A}(\pi_0M, \pi_0N), \quad t \ge s \ge 0.	\]
Since $\ccat$ is complete,
\[	\map_A(M,N) = \lim_s \mathrm{maps}_s,	\]
so the spectral sequence converges conditionally to $\pi_*\map_A(M,N)$.
\end{proof}

\begin{rem}\label{rem:conditional_convergence}
For conditional convergence, see \cite{boardman1999conditionally} or \cite[IX.5.4]{bousfield1972homotopy}. The spectral sequence converges completely to $\pi_*\map_A(M,N)$ if the derived limit
\[	\lim{}^1 (\dotsb \subseteq E_r^{r+2,r+2+i} \subseteq E_r^{r+1,r+1+i})	\]
vanishes for each fixed $r$ and $i$. This condition holds in a wide variety of cases, for example, if each $E_r^{s,t}$ is finitely generated for some fixed $r$.
\end{rem}

\begin{rem}
Our two results about the functor $\mathcal{M}_{n} \rightarrow \mathcal{M}_{n-1}$, namely the obstructions to lifting of \cref{thm:obstructions_in_ext_groups_to_lifting_a_potential_stage} and the relation between the mapping spaces of \cref{prop:fibre_sequence_of_mapping_spaces_of_linear_potential_stages}, are combined in the work of Goerss and Hopkins into a single theorem by constructing a pullback square involving the $\infty$-groupoids of potential $n$-stages and certain spaces of algebraic nature \cite[3.3.5]{moduli_problems_for_structured_ring_spectra}. We decided to separate these two results for clarity of exposition.
\end{rem}

\begin{rem}
We decided not to write at this level of generality, but if one is only interested in linear obstruction theory, then the shift algebra $A$ can be dispensed with entirely. What suffices is to have a complete, graded prestable $\infty$-category $\ccat$ together with a \emph{thread structure}; that is, a transformation $\tau: \Sigma M[-1] \rightarrow M$ natural in $M \in \ccat$. This is enough to characterize periodic objects and one should again ask that $\ccat$ is generated by such under colimits.

One can show as in \cref{thm:modules_over_zero_truncation_same_as_the_derived_category} that the $\infty$-categories $\Mod_{\atrun{n}}(\ccat)$ considered above can be identified with the universal $n$-complicial approximation to the prestable $\infty$-category $\ccat$ of Lurie, see \cite[C.5]{lurie_spectral_algebraic_geometry}.  The latter notion makes sense without the presence of the algebra $A$, providing the needed tower of $\infty$-categories. Our appeal to deformation theory of algebras needs to be replaced by an explicit calculation as in \cite[3.3]{pstrkagowski2017moduli}, but the results contained in this section, in particular, \cref{cor:if_c_is_complete_then_we_have_an_obstruction_theory_to_constructing_a_periodic_a_module} and \cref{cor:mapping_space_spectral_sequence_linear}, hold without any change. 
\end{rem}

\section{Goerss-Hopkins obstruction theory}

In this section we give a multiplicative extension of the linear theory developed in the previous one, constructing obstructions to realizing an algebra in $\Mod_{\pi_{0}A}(\ccat^{\heartsuit})$ by a periodic $\mathbf{E}_{k}$-algebra in $\Mod_{A}$. The obstructions will lie in suitable Andr\'{e}-Quillen cohomology groups of $\pi_{0} A$-algebras. 

Again, we assume that $\ccat$ is a graded symmetric monoidal, separated Grothendieck prestable $\infty$-category. We fix a \emph{commutative} shift algebra $A \in \textnormal{CAlg}(\ccat)$ such that $\Mod_{A}$ is generated under colimits by periodic modules.

Our arguments will follow rather closely the linear version established before, with the important difference that the arguments involving the prestability of the $\infty$-categories of modules will need to be replaced by the deformation theory of algebras.

\begin{defin}
\label{defin:a_potential_n_stage_for_an_ek_algebra}
A \emph{potential $n$-stage for an $\ekoperad{k}$-algebra} is an $\ekoperad{k}$-algebra $R$ in $\Mod_{\atrun{n}}$ whose underlying $\atrun{n}$-module is a potential $n$-stage. We denote the $\infty$-category of potential $n$-stages for an $\ekoperad{k}$-algebra by $\textnormal{Alg}_{\ekoperad{k}}(\mathcal{M}_{n})$.
\end{defin}
Being more verbose, one could say that $R$ is a \emph{potential $n$-stage to realizing the $\pi_{0}A$-algebra $\pi_{0} R$ as homotopy of a periodic $\mathbf{E}_{k}$-algebra}, but we will refrain from doing so. Note that the notation $\textnormal{Alg}_{\ekoperad{k}}(\mathcal{M}_{n})$ is slightly abusive, since the $\infty$-category $\mathcal{M}_{n}$ of potential $n$-stages is not symmetric monoidal: by $\textnormal{Alg}_{\ekoperad{k}}(\mathcal{M}_{n})$, we really mean $\textnormal{Alg}_{\ekoperad{k}}(\Mod_{\atrun{n}}) \times _{\Mod_{\atrun{n}}} \mathcal{M}_{n}$. 

Going back to \cref{defin:potential_n_stage}, we see that a potential $n$-stage $R$ for an $\ekoperad{k}$-algebra is an object $R \in \textnormal{Alg}_{\ekoperad{k}}(\Mod_{\atrun{n}})$ such that $\pi_{0}A \otimes _{\atrun{n}} R$ is discrete. In this case $\pi_{0} A \otimes _{\atrun{n}} R \simeq \pi_{0} R$ is an algebra in $\Mod_{\pi_{0} A}(\ccat^{\heartsuit})$, commutative if $k \geq 2$. 

\begin{rem}
\label{rem:multiplicative_potential_infty_stages_limit_of_finite_ones}
If $\ccat$ is complete, so that $\mathcal{M}_{\infty} \simeq \varprojlim \mathcal{M}_{n}$ by \cref{rem:if_ambient_category_complete_infinite_stages_a_limit_of_finite_ones}, then we also have $\textnormal{Alg}_{\ekoperad{k}}(\mathcal{M}_{\infty}) \simeq \textnormal{Alg}_{\ekoperad{k}}(\mathcal{M}_{n})$ by the same argument.
\end{rem}

Recall that by \cref{thm:modules_over_higher_truncation_as_modules_with_a_section}, there's an equivalence between $\Mod_{\atrun{n}}$ and the $\infty$-category $\ThetaSect_{A_{n-1}}$ of objects $M \in \Mod_{\atrun{n-1}}$ equipped with a section $s: M \rightarrow \Theta M$.of the natural projection $\pi:\Theta M \to M$. 

The above equivalence can be promoted to a symmetric monoidal one, see \cref{rem:theta_lax_symmetric_monoidal}, and so extends to an equivalence between $\ekoperad{k}$-algebras in $\Mod_{\atrun{n}}$ and $\ekoperad{k}$-algebras in $\Mod_{\atrun{n-1}}$ equipped with a section which is also a map of $\ekoperad{k}$-algebras. Thus, we can determine which potential $(n-1)$-stages for $\ekoperad{k}$-algebras lift to potential $n$-stages by giving sufficient and necessary conditions for such a section to exist.

\begin{lemma}
\label{lemma:projection_from_theta_is_a_square_zero_extension}
Let $R$ be a potential $(n-1)$-stage for an $\ekoperad{k}$-algebra. Then, the map $\pi:\Theta R \rightarrow R$ is a square-zero extension of the $\ekoperad{k}$-$\atrun{n-1}$-algebra $R$ by the $R$-$\ekoperad{k}$-module $\Sigma^{n+1} \pi_{0}R [-n]$.
\end{lemma}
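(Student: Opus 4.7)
The plan is to realize $\pi:\Theta R \to R$ as a restriction of scalars along $d$ of a square-zero extension in $\ekoperad{k}$-$B$-algebras, where $B = \atrun{n-1} \oplus \Sigma F_{n}$.

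First, I would observe that $d_0^*R = R \otimes_{\atrun{n-1}, d_0} B$ is, as a $\ekoperad{k}$-$B$-algebra, the base change of the trivial square-zero extension $B \to \atrun{n-1}$ along the structure map $\atrun{n-1}\to R$. In particular, the induced map $d_0^*R \to R$ (obtained from the algebra projection $p: B \to \atrun{n-1}$) exhibits $d_0^*R$ as the trivial square-zero extension of $R$ by $\Sigma F_{n} \otimes_{\atrun{n-1}} R$ in $\ekoperad{k}$-$B$-algebras. Using that $R$ is a potential $(n-1)$-stage, we have $\pi_{0}A \otimes_{\atrun{n-1}} R \simeq \pi_{0}R$, so this module simplifies to $\Sigma^{n+1}\pi_{0}R[-n]$, matching the expected fibre computed from \textbf{Corollary \ref{cor:cofibre_of_map_from_theta_for_potential_stages}}.

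Second, I would apply the restriction of scalars $d_*$ along the algebra map $d:\atrun{n-1} \to B$, which is a right adjoint and therefore preserves pullbacks. By definition $d_*d_0^*R = \Theta R$, and restriction along $d$ sends the square-zero extension pullback diagram for $d_0^*R \to R$ to an analogous pullback diagram in $\ekoperad{k}$-$\atrun{n-1}$-algebras exhibiting $\pi:\Theta R \to R$ as a square-zero extension of $R$ by $\Sigma^{n+1}\pi_{0}R[-n]$. Concretely, the classifying derivation is obtained by base-changing $d$ along the structure map $\atrun{n-1}\to R$; this derivation measures the twist introduced by restriction of scalars, and in the trivial case $d = d_0$ recovers the trivial square-zero extension. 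The lax symmetric monoidality of $\Theta = d_*d_0^*$ noted in \textbf{Remark \ref{rem:theta_lax_symmetric_monoidal}} ensures that all constructions above are compatible with the $\ekoperad{k}$-algebra structure.

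The main obstacle will be verifying rigorously that the pullback characterization of a square-zero extension is preserved under restriction of scalars along $d$, producing a pullback square in $\textnormal{Alg}_{\ekoperad{k}}(\Mod_{\atrun{n-1}})$ rather than merely in $\textnormal{Alg}_{\ekoperad{k}}(\Mod_{B})$. The key input is that $d$ and $d_{0}$ are both sections of $p$, which implies the trivial section and the base-changed derivation of the pullback description in $B$-algebras become genuine $\atrun{n-1}$-algebra maps after applying $d_*$; combined with preservation of finite limits by $d_*$, this gives the desired pullback description and hence identifies $\pi$ as a square-zero extension of the claimed form.
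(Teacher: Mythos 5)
Your proposal has a genuine gap in Step~1, which then undermines the rest of the argument. You claim that the map $d_0^*R \to R$ (with $R$ viewed as a $B$-algebra $p_*R$) is a \emph{trivial} square-zero extension in $\ekoperad{k}$-$B$-algebras, where $B = \atrun{n-1} \oplus \Sigma F_n$. This is false, and in fact cannot be true for the following reason: as you correctly note in Step~2, restriction of scalars $d_*$ preserves pullbacks, and (because the $B$-module structure on the target $p_*R$ and on the trivial square-zero extension over it factors through $p$, with $p\circ d = \mathrm{id}$) it takes the zero derivation to the zero derivation and the split square-zero extension of $p_*R$ to the split square-zero extension of $R$. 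If $d_0^*R \to p_*R$ were a trivial square-zero extension of $B$-algebras, then applying $d_*$ would produce a \emph{trivial} square-zero extension $\pi: \Theta R \to R$. But then $\pi$ would always admit a section, every potential $(n-1)$-stage would lift to a potential $n$-stage, and the obstructions of \textbf{Theorem \ref{thm:obstructions_to_lifting_in_multiplicative_case}} would be identically zero, which is absurd.

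The source of the error is a conflation of two different module structures. It is true that base change of the split square-zero extension $B \to \atrun{n-1}$ along the structure map $\atrun{n-1} \to R$ gives a split square-zero extension of $\atrun{n-1}$-algebras (or $R$-algebras). But the $B$-module structure on $d_0^*R \simeq B \otimes_{\atrun{n-1},d_0} R$ is by left multiplication on the $B$-factor, so $\Sigma F_n \subset B$ acts nontrivially --- it maps the $R$-summand into the $\Sigma F_n \otimes R$-summand. In a split square-zero extension of $p_*R$, by contrast, $\Sigma F_n$ acts by zero, since the $B$-action factors through $p$. Concretely, the candidate section $r \mapsto 1 \otimes r$ is an $R$-algebra map but is not $B$-linear. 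It is exactly this discrepancy, carried over by $d_*$, that produces the ``exotic'' $\atrun{n-1}$-module structure on $\Theta R$ and the nontrivial classifying derivation, so it is essential that $d_0^*R \to p_*R$ not be split.

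Incidentally, even establishing the weaker statement that $d_0^*R \to p_*R$ is a (not necessarily trivial) square-zero extension of $B$-algebras requires an input like \cite[7.4.1.26]{higher_algebra}, since the pushout base change you use does not preserve square-zero extensions in general. At that point one might as well apply \cite[7.4.1.26]{higher_algebra} directly to $\pi$, which is what the paper does: by \textbf{Corollary \ref{cor:cofibre_of_map_from_theta_for_potential_stages}} the fibre of $\pi$ is $\Sigma^{n+1}\pi_0 R[-n]$, concentrated in a single positive homotopy degree, and \cite[7.4.1.26]{higher_algebra} then immediately gives that $\pi$ is a square-zero extension, with no pullback square to exhibit by hand.
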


\begin{proof}
It is clear that $\pi$ is an $\ekoperad{k}$-$\atrun{n-1}$-algebra map, and it follows from \cref{cor:cofibre_of_map_from_theta_for_potential_stages} that its fibre is $\Sigma^{n+1} \pi_{0}R [-n]$. As this fibre has homotopy concentrated in a single positive degree, $\pi$ is a square-zero extension, by \cite[7.4.1.26]{higher_algebra}.
\end{proof}

The $\ekoperad{k}$-$\atrun{n-1}$-algebra $R$ has a cotangent complex $\LL_{R/\atrun{n-1}}^{\ekoperad{k}}$, see \cite[7.3, 7.4]{higher_algebra}, which has the property that there exists a natural equivalence of $\infty$-groupoids
\begin{equation*}
\left\{ \parbox{.5\textwidth}{Square-zero extensions of $\ekoperad{k}$-$\atrun{n-1}$-algebras $\widetilde{R} \to R$ by the $R$-module $M$} \right\}
\quad \simeq \quad \mathrm{map}^{\ekoperad{k}}_R(\LL_{R/\atrun{n-1}}^{\ekoperad{k}}, \Sigma M),
\end{equation*}
where the mapping space on the right is computed in $\ekoperad{k}$-$R$-modules. In particular, an extension $\widetilde{R} \to R$ is split, that is, admits a section, if and only if the classifying map from the cotangent complex to the suspension of the fibre is null. 

\begin{thm}[Goerss-Hopkins, obstructions to lifting objects]
\label{thm:obstructions_to_lifting_in_multiplicative_case}
Let $R$ be a potential $(n-1)$-stage for an $\ekoperad{k}$-algebra. Then, there exists an obstruction in the Andr\'e-Quillen cohomology group
\begin{center}
$\Ext^{n+2, n}(\mathbb{L}_{\pi_{0} R}^{\ekoperad{k}}, \pi_{0} R)$
\end{center}
which vanishes if and only if $R$ can be lifted to a potential $n$-stage, where the $\Ext$-group is computed in the $\infty$-category of $\ekoperad{k}$-modules over $\pi_0R$ in the derived $\infty$-category $\dcat(\Mod_{\pi_0A}(\ccat^\heartsuit))$.
\end{thm}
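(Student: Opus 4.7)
The strategy is to emulate the argument of \textbf{Theorem \ref{thm:obstructions_in_ext_groups_to_lifting_a_potential_stage}}, replacing the general cofibre-splitting argument, which relied on prestability, by the deformation theory of $\ekoperad{k}$-algebras via the cotangent complex.

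First, I would invoke \textbf{Theorem \ref{thm:modules_over_higher_truncation_as_modules_with_a_section}}, together with the symmetric monoidal enhancement noted in \textbf{Remark \ref{rem:theta_lax_symmetric_monoidal}}, to identify $\textnormal{Alg}_{\ekoperad{k}}(\mathcal{M}_{n})$ with the $\infty$-category of $\ekoperad{k}$-algebras $R$ in $\Mod_{\atrun{n-1}}$ equipped with a section $s: R \to \Theta R$ (as $\ekoperad{k}$-algebras) of the canonical projection $\pi: \Theta R \to R$. Thus lifting a potential $(n-1)$-stage $R$ to a potential $n$-stage is equivalent to producing such a section in the $\infty$-category $\textnormal{Alg}_{\ekoperad{k}}(\Mod_{\atrun{n-1}})_{/R}$.

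Next, by \textbf{Lemma \ref{lemma:projection_from_theta_is_a_square_zero_extension}}, $\pi: \Theta R \to R$ is a square-zero extension of the $\ekoperad{k}$-$\atrun{n-1}$-algebra $R$ by the $\ekoperad{k}$-$R$-module $M = \Sigma^{n+1} \pi_{0} R [-n]$. By the universal property of the cotangent complex $\LL_{R/\atrun{n-1}}^{\ekoperad{k}}$ recalled above the statement, such square-zero extensions are classified by a map
\[
\eta_{R} : \LL_{R/\atrun{n-1}}^{\ekoperad{k}} \longrightarrow \Sigma M \simeq \Sigma^{n+2}\pi_{0}R[-n]
\]
in $\ekoperad{k}$-$R$-modules, and $\pi$ admits a section of $\ekoperad{k}$-algebras if and only if $\eta_R$ is null. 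I would then take the obstruction $\theta_{R}$ to be the homotopy class of this map.

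Finally, I would rewrite $[\eta_R]$ as the claimed Ext group. Since the target $\Sigma^{n+2}\pi_{0}R[-n]$ is a suspension of a discrete object, it is canonically an $\ekoperad{k}$-module over $\pi_{0}R$, so adjunction identifies
\[
\map^{\ekoperad{k}}_{R}\bigl(\LL_{R/\atrun{n-1}}^{\ekoperad{k}}, \Sigma^{n+2}\pi_{0}R[-n]\bigr) \simeq \map^{\ekoperad{k}}_{\pi_{0}R}\bigl(\pi_{0}R\otimes_{R}\LL_{R/\atrun{n-1}}^{\ekoperad{k}}, \Sigma^{n+2}\pi_{0}R[-n]\bigr).
\]
Using the base-change property of the cotangent complex along the pushout square $\atrun{n-1}\to R$, $\atrun{n-1}\to\pi_{0}A$, whose pushout is $\pi_{0}A\otimes_{\atrun{n-1}}R\simeq \pi_{0}R$ (this is precisely the potential $(n-1)$-stage condition), one has $\pi_{0}R\otimes_{R}\LL_{R/\atrun{n-1}}^{\ekoperad{k}}\simeq \LL_{\pi_{0}R/\pi_{0}A}^{\ekoperad{k}}$. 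Through the equivalence $\Mod_{\atrun{0}}\simeq \dcat(\Mod_{\pi_{0}A}(\ccat^{\heartsuit}))_{\geq 0}$ of \textbf{Theorem \ref{thm:modules_over_zero_truncation_same_as_the_derived_category}}, applied at the level of $\ekoperad{k}$-algebras and their modules, this mapping space computes $\Ext^{n+2,n}(\LL_{\pi_{0}R}^{\ekoperad{k}},\pi_{0}R)$ in the stated sense.

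The main obstacle I anticipate is the last step: carefully establishing that the $\ekoperad{k}$-cotangent complex is compatible with the pushout of $\ekoperad{k}$-algebras $R\otimes_{\atrun{n-1}}\pi_{0}A\simeq \pi_{0}R$, and that the equivalence of \textbf{Theorem \ref{thm:modules_over_zero_truncation_same_as_the_derived_category}} is appropriately symmetric monoidal so that it lifts to $\ekoperad{k}$-algebras and their modules in a way compatible with the formation of cotangent complexes. Modulo these formal points, which are standard consequences of the general theory of $\ekoperad{k}$-algebras and their deformation theory developed in \cite{higher_algebra}[7.3--7.4], the obstruction is well defined and the lift exists precisely when it vanishes.
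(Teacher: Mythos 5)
Your proposal matches the paper's proof essentially step for step: the reduction to producing a section of $\pi:\Theta R\to R$ via Theorem \ref{thm:modules_over_higher_truncation_as_modules_with_a_section} and Remark \ref{rem:theta_lax_symmetric_monoidal}, the identification of $\pi$ as a square-zero extension classified by a map out of the cotangent complex (Lemma \ref{lemma:projection_from_theta_is_a_square_zero_extension}), the base-change adjunction using that the target is a $\pi_0A$-module, the base-change formula for the cotangent complex \cite[7.3.3.7]{higher_algebra} together with the potential $(n-1)$-stage hypothesis to rewrite $\pi_0R\otimes_R\LL^{\ekoperad{k}}_{R/\atrun{n-1}}\simeq\LL^{\ekoperad{k}}_{\pi_0R/\pi_0A}$, and finally Theorem \ref{thm:modules_over_zero_truncation_same_as_the_derived_category} to land in the derived category. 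The only cosmetic difference is that you write the extension of scalars as $\pi_0R\otimes_R(-)$ where the paper writes $\pi_0A\otimes_{\atrun{n-1}}(-)$, which agree since $\pi_0R\simeq\pi_0A\otimes_{\atrun{n-1}}R$.
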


\begin{proof}
By the equivalence of \cref{thm:modules_over_higher_truncation_as_modules_with_a_section}, we know $R$ can be lifted to a potential $n$-stage if and only if $\pi: \Theta R \rightarrow R$ admits a section. By \cref{lemma:projection_from_theta_is_a_square_zero_extension}, $\pi$ is a square-zero extension of $\atrun{n-1}$-algebras and so is classified by an element of 

\begin{center}
$\pi_{0} \map^{\ekoperad{k}}_{R} (\mathbb{L}_{R / \atrun{n-1}}^{\ekoperad{k}}, \Sigma^{n+2} \pi_{0}R [-n])$ 
\end{center}
which vanishes if and only if there exists a section.

Notice that the target of this mapping space is a suspension of a discrete object and so is canonically a $\pi_{0} A$-module. Thus, we have
\begin{align*}
	\pi_{0} \map^{\ekoperad{k}}_{R} (\mathbb{L}_{R / \atrun{n-1}}^{\ekoperad{k}}, \Sigma^{n+2} \pi_{0} R [-n]) &\cong \pi_0 \map^{\ekoperad{k}}_{\pi_0 R}(\pi_0 A \otimes_{\atrun{n-1}} \LL_{R/\atrun{n-1}}^{\ekoperad{k}}, \Sigma^{n+2} \pi_0 R[-n]),
\end{align*}
where we have used that $R$ is a potential $(n-1)$-stage so that $\pi_{0} A \otimes_{\atrun{n-1}} R \simeq \pi_{0}R$, and further 

\begin{center}
$\pi_0 \map^{\ekoperad{k}}_{\pi_{0}R}(\pi_0 A \otimes_{\atrun{n-1}} \LL_{R/\atrun{n-1}}^{\ekoperad{k}}, \Sigma^{n+2} \pi_0 R[-n]) \simeq \map^{\ekoperad{k}}_{\pi_0R}(\LL_{\pi_0 R/\pi_0 A}^{\ekoperad{k}}, \Sigma^{n+2}\pi_0R[-n])$
\end{center}
by the base change formula for the cotangent complex of \cite[7.3.3.7]{higher_algebra}. Finally, the path components of the above mapping space correspond to the $\Ext$-group given in the statement of the theorem under the equivalence $\dcat(\Mod_{\pi_{0} A}(\ccat^{\heartsuit})) \simeq \Mod_{\pi_{0} A}(\ccat)$ of  \cref{thm:modules_over_zero_truncation_same_as_the_derived_category}.
\end{proof}

\begin{cor}
\label{cor:if_c_is_complete_we_have_inductive_obstructions_to_existence_of_periodic_algebras}
Suppose that $\ccat$ is complete and let $S$ be a $\pi_{0} A$-algebra in $\ccat^{\heartsuit}$, commutative if $k \geq 2$. Then, there exists a sequence of inductively defined obstructions in $\Ext^{n+2, n}(\mathbb{L}_{S}^{\ekoperad{k}}, S)$, where $n \geq 1$ and the extensions are computed in $\ekoperad{k}$-$\pi_{0}R$-modules in $\dcat(\Mod_{\pi_{0} A}(\ccat^{\heartsuit}))$, which vanish if and only if there exists a periodic $\ekoperad{k}$-$A$-algebra $R$ such that $\pi_{0} R \simeq S$ as $\pi_{0} A$-algebras. 
\end{cor}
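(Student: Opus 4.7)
The plan is to reduce to the one-step obstruction theorem \textbf{Theorem \ref{thm:obstructions_to_lifting_in_multiplicative_case}} by expressing the space of realizations as a limit of a tower, exactly as in the proof of the linear case \textbf{Corollary \ref{cor:if_c_is_complete_then_we_have_an_obstruction_theory_to_constructing_a_periodic_a_module}}.

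First, I would use completeness of $\ccat$ to identify $\textnormal{Alg}_{\ekoperad{k}}(\mathcal{M}_\infty) \simeq \varprojlim_n \textnormal{Alg}_{\ekoperad{k}}(\mathcal{M}_n)$ by \textbf{Remark \ref{rem:multiplicative_potential_infty_stages_limit_of_finite_ones}}. The base of the tower, $\textnormal{Alg}_{\ekoperad{k}}(\mathcal{M}_0)$, is the $\infty$-category of $\ekoperad{k}$-$\pi_0A$-algebras in $\ccat^{\heartsuit}$, and the composite $\textnormal{Alg}_{\ekoperad{k}}(\mathcal{M}_\infty) \to \textnormal{Alg}_{\ekoperad{k}}(\mathcal{M}_0)$ can be identified with the functor $R \mapsto \pi_0 R$. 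Thus the $\infty$-groupoid of periodic $\ekoperad{k}$-$A$-algebras $R$ with $\pi_0R \simeq S$ is the fiber $\textnormal{Alg}_{\ekoperad{k}}(\mathcal{M}_\infty) \times_{\textnormal{Alg}_{\ekoperad{k}}(\mathcal{M}_0)} \{S\}$, which by the displayed equivalence becomes $\varprojlim_n \bigl(\textnormal{Alg}_{\ekoperad{k}}(\mathcal{M}_n) \times_{\textnormal{Alg}_{\ekoperad{k}}(\mathcal{M}_0)} \{S\}\bigr)$. Constructing a point in this limit is equivalent to giving a compatible sequence of lifts $R_n \in \textnormal{Alg}_{\ekoperad{k}}(\mathcal{M}_n)$ starting with $R_0 = S$.

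The inductive construction proceeds by applying \textbf{Theorem \ref{thm:obstructions_to_lifting_in_multiplicative_case}} at each stage: given $R_{n-1}$, the obstruction to lifting it to some $R_n$ lives in $\Ext^{n+2,n}(\mathbb{L}^{\ekoperad{k}}_{\pi_0R_{n-1}}, \pi_0R_{n-1})$ (computed in $\ekoperad{k}$-$\pi_0R_{n-1}$-modules in the derived $\infty$-category). The crucial point is that since $R_{n-1}$ is a potential $(n-1)$-stage, \textbf{Lemma \ref{lemma:homotopy_of_a_potential_n_stage_free}} gives $\pi_0R_{n-1} \simeq \pi_0A \otimes_{\atrun{n-1}} R_{n-1} \simeq \pi_0A \otimes_{\atrun{n-1}} \atrun{n-1} \otimes_{\atrun{0}} S \simeq S$ as $\pi_0A$-algebras; this identification holds uniformly in $n$, independent of the earlier choices of lifts. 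Hence the obstruction group is canonically $\Ext^{n+2,n}(\mathbb{L}^{\ekoperad{k}}_S, S)$ as asserted, and if it vanishes we can choose a lift and proceed.

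The main subtlety here is a bookkeeping point rather than a hard mathematical one: although the obstruction classes $\theta_n$ depend on the previously chosen lifts, the \emph{groups} in which they live depend only on $S$, since $\pi_0R_n \simeq S$ at every stage. This is what makes the obstructions ``inductively defined'' in the sense of the statement. If all these obstructions vanish, one assembles the compatible system $(R_n)$ and takes the limit to produce the desired periodic $\ekoperad{k}$-$A$-algebra; conversely, the existence of such an $R$ provides the compatible system of lifts via extension of scalars along $A \to \atrun{n}$, forcing each obstruction to vanish.
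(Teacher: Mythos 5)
Your proposal follows exactly the same route as the paper: use \textbf{Remark \ref{rem:multiplicative_potential_infty_stages_limit_of_finite_ones}} to express the fiber over $\{S\}$ as a limit of the moduli of potential $n$-stages, and then apply \textbf{Theorem \ref{thm:obstructions_to_lifting_in_multiplicative_case}} inductively at each stage. The paper's proof is essentially a one-line reduction to these two facts (by analogy with \textbf{Corollary \ref{cor:if_c_is_complete_then_we_have_an_obstruction_theory_to_constructing_a_periodic_a_module}}), and yours is a correct expansion of it.

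One small but genuine error in your chain of identifications: you write $\pi_0A \otimes_{\atrun{n-1}} R_{n-1} \simeq \pi_0A \otimes_{\atrun{n-1}} (\atrun{n-1} \otimes_{\atrun{0}} S)$, but a potential $(n-1)$-stage $R_{n-1}$ is \emph{not} equivalent to the base change $\atrun{n-1} \otimes_{\atrun{0}} S$ — that would be a very special (typically nonexistent) module, since the entire point is that $R_{n-1}$ encodes nontrivial higher information beyond $S$. The correct reason that $\pi_0 R_{n-1} \simeq S$ is the inductive one you already state in the next paragraph: $R_0 = S$ and each lift $R_n \to R_{n-1}$ induces an equivalence $\pi_0 R_n \simeq \pi_0 R_{n-1}$, since extension of scalars along $\atrun{n} \to \atrun{n-1}$ is a $0$-equivalence. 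Relatedly, the phrase ``independent of the earlier choices of lifts'' is a slight overstatement — the isomorphism $\pi_0 R_{n-1} \simeq S$ is furnished \emph{by} the chosen system of lifts, not canonically — but as you note in the following paragraph, what matters is that the ambient Ext-group depends only on $S$, and that is correct.
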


\begin{proof}
As in the linear case covered in \cref{cor:if_c_is_complete_then_we_have_an_obstruction_theory_to_constructing_a_periodic_a_module}, this is immediate from \cref{thm:obstructions_to_lifting_in_multiplicative_case} and $\textnormal{Alg}_{\ekoperad{k}}(\mathcal{M}_{\infty}) \times _{\textnormal{Alg}_{\ekoperad{k}}(\mathcal{M}_{0})} \{ S \} \simeq \varprojlim \textnormal{Alg}_{\ekoperad{k}}(\mathcal{M}_{n}) \times _{\textnormal{Alg}_{\ekoperad{k}}(\mathcal{M}_{0})} \{ S \}$, which is a consequence of \cref{rem:multiplicative_potential_infty_stages_limit_of_finite_ones}.
\end{proof}

\begin{rem}
\label{rem:obstructions_in_derived_category_in_commutative_case}
In the particular case of the commutative operad, an $\ekoperad{\infty}$-$\pi_{0}R$-module is the same as a left $\pi_{0} R$-module, so that the obstructions of \cref{thm:obstructions_to_lifting_in_multiplicative_case} live in $\Ext$-groups computed in the $\infty$-category of left $\pi_{0} R$-modules in $\dcat(\Mod_{\pi_{0}A}(\ccat^{\heartsuit}))$. Under very weak technical conditions, the latter admits a much easier description.

That is, suppose that either $\pi_{0} R$ is flat over $\pi_{0}A$ or that $\Mod_{A}(\ccat)$ is generated by periodic $A$-modules $M$ such that $\pi_{0} M$ is flat over $\pi_{0} A$. In either case, it follows that 
\begin{center}
$\pi_{0} R \otimes M \simeq (\pi_{0} R\otimes _{\pi_{0}A} \pi_{0}A) \otimes M \simeq \pi_{0} R \otimes _{\pi_{0}A} M$
\end{center}
is discrete, so that the $\infty$-category $\Mod_{\pi_{0}R}(\ccat)$ is generated by discrete objects. Then, the same argument as in the proof of \cref{thm:modules_over_zero_truncation_same_as_the_derived_category} shows that $\Mod_{\pi_{0}R}(\ccat) \simeq \dcat(\Mod_{\pi_{0}R}(\ccat^{\heartsuit}))_{\geq 0}$. 

Using the aforementioned theorem, one rewrites 

\begin{center}
$\Mod_{\pi_{0}R}(\dcat(\Mod_{\pi_{0}A}(\ccat^{\heartsuit}))) \simeq \Mod_{\pi_{0}R}(\Mod_{\pi_{0}A}(\ccat)) \simeq \Mod_{\pi_{0}R}(\ccat)$, 
\end{center}
which combined with the equivalence of the previous paragraph shows that the obstructions of \cref{thm:obstructions_to_lifting_in_multiplicative_case} can be in fact computed in $\dcat(\Mod_{\pi_{0}R}(\ccat^{\heartsuit}))$, the derived $\infty$-category of discrete $\pi_{0}R$-modules. 
\end{rem}

\begin{prop}[Goerss-Hopkins, obstruction to lifting maps]
\label{prop:relation_between_mapping_spaces_between_potential_stages_in_the_multiplicative_case}
Let $R, S$ be potential $n$-stages for an $\ekoperad{k}$-algebra and $\phi: uR \rightarrow uS$ be a map of corresponding potential $(n-1)$-stages. Then, there's an equivalence

\begin{center}
$F_{\phi} \simeq P_{0, \phi^{\prime}} \map_{\pi_{0}R} ^{\ekoperad{k}}(\mathbb{L}_{\pi_{0}R / \pi_{0} A}^{\ekoperad{k}}, \Sigma^{n+1} \pi_{0} S[-n])$,
\end{center}
between the fibre over $\phi$ of $\map_{\textnormal{Alg}_{\ekoperad{k}}(\mathcal{M}_{n})}(R, S) \rightarrow \map_{\textnormal{Alg}_{\ekoperad{k}}(\mathcal{M}_{n-1})}(uR, uS)$ and the space of paths between $0$ and a certain morphism $\phi^{\prime}: \mathbb{L}_{\pi_{0}R / \pi_{0} A}^{\ekoperad{k}} \rightarrow \Sigma^{n+1} \pi_{0} S[-n]$.
\end{prop}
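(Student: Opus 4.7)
The plan is to leverage the equivalence from \textbf{Theorem \ref{thm:modules_over_higher_truncation_as_modules_with_a_section}} (promoted to $\ekoperad{k}$-algebras via \textbf{Remark \ref{rem:theta_lax_symmetric_monoidal}}) to recast a map $R \to S$ of potential $n$-stages as the data of a map $\phi: uR \to uS$ of the underlying potential $(n-1)$-stages together with a homotopy $\alpha: \sigma_S \circ \phi \simeq \Theta\phi \circ \sigma_R$ in $\map^{\ekoperad{k}}_{\atrun{n-1}}(uR, \Theta uS)$. The fibre $F_\phi$ is then the space of such $\alpha$; since $\pi \circ \sigma_S = \mathrm{id}$ and $\pi$ is natural, both endpoints of $\alpha$ are lifts of $\phi$ along $\pi: \Theta uS \to uS$, and $F_\phi$ is therefore the space of paths between these two lifts inside the fibre $L_\phi$ of $\pi_*: \map^{\ekoperad{k}}_{\atrun{n-1}}(uR, \Theta uS) \to \map^{\ekoperad{k}}_{\atrun{n-1}}(uR, uS)$ over $\phi$.

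Next I would invoke \textbf{Lemma \ref{lemma:projection_from_theta_is_a_square_zero_extension}}, which exhibits $\pi$ as a square-zero extension of $\ekoperad{k}$-$\atrun{n-1}$-algebras by $\Sigma^{n+1}\pi_0 S[-n]$, classified by a derivation $\eta: \LL^{\ekoperad{k}}_{uS/\atrun{n-1}} \to \Sigma^{n+2}\pi_0 S[-n]$. The standard theory of square-zero extensions (\cite[7.4]{higher_algebra}) identifies $L_\phi$ with the space of nullhomotopies of $\phi^{*}\eta$ inside $\map^{\ekoperad{k}}_{uR}(\LL^{\ekoperad{k}}_{uR/\atrun{n-1}}, \Sigma^{n+2}\pi_0 S[-n])$, i.e., with the path space from $0$ to $\phi^{*}\eta$ therein. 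Applying the $\ekoperad{k}$-cotangent complex base-change formula of \cite[7.3.3.7]{higher_algebra} together with the potential-stage identity $\pi_0 A \otimes_{\atrun{n-1}} uR \simeq \pi_0 R$ — exactly as in the proof of \textbf{Theorem \ref{thm:obstructions_to_lifting_in_multiplicative_case}} — rewrites this mapping space as $W := \map^{\ekoperad{k}}_{\pi_0 R}(\LL^{\ekoperad{k}}_{\pi_0 R / \pi_0 A}, \Sigma^{n+2}\pi_0 S[-n])$.

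Now $F_\phi$ is a path space inside $L_\phi$, and since $L_\phi$ is itself a path space in the topological abelian group $W$, the path space of $L_\phi$ between any two chosen points is a torsor over $\Omega L_\phi \simeq \Omega^{2} W = \map^{\ekoperad{k}}_{\pi_0 R}(\LL^{\ekoperad{k}}_{\pi_0 R / \pi_0 A}, \Sigma^{n}\pi_0 S[-n])$. Equivalently, since this last mapping space is the loop space of $\map^{\ekoperad{k}}_{\pi_0 R}(\LL^{\ekoperad{k}}_{\pi_0 R / \pi_0 A}, \Sigma^{n+1}\pi_0 S[-n])$, any such torsor can be presented canonically as a path space therein from $0$ to some class. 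Tracking through the identifications yields the desired equivalence $F_\phi \simeq P_{0,\phi'}\map^{\ekoperad{k}}_{\pi_0 R}(\LL^{\ekoperad{k}}_{\pi_0 R / \pi_0 A}, \Sigma^{n+1}\pi_0 S[-n])$, with $\phi'$ the class measuring the discrepancy between $\sigma_S \circ \phi$ and $\Theta\phi \circ \sigma_R$ as elements of $L_\phi$.

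The hard part will be carefully matching the higher coherences built into the $\infty$-category $\ThetaSect_{\atrun{n-1}}$ (whose mapping spaces are defined via functor categories and inner fibrations, per \textbf{Remark \ref{rem:construction_of_the_infty_categories_of_tuples}}) with the naive description of $F_\phi$ in terms of the single homotopy $\alpha$; one must check that the witnesses $h_R, h_S$ of the section properties contribute only contractible spaces of choices and do not modify the answer. A subsequent task is to identify $\phi'$ explicitly as a natural morphism $\LL^{\ekoperad{k}}_{\pi_0 R/\pi_0 A} \to \Sigma^{n+1}\pi_0 S[-n]$, which can be done by comparing the derivations that classify the two square-zero extensions $\Theta uR \to uR$ and $\phi^{*}(\Theta uS) \to uR$ through the natural map $\LL^{\ekoperad{k}}_{uR/\atrun{n-1}} \to \phi^{*}\LL^{\ekoperad{k}}_{uS/\atrun{n-1}}$ of cotangent complexes.
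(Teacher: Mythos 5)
The paper's own proof takes a different, cleaner route than yours, and the comparison is instructive. The paper does not invoke the $\ThetaSect$ description at all for this result. Instead, it observes (via \textbf{Remark \ref{rem:any_potential_n_stage_is_an_n_type_and_defin_equivalent_to_classical_one}} and \textbf{Lemma \ref{lemma:homotopy_of_a_potential_n_stage_free}}) that $uR \simeq R_{\le n-1}$ and $uS \simeq S_{\le n-1}$, so by adjunction $\map(uR, uS) \simeq \map_A^{\ekoperad{k}}(R, S_{\le n-1})$, and the map $\map(R,S) \to \map(uR, uS)$ is simply post-composition with the Postnikov truncation $S \to S_{\le n-1}$. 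Since $S \to S_{\le n-1}$ is a square-zero extension of $\ekoperad{k}$-$A$-algebras by $\Sigma^n\pi_0 S[-n]$ (by \cite[7.4.1.26]{higher_algebra}), the fibre $F_\phi$ is read off directly from \cite[7.4.1.8]{higher_algebra} as a path space in $\map^{\ekoperad{k}}_R(\LL^{\ekoperad{k}}_{R/\atrun{n}}, \Sigma^{n+1}\pi_0 S[-n])$, and a single base-change rewrites this in the form of the statement. One application of the square-zero lifting theory, one path space; no nesting.

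Your proposal, by contrast, routes through \textbf{Theorem \ref{thm:modules_over_higher_truncation_as_modules_with_a_section}} and analyzes the $\Theta$-section data of a morphism. This forces you to work with a path space $P_{a,b}L_\phi$ inside a space $L_\phi$ that is \emph{itself} a path space in a mapping space, and then collapse this double path space. That collapse can indeed be done (and you arrive at the right homotopy type, since $a = s_S\circ\phi$ and $b = \Theta\phi\circ s_R$ are explicit points of $L_\phi$, so $L_\phi$ is automatically nonempty and you can shift the path space one level down). But the argument has two acknowledged gaps that are not trivial to close: first, the identification of $F_\phi$ with the naive space of homotopies $\alpha$ presupposes that the higher coherence data in the morphism space of $\ThetaSect_{\atrun{n-1}}$ (the $2$-cell filling the triangle made from $\alpha$, $h_R$, and $h_S$) contributes contractibly, and this requires its own fibration argument analogous to the one in the proof of \textbf{Theorem \ref{thm:modules_over_higher_truncation_as_modules_with_a_section}}; second, the presentation of the resulting $\Omega^2 W$-torsor as a path space $P_{0,\phi'}\Omega_0 W$ with an explicit $\phi'$ is asserted but not carried out, and extracting the class $\phi'$ from the difference $b-a$ in $L_\phi$ under all the identifications is exactly where the content of the obstruction lies. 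So while your route is plausible and lands on the right answer, the paper's is substantively shorter because it never enters the $\ThetaSect$ picture: it exchanges your nested path-space computation for a single adjunction move, and the obstruction class falls out immediately from \cite[7.4.1.8]{higher_algebra} without any torsor bookkeeping.
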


\begin{proof}
By \cref{rem:any_potential_n_stage_is_an_n_type_and_defin_equivalent_to_classical_one}, the mapping spaces in $\textnormal{Alg}_{\ekoperad{k}}(\mathcal{M}_{n})$ and $\textnormal{Alg}_{\ekoperad{k}}(\mathcal{M}_{n-1})$ can be computed in $\ekoperad{k}$-$A$-algebras. Then, \cref{lemma:homotopy_of_a_potential_n_stage_free} implies that the natural maps $R \rightarrow uR$ and $S \rightarrow uS$ identify their targets with Postnikov truncation of the former, so that we can replace $uR, uS$ by $R_{\leq n-1}$ and $S_{\leq n-1}$. Thus, the fibre we're trying to describe can be identified with the fibre of $\map_{\textnormal{Alg}_{\ekoperad{k}}(\Mod_{A})}(R, S) \rightarrow \map_{\textnormal{Alg}_{\ekoperad{k}}(\Mod_{A})}(R, S_{\leq n-1})$.

Again, by \cite[7.4.1.26]{higher_algebra} $S$ is a square-zero extension of $S_{\leq n-1}$ by $\Sigma^{n} \pi_{0}S [-n]$. Then, by \cite[7.4.1.8]{higher_algebra} the relevant fibre can be described as the space of paths 

\begin{center}
$P_{0, \widetilde{\phi}} \map_{R}^{\ekoperad{k}} (\mathbb{L}^{\ekoperad{k}}_{R / \atrun{n}}, \Sigma^{n+1} S_{\leq 0}[-n])$,
\end{center}
where $\widetilde{\phi}$ is the element classifying the square-zero extension $S \times _{S_{\leq n-1}} R \rightarrow R$. 

Observe that the target of the above mapping space is canonically a $\pi_{0} A$-module, so that using the base change formula for the cotangent complex and $\pi_{0} A \times _{\atrun{n}} R \simeq \pi_{0} R$ we can rewrite this mapping space as $\map_{\pi_{0}R} ^{\ekoperad{k}} (\mathbb{L}^{\ekoperad{k}}_{\pi_{0} R / \pi_{0} A}, \Sigma^{n+1} \pi_{0} S[-n])$, which ends the argument.
\end{proof}

The statement of \cref{prop:relation_between_mapping_spaces_between_potential_stages_in_the_multiplicative_case}, involving the path space of a mapping space, can be perhaps a little mysterious at first sight. However, it immediately yields obstructions to lifting maps as well as control over the space of possible lifts, see \cref{rem:obstruction_to_lifting_morphism_of_potential_stages} and \cref{rem:fibre_of_map_between_mapping_spaces_of_potential_stages} below. 

\begin{rem}
\label{rem:obstruction_to_lifting_morphism_of_potential_stages}
Notice that \cref{prop:relation_between_mapping_spaces_between_potential_stages_in_the_multiplicative_case} implies that if $R, S$ are potential $n$-stages for $\ekoperad{k}$-algebras and $\phi: uR \rightarrow uS$ is a map of the corresponding potential $(n-1)$-stages, then $\phi$ lifts to a map $R \rightarrow S$ if and only if the corresponding element $\phi^{\prime}: \mathbb{L}_{\pi_{0}R / \pi_{0} A}^{\ekoperad{k}} \rightarrow \Sigma^{n+1} \pi_{0} S[-n]$ is nullhomotopic. Thus, the homotopy class of the latter determines an obstruction to lifting $\phi$.
\end{rem}

\begin{rem}
\label{rem:fibre_of_map_between_mapping_spaces_of_potential_stages}
In the case $\phi: R^\prime \rightarrow S^\prime$ does lift to a map $R \rightarrow S$, \cref{prop:relation_between_mapping_spaces_between_potential_stages_in_the_multiplicative_case} implies that the fibre $F_{\phi}$ of $\map_{\textnormal{Alg}_{\ekoperad{k}}(\mathcal{M}_{n})}(R, S) \rightarrow \map_{\textnormal{Alg}_{\ekoperad{k}}(\mathcal{M}_{n-1})}(uR, uS)$ can be described as 

\begin{center}
$F_{\phi} \simeq \Omega (\map_{\pi_{0}R} ^{\ekoperad{k}} (\mathbb{L}_{\pi_{0}R / \pi_{0} A}^{\ekoperad{k}}, \Sigma^{n+1} \pi_{0} S[-n])).$
\end{center}
By prestability, this is equivalent to
\[	\map_{\pi_{0}R} ^{\ekoperad{k}} (\mathbb{L}_{\pi_{0}R / \pi_{0} A}^{\ekoperad{k}}, \Sigma^{n} \pi_{0} S[-n]).	\]
\end{rem}

\begin{rem}
By applying \cref{rem:obstruction_to_lifting_morphism_of_potential_stages} to the case of $uR = uS$ and $\phi = id$, we obtain an obstruction to the uniqueness of a lift of a potential $(n-1)$-stage for an $\ekoperad{k}$-algebra, complementing the obstruction to existence of \cref{thm:obstructions_to_lifting_in_multiplicative_case}. To see this, notice that the functors $\textnormal{Alg}_{\ekoperad{k}}(\mathcal{M}_{n}) \rightarrow \textnormal{Alg}_{\ekoperad{k}}(\mathcal{M}_{n-1})$ are conservative by the virtue of \cref{lemma:extension_along_0_equivalence_is_conservative} and so any lift of the identity is necessarily an equivalence. 
\end{rem}

\begin{cor}[Mapping space spectral sequence]
\label{cor:mapping_space_spectral_sequence}
Let $\ccat$ be complete and suppose that $\phi:R \to S$ is a morphism of periodic $\ekoperad{k}$-$A$-algebras in $\ccat$. Then there is a first quadrant spectral sequence with
\[	E_1^{0,0} = \map_{\Alg_{\pi_0A}}(\pi_0R, \pi_0S)	\]
and
\[	E_1^{s,t} = \Ext^{2s-t,s}_{\Mod_{\ekoperad{k}}(\pi_0R)}(\LL^{\ekoperad{k}}_{\pi_0R/\pi_0A}, \pi_0S), \quad t\ge s > 0,	\]
where $\pi_0S$ is given the $\pi_0R$-module structure induced by $\phi$, and converging conditionally to
\[	\pi_{t-s}(\map_{\Alg_{\ekoperad{k}}(\Mod_A(\ccat))}(R, S), \phi),	\]
the homotopy groups of the space of $\ekoperad{k}$-algebra maps from $R$ to $S$, based at $\phi$.
\end{cor}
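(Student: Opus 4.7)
The plan is to set up a Bousfield--Kan spectral sequence for the tower of mapping spaces between the potential stages of $R$ and $S$. Write $R_n := \atrun{n} \otimes_A R$ and $S_n := \atrun{n} \otimes_A S$ for the associated potential $n$-stages. By \textbf{Remark \ref{rem:multiplicative_potential_infty_stages_limit_of_finite_ones}}, completeness of $\ccat$ implies
\[
\map_{\textnormal{Alg}_{\ekoperad{k}}(\Mod_A(\ccat))}(R, S) \;\simeq\; \varprojlim_n \map_{\textnormal{Alg}_{\ekoperad{k}}(\mathcal{M}_n)}(R_n, S_n),
\]
and $\phi$ picks out a compatible basepoint $\phi_n$ at each finite stage. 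Applying the standard Bousfield--Kan spectral sequence for the homotopy of a tower of pointed spaces (\cite{bousfield1972homotopy}) to this tower, based at $\phi$, one obtains a conditionally convergent spectral sequence of the form $E_1^{s,t} = \pi_{t-s} F_s \Rightarrow \pi_{t-s}\bigl(\map(R,S),\phi\bigr)$, where $F_s$ is the fibre of the $s$-th map in the tower over $\phi_{s-1}$.

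The main step is to identify the $E_1$-page. For $s \geq 1$, since $\phi_s$ itself is a lift of $\phi_{s-1}$, \textbf{Proposition \ref{prop:relation_between_mapping_spaces_between_potential_stages_in_the_multiplicative_case}} combined with \textbf{Remark \ref{rem:fibre_of_map_between_mapping_spaces_of_potential_stages}} yields
\[
F_s \;\simeq\; \map_{\pi_0 R}^{\ekoperad{k}}\bigl(\LL^{\ekoperad{k}}_{\pi_0 R/\pi_0 A},\, \Sigma^{s} \pi_0 S[-s]\bigr).
\]
Since this is a mapping space in a stable $\infty$-category, $\pi_{t-s} F_s \simeq \pi_0 \map^{\ekoperad{k}}_{\pi_0 R}(\LL^{\ekoperad{k}}_{\pi_0 R/\pi_0 A}, \Sigma^{2s-t}\pi_0 S[-s])$, which by the convention for bigraded Ext in \textbf{Remark \ref{rem:grading_on_heart}} equals $\Ext^{2s-t,\,s}_{\Mod_{\ekoperad{k}}(\pi_0 R)}(\LL^{\ekoperad{k}}_{\pi_0 R/\pi_0 A}, \pi_0 S)$, as claimed. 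The constraint $t \geq s \geq 0$ is forced by the fact that $\pi_{t-s}$ of a pointed space is only defined for $t \geq s$, and the first-quadrant shape arises because the Ext groups otherwise vanish once $t > 2s$.

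For the base of the tower, $\mathcal{M}_0 \simeq \Mod_{\pi_0 A}(\ccat^{\heartsuit})$ is a $1$-category and $\textnormal{Alg}_{\ekoperad{k}}(\mathcal{M}_0) \simeq \Alg_{\pi_0 A}$ is discrete, so $\map_{\textnormal{Alg}_{\ekoperad{k}}(\mathcal{M}_0)}(R_0, S_0) = \map_{\Alg_{\pi_0 A}}(\pi_0 R, \pi_0 S)$ as a set, giving the asserted $E_1^{0,0}$. The conditional convergence to $\pi_{t-s}(\map(R,S),\phi)$ then follows from the limit expression above (cf.\ \textbf{Remark \ref{rem:conditional_convergence}} and \textbf{Corollary \ref{cor:mapping_space_spectral_sequence_linear}} in the linear case).

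The main obstacle is bookkeeping rather than mathematical: one must verify that the basepoints supplied by $\phi$ at each stage are compatible with the identifications in \textbf{Proposition \ref{prop:relation_between_mapping_spaces_between_potential_stages_in_the_multiplicative_case}} so that the classifying element $\phi'$ used there coincides with the zero map whenever the lift already exists, yielding the loop space identification of $F_s$. Once this is confirmed, the spectral sequence assembles mechanically.
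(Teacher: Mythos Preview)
Your proposal is correct and follows essentially the same route as the paper: both construct the Bousfield--Kan spectral sequence of the tower $\map_{\textnormal{Alg}_{\ekoperad{k}}(\mathcal{M}_s)}(R_s,S_s)$ pointed by the images of $\phi$, invoke completeness of $\ccat$ to identify the limit with $\map_{\textnormal{Alg}_{\ekoperad{k}}(\Mod_A)}(R,S)$, and then read off the $E_1$-term from \textbf{Remark \ref{rem:fibre_of_map_between_mapping_spaces_of_potential_stages}} together with the discreteness of $\textnormal{Alg}_{\ekoperad{k}}(\mathcal{M}_0)$. The only cosmetic difference is that the paper explicitly notes the $E_1$-terms are always abelian groups (rather than merely pointed sets or groups), while you instead add a remark about the first-quadrant shape; neither point is essential to the argument.
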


\begin{proof}
Compare the linear analogue, \cref{cor:mapping_space_spectral_sequence_linear}. Again, this is a case of the Bousfield-Kan spectral sequence \cite{bousfield1972homotopy}, applied to the tower 
\[	\mathrm{maps}_s := \map_{\Alg_{\ekoperad{k}}(\mathcal{M}_s)}(R_{\le s},S_{\le s}),	\]
where each $\mathrm{maps}_s$ is pointed by the image of $\phi$. By completeness of $\ccat$, the limit of the tower is
\[	\map_{\Alg_{\ekoperad{k}}(\ccat)}(R, S).	\]
Then the Bousfield-Kan spectral sequence converges conditionally to the homotopy groups of this space, see \cref{rem:conditional_convergence}.

The $E_1$ term of the spectral sequence is
\[	E_1^{s,t} = \pi_{t-s}\mathrm{fib}(\mathrm{maps}_s \to \mathrm{maps}_{s-1}), \quad t \ge s > 0.	\]
By \cref{rem:fibre_of_map_between_mapping_spaces_of_potential_stages}, we have, for $s > 0$,
\begin{align*}
	E_1^{s,t} &\cong \pi_{t-s}\map_{\pi_{0}R} ^{\ekoperad{k}} (\mathbb{L}_{\pi_{0}R / \pi_{0} A}^{\ekoperad{k}}, \Sigma^s\pi_0S[-s]) \\
	&\cong \pi_{0}\map_{\pi_{0}R} ^{\ekoperad{k}} (\mathbb{L}_{\pi_{0}R / \pi_{0} A}^{\ekoperad{k}}, \Sigma^{2s-t}\pi_0S[-s]) \\
	&\cong \Ext^{2s-t,s}_{\Mod_{\ekoperad{k}}(\pi_0R)}(\LL^{\ekoperad{k}}_{\pi_0R/\pi_0A}, \pi_0S).
\end{align*}
Notice that these spectral sequence terms, which are \textit{a priori} homotopy sets (when $t-s = 0$) or groups (when $t-s = 1$), are in fact always abelian groups. Finally, when $s = 0$, we have
\[	E_1^{0,t} = \pi_t\mathrm{maps}_0 = \pi_t\map_{\Alg_{\pi_0A}}(\pi_0R, \pi_0S).	\]
Here, $\pi_0R$ and $\pi_0S$ are discrete algebras over $\pi_0A$ in $\ccat^\heartsuit$, commutative if $k \ge 2$. In particular, the space of algebra maps is discrete, so the only nonzero term on the $s = 0$ line of the spectral sequence is the pointed set
\[	E_1^{0,0} = \map_{\Alg_{\pi_0A}}(\pi_0R, \pi_0S).	\]
\end{proof}

\section{Moduli of ring spectra} 

In this section we adapt the abstract Goerss-Hopkins theory developed in the previous sections to the particular problem of existence of commutative ring spectra with prescribed homology by specializing to the Grothendieck prestable $\infty$-category of synthetic spectra. 

One way of stating the problem is as follows: given a homotopy commutative ring spectrum $E$ and a commutative algebra $A$ in $E_*E$-comodules, what is the moduli space of \textit{realizations} of $A$ by an $\ekoperad{\infty}$-ring spectrum -- that is, the space of $\ekoperad{\infty}$-ring spectra for which there exists an isomorphism
\[	E_*R \cong A	\]
of $E_*E$-comodule algebras? In particular, one might want to prove that this space is nonempty, meaning that there exists a realization, or contractible, meaning that there exists an essentially unique realization. 

Any obstruction theory of this kind suffers from some natural limitations. Since all of the starting data is in terms of $E$-homology, one will be unable to distinguish a realization of $A$ from its $E$-localization. Even in the best case, then, one will only be able to compute the space of $E$-local realizations. 

Additionally, this approach is only reasonable if we put additional conditions on $E$ which make the categories of $E_*E$-comodules and the $E$-based synthetic spectra well-behaved. One of these conditions is the classical Adams condition, to be reviewed momentarily. To check that a realization is actually produced if all obstructions vanish, we require an additional completeness condition, filling a long-unnoticed gap in the literature.

\begin{defin}
\label{defin:Adams_homology_theory}
An \emph{Adams-type homology theory} is a homotopy commutative ring spectrum $E$ that is a filtered colimit $E \simeq \varinjlim E_\alpha$ of finite spectra $E_{\alpha}$ such that

\begin{enumerate}
\item $E_*E_\alpha$ is a finitely generated, projective $E_*$-module and
\item the K\"{u}nneth map $E^*E_\alpha \to \Hom_{E_*}(E_*E_\alpha, E_*)$ is an isomorphism.
\end{enumerate}
\end{defin}

Originally, this technical condition arose in Adams' blue book \cite{adams1995stable} to prove the universal coefficient theorem and set up the Adams spectral sequence based on $E$. It implies that there is a good algebraic theory of $E$-homology: for example, $(E_*, E_*E)$ is a flat Hopf algebroid, and the $E$-homology $E_*X$ of a spectrum $X$ is an $E_*E$-comodule. 

\begin{example}
Examples of Adams-type theories include the sphere spectrum, Eilenberg-MacLane spectra based on fields, the cobordism spectra $MU$ and $MO$, the $K$-theory spectra $K$ and $KO$, and any Landweber exact homology theory.
\end{example}

In the previous sections, we fixed a graded symmetric monoidal, separated Grothendieck prestable $\infty$-category $\ccat$ and a shift algebra $A \in \textnormal{CAlg}(\ccat)$, and we developed an obstruction theory to existence of periodic $A$-algebras $R$ with prescribed $\pi_{0} R$ as a $\pi_{0} A$-algebra.  To apply these methods to the realization problem for commutative ring spectra, we need to find $\ccat$ and a shift algebra $A$ that satisfy the following conditions:

\begin{enumerate}[label=($\diamondsuit$\arabic*)]
\item there's a symmetric monoidal equivalence $\Mod_{A}^{per}(\ccat) \simeq \spectra_{E}$ between the $\infty$-category of periodic $A$-modules in $\ccat$ and the $\infty$-category of $E$-local spectra,
\item there's a symmetric monoidal equivalence $\Mod_{\pi_{0}A}(\ccat^{\heartsuit}) \simeq \ComodE$ between the abelian category of $\pi_{0}A$-modules in $\ccat^{\heartsuit}$ and the abelian category of $E_{*}E$-comodules,
\item the $\infty$-category $\Mod_{A}$ is generated under colimits by periodic modules and
\item the prestable $\infty$-category $\ccat$ is complete, that is, Postnikov towers in $\ccat$ converge. 
\end{enumerate}
Here, $(\diamondsuit 1)$ is needed to identify the realizations we produce with commutative ring spectra; $(\diamondsuit 2)$ is used to identify the input to the obstruction theory with Andr\'e-Quillen cohomology of comodule algebras; while $(\diamondsuit 3)$ and $(\diamondsuit 4)$ are technical assumptions needed to make sure that, respectively, the obstructions can be computed in the derived category and that if they all vanish then the needed realization exists.

We claim that the $\infty$-category $\synspectra$ of hypercomplete, connective synthetic spectra based on $E$ introduced in \cite{pstrkagowski2018synthetic} and mentioned before in \cref{example:unit_of_synspectra_a_periodicity_algebra}, satisfies the properties $(\diamondsuit 1)-(\diamondsuit 3)$, the shift algebra in this case being given by the monoidal unit.

Note that in this note we will only be considering connective, hypercomplete synthetic spectra based on $E$ in the sense of \cite{pstrkagowski2018synthetic}, and we will simply call them \emph{synthetic spectra}, dropping the two adjectives. As above, we will denote their $\infty$-category by $\synspectra$, with understanding that the Adams-type homology theory is fixed. 

Intuitively, $\synspectra$ is an $\infty$-category of ``resolutions of spectra'' and plays the role of the ``derived $\infty$-category of spectra'' (with respect to the $E$-homology functor). Because of our axiomatic approach, the precise details of the construction of $\synspectra$ are not needed, but for the convenience of the reader we briefly recall them here. 

One says that a finite spectrum $P$ is \emph{$E_{*}$-projective} (or just \emph{projective}) if $E_{*}P$ is projective as an $E_{*}$-module. We endow the $\infty$-category $\spectra_{E}^{fp}$ of finite, $E_{*}$-projective spectra with a Grothendieck topology where covering families consist of single maps which induce a surjection on $E$-homology. 

\begin{defin}
\label{defin:synthetic_spectra}
The $\infty$-category $\synspectra$ of \emph{synthetic spectra} is the $\infty$-category of spherical (product-preserving), hypercomplete sheaves of spaces on $\spectra_{E}^{fp}$.	
\end{defin}
The $\infty$-category of synthetic spectra enjoys the following properties: 

\begin{itemize}
\item as a left exact localization of the $\infty$-category of spherical presheaves at homotopy isomorphisms \cite[2.5]{pstrkagowski2018synthetic}, $\synspectra$ is separated Grothendieck prestable \cite[C.3.2.1]{lurie_spectral_algebraic_geometry},
\item  it has a natural grading induced by the suspension functor on finite $E_{*}$-projective spectra; that is, where $X[1](P):= X(\Sigma^{-1}P)$,
\item it is symmetric monoidal with the tensor product induced by the smash product of finite $E_{*}$-projective spectra, with monoidal unit $\monunit$ given by $\monunit(P) := \map(P, S^{0}_{E})$,
\item the map $\tau :\Sigma \monunit[-1] \to \monunit$ given on sections by the morphism $\Sigma\, \map(\Sigma P, S^0_E) \to \map(P, S^0_E)$ adjoint to the equivalence $\map(\Sigma P, S^{0}_E) \simeq \Omega\, \map(P, S^0_E)$ makes $\monunit$ into a shift algebra \cite[4.61, 4.21]{pstrkagowski2018synthetic}.
\end{itemize}

Thus, as a graded symmetric monoidal, separated Grothendieck prestable $\infty$-category equipped with a choice of a shift algebra, $\synspectra$ is an appropriate context for Goerss-Hopkins theory. We will now verify that it also satisfies $(\diamondsuit 1) - (\diamondsuit 4)$, so that the resulting theory can be applied to the problem of realizing comodule algebras as homology of commutative ring spectra.

\begin{prop}
\label{prop:synthetic_spectra_satisfy_the_conditions_needed_for_gh_theory}
Suppose that $E$ is an Adams-type homology theory. Then the pair $(\synspectra, \monunit)$ satisfies the properties $(\diamondsuit 1)$, $(\diamondsuit 2)$ and $(\diamondsuit 3)$ listed above. 
\end{prop}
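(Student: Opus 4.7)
My plan is to verify each of $(\diamondsuit 1)$, $(\diamondsuit 2)$, $(\diamondsuit 3)$ by combining the framework built up in this paper with the foundational results of \cite{pstrkagowski2018synthetic}; the heavy lifting has already been done there, so the work here is essentially organizational.

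For $(\diamondsuit 1)$, I would first apply \textbf{Proposition \ref{prop:periodic_a_modules_equivalent_to_tau_local_ones}} to reduce the claim to identifying $\tau$-local objects in $\Mod_{\monunit}(\spectra(\synspectra))$ with $E$-local spectra. Since $\monunit$ is the monoidal unit, $\Mod_{\monunit}(\spectra(\synspectra)) \simeq \spectra(\synspectra)$, and the identification of the $\tau$-localization of $\spectra(\synspectra)$ with $\spectra_{E}$ is precisely the content of \cite[4.36, 5.6]{pstrkagowski2018synthetic}, which was already invoked in \textbf{Example \ref{example:periodic_modules_in_synthetic_spectra_same_as_elocal_spectra}}. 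The equivalence is symmetric monoidal because the $\tau$-localization $L_{\tau}$ is smashing (as noted after \textbf{Definition \ref{defin:tau_local_module}}) and therefore inherits a symmetric monoidal structure from $\spectra(\synspectra)$, which matches the smash product on $\spectra_{E}$ via the synthetic analogue functor $\nu$.

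For $(\diamondsuit 2)$, the identification $\synspectra^{\heartsuit} \simeq \ComodE$ of symmetric monoidal abelian categories is one of the central results of \cite{pstrkagowski2018synthetic}. Under this equivalence, the discrete object $\pi_{0} \monunit = \pi_{0} \nu S^{0}_{E}$ corresponds to $E_{*}$ with its tautological $E_{*}E$-comodule structure, since $E_{*}$ is the monoidal unit of $\ComodE$ and $\pi_{0}$ is symmetric monoidal. Because modules over the monoidal unit in any symmetric monoidal category are just the category itself, we obtain $\Mod_{\pi_{0}\monunit}(\synspectra^{\heartsuit}) \simeq \Mod_{E_{*}}(\ComodE) \simeq \ComodE$ as desired.

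For $(\diamondsuit 3)$, since $\monunit$ is the monoidal unit, $\Mod_{\monunit}(\synspectra) \simeq \synspectra$. By the construction of \textbf{Definition \ref{defin:synthetic_spectra}}, $\synspectra$ is generated under colimits by the representables $\nu P$ for $P$ a finite $E_{*}$-projective spectrum (up to the hypercompletion, which preserves the generation statement because it is a left exact localization). It remains to show each such $\nu P$ is periodic as a $\monunit$-module. This is established in \cite{pstrkagowski2018synthetic}: under the equivalence of $(\diamondsuit 1)$, $\nu P$ corresponds to the $E$-local spectrum $L_{E} P$, and one checks that the homotopy groups of $\nu P$ in $\synspectra$ are of the form $(\pi_{0}\nu P)[\tau]$ using the explicit formula for $\nu P$ on sections together with the Adams-type assumption on $E$.

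The only real subtlety is verifying the symmetric monoidal nature of the equivalence in $(\diamondsuit 1)$, which rests on the fact that both localizations involved are smashing and compatible with the symmetric monoidal structure on $\spectra(\synspectra)$; everything else is bookkeeping once the main theorems of \cite{pstrkagowski2018synthetic} are invoked. Note that this proposition deliberately omits $(\diamondsuit 4)$, as completeness of $\synspectra$ is not known in general and must be addressed separately for each homology theory, as discussed in the introduction.
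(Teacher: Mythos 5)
Your proof takes the same route as the paper's: all three properties are reduced to direct citations of the foundational results of \cite{pstrkagowski2018synthetic}, specifically the identification of periodic $\monunit$-modules with $E$-local spectra, the identification of the heart with $E_*E$-comodules, and the generation of $\synspectra$ by synthetic analogues of finite projective spectra, which are periodic. Your exposition is somewhat more elaborate --- notably you spell out why the equivalence in $(\diamondsuit 1)$ is symmetric monoidal (via smashing localization) and you route $(\diamondsuit 1)$ through the $\tau$-localization equivalence of \textbf{Proposition \ref{prop:periodic_a_modules_equivalent_to_tau_local_ones}}, neither of which the paper makes explicit --- but this is elaboration rather than a genuinely different argument. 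One small caveat: your last paragraph of $(\diamondsuit 3)$ says one ``checks the homotopy groups of $\nu P$'' directly; the paper instead cites the precise result \cite[4.61]{pstrkagowski2018synthetic} that the synthetic analogues $\nu P_E$ of $E$-localized finite projectives are periodic, which is cleaner, though the content is the same once one recalls that in the hypercomplete setting $\nu P$ and $\nu P_E$ agree.
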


\begin{proof}
The property $(\diamondsuit 1)$, namely the equivalence $\Mod_{\monunit}^{per}(\synspectra) \simeq \spectra_{E}$, is \cite[4.36, 5.6]{pstrkagowski2018synthetic}. The property $(\diamondsuit 2)$, that is, $\Mod_{\pi_{0} \monunit}(\synspectra^{\heartsuit}) \simeq \ComodE$, is \cite[4.16]{pstrkagowski2018synthetic}. Lastly, $(\diamondsuit 3)$ follows from the fact that $\synspectra$ is generated under colimits by the synthetic analogues of $E$-localizations of finite projective spectra \cite[4.14, 5.6]{pstrkagowski2018synthetic}, which are periodic by \cite[4.61]{pstrkagowski2018synthetic}. 
\end{proof}

\begin{rem}
\label{rem:synthetic_spectra_not_always_complete}
The Grothendieck prestable $\infty$-category $\synspectra$ is separated, but not necessarily complete, so it may not satisfy $(\diamondsuit 4)$. We will later show, however, that it is complete in what is perhaps the most important case, namely that of Morava $E$-theory, see \cref{thm:synthetic_spectra_based_on_morava_e_theory_complete}.

In the cases where the completeness of $\synspectra$ is not clear, it might be still possible to prove the convergence of the Goerss-Hopkins tower by hand for certain classes of comodules, yielding an effective obstruction theory. An example here is given by $H := H \mathbb{F}_{p}$, the mod $p$ Eilenberg-MacLane spectrum; in \cref{thm:goerss_hopkins_tower_converges_for_bounded_below_hh_comodules} we show that the convergence holds for all $H_{*}H$-comodules which are bounded below.
\end{rem}

We will now show how the properties guaranteed by \cref{prop:synthetic_spectra_satisfy_the_conditions_needed_for_gh_theory} together with the results of previous sections imply an obstruction theory to realizing an algebra in comodules as a homology of a commutative ring spectrum. This is fairly straightforward considering the above discussion, but we make it explicit as a guide to a reader who is perhaps only interested in the realization problem for commutative ring spectra. 

Let $E$ be an Adams-type homology theory such that the $\infty$-category $\synspectra$ is complete, so that $(\diamondsuit 4)$ holds, for example, $E$ can be Morava $E$-theory, see \cref{thm:synthetic_spectra_based_on_morava_e_theory_complete}. Internally to the $\infty$-category $\synspectra$, one constructs a tower of $\infty$-categories of moduli of potential $n$-stages for an $\ekoperad{\infty}$-algebra 

\begin{center}
$\textnormal{Alg}_{\ekoperad{\infty}}(\mathcal{M}_{\infty}) \rightarrow \ldots \rightarrow \textnormal{Alg}_{\ekoperad{\infty}}(\mathcal{M}_{1}) \rightarrow  \textnormal{Alg}_{\ekoperad{\infty}}(\mathcal{M}_{0})$ 
\end{center}
as defined in \cref{defin:a_potential_n_stage_for_an_ek_algebra}. In more detail, $\textnormal{Alg}_{\ekoperad{\infty}}(\mathcal{M}_{n})$ is the $\infty$-category of commutative $\monunitt{n}$-algebras $R$ in synthetic spectra such that $\monunitt{0} \otimes _{\monunitt{n}} R$ is discrete and the connecting functors are given by extension of scalars. By \cref{rem:multiplicative_potential_infty_stages_limit_of_finite_ones}, we have $\textnormal{Alg}_{\ekoperad{\infty}}(\mathcal{M}_{\infty}) \simeq \varprojlim \textnormal{Alg}_{\ekoperad{\infty}}(\mathcal{M}_{n})$.

Then, by the properties $(\diamondsuit 1)$ and $(\diamondsuit 2)$ we have equivalences $\textnormal{Alg}_{\ekoperad{\infty}}(\spectra_{E}) \simeq \textnormal{Alg}_{\ekoperad{\infty}}(\mathcal{M}_{\infty})$ and $\textnormal{Alg}_{\ekoperad{\infty}}(\mathcal{M}_{0}) \simeq \textnormal{Alg}_{\ekoperad{\infty}}(\ComodE)$, so that the top and the bottom of this tower can be identified, respectively, with the $\infty$-category of $E$-local $\ekoperad{\infty}$-ring spectra, and the category of commutative $E_{*}E$-comodule algebras. By \cite[4.22]{pstrkagowski2018synthetic}, the functor $\textnormal{Alg}_{\ekoperad{\infty}}(\mathcal{M}_{\infty}) \rightarrow \textnormal{Alg}_{\ekoperad{\infty}}(\mathcal{M}_{0})$ can be identified with the functor sending an $E$-local commutative ring spectrum $R$ to its homology algebra $E_{*}R$. In other words, the above tower can be considered as a kind of a stratification of the homology functor. 

Now suppose that $A$ is a commutative algebra in $E_{*}E$-comodules, so that it determines an object $A$ in $\textnormal{Alg}_{\ekoperad{\infty}}(\mathcal{M}_{0}) \simeq \textnormal{Alg}_{\ekoperad{\infty}}(\ComodE)$. By the above discussion, the $\infty$-category of realizations of $A$ as the $E$-homology of an $\ekoperad{\infty}$-ring spectrum can be identified with the fibre 

\begin{center}
$\textnormal{Alg}_{\ekoperad{\infty}}(\mathcal{M}_{\infty}) \times _{\textnormal{Alg}_{\ekoperad{\infty}}(\mathcal{M}_{0})} \{ A \} \simeq \varprojlim \textnormal{Alg}_{\ekoperad{\infty}}(\mathcal{M}_{n}) \times _{\textnormal{Alg}_{\ekoperad{\infty}}(\mathcal{M}_{0})} \{ A \}$.
\end{center}
The above equivalence shows that to construct a realization of $A$ it is enough to construct a sequence $R_{n} \in \textnormal{Alg}_{\ekoperad{\infty}}(\mathcal{M}_{n})$ of potential $n$-stages such that $R_{0} \simeq A$ and which is compatible in the sense that $\monunitt{n-1} \otimes _{\monunitt{n}} R_{n} \simeq R_{n-1}$. The results of the previous section give us an obstruction theory to inductively constructing such a sequence, proving the following. 

\begin{thm}[Goerss-Hopkins, {\cite[3.3.5,3.3.7]{moduli_problems_for_structured_ring_spectra}}]
\label{thm:obstructions_to_realizing_an_algebra_in_comodules}
Let $A$ be a commutative algebra in $E_{*}E$-comodules. Then, there exists an inductively defined sequence of obstructions valued in Andr\'{e}-Quillen cohomology groups 

\begin{center}
$\Ext_{\Mod_{A}(\ComodE)}^{n+2, n}(\mathbb{L}_{A / E_{*}}^{\ekoperad{\infty}},  A)$, where $n \geq 1$,
\end{center}
which vanish if and only there exists a realization of $A$, that is, an $\ekoperad{\infty}$-ring spectrum $R$ such that $E_{*}R \simeq A$ as comodule algebras.
\end{thm}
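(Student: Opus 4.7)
The plan is to apply \textbf{Corollary \ref{cor:if_c_is_complete_we_have_inductive_obstructions_to_existence_of_periodic_algebras}} in the multiplicative, $\ekoperad{\infty}$-case, specialized to $\ccat = \synspectra$, with shift algebra the monoidal unit $\monunit$, and $S = A$; so the proof is essentially a matter of checking hypotheses and translating conclusions along the equivalences $(\diamondsuit 1)$--$(\diamondsuit 4)$.

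First, I would verify the hypotheses needed to invoke the corollary. By \textbf{Example \ref{example:synspectra_is_a_graded_sym_mon_separated_grothendieck_prestable_category}}, $\synspectra$ is a graded symmetric monoidal, separated Grothendieck prestable $\infty$-category; by \textbf{Example \ref{example:unit_of_synspectra_a_periodicity_algebra}}, $\monunit$ is a commutative shift algebra with $\pi_0\monunit \simeq E_*$; by \textbf{Proposition \ref{prop:synthetic_spectra_satisfy_the_conditions_needed_for_gh_theory}} the condition $(\diamondsuit 3)$ holds, so $\Mod_{\monunit}(\synspectra)$ is generated under colimits by periodic modules; and completeness $(\diamondsuit 4)$ is part of the standing assumption under which this theorem is stated, which holds e.g.\ for Morava $E$-theory by \textbf{Theorem \ref{thm:synthetic_spectra_based_on_morava_e_theory_complete}}.

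Second, I would translate the abstract statement. Via the symmetric monoidal equivalence $\Mod_{\pi_0\monunit}(\synspectra^{\heartsuit}) \simeq \ComodE$ of $(\diamondsuit 2)$, the comodule algebra $A$ corresponds to a commutative $\pi_0\monunit$-algebra in the heart, and \textbf{Corollary \ref{cor:if_c_is_complete_we_have_inductive_obstructions_to_existence_of_periodic_algebras}} produces inductively defined obstructions in $\Ext^{n+2,n}(\LL_A^{\ekoperad{\infty}}, A)$ whose vanishing is equivalent to the existence of a periodic $\ekoperad{\infty}$-$\monunit$-algebra $R$ in $\synspectra$ with $\pi_0 R \simeq A$ as $\pi_0\monunit$-algebras. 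By $(\diamondsuit 1)$ periodic $\ekoperad{\infty}$-$\monunit$-algebras are identified with $E$-local $\ekoperad{\infty}$-ring spectra, and by \cite[4.22]{pstrkagowski2018synthetic} the functor $\pi_0$ on the synthetic side is intertwined with $E_*(-)$ on the spectrum side, yielding the desired realization statement. The base-change formula for the cotangent complex gives $\LL_A^{\ekoperad{\infty}} \simeq \LL_{A/E_*}^{\ekoperad{\infty}}$ after extending scalars.

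Third, I would refine the ambient $\infty$-category of the $\Ext$-group using \textbf{Remark \ref{rem:obstructions_in_derived_category_in_commutative_case}}: since $\Mod_{\monunit}(\synspectra)$ is generated by synthetic analogues $\nu X$ of finite $E_*$-projective spectra, which are periodic with $\pi_0 \nu X \simeq E_*X$ projective (hence flat) over $E_*$, the second flatness hypothesis of the remark is satisfied, so the obstructions may be computed inside $\dcat(\Mod_A(\ComodE))$ as claimed. The bulk of the content is thus pushed into the already-established abstract theorem together with \textbf{Proposition \ref{prop:synthetic_spectra_satisfy_the_conditions_needed_for_gh_theory}}; the only remaining obstacle is the completeness hypothesis $(\diamondsuit 4)$, which is the genuinely non-formal input and which must be handled separately (by \textbf{Theorem \ref{thm:synthetic_spectra_based_on_morava_e_theory_complete}} in the Morava case, or by hand for specific $A$ when $\synspectra$ is not known to be complete).
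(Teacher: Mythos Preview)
Your proposal is correct and follows essentially the same approach as the paper: apply \textbf{Corollary \ref{cor:if_c_is_complete_we_have_inductive_obstructions_to_existence_of_periodic_algebras}} with $k = \infty$ and $\ccat = \synspectra$, then invoke \textbf{Remark \ref{rem:obstructions_in_derived_category_in_commutative_case}} to place the obstructions in $\dcat(\Mod_A(\ComodE))$. The paper's own proof is terser, deferring the hypothesis verification to the discussion preceding the theorem, but the logical content is identical; your explicit check that the generators $\nu P$ have $\pi_0$ flat over $E_*$ (so that the second hypothesis of the remark applies) is exactly what is needed.
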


\begin{proof}
Keeping in mind the above discussion, this is just a restatement of \cref{cor:if_c_is_complete_we_have_inductive_obstructions_to_existence_of_periodic_algebras} where $k = \infty$ and the ambient $\infty$-category $\ccat$ is the $\infty$-category $\synspectra$ of synthetic spectra. Note that since we're in the $\ekoperad{\infty}$-case, the $\Ext$-groups are computed in the derived category as above by \cref{rem:obstructions_in_derived_category_in_commutative_case}. 
\end{proof}

Similarly to the obstructions to existence, we also have obstructions to lifting morphisms.

\begin{thm}[Goerss-Hopkins, {\cite[3.3.5]{moduli_problems_for_structured_ring_spectra}}]
\label{thm:obstruction_to_lifting_of_maps_of_potential_stages_for_realization_of_comodule_algebra}
Let $A, B$ be commutative algebras in comodules and $R_{n}, S_{n} \in \textnormal{Alg}_{\ekoperad{\infty}}(\mathcal{M}_{n})$ be potential $n$-stages for realizations of, respectively, $A$ and $B$. Suppose that $\phi: R_{n-1} \rightarrow S_{n-1}$ is a morphism of the corresponding potential $(n-1)$-stages. 

Then, there exists an obstruction in $\Ext_{\Mod_{A}(\ComodE)}^{n+1, n}(\mathbb{L}_{A / E_{*}}^{\ekoperad{\infty}}, B)$ which vanishes if and only if $\phi$ can be lifted to a morphism $R_{n} \rightarrow S_{n}$. Moreover, if such a lift exists, then we have a fibre sequence

\begin{center}
$\map_{\dcat(\Mod_{A}(\ComodE))}(\mathbb{L}^{\ekoperad{\infty}}_{A / E_{*}}, \Sigma^{n} B[-n]) \rightarrow \map(R_{n}, S_{n}) \rightarrow \map(R_{n-1}, S_{n-1})$ 
\end{center}
involving, respectively, the Andr\'{e}-Quillen cohomology space and the mapping spaces between potential $n$- and $(n-1)$-stages for an $\ekoperad{\infty}$-algebra.
\end{thm}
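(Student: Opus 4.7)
The plan is to specialize the abstract statements Proposition \ref{prop:relation_between_mapping_spaces_between_potential_stages_in_the_multiplicative_case}, Remark \ref{rem:obstruction_to_lifting_morphism_of_potential_stages}, and Remark \ref{rem:fibre_of_map_between_mapping_spaces_of_potential_stages} to the context $\ccat = \synspectra$, shift algebra $\monunit$, and $k = \infty$, using Proposition \ref{prop:synthetic_spectra_satisfy_the_conditions_needed_for_gh_theory} to translate the resulting expressions into comodule-theoretic language. Under the equivalence $(\diamondsuit 2)$, the hypothesis that $R_n$ is a potential $n$-stage realizing $A$ identifies $\pi_0 R_n \simeq A$ as a commutative $E_*$-algebra in $E_*E$-comodules, and similarly $\pi_0 S_n \simeq B$, while $\pi_0 \monunit \simeq E_*$.

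First I would invoke Proposition \ref{prop:relation_between_mapping_spaces_between_potential_stages_in_the_multiplicative_case} to obtain an equivalence
\[
F_\phi \;\simeq\; P_{0,\phi'}\,\map^{\ekoperad{\infty}}_{A}(\mathbb{L}^{\ekoperad{\infty}}_{A/E_*},\Sigma^{n+1} B[-n]),
\]
for a certain class $\phi' \in \pi_0\map^{\ekoperad{\infty}}_A(\mathbb{L}^{\ekoperad{\infty}}_{A/E_*},\Sigma^{n+1} B[-n])$. By Remark \ref{rem:obstruction_to_lifting_morphism_of_potential_stages}, $F_\phi$ is nonempty (i.e.\ $\phi$ lifts) precisely when $\phi'$ is nullhomotopic, and the homotopy class of $\phi'$ provides the obstruction.

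The remaining step is to identify this homotopy class with an element of the stated $\Ext$-group. For this I would apply Remark \ref{rem:obstructions_in_derived_category_in_commutative_case}, which reduces the computation of $\ekoperad{\infty}$-$A$-module $\Ext$-groups to Ext in $\dcat(\Mod_A(\ccat^\heartsuit))_{\geq 0}$ whenever $\Mod_{\monunit}$ is generated by periodic modules whose $\pi_0$ is flat over $\pi_0 \monunit$. In the synthetic setting this flatness hypothesis is satisfied: by the proof of Proposition \ref{prop:synthetic_spectra_satisfy_the_conditions_needed_for_gh_theory} (the reference to \cite[4.14, 5.6]{pstrkagowski2018synthetic}), $\Mod_{\monunit}(\synspectra)$ is generated by synthetic analogues of $E$-localizations of finite $E_*$-projective spectra, whose $\pi_0$ are $E_*$-projective $E_*E$-comodules and in particular flat. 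Therefore Remark \ref{rem:obstructions_in_derived_category_in_commutative_case} applies, and combined with the Ext-indexing convention
\[
\pi_0\map_{\dcat(\Mod_A(\ComodE))}\!\bigl(\mathbb{L}^{\ekoperad{\infty}}_{A/E_*},\Sigma^{n+1} B[-n]\bigr)
\;\simeq\; \Ext^{n+1,n}_{\Mod_A(\ComodE)}\!\bigl(\mathbb{L}^{\ekoperad{\infty}}_{A/E_*},B\bigr),
\]
the obstruction takes the claimed form.

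Finally, for the fibre sequence when a lift exists, I would apply Remark \ref{rem:fibre_of_map_between_mapping_spaces_of_potential_stages}, which identifies $F_\phi$ with the loop space of the Andr\'e--Quillen mapping space, hence with $\map^{\ekoperad{\infty}}_{A}(\mathbb{L}^{\ekoperad{\infty}}_{A/E_*},\Sigma^{n} B[-n])$ by stability; together with the identification above this produces the displayed fibre sequence. I expect no real obstacle beyond careful bookkeeping; the only substantive verification is the flatness hypothesis needed to invoke Remark \ref{rem:obstructions_in_derived_category_in_commutative_case}, which is handled by the explicit description of a generating set for $\Mod_{\monunit}(\synspectra)$ from \cite{pstrkagowski2018synthetic}.
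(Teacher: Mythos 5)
Your proposal matches the paper's argument, which simply invokes Proposition \ref{prop:relation_between_mapping_spaces_between_potential_stages_in_the_multiplicative_case} together with Remarks \ref{rem:obstruction_to_lifting_morphism_of_potential_stages} and \ref{rem:fibre_of_map_between_mapping_spaces_of_potential_stages}, specialized to $\ccat = \synspectra$ with $k = \infty$. Your explicit verification of the flatness hypothesis needed to apply Remark \ref{rem:obstructions_in_derived_category_in_commutative_case} (via the $E_*$-projectivity of $\pi_0$ of the generators from \cite{pstrkagowski2018synthetic}) is a worthwhile clarification that the paper leaves implicit, but it does not constitute a different route.
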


\begin{proof}
Similarly to \cref{thm:obstructions_to_realizing_an_algebra_in_comodules}, this is an appropriate restatement of \cref{prop:relation_between_mapping_spaces_between_potential_stages_in_the_multiplicative_case}, or more precisely its form given by \cref{rem:obstruction_to_lifting_morphism_of_potential_stages} and \cref{rem:fibre_of_map_between_mapping_spaces_of_potential_stages}. 
\end{proof}

\begin{cor}[{\cite[4.3]{moduli_spaces_of_commutative_ring_spectra}}]
\label{cor:mapping_space_spectral_sequence_ring_spectra}
Let $\phi:R \to S$ be a morphism of $E$-local $\ekoperad{\infty}$-ring spectra. Then there is a first quadrant spectral sequence with
\[	E_1^{0,0} = \map_{\CAlg(\ComodE)}(E_*R, E_*S)	\]
and
\[	E_1^{s,t} = \Ext^{2s-t,s}_{\Mod_{E_*R}(\ComodE)}( \mathbb{L}^{\ekoperad{\infty}}_{E_*R/E_*}, E_*S), \quad t \ge s > 0,	\]
where $E_*S$ is given an $E_*R$-module structure via $\phi$, and converging conditionally to
\[	\pi_{t-s}(\map_{\ekoperad{\infty}}(R, S), \phi),	\]
the homotopy groups of the space of $\ekoperad{\infty}$-maps from $R$ to $S$, based at $\phi$.
\end{cor}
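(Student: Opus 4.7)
The plan is to deduce this result by specializing \textbf{Corollary \ref{cor:mapping_space_spectral_sequence}} to the graded symmetric monoidal separated Grothendieck prestable $\infty$-category $\ccat = \synspectra$ of synthetic spectra based on $E$, with shift algebra $A = \monunit$ and operad $\ekoperad{\infty}$. This is the same strategy used in the proofs of \textbf{Theorem \ref{thm:obstructions_to_realizing_an_algebra_in_comodules}} and \textbf{Theorem \ref{thm:obstruction_to_lifting_of_maps_of_potential_stages_for_realization_of_comodule_algebra}}, so the work is essentially bookkeeping.

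First I would observe that by property $(\diamondsuit 1)$, the map $\phi:R \to S$ of $E$-local $\ekoperad{\infty}$-ring spectra corresponds to a map of periodic $\ekoperad{\infty}$-$\monunit$-algebras in $\synspectra$, so \textbf{Corollary \ref{cor:mapping_space_spectral_sequence}} applies provided $\synspectra$ is complete. This completeness hypothesis is the only nontrivial point: it holds for Morava $E$-theory by \textbf{Theorem \ref{thm:synthetic_spectra_based_on_morava_e_theory_complete}}, which is the motivating case, and more generally whenever $\synspectra$ satisfies $(\diamondsuit 4)$. Granting completeness, \textbf{Corollary \ref{cor:mapping_space_spectral_sequence}} yields a conditionally convergent first quadrant spectral sequence with $E_1^{0,0} = \map_{\Alg_{\pi_0\monunit}}(\pi_0 R, \pi_0 S)$, with $E_1^{s,t} = \Ext^{2s-t,s}_{\Mod_{\ekoperad{\infty}}(\pi_0R)}(\mathbb{L}^{\ekoperad{\infty}}_{\pi_0 R/\pi_0 \monunit}, \pi_0 S)$ for $t \ge s > 0$, and abutting to $\pi_{t-s}(\map_{\Alg_{\ekoperad{\infty}}(\Mod_{\monunit}(\synspectra))}(R,S), \phi)$.

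Next, I would translate each piece into the claimed form. By property $(\diamondsuit 2)$, the equivalence $\Mod_{\pi_0\monunit}(\synspectra^{\heartsuit}) \simeq \ComodE$ identifies $\pi_0 \monunit$ with $E_*$ and discrete $\pi_0 \monunit$-algebras with commutative $E_*E$-comodule algebras, and the composite $\textnormal{CAlg}(\Mod_{\monunit}^{per}(\synspectra)) \to \textnormal{CAlg}(\mathcal{M}_0) \simeq \textnormal{CAlg}(\ComodE)$ is the $E$-homology functor $R \mapsto E_* R$ by \cite[4.22]{pstrkagowski2018synthetic}, so $\pi_0 R = E_* R$ and $\pi_0 S = E_* S$ as $E_*E$-comodule algebras. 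This gives $E_1^{0,0} = \map_{\CAlg(\ComodE)}(E_* R, E_* S)$. For the higher terms, in the $\ekoperad{\infty}$ case \textbf{Remark \ref{rem:obstructions_in_derived_category_in_commutative_case}} lets us replace $\Mod^{\ekoperad{\infty}}_{\pi_0R}$ of the derived category by $\Mod_{\pi_0 R}(\dcat(\Mod_{\pi_0 \monunit}(\synspectra^{\heartsuit}))) \simeq \Mod_{E_* R}(\ComodE)$, after checking the flatness hypothesis (automatic here since $\Mod_{\monunit}$ is generated by synthetic analogues of $E$-localizations of finite $E_*$-projective spectra, whose homotopy is flat over $E_*$). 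Finally, $\mathbb{L}^{\ekoperad{\infty}}_{\pi_0 R/\pi_0 \monunit}$ becomes $\mathbb{L}^{\ekoperad{\infty}}_{E_* R/E_*}$ and the $E_1$-term takes the stated form.

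The abutment is identified using $(\diamondsuit 1)$: the symmetric monoidal equivalence $\Mod_{\monunit}^{per}(\synspectra) \simeq \spectra_E$ passes to commutative algebras to give $\textnormal{CAlg}(\Mod_{\monunit}^{per}(\synspectra)) \simeq \textnormal{CAlg}(\spectra_E)$, so the mapping space $\map_{\Alg_{\ekoperad{\infty}}(\Mod_{\monunit}(\synspectra))}(R,S)$ coincides with $\map_{\ekoperad{\infty}}(R,S)$ on periodic objects. The main obstacle, as already noted, is ensuring completeness of $\synspectra$; once that is in hand, the rest of the argument is a direct translation through the dictionary provided by \textbf{Proposition \ref{prop:synthetic_spectra_satisfy_the_conditions_needed_for_gh_theory}}.
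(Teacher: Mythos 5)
Your proposal is correct and follows essentially the same approach as the paper, which simply invokes \textbf{Corollary \ref{cor:mapping_space_spectral_sequence}} applied to $\synspectra$; your elaboration of the bookkeeping (identifying $\pi_0\monunit$ with $E_*$ via $(\diamondsuit 2)$, the abutment via $(\diamondsuit 1)$, and the simplification of the $\Ext$-groups via \textbf{Remark \ref{rem:obstructions_in_derived_category_in_commutative_case}}) is precisely what the paper leaves implicit. You are also right that the completeness hypothesis on $\synspectra$ is the only substantive input, and that it is the standing assumption of the section.
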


\begin{proof}
This is an immediate application of \cref{cor:mapping_space_spectral_sequence} to the present case.
\end{proof}

\begin{rem}
This spectral sequence of \cref{cor:mapping_space_spectral_sequence_ring_spectra} agrees with that of Goerss and Hopkins only after suitable reindexing: our $E_1^{s,t}$ is isomorphic to their $E_2^{2s-t,s}$. We believe that the two spectral sequences are related by the Deligne's d\'ecalage construction of \cite[Proposition 6.3]{levine2015adams}. 

The discrepancy comes from the different methods of construction. The Goerss-Hopkins mapping space spectral sequence is constructed without using the decomposition of the moduli space into moduli of potential $n$-stages \cite{moduli_spaces_of_commutative_ring_spectra}. Instead, Goerss and Hopkins resolve $R$ and $S$ by suitably free algebras, producing a cosimplicial object totalizing to the mapping space. The $E_1$ page of their spectral sequence depends on this choice of resolution, but the $E_2$ page is a homotopy invariant of the ring spectrum.

Our mapping space is the Bousfield-Kan spectral sequence for the tower of mapping spaces of potential $n$-stages; since the notion of potential $n$-stage is homotopy-invariant, the resulting spectral sequence is well-defined at the $E_1$ page. 
\end{rem}

\begin{rem}
We did not use it, but the $\infty$-category $\synspectra$ of synthetic spectra is in fact equivalent to the underlying $\infty$-category of the semi-model category used by Goerss and Hopkins, and using this fact one can translate between our account and the one given in \cite{moduli_spaces_of_commutative_ring_spectra}, \cite{moduli_problems_for_structured_ring_spectra}. We give a short sketch of this equivalence. 

It is a folklore result, which unfortunately doesn't seem to be written down, that the underlying $\infty$-category of the Bousfield model structure with respect to a set of compact generators is equivalent to the $\infty$-category of spherical presheaves on those generators. In the case of the particular Bousfield model structure appearing in the work of Goerss and Hopkins, for a maximal choice of generators, this translates into an equivalence between this underlying $\infty$-category and the $\infty$-category $P_{\Sigma}(\spectra_{E}^{fp})$ of spherical presheaves on finite projective spectra. 

Since the semi-model structure that Goerss and Hopkins work with is defined as a Bousfield localization, it follows that its underlying $\infty$-category is equivalent to a localization of $P_{\Sigma}(\spectra_{E}^{fp})$; by definition, to the localization along $E_{*}$-quasi-isomorphisms. This localization is precisely the $\infty$-category $\synspectra$, see \cite[5.4]{pstrkagowski2018synthetic}.

\end{rem}

\begin{rem}
\label{rem:goerss_hopkins_issue_of_completeness}
Note that to obtain our obstruction theory we needed to make an additional assumption on top of $E$ being Adams, namely that the $\infty$-category $\synspectra$ of synthetic spectra based on $E$ is complete, in other words, that Postnikov towers converge. This assumption does not appear in the original references \cite{moduli_spaces_of_commutative_ring_spectra}, \cite{moduli_problems_for_structured_ring_spectra}, but we believe this is in fact a mistake. 

The assumption of convergence of Postnikov towers, which we denoted above as $(\diamondsuit 4)$, is only needed to obtain the identification $\textnormal{Alg}_{\ekoperad{\infty}}(\mathcal{M}_{\infty}) \simeq \varprojlim \textnormal{Alg}_{\ekoperad{\infty}}(\mathcal{M}_{n})$. Thus, without it, we can still construct the tower of potential $n$-stages with all the required properties, but we can't deduce that a compatible tower of potential $n$-stages is necessarily induced by a realization in $E$-local ring spectra. 

The proof of the corresponding fact in \cite{moduli_spaces_of_commutative_ring_spectra}, \cite{moduli_problems_for_structured_ring_spectra} is a reference to \cite[4.6]{dwyer_kan_classification_theorem}, a theorem which in modern language can be interpreted as an explicit point-set proof of the fact that Postnikov towers converge in the $\infty$-category of spaces. This, however, does not seem to immediately apply to the semi-model category that Goerss and Hopkins are working with. 

The relevant semi-model category is a Bousfield localization of a model structure on the category of simplicial spectra due to Bousfield, and hence the Postnikov towers can be computed in the latter model structure \cite[2.5.6]{moduli_problems_for_structured_ring_spectra}. There, we have an explicit model for the $n$-th Postnikov stage given by attaching cells, see \cite[3.11]{moduli_spaces_of_commutative_ring_spectra}, and using this model one sees that attaching an $n$-cells does not change the lower homotopy groups. Using this, one can show that Postnikov stages do converge in the Bousfield model category. 

This does not, however, imply a corresponding statement in the localized semi-model category. To see this, note that to obtain a homotopy invariant statement about mapping spaces in this category, we need to perform a fibrant-cofibrant replacement with respect to the localized model structure, and the relevant homotopy groups are not invariant under this replacement, only homology is. Thus, as we attach cells in the semi-model category and follow it by the fibrant-cofibrant replacement, it is possible that the lower homotopy groups change as well, and so Postnikov convergence is not obvious. This, we believe, is the root of the mistake. 

Note that perhaps the most spectacular application of Goerss-Hopkins obstruction theory is the functorial $\ekoperad{\infty}$-ring structure on Morava $E$-theory, and this application is in fact not threatened by this error, as we show below in \cref{thm:synthetic_spectra_based_on_morava_e_theory_complete} that Postnikov towers in synthetic spectra based on Morava $E$-theory do converge. 
\end{rem}

\section{Morava \texorpdfstring{$E$}{E}-theory}

This section is devoted to the classical Goerss-Hopkins obstruction theory in the particular case of Morava $E$-theory. We first show that the $\infty$-category of synthetic spectra based on Morava $E$-theory is complete, a technical assumption needed for the obstruction theory to work. Then, we review the calculation of the relevant obstruction groups which implies that Morava $E$-theory carries a canonical structure of an $\ekoperad{\infty}$-ring spectrum.

Let $k$ be a perfect field of positive characteristic and let $\Gamma$ a formal group law over $k$ of finite height $n$. By results of Lubin and Tate, there exists a universal deformation of $\Gamma$, defined over the ring $W(k)[[u_{1}, \ldots, u_{n-1}]]$ of power series in the Witt vectors over $k$ \cite{lubin1966j}.  

The \emph{Morava $E$-theory} or \emph{Lubin-Tate theory} associated to the pair $(k, \Gamma)$ is the 2-periodic Landweber exact multiplicative homology theory with $E(k, \Gamma)_{*} \simeq W(k)[[u_{1}, \ldots, u_{n-1}]][u^{+-1}]$ whose corresponding formal group law is the universal deformation of $\Gamma$ \cite{rezk1998notes}. 

The homology theory $E(k, \Gamma)$ depends on the height $n$ and characteristic $p$, but dependance on the pair $(k, \Gamma)$ is relatively mild. More precisely, by classical results of Hovey and Strickland, $E(k, \Gamma)$ is Bousfield equivalent to the Johnson-Wilson theory $E(n)$, and similarly, the category of $E(k, \Gamma)_{*}E(k, \Gamma)$-comodules is equivalent to the category of $E(n)_{*}E(n)$-comodules \cite{hovey2003comodules}. For this reason, we will omit $(k, \Gamma)$ from the notation when convenient, writing only $E$.

We will first show that the $\infty$-category $\synspectra$ of synthetic spectra based on $E$ is complete, that is, that the Postnikov towers converge. This fact is related to another extraordinary property of Morava $E$-theory, namely that any $E$-local spectrum is $E$-nilpotent, so that the $E$-based Adams spectral sequence always converges \cite[5.3]{hovey1999invertible}. Our proof is similar, utilizing the existence of the Hopkins-Ravenel finite spectrum.

\begin{lemma}
\label{lemma:for_morava_e_theory_no_ext_into_e_implies_finite_injective_dimension}
Let $E$ be Morava $E$-theory and suppose that $M$ is a dualizable $E_{*}E$-comodule such that $\Ext^{s, t}_{E_{*}E}(M, E_{*}) = 0$ for $s > n^{2} + n$. Then, $\Ext^{s, t}_{E_{*}E}(M, N) = 0$ for $s > n^{2} + n$ and all comodules $N$. 
\end{lemma}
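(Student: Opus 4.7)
The strategy is to dualize and perform a devissage.

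Since $M$ is dualizable with dual $M^{\vee}$, there is a natural isomorphism
\[	\Ext^{s,t}_{E_{*}E}(M, N) \;\cong\; \Ext^{s,t}_{E_{*}E}(E_{*}, M^{\vee} \otimes_{E_{*}} N),	\]
with $M^{\vee} \otimes_{E_{*}} N$ carrying the diagonal comodule structure. This rewrites the hypothesis as $\Ext^{s,*}_{E_{*}E}(E_{*}, M^{\vee}) = 0$ for $s > n^{2}+n$, and the conclusion as the same vanishing with $M^{\vee} \otimes_{E_{*}} N$ in place of $M^{\vee}$, for every comodule $N$.

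Let $\mathcal{N}$ be the class of comodules $N$ for which this vanishing holds. Because $M^{\vee}$ is flat over $E_{*}$ (being a dualizable, hence projective, $E_{*}$-module), the functor $M^{\vee} \otimes_{E_{*}} -$ is exact, so $\mathcal{N}$ is closed under extensions via the long exact sequence of $\Ext$. It is also closed under retracts, internal shifts, and filtered colimits, the last because $E_{*}$ is a compact object of $\ComodE$ (being finitely generated) and hence $\Ext^{s}(E_{*}, -)$ commutes with filtered colimits. Since every $E_{*}E$-comodule is a filtered colimit of its dualizable subcomodules (a standard consequence of the Adams-type hypothesis on $E$), the problem reduces to verifying that $\mathcal{N}$ contains every dualizable comodule $N$.

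For dualizable $N$, the task becomes showing that every dualizable $E_{*}E$-comodule lies in the smallest subcategory of $\ComodE$ containing the internal shifts of $E_{*}$ and closed under extensions and retracts. In the Morava $E$-theory setting, dualizable comodules correspond to continuous representations of the Morava stabilizer group $\mathbb{G}_{n}$ on finitely generated projective $E_{*}$-modules; using the $\mathfrak{m}$-adic completeness of $E_{*}$ together with the $p$-adic analytic structure of $\mathbb{G}_{n}$, any such representation admits a finite filtration whose subquotients are Galois twists of shifts of $E_{*}$. Granting this, the vanishing propagates up the filtration from the base case $N = E_{*}$ via repeated use of the long exact sequence of $\Ext$, preserving the bound $n^{2}+n$ (since the filtration length is finite, no degree shift is introduced).

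The main obstacle is this last structural input: obtaining the finite filtration by shifts of $E_{*}$ on every dualizable comodule. The earlier closure arguments are formal, but the identification of dualizable comodules as continuous $\mathbb{G}_{n}$-representations and the extraction of a finite filtration rely on the specific features of Morava $E$-theory --- in particular the completeness of $E_{*}$ as a Noetherian local ring and the fact that $n^{2}+n$ is an upper bound for the relevant virtual cohomological dimension of $\mathbb{G}_{n}$, which is precisely the reason the same bound appears in both the hypothesis and the conclusion.
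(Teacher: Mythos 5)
Your opening moves parallel the paper's: the dualization $\Ext^{s,t}_{E_*E}(M,N) \cong \Ext^{s,t}_{E_*E}(E_*, M^\vee \otimes_{E_*} N)$ is valid though not strictly necessary, the two-out-of-three behaviour of the vanishing condition under short exact sequences is exactly the right engine, and commuting $\Ext$ with filtered colimits via the cobar complex is the correct reduction to small comodules. The devissage then fails at both of its remaining steps, however. First, the reduction to dualizable $N$ does not go through: a filtered colimit of dualizable (hence $E_*$-projective) comodules is flat over $E_*$, so the torsion comodule $E_*/p$ is not such a colimit --- in fact it has no nonzero dualizable subcomodule at all. The paper reduces instead to \emph{finitely generated} $N$, which genuinely do exhaust all comodules as filtered colimits of subcomodules. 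Second, and this is the central gap, the structural claim that a dualizable comodule admits a finite filtration with subquotients that are twists of shifts of $E_*$ is false; the subquotients of a filtration need not be $E_*$-free. The available structure theorem, which the paper invokes, is the Landweber filtration of Hovey--Strickland, and its subquotients are shifts of $E_*/I_j$ for the invariant primes $I_j = (p, u_1, \ldots, u_{j-1})$, which for $j > 0$ are torsion $E_*$-modules. To close the devissage one must first show that the $E_*/I_j$ are good, which follows inductively from the hypothesis on $E_*$ together with the short exact sequences $0 \to E_*/I_j[*] \xrightarrow{v_j} E_*/I_j \to E_*/I_{j+1} \to 0$; only then does the Landweber filtration propagate the vanishing to all finitely generated comodules. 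Your appeal to the representation theory of the Morava stabilizer group, rather than supplying this step, substitutes a structural assertion that is not correct.
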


\begin{proof}
 Let us say that a comodule $N$ is \emph{good} if it satisfies the condition given above, by assumption we know that the shifts of $E_{*}$ are good. Similarly, the long exact sequence of $\Ext$-groups implies that if $0 \rightarrow N \rightarrow N^{\prime} \rightarrow N^{\prime \prime} \rightarrow 0$ is short exact and $N$ is good, then $N^{\prime}$ is good if and only if $N^{\prime \prime}$ is. We deduce that any comodule that admits a Landweber filtration in the sense of \cite{hovey2003comodules} is good and, hence, that all finitely generated comodules are. 

Since $E$ is Adams, any $E_{*}E$-comodule is a filtered colimit of finitely generated ones. Because $M$ is dualizable, $\Ext^{s, t}(M, N)$ can be computed using the cobar complex and so commutes with filtered colimits in $N$. Thus, we deduce that all comodules are good, which is what we wanted to show. 
\end{proof}

\begin{lemma}
\label{lemma:any_collection_of_fp_spectra_which_surjects_on_all_others_generates_synthetic_spectra}
Let $E$ be an Adams-type homology theory and $\ccat$ be a collection of finite projective spectra such that for any $P \in \spectra_{E}^{fp}$ there exists an $S \in \ccat$ and an $E_{*}$-surjection $S \rightarrow P$. Then, the synthetic analogues $\nu S_{E}$ of $E$-localizations of $S \in \ccat$ generate the $\infty$-category $\synspectra$ of synthetic spectra based on $E$ under colimits. 
\end{lemma}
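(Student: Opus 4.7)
My plan is to show that the full subcategory $\mathcal{D} \subseteq \synspectra$ generated under colimits by $\{\nu S_E : S \in \ccat\}$ coincides with all of $\synspectra$. Since $\synspectra$ is already known to be generated under colimits by $\{\nu P_E : P \in \spectra_E^{fp}\}$ (the generation result cited in the proof of \textbf{Proposition \ref{prop:synthetic_spectra_satisfy_the_conditions_needed_for_gh_theory}}), I only need to exhibit each $\nu P_E$ as a colimit of objects of the form $\nu S_E$ with $S \in \ccat$.

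For a fixed $P \in \spectra_E^{fp}$, my approach is to construct a hypercover $S_\bullet \to P$ in the site $\spectra_E^{fp}$ (with its $E_*$-surjection topology) all of whose terms $S_n$ lie in $\ccat$. The construction proceeds by induction on simplicial degree: the base case follows immediately from the hypothesis, supplying some $S_0 \in \ccat$ together with an $E_*$-surjection $S_0 \to P$. Given a partial hypercover with $S_0, \dotsc, S_{n-1}$ in $\ccat$, I would form the matching object $M_n$ inside $\spectra_E^{fp}$ and invoke the hypothesis once more to find an $E_*$-surjection $S_n \to M_n$ with $S_n \in \ccat$. The point here is that $\spectra_E^{fp}$ is closed under fiber products along $E_*$-surjections: in any fibre sequence $F \to X \to Y$ with $X, Y \in \spectra_E^{fp}$ and $X \to Y$ an $E_*$-surjection, the resulting short exact sequence $0 \to E_*F \to E_*X \to E_*Y \to 0$ splits because $E_*Y$ is projective, so $E_*F$ is finitely generated projective and $F \in \spectra_E^{fp}$.

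Having the hypercover in hand, applying $\nu$ (which preserves finite limits as a left-exact localization of the Yoneda embedding) would produce a hypercover $\nu S_\bullet \to \nu P$ in $\synspectra$, since $E_*$-surjections are by definition the covers of the topology. Because $\synspectra$ is by construction the $\infty$-category of \emph{hypercomplete} sheaves on $\spectra_E^{fp}$ (\textbf{Definition \ref{defin:synthetic_spectra}}), the canonical comparison $|\nu S_\bullet|_{\Delta^{op}} \to \nu P$ is an equivalence, exhibiting $\nu P$ as a colimit of the $\nu S_n$ with $S_n \in \ccat$ and thus placing $\nu P$ in $\mathcal{D}$ as required.

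The principal obstacle I anticipate is the combinatorial verification that the inductive hypercover construction really stays inside $\spectra_E^{fp}$ rather than leaking out into the ambient $\infty$-category of spectra: the matching objects $M_n$ are iterated finite limits, and one must track that all intervening face maps remain $E_*$-surjective in order to repeatedly invoke the closure property noted above. Once this bookkeeping is settled, the conceptual content of the proof is carried entirely by the hypercompleteness of $\synspectra$, which converts existence of a hypercover by objects of $\ccat$ directly into the colimit presentation we need.
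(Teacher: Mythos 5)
Your approach matches the paper's in conceptual outline: reduce to showing each $\nu P_E$ lies in the colimit closure, produce a hypercover of $P$ by objects of $\ccat$, apply $\nu$, and conclude by hypercompleteness. The paper's proof does exactly this, citing \cite[A.22]{pstrkagowski2018synthetic} for the existence of the hypercover and \cite[A.23]{pstrkagowski2018synthetic} to convert hypercovers into colimit diagrams in the hypercomplete setting.

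There is, however, a genuine gap in the inductive step as you have described it. You propose to build a \emph{simplicial} hypercover indexed by $\Delta^{op}$: at each stage you would ``form the matching object $M_n$\ldots and invoke the hypothesis once more to find an $E_*$-surjection $S_n \to M_n$ with $S_n \in \ccat$.'' But a simplicial object must also carry degeneracy maps $S_{n-1} \to S_n$, and if $S_n$ is a freely chosen cover of $M_n$ drawn from $\ccat$, there is in general no map $S_{n-1} \to S_n$ at all. The standard device for manufacturing degeneracies is to set $S_n$ to be a latching object joined with the cover, but that construction pushes you out of $\ccat$, which is only assumed to be a collection, not closed under coproducts or any other operations. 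The paper sidesteps this by constructing an augmented \emph{semisimplicial} hypercover, i.e.\ a diagram indexed over $\Delta_{s,+}^{op}$, where there are only face maps and the inductive step therefore only requires choosing some $S_n \in \ccat$ with an $E_*$-surjection onto the semisimplicial matching object (and this is precisely the content of \cite[A.22]{pstrkagowski2018synthetic}); one then uses \cite[A.23]{pstrkagowski2018synthetic}, which in the hypercomplete setting exhibits $\nu P_E$ as the colimit over $\Delta_s^{op}$ of the resulting semisimplicial diagram. Your instinct that the combinatorics of matching objects is the hard point is correct, but the more immediate obstruction is the simplicial-versus-semisimplicial issue, and once you switch to semisimplicial objects your splitting argument (applied inductively along the $E_*$-surjective structure maps) does show that the matching objects remain finite projective.
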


\begin{proof}
By construction, $\synspectra$ is generated under colimits by the synthetic analogues $\nu P_{E}$ of $E$-localizations of finite projective spectra, where we're implictly using that $\nu P_{E}$ is the hypercompletion of the non-hypercomplete synthetic spectrum $\nu P$ \cite[5.6]{pstrkagowski2018synthetic}. Thus, it is enough to write down any such $\nu P_{E}$ as a colimit of $\nu S_{E}$ as above. 

Now, by assumption for any finite projective $P$ we can find an $E_{*}$-surjection $S \rightarrow P$ where $S \in \ccat$. Proceeding inductively using \cite[A.22]{pstrkagowski2018synthetic} we can extend this map to a hypercover; that is, a semisimplicial augmented object $U: \Delta^{op}_{s, +} \rightarrow \spectra_{E}^{fp}$ in finite projective spectra such that all of the maps $U_{n} \rightarrow M_{n}(U)$ are $E_{*}$-surjective. Moreover, by our assumption we can guarantee that $U_{n} \in \ccat$ for $n \geq 0$. 

By \cite[A.23]{pstrkagowski2018synthetic}, in the hypercomplete setting the Yoneda embedding takes hypercovers to colimit diagrams. We deduce that $\varinjlim \nu (U_{n})_{E} \simeq \nu P_{E}$, which is what we wanted to show. 
\end{proof}

\begin{lemma}
\label{lemma:for_morava_e_theory_any_fp_covered_by_one_with_homology_of_finite_homological_dimension}
Let $E$ be Morava $E$-theory. Then, for any finite projective spectrum $P$ there exists a surjection $S \rightarrow P$ from a finite projective $S$ such that $\Ext_{E_{*}E}^{s, t}(E_{*}S, N) = 0$ for $s > n^{2} + n$ and all comodules $N$. 
\end{lemma}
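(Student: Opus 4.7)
The plan is to reduce the lemma, via a smash product trick, to constructing a single auxiliary finite $E_*$-projective spectrum $V$ equipped with a map $\varepsilon \colon V \to S^0$ surjective on $E$-homology and satisfying $\Ext^{s,t}_{E_*E}(E_*V, E_*) = 0$ for $s > n^2 + n$. Given such a $V$, for arbitrary $P \in \spectra_{E}^{fp}$ I form $S := V \wedge P$ with map $\varepsilon \wedge \mathrm{id}_P \colon S \to P$. This is surjective on $E$-homology by right exactness of $\otimes$, and $S$ is finite $E_*$-projective since $V$ and $P$ are. Moreover $E_*S \cong E_*V \otimes_{E_*} E_*P$ as comodules, and because $E_*P$ is dualizable in $\ComodE$ (being finitely generated and projective over $E_*$), internal-hom adjunction yields
\[
\Ext^{s,t}_{E_*E}(E_*S, E_*) \;\cong\; \Ext^{s,t}_{E_*E}\!\left(E_*V,\; \Hom_{E_*}(E_*P, E_*)\right),
\]
which vanishes for $s > n^2 + n$ by \textbf{Lemma \ref{lemma:for_morava_e_theory_no_ext_into_e_implies_finite_injective_dimension}} applied to $E_*V$. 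Since $E_*S$ is itself dualizable, a second application of the same lemma, this time to $E_*S$, upgrades this to $\Ext^{s,t}_{E_*E}(E_*S, N) = 0$ for all comodules $N$ and $s > n^2 + n$.

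The substantive content is therefore the construction of $V$. Via Morava duality, which identifies finitely generated $E_*E$-comodules with continuous $\mathbb{G}_n$-modules over $E_*$, the required condition becomes vanishing of continuous cohomology $H^s_c(\mathbb{G}_n;\, (E_*V)^{\vee}) = 0$ for $s > n^2 + n$. Realizing such a cohomological bound by a finite spectrum rests on the chromatic machinery of Hopkins-Ravenel and Devinatz-Hopkins: one combines the existence of generalized Moore spectra $M(p^{i_0}, v_1^{i_1}, \ldots, v_{n-1}^{i_{n-1}})$ with arbitrarily large nilpotence exponents (Hopkins-Ravenel) with the decomposition of $L_{K(n)}S^0 \simeq E^{h\mathbb{G}_n}$ according to open subgroups $U \subset \mathbb{G}_n$ of finite virtual cohomological dimension (Devinatz-Hopkins). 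The numerical value $n^2 + n$ arises as the sum of the virtual cohomological dimension $n^2$ of the Morava stabilizer group $\mathbb{S}_n$ and an additional $n$ contribution from the Galois/determinant factor.

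The main obstacle is precisely this construction of $V$. The twin demands of $E_*$-surjectivity onto $S^0$ (which forces $V$ to look ``close to the sphere'') and of cohomological vanishing (which pulls $V$ toward type-$n$ behavior, where $\mathbb{G}_n$-cohomology is easier to control) are in tension, particularly at small primes where $\mathbb{G}_n$ has $p$-torsion and $H^*_c(\mathbb{G}_n;\, E_*)$ is itself unbounded. Reconciling these two properties can only be done by a careful Hopkins-Ravenel-style argument with cofiber sequences of generalized Moore spectra, which is the real heart of the proof.
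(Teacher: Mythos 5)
Your reduction from a general finite projective $P$ to the case of the sphere via smashing is exactly what the paper does, and the use of dualizability of $E_*P$ plus \textbf{Lemma \ref{lemma:for_morava_e_theory_no_ext_into_e_implies_finite_injective_dimension}} to upgrade $\Ext$-vanishing against $E_*$ to all $N$ is also correct. But the proposal stalls at precisely the step you flag as "the real heart of the proof," and this is a genuine gap: you do not actually construct the auxiliary spectrum $V$, and the route you gesture at (Morava duality, continuous cohomology of $\mathbb{G}_n$, Devinatz--Hopkins decomposition, a delicate balancing of cofiber sequences of generalized Moore spectra) is both more complicated than necessary and not carried out.

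The point you are missing is that the needed input already exists off the shelf and only needs to be dualized. Hopkins and Ravenel, as part of the proof of the smash product theorem, produce a finite spectrum $T$ with torsion-free $MU$-homology (hence $E_*T$ free over $E_*$, so $T$ is finite $E_*$-projective and $E_*T$ is dualizable in $\ComodE$) satisfying $\Ext^s_{E_*E}(E_*, E_*T) = 0$ for $s > n^2 + n$. Setting $S := F(T, S^0)$ and using dualizability, one gets $\Ext^{s,t}_{E_*E}(E_*S, E_*) \cong \Ext^{s,t}_{E_*E}(E_*, E_*T) = 0$ in the same range, so \textbf{Lemma \ref{lemma:for_morava_e_theory_no_ext_into_e_implies_finite_injective_dimension}} then gives vanishing against all $N$. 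After a suspension so that the top cell of $S$ sits in degree zero, the projection onto the top cell is automatically an $E_*$-surjection onto $S^0$ --- there is no tension to reconcile and no further Hopkins--Ravenel-style construction to perform. Your framing of $V$ as torn between "close to the sphere" and "type-$n$ behavior" misreads the situation: the Hopkins--Ravenel spectrum is not type $n$, and the surjection to $S^0$ is supplied by the top-cell projection, not by any property of $T$ beyond having a top cell. So the proposal is correct in outline but omits the one concrete construction that actually makes the lemma go through.
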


\begin{proof}
We first claim such a spectrum exists when $P$ is the sphere $S^{0}$. Recall that Hopkins and Ravenel construct a finite spectrum $T$ with torsion-free homology such that $\Ext^{s}_{E_{*}E}(E_{*}, E_{*}T)$ vanishes for $s > n^{2}+n$ \cite[8.3]{ravenel2016nilpotence}. Note that torsion-freeness implies that $E_{*}T$ is dualizable, since $E_{*}T  \simeq E_{*} \otimes _{MU_{*}} MU_{*}T$ and $MU_{*}T$ is known to be free, so that $T$ is finite projective. 

Taking the Spanier-Whitehead dual $S := F(T, S^{0})$ we see that $S$ is a finite projective spectrum such that $\Ext^{s, t}_{E_{*}E}(E_{*}S, E_{*}) = 0$ for $s > n^{2}+n$ and so $\Ext^{s, t}_{E_{*}E}(E_{*}S, N) = 0$ for $s > n^{2} + n$ and all $N$ by \cref{lemma:for_morava_e_theory_no_ext_into_e_implies_finite_injective_dimension}. Taking suspensions, we can guarantee that the top cell in a minimal cell structure on $S$ is in degree zero. Then, the projection $S \rightarrow S^{0}$ onto the top cell is the needed $E_{*}$-surjection onto $S^{0}$.

Now suppose that $P$ is an arbitrary finite projective. If $S \rightarrow S^{0}$ is the $E_{*}$-surjection constructed above, then $S \wedge P \rightarrow P$ is also $E_{*}$-surjective. Finite projective spectra are closed under the smash product \cite[3.18]{pstrkagowski2018synthetic}, so $S \wedge P$ is also finite projective. Finally, we have 
\begin{center}
$\Ext^{s, t}_{E_{*}E}(E_{*}(S \wedge P), N) \simeq \Ext_{E_{*}E}^{s, t}(E_{*}S \otimes _{E_{*}} E_{*}P, N) \simeq \Ext_{E_{*}E}^{s, t}(E_{*}S, \Hom_{E_{*}}(E_{*}P, E_{*}) \otimes _{E_{*}} N)$,
\end{center}
so that $S \wedge P$ also has the required property, ending the argument. 
\end{proof}

\begin{thm}
\label{thm:synthetic_spectra_based_on_morava_e_theory_complete}
Let $E$ be the Morava theory at a fixed prime and height. Then, the Grothendieck prestable $\infty$-category $\synspectra$ of hypercomplete, connective synthetic spectra based on $E$ is complete. 
\end{thm}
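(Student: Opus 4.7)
The plan is to reduce the convergence of Postnikov towers in $\synspectra$ to a bounded cohomological-dimension property for a good family of generators. By combining \textbf{Lemma \ref{lemma:any_collection_of_fp_spectra_which_surjects_on_all_others_generates_synthetic_spectra}} with \textbf{Lemma \ref{lemma:for_morava_e_theory_any_fp_covered_by_one_with_homology_of_finite_homological_dimension}}, the $\infty$-category $\synspectra$ is generated under colimits by the synthetic analogues $\nu S_{E}$ of $E$-localizations of those finite projective spectra $S$ for which $\Ext^{s,t}_{E_{*}E}(E_{*}S, -)$ vanishes in every bidegree with $s > n^{2} + n$. Such generators jointly detect equivalences and $\map(\nu S_{E}, -)$ preserves limits, so it will be enough to prove that for each such $S$ and each $X \in \synspectra$ the canonical map
\[
\map(\nu S_{E}, X) \longrightarrow \varprojlim_{m} \map(\nu S_{E}, X_{\leq m})
\]
is an equivalence.

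Writing $\tau_{>m} X$ for the fibre of $X \to X_{\leq m}$ computed in $\spectra(\synspectra)$, there is a fibre sequence $\varprojlim_{m} \tau_{>m} X \to X \to \varprojlim_{m} X_{\leq m}$, so the claim is equivalent to the vanishing of $\varprojlim_{m} \map(\nu S_{E}, \tau_{>m} X)$. Via the Milnor sequence this will follow once I establish the degreewise statement that, for every $k \geq 0$, the group $\pi_{k} \map(\nu S_{E}, \tau_{>m} X)$ vanishes for $m$ sufficiently large compared to $k$. Filtering $\tau_{>m} X$ by its Postnikov tower yields a convergent spectral sequence whose $E_{1}$-page is built from the homotopy groups $\pi_{k} \map(\nu S_{E}, \Sigma^{j} A[-l])$ for $j > m$ and $A = \pi_{j} X \in \synspectra^{\heartsuit} \simeq \ComodE$. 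Identifying these with bigraded comodule $\Ext$-groups $\Ext^{j-k, l}_{E_{*}E}(E_{*}S, A)$ and applying the hypothesis on $S$ produces the desired vanishing whenever $j - k > n^{2} + n$, which holds once $m > k + n^{2} + n$.

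The main obstacle will be making rigorous the identification of $\pi_{*}\map_{\synspectra}(\nu S_{E}, -)$ on discrete targets with bigraded $\Ext$-groups in $\ComodE$: although $\nu S_{E}$ is not itself concentrated in a single homotopy degree (its homotopy groups are synthetic shifts of $E_{*}S$), the projectivity of $E_{*}S$ as an $E_{*}E$-comodule and the structural result $\synspectra^{\heartsuit} \simeq \ComodE$ should allow one to compute these mapping spaces by a convergent spectral sequence whose $E_{2}$-page is exactly the relevant bigraded $\Ext$-groups. Because every spectral sequence intervening in the argument collapses in filtration above $n^{2} + n$ by virtue of \textbf{Lemma \ref{lemma:for_morava_e_theory_no_ext_into_e_implies_finite_injective_dimension}}, convergence issues are automatic and the theorem will follow.
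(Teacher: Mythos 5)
Your computational core is sound and agrees with the paper's: you use the same pair of lemmas to produce generators $\nu S_E$ with $\Ext^{s,t}_{E_*E}(E_*S, -) = 0$ for $s > n^2+n$, and the $\Ext$-vanishing is deployed to show stabilization in low homotopy degrees. The final step identifying $\pi_*\map(\nu S_E, -)$ on discrete targets with comodule $\Ext$-groups is also exactly what the paper does, via $\Mod_{\monunitt{0}}(\synspectra) \simeq \dcat(\ComodE)_{\geq 0}$, so that part you flag as "the main obstacle" is not actually where the difficulty lies.

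The genuine gap is the convergence of your intermediate spectral sequence — and it is not a technical nuisance but a circularity. You filter $\tau_{>m}X$ by its Postnikov tower $(\tau_{>m}X)_{\leq j}$ and claim the resulting spectral sequence for $\pi_*\map(\nu S_E, -)$ converges to $\pi_*\map(\nu S_E, \tau_{>m}X)$. But the target of that spectral sequence is a priori $\pi_*\map(\nu S_E, \varprojlim_j (\tau_{>m}X)_{\leq j})$, and this equals $\pi_*\map(\nu S_E, \tau_{>m}X)$ precisely when the Postnikov tower of $\tau_{>m}X$ converges — which is the theorem you are trying to prove. So "convergence issues are automatic" is exactly backwards: convergence of your spectral sequence is equivalent to the statement of the theorem. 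The same problem appears one level up: you cannot establish $\pi_k\map(\nu S_E, \tau_{>m}X) = 0$ for $m \gg k$ from the Postnikov layers of $X$ alone, because you have no a priori control of $\tau_{>m}X$ beyond its truncations.

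The paper avoids this by never forming $\tau_{>m}X$ at all. It instead invokes the criterion of \cite[5.5.6.27]{lurie_higher_topos_theory}: it suffices to show that $\varprojlim_i X_{\leq i} \to X_{\leq k}$ is an $m$-equivalence for $k$ large. This is a statement purely about the tower $(X_{\leq i})$ of truncations, not about $X$ itself, so it can be verified by the Milnor sequence applied to the tower of mapping spaces $\map(\nu P_E, X_{\leq i})$, whose successive fibres $\map(\nu P_E, X[i]) \simeq \map_{\dcat(\ComodE)}(E_*P, \Sigma^i\pi_i X)$ are computable without any convergence hypothesis. In short, your fibre should be $\mathrm{fib}(\varprojlim_i X_{\leq i} \to X_{\leq k})$ rather than $\tau_{>k}X$; the former is built from finite Postnikov stages $\tau_{>k}(X_{\leq i})$ and is genuinely accessible, whereas the latter is not. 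A smaller point: the expression $\Sigma^j A[-l]$ in your $E_1$-page has an extraneous internal shift $[-l]$; the Postnikov layer is simply $\Sigma^j\pi_j X$, an object of $\synspectra^\heartsuit \simeq \ComodE$.
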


\begin{proof}
It is enough to show that for any $X \in \synspectra$ and any $m \geq 0$, the projection $\varprojlim X_{\leq i} \rightarrow X_{\leq k}$ is an $m$-equivalence for all $k$ large enough \cite[5.5.6.27]{lurie_higher_topos_theory}. To do this, it is enough to verify that for all $k$ large enough, the collection of $Y$ such that 

\begin{center}
$\map(Y, \varprojlim X_{\leq i}) \rightarrow \map(Y, X_{\leq k})$
\end{center}
is an $(m+1)$-equivalence generates $\synspectra$ under colimits. 

We claim that when $k > m + n^{2}+n$, that collection of $Y$ satisfying the above condition contains the synthetic analogues $\nu P_{E}$ of $E$-local finite projectives such that $\Ext_{E_{*}E}^{s, t}(E_{*}P, N) = 0$ for all $s > n^{2} + n$ and all comodules $N$. Since the latter generate $\synspectra$ under colimits by a combination of \cref{lemma:for_morava_e_theory_any_fp_covered_by_one_with_homology_of_finite_homological_dimension} and \cref{lemma:any_collection_of_fp_spectra_which_surjects_on_all_others_generates_synthetic_spectra}, this will finish the argument. 

Let $X[i]$ denote the fibre of $X_{\leq i} \rightarrow X_{\leq i-1}$, this is a suspension of a discrete synthetic spectrum and so is canonically a $\monunit_{\leq 0}$-module. It follows that we have equivalences

\begin{center}
$\map_{\synspectra}(\nu P_{E}, X[i]) \simeq \map_{\Mod_{\monunitt{0}}(\synspectra)}(\monunitt{0} \otimes \nu P_{E}, X[i]) \simeq \map_{\dcat(\ComodE)}(E_{*}P, \Sigma^{i} \pi_{i} X)$,
\end{center}
where the second one follows from $\Mod_{\monunitt{0}}(\synspectra) \simeq \dcat(\ComodE)_{\leq 0}$, which we have shown in \cref{prop:synthetic_spectra_satisfy_the_conditions_needed_for_gh_theory}. 

Since $\pi_{j} \map_{\dcat(\ComodE)}(E_{*}P, \Sigma^{i} \pi_{i} X) \simeq \Ext^{i-j}_{E_{*}E}(E_{*}P, \pi_{i} X)$ and we assume that this vanishes when $i-j > n^{2}+n$, we see that this space grows highly connected as $i$ goes to infinity. Thus, we deduce that the homotopy groups $\pi_{j} \map_{\synspectra}(\nu P_{E}, X_{\leq i})$ stabilize for $i > n^{2}+n+j$, and an application of the Milnor exact sequence then yields the needed result. 
\end{proof}

\begin{rem}
\label{rem:abelian_category_generated_by_objects_of_finite_homological_dimension_has_left_complete_derived_category}
The same argument as the one used in the proof of \cref{thm:synthetic_spectra_based_on_morava_e_theory_complete} would show that the derived category $\dcat(\acat)$ of a Grothendieck abelian category $\acat$ generated under colimits by objects of fixed finite homological dimension (intuitively, $\acat$ is ``virtually of finite homological dimension'') is left complete. This statement appears to be foklore \cite[1.2.10]{drinfeld2013some}. 
\end{rem}

For the convenience of the reader, we will now recall the proof of the celebrated Goerss-Hopkins-Miller theorem, which states that Morava $E$-theory admits a unique $\ekoperad{\infty}$-ring structure. This requires us to show that the obstruction groups appearing in the statement of \cref{thm:obstructions_to_realizing_an_algebra_in_comodules} vanish, which is a non-trivial result in its own right.

We start with a short technical lemma about the cotangent complex, which will help us reduce the relevant Andr\'e-Quillen cohomology groups to a simpler form.

\begin{lemma}[General base-change for the cotangent complex] 
\label{lemma:general_base_change_for_cotangent_complex}
Let $f: \ccat \rightarrow \dcat$ be a symmetric monoidal, cocontinuous functor between presentably symmetric monoidal stable $\infty$-categories. Then, for any $A \in \textnormal{Alg}_{\ekoperad{\infty}}(\ccat)$ we have $\mathbb{L}^{\ekoperad{\infty}}_{f(A)} \simeq f_{*}(\mathbb{L}_{A}^{\ekoperad{\infty}})$, where $f_{*}: \Mod_{A} \rightarrow \Mod_{f(A)}$ is the induced functor between module $\infty$-categories. 
\end{lemma}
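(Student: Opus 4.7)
The plan is to reduce to the case of free $\ekoperad{\infty}$-algebras by taking a simplicial resolution, and then commute $f$ through using its symmetric monoidality and cocontinuity.

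First, I would recall two standard facts about the cotangent complex. For any $A \in \textnormal{CAlg}(\ccat)$, one may choose a simplicial resolution $\textnormal{Sym}_{\ccat}(X_{\bullet}) \to A$ of $A$ by free $\ekoperad{\infty}$-algebras, for example via the monadic bar construction arising from the free/forgetful adjunction between $\ccat$ and $\textnormal{CAlg}(\ccat)$. The cotangent complex of a free algebra $\textnormal{Sym}_{\ccat}(X)$ is canonically equivalent to $\textnormal{Sym}_{\ccat}(X) \otimes X$, which is a direct consequence of that same adjunction. Since the cotangent complex functor commutes with sifted colimits in the appropriate sense (it is a left adjoint, up to the fibration structure of the tangent bundle), one obtains the formula
\[ \mathbb{L}^{\ekoperad{\infty}}_A \simeq |A \otimes X_{\bullet}| \quad \text{in } \Mod_A(\ccat), \]
where each term $A \otimes X_n$ arises by extending scalars along the augmentation $\textnormal{Sym}_{\ccat}(X_n) \to A$.

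Next, since $f$ is symmetric monoidal and cocontinuous, it commutes with the formation of free algebras: there is a natural equivalence $f(\textnormal{Sym}_{\ccat}(X)) \simeq \textnormal{Sym}_{\dcat}(f(X))$, as both sides are built from the $\Sigma_n$-coinvariants of $X^{\otimes n}$ (respectively $f(X)^{\otimes n}$), and $f$ preserves both tensor products and colimits. Applying $f$ to the chosen resolution of $A$ therefore produces a free simplicial resolution $\textnormal{Sym}_{\dcat}(f(X_{\bullet})) \to f(A)$ of $f(A)$ in $\textnormal{CAlg}(\dcat)$. Moreover, the induced pushforward $f_{*}: \Mod_A(\ccat) \to \Mod_{f(A)}(\dcat)$ is itself cocontinuous, being given on underlying objects by $f$ and preserving the tensor products used to form extensions of scalars.

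Combining these inputs, I would compute
\[ f_{*}(\mathbb{L}^{\ekoperad{\infty}}_A) \simeq f_{*} |A \otimes X_{\bullet}| \simeq |f(A) \otimes f(X_{\bullet})| \simeq \mathbb{L}^{\ekoperad{\infty}}_{f(A)}, \]
where the first equivalence is the resolution formula above, the second uses the cocontinuity and symmetric monoidality of $f$, and the last applies the resolution formula for the cotangent complex to $f(A)$ using the resolution $\textnormal{Sym}_{\dcat}(f(X_{\bullet})) \to f(A)$ produced above. The main technical obstacle is a careful bookkeeping of module structures --- specifically, naturally identifying $f_{*}(A \otimes X)$ (the $f(A)$-module obtained by restricting the object $f(A \otimes X) \simeq f(A) \otimes f(X)$ of $\dcat$) with $f(A) \otimes f(X)$ equipped with its direct $f(A)$-module structure in $\dcat$ --- but this is an immediate consequence of $f$ being symmetric monoidal and hence intertwining the two free algebra-free module adjunctions in $\ccat$ and $\dcat$.
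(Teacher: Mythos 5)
Your proof is correct, but it takes a genuinely different route from the paper's. The paper invokes Lurie's abstract characterization $\mathbb{L}^{\ekoperad{\infty}}_A \simeq \Sigma^{\infty}_+ A$, where $\Sigma^{\infty}_+$ is the stabilization functor on $\textnormal{Alg}_{\ekoperad{\infty}}(\ccat)_{/A}$, and the identification $\spectra(\textnormal{Alg}_{\ekoperad{\infty}}(\ccat)_{/A}) \simeq \Mod_A$ from \cite[7.3.4.13]{higher_algebra}; the key observation is then simply that $f$ induces a cocontinuous functor on slice categories of algebras (colimits in slices being computed underlying), so the square of $\Sigma^{\infty}_+$ functors and stabilizations commutes, and the result drops out at once. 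Your argument instead unwinds the cotangent complex concretely: resolve $A$ by free $\ekoperad{\infty}$-algebras, compute $\mathbb{L}_{\mathrm{Sym}(X)} \simeq \mathrm{Sym}(X) \otimes X$ and push through the geometric realization, then use that a symmetric monoidal cocontinuous $f$ preserves free algebras and hence takes the resolution of $A$ to one of $f(A)$. The paper's approach is shorter and avoids the bookkeeping of module structures you flag at the end, since it never leaves the abstract setting of stabilization; yours has the advantage of being more computable and of making visible exactly what the cotangent complex ``is'' under the equivalence, and the free-algebra formula and sifted-colimit-compatibility you use are indeed the standard facts (cf.\ \cite[7.3.3.8, 7.4.3.14]{higher_algebra}). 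Both proofs ultimately rest on the same two properties of $f$ (cocontinuity and symmetric monoidality), just packaged at different levels of abstraction.
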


\begin{proof}
Since $f$ is a cocontinuous functor between presentable, stable $\infty$-categories, it has a right adjoint $g: \dcat \rightarrow \ccat$ which for formal reasons has a structure of a lax symmetric monoidal functor. It follows that both functors take algebras to algebras and one can verify that this induces another adjunction $f \dashv g: \textnormal{Alg}_{\ekoperad{\infty}}(\ccat) \leftrightarrows \textnormal{Alg}_{\ekoperad{\infty}}(\dcat)$ which we denote with the same letters. 

If $A \in \textnormal{Alg}_{\ekoperad{\infty}}(\ccat)$, then the induced functor $f: \textnormal{Alg}_{\ekoperad{\infty}}(\ccat) _{/A} \rightarrow \textnormal{Alg}_{\ekoperad{\infty}}(\dcat) _{/f(A)}$ is also cocontinuous, since colimits in overcategories are computed objectwise. It follows that we have a commutative diagram 

\begin{center}
	\begin{tikzpicture}
		\node (TL) at (0, 1.3) {$ \textnormal{Alg}_{\ekoperad{\infty}}(\ccat) _{/A} $};
		\node (TR) at (4, 1.3) {$ \spectra(\textnormal{Alg}_{\ekoperad{k}}(\ccat) _{/A}) $};
		\node (BL) at (0, 0) {$ \textnormal{Alg}_{\ekoperad{\infty}}(\dcat) _{/f(A)} $};
		\node (BR) at (4, 0) {$ \spectra(\textnormal{Alg}_{\ekoperad{\infty}}(\dcat) _{/f(A)}) $};
		
		\draw [->] (TL) to node[auto] {$ \Sigma^{\infty}_{+} $} (TR);
		\draw [->] (TL) to (BL);
		\draw [->] (TR) to (BR);
		\draw [->] (BL) to node[below] {$ \Sigma^{\infty}_{+} $} (BR);
	\end{tikzpicture}
\end{center}
of presentable $\infty$-categories and left adjoints induced by stabilization. By \cite[7.3.4.13]{higher_algebra}, the right vertical arrow can be identified with $f_{*}: \Mod_{A} \rightarrow \Mod_{f(A)}$. The statement is then immediate from the fact that $\mathbb{L}^{\ekoperad{\infty}}_{A} := \Sigma^{\infty}_{+} A$, and likewise for $f(A)$, by definition of the cotangent complex \cite[7.3.2.14]{higher_algebra}. 
\end{proof}

To state the theorem, we must introduce the profinite group $\Aut(k,\Gamma)$. An element of $\Aut(k,\Gamma)$ is a commutative square
\[	\xymatrix{ \Gamma \ar[d] \ar[r]^{\phi} & \Gamma \ar[d] \\ \Spec k \ar[r]_{i^*} & \Spec k }	\]
where $i$ is an automorphism of $k$, $\phi$ is a formal group isomorphism, and both vertical maps are the natural structure map of $\Gamma$.

\begin{thm}[Goerss-Hopkins-Miller]
\label{thm:goerss_hopkins_miller}
Let $E = E(k, \Gamma)$ be the Morava $E$-theory spectrum associated to a perfect field $k$ of characteristic $p$ and a formal group $\Gamma$ of finite height, and let $\mathcal{M}_{\ekoperad{\infty}}(E)$ denote the $\infty$-groupoid whose objects are $\ekoperad{\infty}$-ring spectra whose underlying homotopy associative ring spectrum is equivalent to $E$ and whose morphisms are $\ekoperad{\infty}$-ring equivalences.  

Then, there's an equivalence $\mathcal{M}_{\ekoperad{\infty}}(E) \simeq B\Aut(k,\Gamma)$; in other words, there's a unique $\ekoperad{\infty}$-ring spectrum equivalent to $E$ and its automorphism group is $Aut(k, \Gamma)$. 
\end{thm}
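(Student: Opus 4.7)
The plan is to apply the obstruction theory of \textbf{Theorem \ref{thm:obstructions_to_realizing_an_algebra_in_comodules}} and the mapping-space spectral sequence of \textbf{Corollary \ref{cor:mapping_space_spectral_sequence_ring_spectra}} to the comodule algebra $A = E_{*}E$, exploiting the observation that the relative cotangent complex $\mathbb{L}^{\ekoperad{\infty}}_{E_{*}E/E_{*}}$ vanishes. \textbf{Theorem \ref{thm:synthetic_spectra_based_on_morava_e_theory_complete}} supplies the completeness hypothesis on $\synspectra$ that we need throughout.

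First, I would identify $\mathcal{M}_{\ekoperad{\infty}}(E)$ with the fiber over $E_{*}E$ of the realization functor $\Alg_{\ekoperad{\infty}}(\spectra_{E})^{\simeq} \to \Alg_{\ekoperad{\infty}}(\ComodE)^{\simeq}$. Any $\ekoperad{\infty}$-ring $R$ equivalent to $E$ as a homotopy associative ring has $E_{*}R \simeq E_{*}E$ as a comodule algebra through its multiplication; conversely, convergence of the $E$-based Adams spectral sequence (which holds in the Morava case by the Hopkins-Ravenel smash product theorem) ensures that any realization has underlying homotopy associative ring $E$.

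Second, I would invoke the classical Morita equivalence $\Mod_{E_{*}E}(\ComodE) \simeq \Mod_{E_{*}}$, valid because $(E_{*}, E_{*}E)$ is a flat Hopf algebroid. This symmetric monoidal equivalence sends the $E_{*}E$-algebra $E_{*}E$ (over $E_{*}$) to the unit $E_{*}$-algebra $E_{*}$. By \textbf{Lemma \ref{lemma:general_base_change_for_cotangent_complex}} it transports $\mathbb{L}^{\ekoperad{\infty}}_{E_{*}E/E_{*}}$ to $\mathbb{L}^{\ekoperad{\infty}}_{E_{*}/E_{*}} = 0$. The obstruction groups
\[
\Ext^{n+2,n}_{\Mod_{E_{*}E}(\ComodE)}(\mathbb{L}^{\ekoperad{\infty}}_{E_{*}E/E_{*}}, E_{*}E) = 0
\]
then vanish for all $n \geq 1$, so \textbf{Theorem \ref{thm:obstructions_to_realizing_an_algebra_in_comodules}} produces a realization; likewise the lifting obstructions of \textbf{Theorem \ref{thm:obstruction_to_lifting_of_maps_of_potential_stages_for_realization_of_comodule_algebra}} vanish, making the fiber connected.

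Third, I would apply \textbf{Corollary \ref{cor:mapping_space_spectral_sequence_ring_spectra}} to $\phi = \mathrm{id}_{E}$. With $\mathbb{L}^{\ekoperad{\infty}}_{E_{*}E/E_{*}} = 0$, the $s > 0$ part of the $E_{1}$-page vanishes, collapsing the spectral sequence onto $s = 0$ and yielding
\[
\map_{\ekoperad{\infty}}(E, E) \simeq \map_{\CAlg(\ComodE)}(E_{*}E, E_{*}E),
\]
a discrete set. Morava's identification $E_{*}E \simeq \mathrm{Map}_{c}(\Aut(k, \Gamma), E_{*})$, together with the corepresenting property of the Lubin-Tate Hopf algebroid, then identifies comodule-algebra endomorphisms of $E_{*}E$ with $\Aut(k, \Gamma)$, all of which are invertible. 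Hence $\mathrm{Aut}_{\ekoperad{\infty}}(E) \simeq \Aut(k, \Gamma)$ and $\mathcal{M}_{\ekoperad{\infty}}(E) \simeq B\,\mathrm{Aut}_{\ekoperad{\infty}}(E) \simeq B\Aut(k, \Gamma)$.

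The main obstacle is the derived upgrade of the Morita equivalence: the abelian equivalence $\Mod_{E_{*}E}(\ComodE) \simeq \Mod_{E_{*}}$ must be promoted to a symmetric monoidal $\infty$-categorical equivalence transporting $\ekoperad{\infty}$-algebra structures and cotangent complexes, which is formal once faithful flatness is invoked but must be verified carefully. A secondary input is the classical identification $\Aut_{\CAlg(\ComodE)}(E_{*}E) \simeq \Aut(k, \Gamma)$ from Lubin-Tate deformation theory, which lies outside the obstruction machinery itself.
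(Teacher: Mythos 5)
The key step in your argument — that the Morita equivalence $\Mod_{E_*E}(\ComodE) \simeq \Mod_{E_*}$ transports $\LL^{\ekoperad{\infty}}_{E_*E/E_*}$ to $\LL^{\ekoperad{\infty}}_{E_*/E_*} = 0$ — is incorrect, and it conceals the actual content of the theorem. \textbf{Lemma \ref{lemma:general_base_change_for_cotangent_complex}} applies to a cocontinuous symmetric monoidal functor $f\colon \ccat \to \dcat$, and gives $\LL^{\ekoperad{\infty}}_{f(A)} \simeq f_*(\LL^{\ekoperad{\infty}}_A)$ where $A \in \CAlg(\ccat)$ and $f_*$ is the induced map on module categories. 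The Morita equivalence is not of this form: it is an equivalence of module categories $\Mod_{E_*E}(\ComodE) \simeq \Mod_{E_*}$, not a symmetric monoidal functor out of $\ComodE$. The cotangent complex of $E_*E$ is defined via stabilization of the slice category of $E_*E$-algebras \emph{inside} $\ComodE$, so it depends on the ambient category, not merely on the module category $\Mod_{E_*E}(\ComodE)$ as a symmetric monoidal category. (A sanity check: $\Mod_{H\mathbb{F}_p}(\spectra) \simeq \Mod_{H\mathbb{F}_p}(\Mod_{H\mathbb{F}_p})$ as symmetric monoidal $\infty$-categories, but $\LL^{\ekoperad{\infty}}_{H\mathbb{F}_p/S^0}$ is the nontrivial topological Andr\'e--Quillen homology while $\LL^{\ekoperad{\infty}}_{H\mathbb{F}_p/H\mathbb{F}_p} = 0$.) If you try to salvage the argument by composing extension of scalars $\ComodE \to \Mod_{E_*E}(\ComodE)$ with the Morita equivalence, the composite is just the forgetful functor $U\colon \ComodE \to \Mod_{E_*}$, which takes $E_*E$ to $E_*E$, not to $E_*$ — and this is exactly what the paper does.

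In fact the relative cotangent complex does \emph{not} vanish, and the theorem is not a formal consequence of the Hopf-algebroid structure: the $\mathfrak{m}$-adic completeness of the Lubin--Tate ring is essential. The paper's proof, after passing through the forgetful functor, reduces to the cotangent complex of $E_*E$ as an $E_*$-algebra in ordinary modules, then filters by powers of $\mathfrak{m}$ to set up a spectral sequence whose input is $\LL^{\ekoperad{\infty}}_{E_*E/E_*} \otimes^\LL_{E_*} E_*/\mathfrak{m}$. This derived reduction is identified (using flatness) with $\LL^{\ekoperad{\infty}}_{(E_0E/\mathfrak{m})/k}$, which vanishes because $E_0E/\mathfrak{m} \cong \mathrm{maps}_{cts}(\Aut(k,\Gamma),k)$ is a perfect $\mathbb{F}_p$-algebra. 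So what vanishes is the reduction mod $\mathfrak{m}$ of the cotangent complex, and then the relevant $\Ext$ groups into $E_*E$ vanish by completeness — not the cotangent complex itself. Your outline is otherwise sound: the identification of $\mathcal{M}_{\ekoperad{\infty}}(E)$ with the moduli of realizations of $E_*E$ (which the paper handles via the universal coefficient spectral sequence rather than via Adams convergence, though either works), and the collapse of the mapping-space spectral sequence once the obstruction groups are known to vanish, are both as in the paper. But the central calculation needs the $\mathfrak{m}$-adic filtration and the perfectness of $E_0E/\mathfrak{m}$; the Morita shortcut is a false economy.
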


\begin{proof}
We claim that a homotopy associative ring spectrum $A$ is equivalent to $E$ if and only if $E_{*}A \simeq E_{*}E$ as comodule algebras, so that we have an equivalence 

\begin{center}
$\mathcal{M}_{\ekoperad{\infty}}(E) \simeq \textnormal{Alg}^{E_{*}E}_{\ekoperad{\infty}}(\spectra_{E})$,
\end{center}
where the former is as in the statement and the latter is the $\infty$-groupoid of realizations of $E_*E$ in $E$-local $\ekoperad{\infty}$-ring spectra. One direction is easy: if $A$ is equivalent to $E$, then $E_*A \simeq E_*E$. Let us instead assume that we have $E_{*}E \simeq E_{*}A$ as comodule algebras. This implies that $E_{*}A$ is flat over $E_{*}$, so that the $E_{2}$-page of the universal coefficient spectral sequence

\begin{center}
$\Ext^{s, t}_{E_{*}}(E_{*}A, E_{*}) \Rightarrow [A, E]_{t-s}$
\end{center}
vanishes for $s > 0$ by \cite[15.6]{rezk1998notes}, forcing the spectral sequence to collapse. We deduce that the chosen isomorphism in $\Hom_{E_{*}E}(E_{*}A, E_{*}E) \simeq \Hom_{E_{*}}(E_{*}A, E_{*})$ descends to an equivalence $A \simeq E$. Since the chosen isomorphism $E_{*}A \simeq E_{*}E$ was an isomorphism of algebras, $A \simeq E$ is an equivalence of homotopy associative rings, which is what we wanted to show. 

We now consider the problem of realizing the $E_*E$-comodule algebra $E_*E$ as the homology of an $E$-local $\ekoperad{\infty}$-ring spectrum. As $E$ satisfies the Adams condition and $\synspectra$ is complete, we can study this problem using the obstruction theory of \cref{thm:obstructions_to_realizing_an_algebra_in_comodules} and \cref{thm:obstruction_to_lifting_of_maps_of_potential_stages_for_realization_of_comodule_algebra}.

The obstructions to existence and uniqueness live in groups of the form
\[	\Ext_{\Mod_{E_*E}(\dcat(\Comod_{E_*E}))}^{s,t}(\LL_{E_*E/E_*}^{\ComodE}, E_*E),	\]
where we dropped $\ekoperad{\infty}$ from the notation for the cotangent complex in favor of emphasizing that it is internal to the category of comodules. We claim that all of these obstruction groups vanish; our first step is to reduce this calculation to one internal to the world of $E_{*}$-modules. 

The forgetful functor $U:\ComodE \rightarrow \Mod_{E_*}^{\heartsuit}$  from comodules into discrete $E_{*}$-modules admits a right adjoint given by the cofree comodule construction $M \mapsto E_{*}E \otimes _{E*} M$. One sees that if $M$ is an $E_*E$-module, then $E_*E \otimes_{E_*} M$ is a module over $E_*E$ in comodules, inducing an adjunction $E \dashv E_{*}E \otimes _{*} (-): \Mod_{E_{*}E} (\ComodE) \leftrightarrows \Mod_{E_{*}E}^{\heartsuit}$ between categories of modules. It is easy to see that both adjoints are exact, using flatness of $E_*E$ over $E_*$, so that this adjunction passes to one between derived $\infty$-categories.

Using the derived adjunction, we see that
\begin{center}
$\Ext_{\Mod_{E_*E}(\dcat(\Comod_{E_*E}))}^{s,*}(\LL_{E_*E/E_*}^{\ComodE}, E_*E) \simeq \Ext_{\Mod_{E_*E}}^{s,*}(U_*(\LL_{E_*E/E_*}^{\ComodE}), E_*)$
\end{center}
and further by the base change formula of \cref{lemma:general_base_change_for_cotangent_complex} that 

\begin{center}
$ \Ext_{\Mod_{E_*E}}^{s,*}(U_*(\LL_{E_*E/E_*}^{\ComodE}), E_*) \simeq \Ext_{\Mod_{E_*E}}^{s,*}(\LL_{E_*E/E_*}^{\Mod_{E_*}^{\heartsuit}}, E_*)$,
\end{center}
where we've used that the forgetful functor is cocontinuous and symmetric monoidal. 

Having reduced to the case of modules, let us write $\LL_{E_*E / E_*}^{\ekoperad{\infty}} := \LL_{E_*E/E_*}^{\Mod_{E_*}^{\heartsuit}}$; that is, the cotangent complex is now implicitly taken in $E_{*}$-modules. The ring $E_*$ is complete with respect to a maximal ideal $\mathfrak{m}$ and filtering $E_*$ by powers of $\mathfrak{m}$ gives a spectral sequence
\[	E_2^{p,q} = \Ext^{p,*}_{E_*}(\LL_{E_*E/E_*}^{\ekoperad{\infty}}, \mathfrak{m}^q/\mathfrak{m}^{q+1}) \Rightarrow \Ext^{p+q,*}_{E_*}(\LL_{E_*E/E_*}^{\ekoperad{\infty}}, E_*).	\]

The $E_2$ page of this spectral sequence is equivalent to
\[	\Ext^{p,*}_{E_*/\mathfrak{m}}(\LL_{E_*E/E_*}^{\ekoperad{\infty}} \otimes^\LL_{E_*} E_*/\mathfrak{m}, \mathfrak{m}^q/\mathfrak{m}^{q+1}),	\]
and so it suffices to prove the vanishing of
\[	\LL_{E_*E/E_*}^{\ekoperad{\infty}} \otimes^\LL_{E_*} E_*/\mathfrak{m}.	\]
Since $E_*E$ is flat over $E_*$, this is equivalent to
\[	\LL_{(E_*E/\mathfrak{m})/(E_*/\mathfrak{m})}^{\ekoperad{\infty}} \simeq E_* \otimes_{E_0} \LL_{(E_0E/\mathfrak{m})/(E_0/\mathfrak{m})}^{\ekoperad{\infty}}.	\]

By construction of Morava $E$-theory, we have $E_0/\mathfrak{m} = k$, a perfect field of characteristic $p$, and it is well-known that 
\[	E_0E/\mathfrak{m} \cong \mathrm{maps}_{cts}(\mathrm{Aut}(k,\Gamma), k),	\]
where the latter is the ring of continuous maps from the profinite group $\mathrm{Aut}(k,\Gamma)$ to the discrete field $k$, see \cite{strickland_gross_hopkins}. In particular, $E_0E/\mathfrak{m}$ is perfect, which implies that the $\ekoperad{\infty}$ cotangent complex $\LL_{(E_0E/\mathfrak{m})/(E_0/\mathfrak{m})}^{\ekoperad{\infty}}$ vanishes \cite[Lemma 5.2.8]{lurie_elliptic_cohomology_two}.

This implies that the obstruction groups of \cref{thm:obstructions_to_realizing_an_algebra_in_comodules}, which are the Andr\'e-Quillen cohomology groups
\[	 \Ext_{\Mod_{E_*E}(\dcat(\Comod_{E_*E}))}^{t+2,t}(\LL_{E_*E/E_*}^{\ComodE}, E_*E),	\]
vanish, so that a realization of $E_{*}E$ exists. Moreover, the obstructions to lifting morphisms of \cref{thm:obstruction_to_lifting_of_maps_of_potential_stages_for_realization_of_comodule_algebra} also vanish and the functors $\textnormal{Alg}_{\ekoperad{\infty}}(\mathcal{M}^{E_{*}E}_{t+1}) \to \textnormal{Alg}_{\ekoperad{\infty}}(\mathcal{M}^{E_{*}E}_{t}) $ between the $\infty$-categories of potential $t$-stages for $E_{*}E$ are equivalences. Passing to the limit, we deduce that the same is true for the functor

\begin{center}
$\textnormal{Alg}^{E_{*}E}_{\ekoperad{\infty}}(\spectra_{E}) \rightarrow B\textnormal{Aut}_{\textnormal{CAlg}(\ComodE)}(E_{*}E)$
\end{center}
between realizations of $E_{*}E$ in $E$-local aspectra and the category of comodule algebras isomorphic to $E_{*}E$. The latter is well-known to be equivalent to $B\textnormal{Aut}(k, \Gamma)$, see \cite[\S17]{rezk1998notes}, ending the argument.
\end{proof}

\begin{rem}
Using arguments analogous to the ones appearing in the proof of \cref{thm:goerss_hopkins_miller}, one can show more generally that if $E(k_{1}, \Gamma_{1})$ and $E(k_{2}, \Gamma_{2})$ are two Morava $E$-theory spectra with their unique $\ekoperad{\infty}$-ring structures, then 

\begin{center}
$\map_{\textnormal{Alg}_{\ekoperad{\infty}}(\spectra)}(E(k_{1}, \Gamma_{1}), E(k_{2}, \Gamma_{2})) \simeq \map_{FGL}((k_{1}, \Gamma_{1}), (k_{2}, \Gamma_{2}))$,
\end{center}
where the latter is the set of homomorphisms in the category of rings equipped with a choice of a formal group law. In particular, the space of $\ekoperad{\infty}$-maps is component-wise contractible. 
\end{rem}

\begin{rem}
Lurie has recently given a very different proof of the existence of an $\ekoperad{\infty}$-algebra structure on $E(k,\Gamma)$, by realizing it as a solution to a moduli problem in spectral algebraic geometry \cite{lurie_elliptic_cohomology_two}. In fact, Lurie's construction is much more general and produces commutative ring spectra associated to deformation problems for a wide class of rings and p-divisible groups.
\end{rem}

\begin{rem}
One can imagine a different obstruction-theoretic proof of the uniqueness of the $\ekoperad{\infty}$-ring structure on $E(k, \Gamma)$ by using Andr\'e-Quillen cohomology of algebras with power operations, analogous to the one using $\theta$-algebras in \cite{moduli_problems_for_structured_ring_spectra}. 

The relevant power operations on the $E(k,\Gamma)$-homology of an $\ekoperad{\infty}$-algebra are given by the Rezk monad $\mathbb{T}$ \cite{rezk2009congruence}; as the free $\mathbb{T}$-algebra on one generator is a polynomial algebra \cite{strickland1998morava}, the $\mathbb{T}$-algebra cotangent complex agrees with the ordinary commutative algebra cotangent complex, which vanishes by formal \'etaleness. An odd feature of this argument is that one needs to assume that an $\ekoperad{\infty}$-ring structure (or at least an $H_\infty$-structure) exists on $E(k,\Gamma)$ to produce the obstruction theory in the first place.
\end{rem}

\section{Toda obstruction theory}
Let $H$ be the Eilenberg-MacLane spectrum $H\mathbb{F}_p$ at a fixed prime and $A$ the associated Steenrod algebra. In \cite{toda1971spectra}, Toda gave an obstruction theory for realizing a bounded below module over the Steenrod algebra as the cohomology of a spectrum. 

\begin{thm}[Toda's obstruction theory, {\cite[3.2]{bhattacharya2016class}}]
\label{thm:toda_obstruction_theory}
Let $M$ be a bounded below $A$-module. Then, there exists a sequence of inductively defined obstructions
\begin{center}
$\theta_{n} \in \Ext_{A}^{n+2, n}(M, M)$, where $n \geq 1$
\end{center}
which vanish if and only if there exists a spectrum $X$ such that $M \simeq H^{*}X$ as $A$-modules.
\end{thm}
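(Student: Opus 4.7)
The strategy is to specialize the abstract linear obstruction theory of Section 4 to $\ccat = \synspectra$, the $\infty$-category of hypercomplete connective synthetic spectra based on $H = H\mathbb{F}_p$, with shift algebra given by the unit $\monunit$. By \textbf{Proposition \ref{prop:synthetic_spectra_satisfy_the_conditions_needed_for_gh_theory}} this $\ccat$ satisfies $(\diamondsuit 1)$--$(\diamondsuit 3)$, so \textbf{Theorem \ref{thm:obstructions_in_ext_groups_to_lifting_a_potential_stage}} already produces obstructions in $\Ext^{n+2,n}_{\Comod_{H_*H}}(-,-)$ of the required form. The remaining work is (a) to establish a restricted form of $(\diamondsuit 4)$ adequate to run the inductive realization argument of \textbf{Corollary \ref{cor:if_c_is_complete_then_we_have_an_obstruction_theory_to_constructing_a_periodic_a_module}} in the bounded-below setting, and (b) to transfer the resulting homological statement to the cohomological statement of the theorem via Steenrod duality.

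For (a), since \textbf{Remark \ref{rem:synthetic_spectra_not_always_complete}} warns that $\synspectra$ based on $H\mathbb{F}_p$ is not known to be complete, I would instead prove the following restricted convergence: for any $X \in \synspectra$ whose homotopy groups $\pi_* X \in \Comod_{H_*H}$ are uniformly bounded below, the canonical map $X \to \varprojlim_n X_{\leq n}$ is an equivalence. The model is the Morava-$E$-theory argument of \textbf{Theorem \ref{thm:synthetic_spectra_based_on_morava_e_theory_complete}}, whose key input is the Hopkins--Ravenel finite spectrum of finite $E$-injective dimension. Its substitute here is the classical Adams vanishing line: for bounded-below finite-type $H_*H$-comodules $M, N$, the bigraded Ext group $\Ext^{s,t}_{H_*H}(M, N)$ vanishes in a half-plane of slope $1/(2p-2)$ (respectively $1/2$ at $p=2$). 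Through the identification $\Mod_{\monunitt{0}}(\synspectra) \simeq \dcat(\Comod_{H_*H})_{\geq 0}$ of \textbf{Theorem \ref{thm:modules_over_zero_truncation_same_as_the_derived_category}}, this translates into vanishing of $\pi_j \map_{\synspectra}(\nu P, X_{\leq i}) \simeq \Ext^{i-j,*}_{H_*H}(H_*P, \pi_i X)$ for large $i$ whenever $P$ is a finite spectrum and $X$ is bounded below, forcing stabilization of the tower in each fixed degree and the vanishing of the Milnor $\varprojlim^1$ terms that gives convergence.

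The restricted convergence feeds into the Goerss--Hopkins induction via the observation that bounded-below-ness propagates along potential stages. By \textbf{Lemma \ref{lemma:homotopy_of_a_potential_n_stage_free}}, the homotopy of a potential $n$-stage is built from its $\pi_0$, so starting from a bounded-below comodule $N$ every $M_n \in \mathcal{M}_n$ produced along the induction has bounded-below homotopy, and the restricted convergence is applicable at each step. The argument of \textbf{Corollary \ref{cor:if_c_is_complete_then_we_have_an_obstruction_theory_to_constructing_a_periodic_a_module}} then goes through inside this bounded-below subcategory, yielding the homological form of the theorem: for a bounded-below $H_*H$-comodule $N$, the vanishing of all $\theta_n \in \Ext^{n+2,n}_{\Comod_{H_*H}}(N, N)$ is equivalent to the existence of an $H$-local spectrum $X$ with $H_*X \simeq N$.

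Step (b) is Steenrod duality. For a bounded-below $A$-module $M$ of finite type, the graded $\mathbb{F}_p$-linear dual $N = M^\vee$ is a bounded-below finite-type $H_*H$-comodule; the standard pairings induce natural isomorphisms $\Ext^{*,*}_A(M, M) \cong \Ext^{*,*}_{H_*H}(N, N)$ and $H^*X \cong (H_*X)^\vee$, transporting realizations and obstructions between the two sides. The general bounded-below case, without a finite-type hypothesis, can be reduced to the finite-type case by writing $M$ as a filtered colimit of its bounded-below finite-type submodules and using that both sides of the realization problem commute with such colimits in the appropriate sense. The main obstacle I expect is the quantitative work in (a): one must verify that the Adams vanishing region, after being propagated through the tower, is large enough relative to the growing internal degrees in $\synspectra$ to force stabilization in each fixed bidegree, and the bookkeeping here -- tracking how the connectivity bound on $\pi_0$ controls the vanishing slope for every subsequent $\pi_i$ -- is where the argument is most delicate.
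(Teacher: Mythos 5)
Your overall strategy matches the paper's quite closely: specialize the linear obstruction theory of Section 4 to $\synspectra$ based on $H\mathbb{F}_p$, establish a restricted convergence theorem via Adams vanishing lines, and feed this into the tower $\mathcal{M}_n^M$. This is precisely the content of \textbf{Theorem \ref{thm:goerss_hopkins_tower_converges_for_bounded_below_hh_comodules}} and \textbf{Corollary \ref{cor:homological_toda_obstruction_theory}}. One important caveat: the paper does not prove \textbf{Theorem \ref{thm:toda_obstruction_theory}} itself, which is attributed to Toda and cited from \cite{bhattacharya2016class}; it proves only the \emph{homological} variant, \textbf{Corollary \ref{cor:homological_toda_obstruction_theory}}. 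So your step (b), passing between cohomology of $A$-modules and homology of $H_*H$-comodules, is extra work not carried out in the paper.

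I see two issues with the proposal, one in each of your steps. In (a), your formulation ``$\pi_*X$ is uniformly bounded below'' is problematic: a potential $n$-stage for a comodule $M$ concentrated in internal degrees $\ge N$ has $\pi_i \simeq M[-i]$, whose internal degrees are $\ge N-i$ and therefore escape to $-\infty$ as $i$ grows. The homotopy groups of a potential stage are emphatically \emph{not} uniformly bounded below in internal degree, so a convergence statement phrased that way does not apply to the objects you actually encounter. The paper sidesteps this by proving convergence only of the Goerss--Hopkins tower itself: it shows directly that for a Postnikov pretower $(X_n)$ of potential $n$-stages for a fixed $M$, the maps $[\Sigma^k\nu P, X_n]$ stabilize for finite $p$-complete $P$ with Bockstein-free homology, using the concrete identification $X_{[n]}\simeq\Sigma^n M[-n]$ and the positive-slope vanishing line of \textbf{Theorem \ref{thm:adams_vanishing}}. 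The stabilization bound depends on $k$, $t(H_*P)$, and the bottom degree of $M$, which is exactly the ``bookkeeping'' you flag at the end; this tracking is not incidental but is the substance of the proof, and your more general Postnikov-convergence claim would need the same bookkeeping to even be formulated correctly.

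In (b), the dualization for finite-type modules is standard, but the passage to general bounded-below $A$-modules via filtered colimits is shaky. The obstruction groups $\Ext^{n+2,n}_A(M,M)$ have $M$ in both variables, so they do not convert a colimit in $M$ into a colimit of Ext groups; realizability of each finite-type piece does not obviously assemble into a realization of the colimit (on the cohomological side one runs into $\varprojlim^1$ phenomena, since $H^*$ takes colimits of spectra to limits of $A$-modules); and the duality between $A$-modules and $H_*H$-comodules only matches cleanly under a finite-type hypothesis. The paper avoids these issues by staying entirely on the homology side. If you want the cohomological statement, you should either add a finite-type hypothesis or give a genuinely different argument for the reduction — the colimit trick does not obviously close the gap.
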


In this section, we will prove a homological variant of Toda's theorem as a consequence of the Goerss-Hopkins obstruction theory developed in this note. In fact, our approach yields a slight strengthening of Toda's result, as it produces not just obstructions to realizations but an inductive decomposition of the moduli space of realizations. It is also a particularly interesting strength test for our theory, for the following reason.

In the case of Morava $E$-theory, considered in \cref{thm:synthetic_spectra_based_on_morava_e_theory_complete}, the Goerss-Hopkins tower
\begin{equation}\label{gh_tower_global}
	\mathcal{M}_\infty \to \dotsb \to \mathcal{M}_1 \to \mathcal{M}_0
\end{equation}
converges globally, meaning that the moduli space $\mathcal{M}_\infty$ of potential $\infty$-stages for all $E$-local spectra is the limit of the moduli stages $\mathcal{M}_n$ of potential $n$-stages. As we saw, this convergence is a consequence of the completeness of the $\infty$-category of synthetic spectra based on $E$; in turn, this follows from an algebraic fact, namely the existence of a horizontal vanishing line in the $E_2$-page of the $E$-based Adams spectral sequence for the Hopkins-Ravenel finite spectrum.

If $E$ is replaced with a general Adams-type homology theory, in general none of the above needs to be true. In fact, completeness fails already for synthetic spectra based on $H$, as a consequence of the existence of an $H$-local but not $H$-nilpotent complete spectrum. We couldn't find an explicit example in the literature, and so we include one here, due to Robert Burklund, see \cref{example:non_vanishing_adams_spectral_sequence}. 

A silver lining on the completness issue is that in solving a given realization problem, one cares less about the Goerss-Hopkins tower in its entirety than about its relativization
\begin{equation}\label{gh_tower_local}
	\mathcal{M}_\infty^M \to \dotsb \to \mathcal{M}_1^M \to \mathcal{M}_0^M = \{M\},
\end{equation}
where $M$ is some fixed $E_{*}E$-comodule and 
\[	\mathcal{M}_n^M = \mathcal{M}_n \times_{\mathcal{M}_0} \{M\}	\]
is the moduli of potential $n$-stages for a realization of $M$. The theorem we will actually prove, which by \cref{cor:if_c_is_complete_then_we_have_an_obstruction_theory_to_constructing_a_periodic_a_module} will have a homological variant of \cref{thm:toda_obstruction_theory} as an immediate consequence, says that the tower \eqref{gh_tower_local} converges whenever $M$ is a comodule that is bounded below.

Again, the argument relies on the existence of vanishing lines on the $E_2$-page of the Adams spectral sequence. In the case of Morava $E$-theory, these vanishing lines were horizontal, as in  \cref{lemma:for_morava_e_theory_any_fp_covered_by_one_with_homology_of_finite_homological_dimension}. In the case of ordinary mod $p$ homology, the vanishing lines are of slope $\frac{1}{q}$ where $q=2p-2$, and are due to Adams. 

Note that we work with the Eilenberg-MacLane spectrum for definiteness, but the same arguments would establish convergence of the Goerss-Hopkins tower whenever the Adams $E_{2}$-term has vanishing lines of positive slope. Most notably, the convergence also holds in the bounded below case for $BP$, see \cref{rem:toda_obstruction_theory_for_bp}. In fact, the argument in the latter case is easier, since the Adams-Novikov $E_{2}$-term has a vanishing line already in the case of the sphere.

If $M$ is a finite $H_{*}H$-comodule, then it is the $\mathbb{F}_{p}$-linear dual of a module over the Steenrod algebra. The latter contains the Bockstein element $\beta \in A^{1}$ with the property that $\beta^{2} = 0$, which subsequently acts on $M$. We will say that a finite $H_{*}H$-comodule $M$ is \emph{freely acted on by the Bockstein} if it free as a module over the exterior algebra $\mathbb{F}_{p}[\beta] / \beta^{2}$.

\begin{thm}[Adams vanishing]
\label{thm:adams_vanishing}
Let $M$ be a finite comodule freely acted on by the Bockstein and let $N$ be bounded from below. Then, $\Ext_{H_{*}H}^{s, t}(M, N)$ has a vanishing line of slope $\frac{1}{q}$.

More precisely, we have $\Ext_{H_{*}H}^{s, t}(M, N) = 0$ for $s > \frac{1}{q}(t-s) + k$, where $k = b(N)-t(M)-3$ with $b(N)$ being the degree of the bottom class in $N$ and $t(M)$ the degree of the top class in $M$. 
\end{thm}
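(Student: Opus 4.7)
The plan is to reduce to the classical Adams vanishing line for $\Ext_{H_*H}^{*,*}(\mathbb{F}_p,\mathbb{F}_p)$, which provides the slope $1/q$, via two standard reductions: first in $N$, then in $M$. Throughout I will freely pass between finite $H_*H$-comodules and their duals as modules over the Steenrod algebra $A$, under which the relevant $\Ext$ groups agree up to reindexing.

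For the reduction in $N$: since $M$ is finite, it is compact in $H_*H$-comodules, so $\Ext_{H_*H}^{s,t}(M,-)$ commutes with filtered colimits. Writing $N$ as the filtered colimit of its finite bounded-below subcomodules, we may assume $N$ is finite, and then induct on $\dim_{\mathbb{F}_p} N$ by extracting the bottom-degree piece: the short exact sequence $0 \to V[b(N)] \to N \to N' \to 0$ with $V$ a trivial comodule and $b(N') \geq b(N)$ yields a long exact sequence in $\Ext$ which propagates the vanishing line, since the bound $k$ depends only on the minimum degree $b(N)$.

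For the reduction in $M$: the Bockstein generates an exterior sub-Hopf algebra $E := \mathbb{F}_p[\beta]/\beta^2 \subset A$, and by hypothesis $M$ is $E$-free (on its dualized $A$-module side). The Cartan--Eilenberg change-of-rings spectral sequence
\[	\Ext_{A//E}^{p,*}\bigl(\mathbb{F}_p, \Ext_E^{q,*}(M, N)\bigr) \Rightarrow \Ext_A^{p+q,*}(M, N)	\]
then collapses onto the row $q = 0$ by $E$-freeness of $M$. Iterating over an $E$-cell decomposition of $M$ reduces to the case $M = \mathbb{F}_p$, with the top-degree parameter $t(M)$ absorbed into the final constant. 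The base case $M = N = \mathbb{F}_p$ is Adams' classical vanishing theorem \cite{adams1995stable}: there is a constant $c$, depending on $p$, such that $\Ext_{H_*H}^{s,t}(\mathbb{F}_p, \mathbb{F}_p) = 0$ whenever $s > (t-s)/q + c$. Combining this with the two reductions, and keeping track of the degree shifts introduced, yields the stated bound.

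The main obstacle will be the precise bookkeeping of constants through the two reductions, and in particular verifying that the slope $1/q$ is not worsened in the change-of-rings step. The slope $1/q$ is sharp for $\Ext_A(\mathbb{F}_p, \mathbb{F}_p)$ because of patterns like the $h_0$-tower, which are precisely killed when one restricts to modules free over the Bockstein; this is the structural reason the hypothesis on $M$ is needed. The specific constant $-3$ in $k = b(N) - t(M) - 3$ will emerge as the sum of the classical Adams sphere constant at the given prime and small shifts coming from the two cell inductions; verifying that the accounting works out exactly is the only genuinely arithmetic step in the argument.
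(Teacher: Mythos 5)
There is a genuine gap in the base case of your second reduction. You claim that iterating over an $E$-cell decomposition of $M$ reduces to $M = \mathbb{F}_p$, but the cells in an $E$-cell decomposition are (shifted copies of) $E = \mathbb{F}_p[\beta]/\beta^2$ itself, not $\mathbb{F}_p$ -- replacing a single $E$-cell by its composition series of $\mathbb{F}_p$'s would destroy the very Bockstein-freeness the hypothesis provides, which is what the vanishing line depends on. And the classical result you cite for the would-be base case is not true in the form you state it: $\Ext_{H_*H}^{s,t}(\mathbb{F}_p,\mathbb{F}_p)$ does \emph{not} vanish above a line of slope $1/q$, because the $h_0$-tower on the $0$-stem gives $\Ext^{s,s} \neq 0$ for all $s$. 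Adams' vanishing theorem for the sphere is a statement about the region $0 < t-s$, not a vanishing line in the sense needed here; the version that does give a vanishing line requires the comodule argument to be Bockstein-free, which $\mathbb{F}_p$ is not. The change-of-rings spectral sequence is a legitimate tool here -- its collapse would identify $\Ext_A(M,N)$ with an $\Ext_{A//E}(\mathbb{F}_p, -)$ group, and one could then argue a vanishing line for the quotient Hopf algebra -- but you would need to establish that vanishing separately, not appeal to the sphere.

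By contrast, the paper sidesteps all of this with a single linear-duality move: $\Ext^{s,t}(M,N) \simeq \Ext^{s,t}(\mathbb{F}_p, M^\vee \otimes N)$. Since $M^\vee$ is Bockstein-free and tensoring a free $E$-module with anything is still $E$-free, the comodule $M^\vee \otimes N$ inherits the Bockstein-freeness, and after a degree shift it is concentrated in non-negative degrees. This reduces at once to the classical statement \cite[3.4.5]{ravenel_complex_cobordism}, which is already phrased for $\Ext(\mathbb{F}_p, O)$ with $O$ Bockstein-free and bounded below -- there is no cell induction and no change of rings. Your reduction in $N$ by a filtered colimit argument is the same as the paper's; the cell induction in $N$ you describe is then superfluous, since the filtered colimit step already handles arbitrary bounded below $N$ once the constant is known to depend only on $b(N)$.
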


\begin{proof}
Since $\Ext^{s, t}(M, -)$ commutes with filtered colimits, because it can be computed by the cobar complex, without loss of generality we can assume that $N$ is finite, too. Observe that by linear duality we have an isomorphism 

\begin{center}
$\Ext^{s, t}_{H_{*}H}(M, N) \simeq \Ext^{s, t}_{H_{*}H}(\mathbb{F}_{p}, M^{\vee} \otimes N)$,
\end{center}
where $M^{\vee} \otimes N$ is an $H_{*}H$-comodule which is freely acted on by the Bockstein and is concentrated in degrees $d$ satisfying $d \geq b(N) - t(M)$. 

Thus, by setting $O = (M^{\vee} \otimes N)[t(M) - b(N)]$ we see that to prove the needed statement it is enough to know that $\Ext^{s, t}_{H_{*}H}(\mathbb{F}_{p}, O)$ vanishes for $s > \frac{1}{q}(t-s)-3$ if $O$ is a finite comodule freely acted on by the Bockstein and concentrated in non-negative degrees, which is classical, see \cite[3.4.5]{ravenel_complex_cobordism}. 
\end{proof}
As usual when working with synthetic spectra, statements about comodules alone are not enough, as we also need some control over finite spectra and their homology. 

\begin{lemma}
For every finite $p$-complete spectrum $P$, there exists a finite $p$-complete spectrum $Q$ together with an $H_{*}$-surjection $Q \rightarrow P$ such that $H_{*}Q$ is freely acted on by the Bockstein. 
\end{lemma}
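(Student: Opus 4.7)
The plan is to construct $Q$ by smashing $P$ with the shifted mod-$p$ Moore spectrum. Set $Q' := \Sigma^{-1}(S/p)$ and $Q := P \wedge Q'$. This is finite as a smash of finite spectra, and $p$-complete since it is the desuspension of $P/p$, which is $p$-complete whenever $P$ is. The cofibre sequence $\Sigma^{-1}(S/p) \to S \xrightarrow{p} S$ shows that $Q' \to S$ is surjective on $H_*$, because $p$ acts as zero on $H_*S = \mathbb{F}_p$; smashing with $P$ then yields the required $H_*$-surjection $Q \to P$.

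It remains to verify that $H_*Q$ is free over the exterior algebra $\Lambda := \mathbb{F}_p[\beta]/\beta^2$. By the K\"unneth formula, $H_*Q \cong H_*P \otimes_{\mathbb{F}_p} H_*Q'$, and $H_*Q'$ is free of rank one over $\Lambda$ (with generator in degree $0$ and Bockstein hitting the class in degree $-1$). So it is enough to prove the following algebraic claim: if $V$ is a free $\Lambda$-module and $W$ is any $\Lambda$-module, then $V \otimes W$ with the diagonal action $\beta(v \otimes w) = \beta v \otimes w + (-1)^{|v|} v \otimes \beta w$ is again free.

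Since $\Lambda$ is a Hopf algebra with $\Delta(\beta) = \beta \otimes 1 + 1 \otimes \beta$, this is a standard observation about tensor products of Hopf modules. Explicitly, by reduction to the case $V = \Lambda$, one checks that the $\Lambda$-linear map
\[
\Phi \colon \Lambda \otimes W_{\mathrm{triv}} \to \Lambda \otimes W, \qquad h \otimes w \longmapsto \sum h_{(1)} \otimes h_{(2)} w,
\]
where $W_{\mathrm{triv}}$ denotes $W$ with trivial $\beta$-action, is an isomorphism of $\Lambda$-modules intertwining the evident free action on the left with the diagonal action on the right. Bijectivity is immediate on the basis $\{1 \otimes w, \beta \otimes w : w \in W\}$, and $\Lambda$-linearity is a direct computation with the coproduct. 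Since $\Lambda \otimes W_{\mathrm{triv}}$ is manifestly free, so is $\Lambda \otimes W$, and taking direct sums extends this to arbitrary free $V$.

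The argument is essentially algebraic; the only real choice is the initial construction of $Q$. The main subtlety lies in the Hopf-algebra argument of the last paragraph, which is a small instance of the general fact that for a Hopf algebra over a field, tensor product with a free module preserves freeness.
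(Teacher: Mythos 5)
Your construction is exactly the one the paper uses: $Q = P \wedge \Sigma^{-1}(S^0/p)$ with the map induced by the top-cell projection $\Sigma^{-1}(S^0/p) \to S^0$. The paper states this as a one-line proof without verifying the freeness of $H_*Q$ over $\mathbb{F}_p[\beta]/\beta^2$; your Hopf-algebra argument for why tensoring with a $\Lambda$-free module (against any $\Lambda$-module, with the diagonal Cartan action coming from primitivity of the Bockstein) preserves freeness correctly fills in that implicit step.
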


\begin{proof}
Take $Q := P \wedge (\Sigma^{-1} S^{0} / p)$, with the map $Q \to P$ induced by the projection $\Sigma^{-1} S^{0} / p \rightarrow S^{0}$.
\end{proof}

\begin{lemma}
\label{lemma:bounded_subspectrum_also_freely_acted_on_by_the_bockstein}
Let $P$ be a finite $p$-complete spectrum such that $H_{*}P$ is freely acted on by the Bockstein and let $k \in \mathbb{Z}$. Then, there exists a finite $p$-complete spectrum $Q$ such that $H_{*}Q$ is freely acted on by the Bockstein and concentrated in degrees $d \leq k+1$, together with a map $Q \rightarrow P$ such that $H_{*}Q \rightarrow H_{*}P$ is an isomorphism in degrees $d \leq k$. 
\end{lemma}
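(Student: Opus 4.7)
The plan is to exhibit $P$ as an iterated attachment of shifted mod-$p$ Moore spectra and then take $Q$ to be an appropriate skeleton. Concretely, I will inductively build a finite tower
\[ \ast = Q^{(0)} \to Q^{(1)} \to \dotsb \to Q^{(N)} \simeq P \]
of finite $p$-complete spectra equipped with compatible maps $Q^{(i)} \to P$, such that each cofibre $Q^{(i)}/Q^{(i-1)}$ is equivalent to $\Sigma^{m_i - 1} S/p$ for a non-decreasing sequence $m_1 \leq m_2 \leq \dotsb \leq m_N$, and such that $H_{*}(Q^{(i)}; \mathbb{F}_{p}) \hookrightarrow H_{*}(P; \mathbb{F}_{p})$ is the inclusion of a free Bockstein direct summand.

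For the inductive step, I form the cofibre $C^{(i-1)} := \mathrm{cofib}(Q^{(i-1)} \to P)$. The quotient $H_{*}(C^{(i-1)}; \mathbb{F}_p)$ inherits Bockstein-freeness from the inductive hypothesis; moreover, by the Bockstein spectral sequence the integral homologies of $P$, $Q^{(i-1)}$, and therefore $C^{(i-1)}$, are all $\mathbb{F}_{p}$-vector spaces. Letting $m_i - 1$ be the bottom non-zero degree of $H_{*}(C^{(i-1)}; \mathbb{F}_p)$, the spectrum $C^{(i-1)}$ is $(m_i - 2)$-connected. Pick any non-zero $v$ in that bottom degree; since every class at the bottom of a free Bockstein module is automatically a $\beta$-image, there is $u \in H_{m_i}(C^{(i-1)};\mathbb{F}_p)$ with $\beta u = v$. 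By Hurewicz and universal coefficients, $\pi_{m_i - 1} C^{(i-1)} \cong H_{m_i - 1}(C^{(i-1)}; \mathbb{Z})$ is an $\mathbb{F}_{p}$-vector space; hence any homotopy lift of $v$ is annihilated by $p$ and so extends to a Moore map $\sigma: \Sigma^{m_i - 1} S/p \to C^{(i-1)}$ whose bottom and top classes realise $v$ and $u$ respectively.

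I will then use the fibre sequence $Q^{(i-1)} \to P \to C^{(i-1)}$ to obtain a boundary $\partial \sigma: \Sigma^{m_i - 2} S/p \to Q^{(i-1)}$ whose composite with $Q^{(i-1)} \to P$ is null, and set $Q^{(i)} := \mathrm{cofib}(\partial \sigma)$ with the extension $Q^{(i)} \to P$ provided by a chosen null-homotopy. The crucial observation is that $(\partial \sigma)_{*} = 0$ on mod-$p$ homology (being the connecting map of a short exact sequence of Bockstein modules), so $H_{*}(Q^{(i)}; \mathbb{F}_p) \cong H_{*}(Q^{(i-1)}; \mathbb{F}_p) \oplus H_{*}(\Sigma^{m_i - 1} S/p; \mathbb{F}_p)$ as graded $\mathbb{F}_p$-vector spaces. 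A direct diagram chase then verifies that the resulting map $H_{*}(Q^{(i)}; \mathbb{F}_p) \to H_{*}(P; \mathbb{F}_p)$ is injective with image $H_{*}(Q^{(i-1)};\mathbb{F}_p) \oplus \langle \tilde u \rangle_\beta$ for a lift $\tilde u$ of $u$, still a Bockstein-free direct summand. The process terminates at the first $N$ with $C^{(N)} = 0$, at which point $Q^{(N)} \to P$ is a mod-$p$ homology equivalence of finite $p$-complete spectra, hence an equivalence.

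Finally, I take $Q := Q^{(j)}$ for the largest $j$ with $m_j \leq k+1$, so $H_{*}(Q;\mathbb{F}_p)$ is Bockstein-free and concentrated in degrees $\leq k+1$ by construction. For the isomorphism in degrees $\leq k$: the cofibre $C^{(j)}$ has bottom $\mathbb{F}_p$-homology in degree $m_{j+1} - 1 > k$, so $C^{(j)}$ is $k$-connected and the long exact sequence forces $H_{*}(Q;\mathbb{F}_p) \to H_{*}(P;\mathbb{F}_p)$ to be an isomorphism in degrees $\leq k$. The main technical obstacle, I expect, will lie in the homological bookkeeping in the inductive step -- specifically, confirming that the newly attached Bockstein pair sits as a Bockstein-free direct summand of the image in $H_{*}(P;\mathbb{F}_p)$, which is what guarantees that the next cofibre $C^{(i)}$ remains Bockstein-free and the induction closes.
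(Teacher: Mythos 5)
Your approach is genuinely different from the paper's. The paper's proof is very short: since $P$ is finite and $p$-complete, it admits a \emph{minimal} cell structure (cells in bijection with a chosen $\mathbb{F}_p$-basis of $H_*P$, attaching maps of positive Adams filtration, so the cellular chain complex has zero differential). Choosing the basis so that $M_{k+1}$ is spanned by a subset of the $(k{+}1)$-dimensional basis elements, where $M$ is a free Bockstein submodule of $H_*P$ in degrees $\le k{+}1$ agreeing with $H_*P$ below degree $k{+}1$, the paper takes $Q$ to be the $k$-skeleton together with those $(k{+}1)$-cells, and the claim is immediate from the vanishing of the cellular differential. Your construction, by contrast, rebuilds $P$ from scratch as an iterated extension by shifted mod-$p$ Moore spectra and truncates the resulting tower; in effect you are re-deriving by hand a Bockstein-refined minimal cell structure, which the paper takes as a black box.

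Your plan does appear to go through, and the ``main technical obstacle'' you flag -- that the image of $H_*(Q^{(i)};\mathbb{F}_p)$ in $H_*(P;\mathbb{F}_p)$ remains a free Bockstein direct summand -- can be closed, but note that you haven't actually closed it, only identified it. The argument one needs: the ring $\mathbb{F}_p[\beta]/\beta^2$ is local Artinian, so finitely generated free modules are both projective and injective in its module category; hence any short exact sequence of $\mathbb{F}_p[\beta]/\beta^2$-modules with free outer terms splits. Combined with the five-lemma comparison of the two short exact sequences
\[
0 \to H_*(Q^{(i-1)}) \to H_*(Q^{(i)}) \to H_*(\Sigma^{m_i-1} S/p) \to 0
\quad\text{and}\quad
0 \to H_*(Q^{(i-1)}) \to H_*(P) \to H_*(C^{(i-1)}) \to 0,
\]
and a change of free basis of $H_*(C^{(i-1)})$ making the image of the top cell of $\sigma$ a free generator, this gives both injectivity of $H_*(Q^{(i)}) \to H_*(P)$ and that the image is a free direct summand, so that $H_*(C^{(i)})$ remains Bockstein-free and the induction continues. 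One minor imprecision: you cannot prescribe the top class $u$ of the Moore map in advance; the extension $\sigma$ of a lift of $v$ to $\Sigma^{m_i-1}S/p$ will realize some preimage $u'$ of $v$ under $\beta$, which need not be the $u$ you chose, but any such $u'$ serves equally well. Overall your route is correct but considerably longer and more delicate than the paper's, which buys you an explicit Moore-cell filtration at the cost of the bookkeeping you anticipated.
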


\begin{proof}
To fix notation, let $M \hookrightarrow H_{*}P$ be the smallest $\mathbb{F}_{p}$-linear subspace freely acted on by the Bockstein which contains all elements of degree $d \leq k$. Observe that since $\mathbb{F}_{p}[\beta] / \beta^{2}$ is concentrated in only two degrees, $M$ has no elements in degrees above $k+1$. 

Since $P$ is a finite $p$-complete spectrum, it admits a minimal cell structure where the cells are in one-to-one correspondence with any chosen basis of $H_{*}P$ over $\mathbb{F}_{p}$, see \cite[Theorem 3.3]{baker2004minimal}. We can choose a basis for $M_{k+1}$, extend it to a basis of $H_*P$, and consider the corresponding cell structure on $P$. Then, $M$ can be realized as the homology of a subcomplex of $P$, namely of the $k$-skeleton together with those $(k+1)$-cells which correspond to basis elements of $M$. This subcomplex is the sought after finite spectrum $Q$.  
\end{proof}

The following is the main result of this section.

\begin{thm}
\label{thm:goerss_hopkins_tower_converges_for_bounded_below_hh_comodules}
Let $M$ be a $H_{*}H$-comodule bounded from below. Then, the Goerss-Hopkins tower for realizations of $M$ converges; that is, the diagram
\begin{equation}\label{eq:goerss_hopkins_tower_over_M}
\mathcal{M}_{\infty}^{M} \rightarrow \ldots \rightarrow \mathcal{M}_{1}^{M} \rightarrow \mathcal{M}_{0}^{M},
\end{equation}
where $\mathcal{M}_{n}^{M} := \mathcal{M}_{n} \times _{\mathcal{M}_{0}^{M}} \{ M \}$, is a limit diagram of $\infty$-categories. 
\end{thm}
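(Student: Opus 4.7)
The plan is to adapt the Postnikov convergence argument from Theorem \ref{thm:synthetic_spectra_based_on_morava_e_theory_complete} to the present bounded-below situation, using Adams' vanishing line (Theorem \ref{thm:adams_vanishing}) in place of the Hopkins-Ravenel horizontal vanishing line. The essential difficulty is that Adams' vanishing has positive slope $\tfrac{1}{q}$, so it is not strong enough to make $\synspectra$ based on $H\mathbb{F}_p$ complete in general. However, once attention is restricted to synthetic objects whose homotopy is built out of a fixed bounded-below comodule, the positive-slope vanishing becomes sufficient, provided the test generator is allowed to vary with the Postnikov height.

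First, I would reduce the assertion that \eqref{eq:goerss_hopkins_tower_over_M} is a limit diagram to Postnikov convergence for candidate potential $\infty$-stages. As in Remark \ref{rem:if_ambient_category_complete_infinite_stages_a_limit_of_finite_ones} and the proof of Proposition \ref{prop:if_ambient_grothendieck_infty_cat_prestable_module_category_a_limit_of_modules_over_truncation}, the equivalence $\mathcal{M}_\infty^M \simeq \varprojlim \mathcal{M}_n^M$ is implied by the statement that for any $R \in \Mod_{\monunit}(\synspectra)$ all of whose homotopy groups $\pi_i R$ are shifts of $M$ (as the homotopy of any candidate potential $\infty$-stage must be, by periodicity), the natural map $R \to \varprojlim_n R_{\leq n}$ is an equivalence. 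Given such convergence, a compatible tower $\{R_n\}$ in $\mathcal{M}_n^M$ can be assembled by taking the limit in synthetic modules, and the hypothesis that each $R_n$ is a potential $n$-stage forces the limit to have the correct homotopy.

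To prove this restricted Postnikov convergence, I would test against a generating set as in Theorem \ref{thm:synthetic_spectra_based_on_morava_e_theory_complete}. By Lemma \ref{lemma:any_collection_of_fp_spectra_which_surjects_on_all_others_generates_synthetic_spectra}, it suffices to verify convergence after mapping in synthetic analogues $\nu Q_H$ of finite $p$-complete spectra $Q$ with $H_*Q$ freely acted on by the Bockstein. Using $\monunit_{\leq 0} \otimes \nu Q_H \simeq H_*Q$ and the equivalence of Proposition \ref{prop:synthetic_spectra_satisfy_the_conditions_needed_for_gh_theory}, the $j$-th homotopy of $\map_{\synspectra}(\nu Q_H, X[i])$, where $X[i]$ denotes the $i$-th Postnikov fibre, is identified with an Ext group $\Ext_{H_*H}^{i-j,\ast}(H_*Q, \pi_i R)$ in graded comodules. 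Since $\pi_i R$ is a shift of $M$ and hence bounded below with a bound that grows linearly in $i$, applying Lemma \ref{lemma:bounded_subspectrum_also_freely_acted_on_by_the_bockstein} lets me replace $Q$ by a subspectrum $Q' \to Q$ whose homology is supported in a bounded range of degrees, chosen so that the top degree of $H_*Q'$ together with the bottom degree of $\pi_i R$ keeps us above Adams' vanishing line. Theorem \ref{thm:adams_vanishing} then forces the relevant Ext groups to vanish for $i$ sufficiently large, and a Milnor exact sequence argument yields convergence of $\map(\nu Q_H, X_{\le i})$ after restricting to $\nu Q'_H$; since the spectra $Q'$ arising this way still generate $\synspectra$ under colimits, Postnikov convergence for $R$ follows.

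The main obstacle is the bookkeeping required by the positive slope of the Adams line: unlike in the Morava $E$-theory case, no single generator $Q$ can control the entire Postnikov tower, so one must simultaneously let the top degree $t(Q')$ of the generator grow, track the bottom degree $b(\pi_i R)$ of the Postnikov fibre, and ensure that the resulting vanishing range in $i-j$ expands fast enough to stabilise each fixed homotopy degree. The bounded-below hypothesis on $M$ is precisely what allows all three quantities to scale compatibly, and making this quantitative is the technical heart of the argument.
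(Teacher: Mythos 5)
Your high-level diagnosis is correct: the positive slope of the Adams vanishing line means one cannot hope for a single generator to control the whole Postnikov tower, and the bounded-below hypothesis on $M$ is what saves the day. But the argument as sketched has two genuine gaps.

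First, the reduction in your opening paragraph is circular. You propose to reduce to the statement that ``for any $R$ all of whose homotopy groups are shifts of $M$, the map $R \to \varprojlim R_{\leq n}$ is an equivalence,'' and then say that ``the hypothesis that each $R_n$ is a potential $n$-stage forces the limit to have the correct homotopy.'' But whether the limit $X := \varprojlim R_n$ of a compatible pretower has the correct homotopy groups (as opposed to, say, acquiring spurious contributions from $\lim^1$) is precisely the content of the theorem; it cannot be assumed. The paper's approach instead identifies $\varprojlim \mathcal{M}_n^M$ with Postnikov pretowers, takes the limit $X$, and then directly establishes $\pi_k X \simeq \pi_k X_m$ for $k \le m$, after which $X$ being a potential $\infty$-stage and the pretower being a genuine tower both follow.

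Second, the step where you ``replace $Q$ by a subspectrum $Q'$ with bounded homology'' and then conclude via the claim that ``the spectra $Q'$ arising this way still generate $\synspectra$ under colimits'' does not go through. The truncations $Q' \to Q$ produced by \textbf{Lemma \ref{lemma:bounded_subspectrum_also_freely_acted_on_by_the_bockstein}} are only isomorphisms on $H_*$ in a bounded range; they are not $H_*$-surjections, so \textbf{Lemma \ref{lemma:any_collection_of_fp_spectra_which_surjects_on_all_others_generates_synthetic_spectra}} does not apply to them. More fundamentally, even with a generating collection in hand, generation under colimits does not by itself upgrade generator-by-generator stabilization (with bounds that depend on the generator) into Postnikov convergence. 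What the paper actually uses is the explicit degreewise formula from \cite[4.17]{pstrkagowski2018synthetic}: $(\pi_k X)_l \simeq \varinjlim_\alpha [\Sigma^k \nu(\Sigma^l DH_\alpha), X]$, where the $H_\alpha$ are $p$-completed skeleta of $H$. Because $H_* H_\alpha$ is concentrated in non-negative degrees and Bockstein-free, the dual $\Sigma^l DH_\alpha$ has its top homology class in degree $l$ \emph{uniformly in $\alpha$}, so the stabilization bound furnished by Adams vanishing is uniform over the entire filtered colimit once $k$ and $l$ are fixed. This uniformity is what lets the filtered colimit pass to a stable value and lets one read off $(\pi_k X)_l \cong (\pi_k X_m)_l$ for $m$ large. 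Without this formula and the attendant uniformity observation, the bookkeeping you describe in your last paragraph cannot actually be carried out.
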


\begin{proof}
Recall from \cite[5.5.6.23]{lurie_higher_topos_theory} that a Postnikov pretower is a diagram
\begin{center}
$\ldots \rightarrow X_{n} \rightarrow \ldots \rightarrow X_{1} \rightarrow X_{0}$
\end{center}
such that each map $X_m \to X_n$, for $m \ge n$, induces an equivalence $(X_{m})_{\leq n} \simeq X_n$. Then the limit of the tower \eqref{eq:goerss_hopkins_tower_over_M} can be identified with the $\infty$-category of Postnikov pretowers $(X_n)$ in $\synspectra$, such that each $X_n$ is a potential $n$-stage for an $H$-local spectrum realizing $M$.

To prove the statement, it is enough to show that any such pretower can be extended to a Postnikov tower $X \rightarrow \ldots \rightarrow X_{1} \rightarrow X_{0}$, which means that $X \rightarrow X_{m}$ induces an equivalence $X_{\leq m} \simeq X_{m}$ for each $m$. Given such an extension, we necessarily have $X \in \mathcal{M}_{\infty}^{M}$ by \cref{rem:if_ambient_category_complete_infinite_stages_a_limit_of_finite_ones}. If such an $X$ exists, then because $\synspectra$ is separated, $X \simeq \varprojlim X_{n}$. Thus, all we have to verify is that the limit of the pretower has the needed property.

Let us fix a Postnikov pretower $(X_{n})$ as above. We claim that if $P$ is a finite $p$-complete spectrum such that $H_{*}P$ is freely acted on by the Bockstein, then the tower 

\begin{center}
$\ldots \rightarrow [\Sigma^{k} \nu P, X_{1}] \rightarrow [\Sigma^{k} \nu P, X_{0}]$,
\end{center}
of homotopy classes of maps in $\synspectra$ stabilizes; in other words, there exists an integer $N$ such that $[\Sigma^{k} \nu P, X_{n}] \simeq [\Sigma^{k} \nu P, X_{N}]$ for $n \geq N$, which implies that $[\Sigma^{k} \nu P, X] \simeq [\Sigma^{k} \nu P, X_{n}]$ for $X := \varprojlim X_{n}$. Moreover, we claim that the bound $N$ depends only on $k$ and $t(H_{*}P)$, the degree of the top class in the homology of $P$. 

Arguing as in the proof of \cref{thm:synthetic_spectra_based_on_morava_e_theory_complete}, let $X_{[n]}$ denote the fibre of $X_{n} \rightarrow X_{n-1}$, observe that we have an equivalence $X_{[n]} \simeq \Sigma^{n} M[-n]$. Since the latter is canonically a $\monunitt{0}$-module, being a suspension of a discrete synthetic spectrum, we have isomorphisms 

\begin{center}
$\pi_{j} \map(\nu P, X_{[n]}) \simeq \Ext^{n, n-j}(H_{*}P, M)$. 
\end{center}
By Adams vanishing, which we stated as \cref{thm:adams_vanishing}, for any fixed $j$ this vanishes for large values of $n$, with the precise vanishing range only depending on the degree $t(B)$ of the bottom class of $M$, which is fixed, and $t(H_{*}P)$, as promised. Then, the long exact sequence of homotopy implies the needed stabilization. 

Let $H_{\alpha}$ be a filtered diagram of finite $p$-complete spectra such that $\varinjlim H_{\alpha} \simeq H$ in the $H$-local category, this can be obtained by $p$-completing the skeletal filtration of $H$. By \cite[4.17]{pstrkagowski2018synthetic}, any such an filtered diagram yields an explicit formula 

\begin{center}
$(\pi_{k} X)_{l} \simeq \varinjlim [\Sigma^{k} \nu (\Sigma^{l} DH_{\alpha}), X]$,
\end{center}
where by the subscript $l$ we mean the subgroup of degree $l$ elements in the $H_{*}H$-comodule $\pi_{k}X$, and $D$ refers to the $H$-local Spanier-Whitehead dual. Here, as elsewhere in the paper, by $\pi_{*}$ we denote the homotopy groups in the sense of prestable $\infty$-categories, which are referred to in \cite{pstrkagowski2018synthetic} as the $t$-structure homotopy groups.

Filtering $H$ by skeleta as in the proof of \cref{lemma:bounded_subspectrum_also_freely_acted_on_by_the_bockstein}, we may assume that each of $H_{*} H_{\alpha}$ is concentrated in non-negative degrees and freely acted on by the Bockstein. If that is the case, then the top class of $H_{*}(\Sigma^{l} DH_{\alpha})$ is in degree $l$. It then follows from the explicit bound we gave on the stabilization of homotopy classes of maps that once we fix $k$ and $l$, then there exists an $N$ such that 

\begin{center}
$[\Sigma^{k} \nu (\Sigma^{l} DH_{\alpha}), X] \simeq [\Sigma^{k} \nu (\Sigma^{l} DH_{\alpha}), X_{n}]$
\end{center}
for $n \geq N$ and all $\alpha$. Passing to the filtered colimit in $\alpha$, and using the explicit formula above, this yields $(\pi_{k} X)_{l} \simeq (\pi_{k} X_{n})_{l}$ for all large enough $n$. 

Now, let $m \geq 0$, to show that $X_{\leq m} \simeq X_{m}$ we have to check that $\pi_{k} X \simeq \pi_{k} X_{m}$ for $k \leq m$. Since isomorphism of comodules are degreewise, it is enough to verify that $(\pi_{k} X)_{l} \simeq (\pi_{k} X_{m})_{l}$ for all $l$. By what we proved above, one can choose an $n \geq m$ such that $(\pi_{k} X)_{l} \rightarrow (\pi_{k} X_{n})_{l}$ is an isomorphism. Since the same is true for $(\pi_{k} X_{n})_{l} \rightarrow (\pi_{k} X_{m})_{l}$, the composite must hold for the composite, which is exactly what we wanted to show. This ends the argument. 
\end{proof}

\begin{cor}
\label{cor:homological_toda_obstruction_theory}
Let $H = H \mathbb{F}_{p}$ be the Eilenberg-MacLane spectrum and let $M$ be a $H_{*}H$-comodule which is bounded below. Then, there exists a sequence of inductively defined obstructions

\begin{center}
$\theta_{n} \in \Ext_{\Comod_{H_{*}H}}^{n+2, n}(M, M)$, where $n \geq 1$
\end{center}
which vanish if and only if there exists a spectrum $X$ such that $H_{*}X \simeq M$ as comodules.
\end{cor}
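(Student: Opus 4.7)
The plan is to apply the abstract linear obstruction theory of \textbf{Theorem \ref{thm:obstructions_in_ext_groups_to_lifting_a_potential_stage}} inside the $\infty$-category $\synspectra$ of synthetic spectra based on $H = H\mathbb{F}_p$, and then invoke the convergence result \textbf{Theorem \ref{thm:goerss_hopkins_tower_converges_for_bounded_below_hh_comodules}} to compensate for the fact that $\synspectra$ based on $H\mathbb{F}_p$ is not known to be complete. First, I would take $A = \monunit$, the monoidal unit, which is a commutative shift algebra by \textbf{Example \ref{example:unit_of_synspectra_a_periodicity_algebra}}. By \textbf{Proposition \ref{prop:synthetic_spectra_satisfy_the_conditions_needed_for_gh_theory}}, the periodic $\monunit$-modules are equivalent to $H$-local spectra, and $\Mod_{\pi_0\monunit}(\synspectra^\heartsuit) \simeq \Comod_{H_*H}$ with $\pi_0\monunit$ corresponding to the unit comodule $H_*$. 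A realization of $M$ as the $H$-homology of some spectrum $X$ is then equivalent to a periodic $\monunit$-module $R$ with $\pi_0 R \simeq M$ as comodules, since $H_*X \simeq H_*(L_H X)$ and $H$-localization preserves $H$-homology.

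Next, I would apply \textbf{Theorem \ref{thm:obstructions_in_ext_groups_to_lifting_a_potential_stage}} inductively along the tower of moduli of potential $n$-stages over the fixed base point $M \in \mathcal{M}_0^M$. Because $\pi_0\monunit$ is the unit of $\synspectra^{\heartsuit}$ under the identification with comodules, modules over $\pi_0\monunit$ in $\synspectra^{\heartsuit}$ are just $H_*H$-comodules themselves. Consequently the obstructions take the form
\[ \theta_n \in \Ext^{n+2,n}_{\Comod_{H_*H}}(M,M), \quad n \geq 1, \]
matching exactly the Ext groups in the statement, and their simultaneous vanishing is necessary and sufficient for constructing a compatible system of potential $n$-stages all sitting over $M$.

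The main obstacle preventing a direct appeal to \textbf{Corollary \ref{cor:if_c_is_complete_then_we_have_an_obstruction_theory_to_constructing_a_periodic_a_module}} is the possible failure of completeness of $\synspectra$ in the present setting. This is precisely what \textbf{Theorem \ref{thm:goerss_hopkins_tower_converges_for_bounded_below_hh_comodules}} handles: under the bounded below hypothesis on $M$, it provides the limit equivalence $\mathcal{M}_\infty^M \simeq \varprojlim_n \mathcal{M}_n^M$. Thus any compatible tower of potential $n$-stages produced by the obstruction theory assembles into a potential $\infty$-stage, i.e.\ a periodic $\monunit$-module with $\pi_0 \simeq M$, which by the identification above corresponds to an $H$-local spectrum realizing $M$. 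Combining this convergence with the inductive obstruction mechanism of the previous paragraph yields the corollary; the genuine work has already been done in establishing the Adams vanishing line used in \textbf{Theorem \ref{thm:goerss_hopkins_tower_converges_for_bounded_below_hh_comodules}}, and the present proof is essentially the assembly step.
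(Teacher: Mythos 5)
Your proof is correct and follows essentially the same route as the paper, which simply cites \textbf{Theorem \ref{thm:goerss_hopkins_tower_converges_for_bounded_below_hh_comodules}} together with \textbf{Corollary \ref{cor:if_c_is_complete_then_we_have_an_obstruction_theory_to_constructing_a_periodic_a_module}}. You have accurately unpacked the one-line argument: the obstructions come from \textbf{Theorem \ref{thm:obstructions_in_ext_groups_to_lifting_a_potential_stage}} applied inside synthetic spectra based on $H$, the Ext groups land in $\Comod_{H_*H}$ via $(\diamondsuit 2)$, and the convergence supplied by \textbf{Theorem \ref{thm:goerss_hopkins_tower_converges_for_bounded_below_hh_comodules}} substitutes for the completeness hypothesis of \textbf{Corollary \ref{cor:if_c_is_complete_then_we_have_an_obstruction_theory_to_constructing_a_periodic_a_module}} so that a compatible tower of potential $n$-stages assembles into a periodic $\monunit$-module, hence an $H$-local spectrum realizing $M$.
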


\begin{proof}
Taking into account \cref{thm:goerss_hopkins_tower_converges_for_bounded_below_hh_comodules}, this follows immediately from \cref{cor:if_c_is_complete_then_we_have_an_obstruction_theory_to_constructing_a_periodic_a_module}. 
\end{proof}

\begin{rem}
\label{rem:toda_obstruction_theory_for_bp}
The analogue of \cref{thm:goerss_hopkins_tower_converges_for_bounded_below_hh_comodules} also holds for the Brown-Peterson spectrum; that is, the Goerss-Hopkins tower converges for any bounded below $BP_{*}BP$-comodule $M$. It follows that the analogue of \cref{cor:homological_toda_obstruction_theory} is true as well; that is, there are obstructions in $\Ext^{n+2, n}_{BP_{*}BP}(M, M)$ to realizing $M$ as a $BP$-homology of a spectrum.

To see this, notice that the key argument in the proof of \cref{thm:goerss_hopkins_tower_converges_for_bounded_below_hh_comodules} is that the Eilenberg-MacLane spectrum can be written as a filtered colimit $H \simeq H_{\alpha}$ of finite spectra with the property that for any fixed $j$ 

\begin{center}
$\Ext^{n, n-j}_{H_{*}H}(H_{*}(DH_{\alpha}), M)$
\end{center}
vanishes for large $n$. Moreover, the bound on vanishing depends only on the bounded below comodule $M$, in fact on the degree of its bottom class, but not on $\alpha$. 

In the case of $BP$, one can use the skeletal filtration, since the skeleta of $BP$ are all finite projective with no cells in negative degrees. To prove the needed vanishing one argues as in the proof of \cref{thm:adams_vanishing}, reducing to the case of showing that $\Ext^{s, t}_{BP_{*}BP}(BP_{*}, O)$ has a vanishing line of fixed positive slope for any finitely generated $BP_{*}BP$-comodule $O$, with the intercept of that line depending only on the degree of the bottom class of $O$. 

It is classical that such a line of slope $1$ exists when $O = BP_{*}$, see \cite{ravenel_complex_cobordism}[5.1.23], and the general case follows by induction on the Landweber filtration of $O$ \cite{hovey2003comodules}. 
\end{rem}

\begin{example}[Burklund]
\label{example:non_vanishing_adams_spectral_sequence}
The following is an example of an $H$-local, but not $H$-nilpotent complete spectrum $X$ due to Robert Burklund. It follows that $\nu X$ is a hypercomplete synthetic spectrum which is not a limit of its Postnikov tower, see \cite[5.6, 5.4]{pstrkagowski2018synthetic}, \cite[A.11]{burklund2019boundaries}. This shows that the $\infty$-category of connective, hypercomplete synthetic spectra based on $H$ is not Postnikov complete. 

Before we move on, let us recall some terminology. Following Bousfield, if $X$ is a spectrum, we call the limit of the Adams resolution

\begin{center}
$H \otimes X \rightrightarrows H \otimes H \otimes X \triplerightarrow \ldots$
\end{center}
the \emph{$H$-nilpotent completion} of $X$ and denote it by $(X)_{H}^{\wedge}$. We say $X$ is \emph{$H$-nilpotent complete} if $X \rightarrow (X)_{H}^{\wedge}$ is an equivalence, that is, if $X$ is a limit of its Adams tower. Any $H$-nilpotent complete spectrum is $H$-local, and it is a classical result of Bousfield that the converse is true for bounded below spectra. 

Since $H$-localization doesn't change $H$-homology, to produce a counterexample it is enough to find a spectrum $X$ such that the natural map $X \rightarrow (X)_{H}^{\wedge}$ is not an $H$-equivalence. If this is the case, then the Bousfield localization $X_{H}$ is the sought after counterexample.

Let $k(n)$ denote the connective Morava $K$-theory with $k(n)_{*} \simeq \mathbb{F}_{p}[v_{n}]$, where $|v_{n}| = 2p^{n}-2$. It is classical that each of these is an $H$-nilpotent complete spectrum with collapsing Adams spectral sequence, and $v_{n}^{k}$ in Adams filtration $k$. We define the counterexample as
\[ X := \bigoplus_{i>0} \Sigma^{ - |v_{n_i}| i } k(n_i), \]
where $\{n_i\}$ is any sequence of natural numbers increasing rapidly enough so that
\[ |v_{n_i}| > |v_{n_{i-1}}|  (i-1) \]
for all $i > 0$. The reader can easily verify that this condition guarantees that
\begin{enumerate}
\item $\pi_0X$ is the direct sum $\bigoplus  _{i > 0} \mathbb{F}_p\{v_{n_i}^i\}$ and 
\item $\pi_{k} X$ is a finite-dimensional $\mathbb{F}_{p}$-vector space for $k \neq 0$.
\end{enumerate}

Since a sum of $H$-Adams resolutions is an $H$-Adams resolution, the nilpotent completion $(X)_{H}^{\wedge}$ is a limit of the direct sum of the $H$-Adams resolutions of the $\Sigma^{-|v_{n_{i}}|i} k(n_{i})$. It follows that the induced spectral sequence of signature
\[
\bigoplus_{i > 0} \textnormal{Ext}^{s, t}_{H_{*}H}(H_{*}, H_{*}(\Sigma^{-|v_{n_{i}}|i} k(n_{i}))) \Rightarrow \pi_{t-s} (X)_{H}^{\wedge}
\]
is conditionaly convergent in the sense of Boardman. This spectral sequence collapses, and our conditions on $n_{i}$ guarantee that $E_{2}^{s, t}$ is a finite abelian group in any bidegree $(s, t)$, yielding strong convergence as all of the relevant $\varprojlim^{1}$-terms vanish. 

It follows that $\pi_{k} X \rightarrow \pi_{k} (X)_{H}^{\wedge}$ is an isomorphism for $k \neq 0$ and the map $\pi_{0} X \rightarrow \pi_{0} (X)_{H}^{\wedge}$ can be identified with completion with respect to the Adams filtration. Thus
\[
\pi_{0} (X)_{H}^{\wedge} \simeq \varprojlim \pi_{0} X / F^{k} \pi_{0} X \simeq \varprojlim \bigoplus _{k > i > 0} \mathbb{F}_{p} \{ v_{n_{i}}^{i} \} \simeq \prod_{i > 0} \mathbb{F}_{p} \{ v_{n_{i}}^{i} \}.
\]
It follows that the cofibre of $X \rightarrow (X)_{H}^{\wedge}$ is an Eilenberg-MacLane spectrum associated to the quotient  $\mathbb{F}_{p}$-vector space $\prod \mathbb{F}_{p} \{ v_{n_{i}}^{i} \} / \bigoplus \mathbb{F}_{p} \{ v_{n_{i}}^{i} \}$. Thus, the cofibre has non-vanishing $H$-homology, so that $X \rightarrow (X)_{H}^{\wedge}$ is not an $H$-localization, as claimed. 
\end{example}

\bibliographystyle{amsalpha}
\bibliography{abstract_gh_theory_bibliography}

\end{document}